\def\Q{\mathbb{Q}}
\def\X{\mathbb{X}}
\def\Z{\mathbb{Z}}
\def\CP{\mathbb{CP}}
\def\to{\rightarrow}
\def \bbcp{\mathbb C\mathbb P}
\newcommand{\Symp}{\mathrm{Symp}}
\newcommand{\Diff}{\mathrm{Diff}}
\newcommand{\Mcc}{\widetilde{M}_{c_1,c_2}}
\newcommand{\Muccc}{\widetilde{M}_{\mu,c_1,c_2,c_3}}
\newcommand{\Mucccc}{\widetilde{M}_{\mu,c_1,c_2,c_3,c_4}}
\newcommand{\Mccc}{\widetilde{M}_{c_1,c_2,c_3}}
\newcommand{\Jccc}{\mathcal{J}_{c_1,c_2,c_3}}
\newcommand{\Jcccm}{\mathcal{J}_{c_1,c_2,c_3,m}}
\newcommand{\Guccc}{G_{\mu,c_1,c_2,c_3}}
\newcommand{\Gccc}{G_{c_1,c_2,c_3}}
\newcommand{\Gcccc}{G_{c_1,c_2,c_3,c_4}}
\newcommand{\Gcc}{G_{c_1,c_2}}
\newtheorem{theorem}{Theorem}[section]
\newtheorem{lemma}[theorem]{Lemma}
\newtheorem{proposition}[theorem]{Proposition}
\newtheorem{corollary}[theorem]{Corollary}
\newtheorem{definition}[theorem]{Definition}
\newtheorem{remark}[theorem]{Remark}
\newtheorem{conjecture[theorem]}{Conjecture}
\newtheorem*{theorem*}{Theorem}
\newtheorem*{lemma*}{Lemma}
\newtheorem*{proposition*}{Proposition}
\newtheorem*{corollary*}{Corollary}
\newtheorem*{definition*}{Definition}
\newtheorem*{remark*}{Remark}
\newcommand*\circled[1]{\tikz[baseline=(char.base)]{
            \node[shape=circle,draw,inner sep=2pt] (char) {#1};}}
\begin{document}

\title[Homotopy algebra of symplectomorphism groups]{The homotopy Lie algebra of symplectomorphism groups of 3--fold blow-ups of   $(S^2 \times S^2, \sigma_{std} \oplus \sigma_{std}) $}

\author[S. Anjos]{S\'ilvia Anjos}
\address{SA: Center for Mathematical Analysis, Geometry and Dynamical Systems \\ Mathematics Department \\  Instituto Superior T\'ecnico \\  Av. Rovisco Pais \\ 1049-001 Lisboa \\ Portugal}
\email{sanjos@math.ist.utl.pt}

\author[S. Eden]{Sinan Eden}
\address{SE: Center for Mathematical Analysis, Geometry and Dynamical Systems \\ Mathematics Department \\  Instituto Superior T\'ecnico \\  Av. Rovisco Pais \\ 1049-001 Lisboa \\ Portugal}
\email{sinaneden@yahoo.com }

\date{\today}

\thanks{The first author is partially supported by FCT/Portugal through projects UID/MAT/04459/2013 and EXCL/MAT-GEO/0222/2012.
The second author is partially supported by FCT/Portugal through fellowship SFRH/BD/87329/2012, and through projects UID/MAT/04459/2013 and PTDC/MAT/098936/2008.}

\begin{abstract}

We consider the 3-point blow-up of the manifold $ (S^2 \times S^2, \sigma \oplus \sigma)$ where $\sigma$ is the standard symplectic form which gives area 1 to the sphere $S^2$, and study its group of symplectomorphisms  $\Symp ( S^2 \times S^2  \#\, 3\overline{ \bbcp}\,\!^2, \omega)$. So far, the monotone case was studied by J. Evans in \cite{Eva}, where he proved that this group is contractible. Moreover, J. Li, T. J. Li and W. Wu in \cite{LiLiWu} showed that the group Symp$_{h}(S^2 \times S^2  \#\, 3\overline{ \bbcp}\,\!^2,\omega)  $ of symplectomorphisms that act trivially on homology is always connected and recently they also computed, in \cite{LiLiWu2}, its fundamental group. We describe, in full detail, the rational homotopy Lie algebra of this group.

We show that some particular circle actions contained in the symplectomorphism group  generate its full topology. More precisely, they give the generators of the homotopy graded Lie algebra of $\Symp (S^2 \times S^2  \#\, 3\overline{ \bbcp}\,\!^2, \omega)$.  Our study depends on Karshon's classification of Hamiltonian circle actions and the inflation technique introduced by Lalonde-McDuff.
As an application, we deduce the rank of the homotopy groups of  $\Symp( \bbcp^2  \#\, 5\overline{ \bbcp}\,\!^2, \widetilde \omega)$,  in the case of small blow-ups. 

\end{abstract}

\subjclass[2010]{Primary 53D35; Secondary 57R17,57S05,57T20}
\keywords{symplectic geometry, symplectomorphism group, homotopy type, Lie graded algebra, J-holomorphic curves}

\maketitle

\tableofcontents

\section{Introduction}\label{section: intro}

The understanding of the homotopy type of symplectomorphism groups is one of the fundamental questions in symplectic topology. If $(M, \omega)$ is a closed simply connected symplectic manifold, then the symplectomorphism group $ \Symp(M, \omega)$ endowed with the standard $C^\infty$-topology, is an infinite dimensional Fréchet Lie group. Except for dimension 4, there are not many tools available to compute the homotopy type of the symplectomorphism group. However, in this case a detailed study of the space of almost complex stuctures compatible with $\omega$, $ \mathcal{J} _{\omega} $, has produced several results. More precisely, the long-known example of  the symplectomorphism group of $ (S ^{2} \times S ^{2},  \sigma \oplus \sigma) $, where $\sigma$ denotes the standard symplectic symplectic form on $S^2$ that gives area 1 to the sphere,  was studied by M. Gromov in \cite{Gro}. Among many other groundbreaking results he proved that the group of symplectomorphisms of this symplectic manifold has the homotopy type of the semi-direct product $(SO(3) \times SO(3)) \ltimes \Z_2$.  Several other authors obtained results about the symplectomorphism of this manifold with different symplectic forms as well as their symplectic blow-ups. More precisely, M. Abreu and D. McDuff  in \cite{AbrMcD} described the rational cohomology ring of the group of symplectomorphisms that act as the identity on homology, on the same manifold equipped with the symplectic form $ \omega_{\mu} = \mu \sigma \oplus \sigma $ where $ \mu \geq 1$. The group of symplectomorphisms of these manifolds with only one blow-up, acting trivially on homology, is calculated by M. Pinsonnault in \cite{Pin} and the 2-fold blow-up is calculated by S. Anjos and M. Pinsonnault in \cite{AnjPin}.
The next simplest thing to do would be to analyze the 3-fold blow-up. So far, the monotone case  was studied by J. Evans in \cite{Eva}, where he proved that the symplectomorphism group in this case is contractible. Furthermore, J. Li, T. J. Li and W. Wu showed in \cite{LiLiWu} that the homology trivial part of $\Symp(S^2 \times S^2  \#\, 3\overline{ \bbcp}\,\!^2, \omega) $ is connected and recently, in \cite{LiLiWu2} they computed its fundamental group. Moreover, they completely determine $\pi_0(\Symp(S^2 \times S^2  \#\, 4\overline{ \bbcp}\,\!^2, \omega))$  and find a lower bound for the rank of the fundamental group of the symplectomorphism group in this case, which together with the upper bound obtained by D. McDuff in \cite{McD4} gives the precise rank in most occasions. In order to obtain these results the authors study the space of tamed almost complex structures using a fine decomposition via smooth rational curves and a relative version of the infinite dimensional Alexander duality. 

The goal of this paper is  to give a complete description of the homotopy  Lie algebra  of the symplectomorphism group $\Symp_h(S^2 \times S^2  \#\, 3\overline{ \bbcp}\,\!^2, \omega) $, for some $\omega$ (see below). As an application we compute the rank of the homotopy groups of $\Symp_h(\bbcp  \#\, 5\overline{ \bbcp}\,\!^2, \omega) $,  for small blow-ups. 

In all cases studied by now, the main idea behind the study of the homotopy type of the symplectomorphism group is to investigate the natural action of this group on  the space of almost complex structures $ \mathcal{J} _{\omega} $. In those cases, namely rational ruled manifolds and some of their blow-ups, the space $ \mathcal{J} _{\omega} $ is a stratified space where each stratum contains an integrable almost complex structure for which its isotropy group is a finite dimensional Lie group. As in the previous cases, we will show that the isotropy groups of the integrable almost complex structures generate the full topology of the symplectomorphism group. More precisely, in our case, the isotropy groups are either $ \mathbb{T}^{2} $ or $ S^{1} $ or the identity, and we will show that the Hamiltonian circle actions contained in them give the generators of the homotopy graded Lie algebra of the symplectomorphism group.

This work adds to a series of projects in the area, focusing on the symplectic blow-ups of the symplectic manifold $ M_\mu = ( S ^{2} \times S ^{2} , \mu \sigma \oplus \sigma )$, $ \mu \geq 1 $. More precisely,  let $ \Muccc$ be obtained from $M_\mu$,  by performing three successive blow-ups of capacities  $ c_{1}$, $ c_{2}$ and $c_{3}$, with $\mu\geq 1 > c_{1} + c_{2} > c_{1} + c_{3} > c_{1} > c_{2} > c_{3} > 0 $.  
In Section \ref{sec m1: reduced} we explain why it is sufficient to consider values of $c_1,c_2$ and $c_3$ in this range. In this paper we will consider the case $\mu=1$. 

Seeing $ S ^{2} \times S ^{2}$ as a trivial fibration over $S^2$, the homology class of the base $ B \in H _{2} ( S ^{2} \times S ^{2} )$ representing $ [ S ^{2} \times \{ pt \} ]$ has area $ \mu =1$, and the homology class of the fiber $ F \in H _{2} ( S ^{2} \times S ^{2} )$ representing $ [ \{ pt \} \times S ^{2} ] $ has area $ 1 $.

Equivalently, we can start with $( \mathbb{CP} ^{2} , \omega _{\nu} ) $, where $ \omega _{\nu} $ is the standard Fubini-Study form rescaled so that $ \omega _{\nu} (\mathbb{CP} ^{1} ) = \nu $, and blow up at four balls of capacities $ \delta_{1}, \delta_{2}, \delta_{3} $ and $ \delta_{4} $. This gives a space conventionally denoted by $ \mathbb{X} _{4} = \mathbb{CP} ^{2} \# 4 \overline{ \mathbb{CP} ^{2} } $, equipped with the symplectic structure $\omega _ { \nu ; \delta_{1}, \delta_{2}, \delta_{3}, \delta_{4} }$.

One easy way to understand the equivalence between $ \mathbb{X} _{4} $ and $\Muccc$ is as follows: let $ \{ L, V_{1}, V_{2}, V_{3}, V_{4} \} $ be the basis for $ H_{2} (\mathbb{X}_{4} ; \mathbb{Z}) $ where $L$ is the class representing a line and the $V_{i} $ are the exceptional classes. Let $ \{ B, F, E_{1}, E_{2}, E_{3} \} $ be the basis for $H_{2}(\widetilde{M}_{\mu , c_{1} , c_{2}, c_{3}} ; \mathbb{Z}) $ where the $ E_{i}$ represent the exceptional spheres arising from the blow-ups. We identify $L$ with $ B+F-E_{1} $, $ V_{1}$ with $ B - E_{1}$, $ V_{2} $ with $ F - E_{1} $, $ V_{3} $ with $ E_{2} $, and $ V_{4} $ with $ E_{3} $. In order to see this birational equivalence in the symplectic category, we recall  {\it the uniqueness of symplectic blow-ups} due to D. McDuff (see \cite[Corollary 1.3]{McD}): the symplectomorphism type of a symplectic blow-up of a rational ruled manifold along an embedded ball of capacity $ c \in (0,1) $ depends only on the capacity $ c $ and not on the particular embedding used in obtaining the blow-up. Using this result and after rescaling, we obtain the equivalence in the symplectic category:
\begin{equation}\label{sympequiv}
 \mu = \dfrac{\nu - \delta_{2}}{\nu - \delta_{1}} , \quad  c_{1}= \dfrac{\nu - \delta_{1} - \delta_{2}}{\nu - \delta_{1}}, \quad  c_{2}= \dfrac{\delta_{3}}{\nu - \delta_{1}}, \quad \mbox{and} \quad  c_{3} = \dfrac{\delta_{4}}{\nu - \delta_{1}} .
 \end{equation}
Let $ G_{c_{1} , c_{2}, c_{3} } = G_{\mu=1, c_{1} , c_{2}, c_{3} } $ denote the group of symplectomorphisms of $ \widetilde{M}_{c_{1} , c_{2}, c_{3} } = \widetilde{M}_{\mu=1, c_{1} , c_{2}, c_{3} }$ acting trivially on homology, and $ \mathcal{J} _{c_{1} , c_{2}, c_{3} } $ the space of $\omega$-compatible almost complex structures on $ \widetilde{M}_{c_{1} , c_{2}, c_{3} }$.

In order to describe the algebraic structure of $ \pi_{*}( G_{c_{1} , c_{2}, c_{3}})$, we introduce the Lie product with which we equip it, namely the Samelson product. Recall, that if $ G $ is a connected topological group, the Samelson product  $ [ \cdot , \cdot ] : \pi _{p} (G) \otimes \pi _{q} (G) \rightarrow \pi _{p+q} (G) $ is defined by the commutator map
$$ S^{p+q} = S^{p} \times S^{q} / S^{p} \vee S^{q} \rightarrow G \ : \ (s,t) \mapsto a(s)b(t)a^{-1}(s)b^{-1}(t) .$$
The Samelson product satisfies the following two properties:
\begin{align*}
 & \mbox{(i)}  \quad [x,y] = (-1)^{\deg x \deg y + 1} [y,x];  &  & \mbox{(antisymmetry)} \\
 & \mbox{(ii)}  \quad [x,[y,z]] = [[x,y],z] + (-1)^{\deg x \deg y}[y,[x,z]]. &  & \mbox{(Jacobi identity)}
\end{align*}
The vector space $ \pi _{*} (G) \otimes \mathbb{Q} $ with the Samelson product is thus a graded Lie algebra (i.e. a graded vector space together with a linear map of degree zero satisfying antisymmetry and the Jacobi identity). 

The main theorem gives a complete description of the homotopy Lie algebra of $G_{c_1,c_2,c_3}.$ 

\begin{theorem}\label{mainthm m1}
Let $ \widetilde{M} _{c_{1} , c_{2}, c_{3} } $ denote the symplectic manifold obtained from $ (S ^{2} \times S ^{2}, \sigma \oplus \sigma )$, by performing three successive blow-ups of capacities of $ c_{1}$, $ c_{2}$ and $c_{3}$, with $ 1 > c_{1} + c_{2} > c_{1} + c_{3} > c_{1} > c_{2} > c_{3}. $ Let $ G_{c_{1} , c_{2}, c_{3} }$ denote the group of symplectomorphisms of $ \widetilde{M}_{c_{1} , c_{2}, c_{3} } $ acting trivially on homology.
Then $ \pi _{*} (G_{c_{1} , c_{2}, c_{3} }) \otimes \mathbb{Q} $ is isomorphic to an algebra generated by nine elements of degree 1 (which we define in Section \ref{sec m1: generators}) satisfying  a set of relations between their Lie products, which are specified in Section \ref{proof m1}. In particular,  $\pi _1 (G_{c_{1} , c_{2}, c_{3} }) \otimes \mathbb{Q} \simeq \Q^9$.
\end{theorem}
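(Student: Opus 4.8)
The plan is to study the natural action of $\Gccc$ on the contractible space $\Jccc$ of $\omega$-compatible almost complex structures and to translate the geometry of this action into algebraic data about $\pi_*(\Gccc)\otimes\Q$. First I would stratify $\Jccc$ according to the configuration of embedded $J$-holomorphic spheres appearing in the exceptional classes $E_1,E_2,E_3$ and in the related classes $B-E_i$, $F-E_i$, $B-E_i-E_j$, and so on relative to the basis $\{B,F,E_1,E_2,E_3\}$. Each stratum is labelled by which of these classes admit a holomorphic representative for a given $J$; there is an open dense top stratum, while the degenerate configurations form strata of positive codimension. The codimensions and normal data of these strata are exactly what the inflation technique of Lalonde--McDuff computes, and this is the input that controls how the strata are glued inside the contractible total space.

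The next step is to pin down the homotopy type of the stabilizers. For each stratum I would choose, following Karshon's classification of Hamiltonian circle and torus actions, an integrable representative $J$; its stabilizer in $\Gccc$ is then the finite-dimensional group of K\"ahler isometries, which in our range of capacities is one of $\TT^2$, $S^1$, or the identity. The Hamiltonian circle actions contained in these tori furnish explicit degree-$1$ classes in $\pi_*(\Gccc)$, and these are the candidate generators. Enumerating the inequivalent circle actions produced by the stratification in the regime $1 > c_1+c_2 > c_1+c_3 > c_1 > c_2 > c_3$ should yield precisely nine of them.

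To assemble these pieces I would use, for each relevant stratum, the orbit fibration $\mathrm{Stab}(J)\to\Gccc\to\mathcal{O}(J)$ together with the identification of the orbit $\mathcal{O}(J)$ with a space homotopy equivalent to the stratum. Running the associated long exact homotopy sequences, and using the contractibility of $\Jccc$ to recover the homotopy of the top stratum from the codimension and normal-bundle data of the lower strata, one reconstructs $\pi_*(\Gccc)$ in terms of the stabilizers. The aim is to show that the nine circle actions generate $\pi_*(\Gccc)\otimes\Q$ as a graded Lie algebra under the Samelson product, and that the only degree-$1$ contributions are the nine generators themselves, which would give $\pi_1(\Gccc)\otimes\Q\simeq\Q^9$.

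The relations among the generators are then obtained by evaluating Samelson products: two circle actions lying in a common torus commute, so their Samelson product vanishes, while the remaining relations encode how circle actions attached to different strata interact and must be read off from the explicit commutators of the corresponding Hamiltonian flows. The hardest part will be this last bookkeeping: correctly enumerating all strata and their stabilizers in the given capacity range, verifying via inflation that no additional generators appear in higher degrees, and producing the complete, non-redundant list of Samelson relations. Establishing simultaneously that the nine generators suffice and that they are linearly independent in degree $1$—so that there are neither hidden higher-degree generators nor any degree-$1$ relation—is where the main technical effort lies.
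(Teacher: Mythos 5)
Your proposal reproduces the paper's general philosophy (stratify $\Jccc$ by configurations of $J$-holomorphic spheres, identify the K\"ahler isometry groups $\TT^2$, $S^1$ or trivial of integrable representatives, take the Hamiltonian circle actions as degree-one generators and read relations off commuting tori), but the mechanism you propose for assembling this data into $\pi_*(\Gccc)\otimes\Q$ is not the one the paper uses, and as sketched it has a genuine gap. You want to reconstruct $\pi_*(\Gccc)$ from the orbit fibrations $\mathrm{Stab}(J)\to\Gccc\to\mathcal{O}(J)$ together with ``codimension and normal-bundle data'' of the lower strata inside the contractible space $\Jccc$. With the dozens of strata present here (configurations 1.1--1.13, 2.1--2.10, 3.1--3.8, and their $B\leftrightarrow F$ mirrors), this amounts to an infinite-dimensional Alexander-duality/stratification argument of the type Li--Li--Wu carry out just to compute $\pi_0$ and $\pi_1$; codimension and normal data alone do not determine the homotopy of the open stratum in higher degrees, and you give no method for gluing the long exact sequences of the many orbit fibrations together. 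The paper avoids this entirely: it proves (Lemma \ref{small c3 m1}, resting on the fact that $E_3$ can never degenerate, plus the inflation-based stability Theorem \ref{slight changes m1}) that $\Gccc$ is homotopy equivalent to the point-stabilizer $\Symp_p(\Mcc)$, and then runs the long exact sequence of the evaluation fibration $\Symp_p(\Mcc)\to\Symp(\Mcc)\to\Mcc$. The decisive point (Proposition \ref{rank proposition}) is that this sequence splits because $\pi_*(\Gcc)\otimes\Q$ is generated in degree one by circle actions that can be chosen to fix $p$, so the evaluation map is trivial on rational homotopy; this yields the exact rank $\dim\pi_n(\Gccc)=\dim\pi_n(\Gcc)+\dim\pi_{n+1}(\Mcc)$, in particular $\pi_1(\Gccc)\otimes\Q\simeq\Q^9$.

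Two further gaps. First, you assert that enumerating the circle actions ``should yield precisely nine''; in fact the toric pictures produce the twenty-four actions $x_0,\dots,x_{11},y_0,\dots,y_{11}$, and the type--0 relations coming from $SL(2,\Z)$ moves on polytopes only cut these down to eleven independent classes. Reducing to nine requires the new relations \eqref{the x-relation} and \eqref{the y-relation} (and, for the Lie algebra structure, their Samelson analogues \eqref{x-Samelson} and \eqref{y-Samelson}), which the paper obtains from auxiliary polytopes that exist only for $\mu>1$ and transports to $\mu=1$ via the McDuff--Tolman injectivity of $\pi_1(\widetilde{T}_{i,j}(0))\to\pi_1(\Gccc)$; none of this appears in your sketch. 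Second, and most importantly, you offer no mechanism for proving that your list of Samelson relations is complete and that no generators occur in higher degree; your suggestion to verify this ``via inflation'' misuses inflation, which proves stability of the symplectomorphism group under changes of the capacities and says nothing about homotopy generators. The paper settles completeness by a purely algebraic count: Milnor--Moore and Poincar\'e--Birkhoff--Witt convert the presented algebra $\widetilde{\Lambda}$ into a Poincar\'e series, Lemma \ref{rank lemma} shows its ranks equal $\lambda_n+\widetilde{r}_n$, and these agree with the ranks forced by the split evaluation sequence, whence the homomorphism $\widetilde{\Lambda}\to\pi_*(\Gccc)\otimes\Q$ is an isomorphism. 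Without a substitute for this counting step your argument cannot conclude.
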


We will see in Section \ref{sec m1: generators} that these generators are represented by Hamiltonian $ S^{1} $-actions on the symplectic manifold, so the rational homotopy type of the symplectomorphism group $ G_{c_{1} , c_{2}, c_{3}} $ is generated by these circle actions.
While the general structure of this work is parallel to the study of the homotopy groups of $ G_{c_{1} , c_{2}} $, there are a few new phenomena occuring when we blow-up once more. More precisely, due to a symmetry between the roles of $c_2$ and $c_3$, two relations  of a new nature between circle actions on $\Mccc$, emerge via the use of auxiliary toric pictures (see Section \ref{section: new relation m1}). We will show that these relations appear more naturally when $ \mu >1$.

\subsubsection*{Organization of the paper} The paper is divided in four  sections and three appendices containing auxiliary technical material. In Section \ref{chp m1: Structure} we explain the stratification of the space of compatible almost complex structures on $\Mccc$.  In Section \ref{chp m1: homotopy type}, we study the homotopy type of the symplectomorphism group $G_{c_1,c_2,c_3}$. This means that we first state the main result about the stability of the symplectomorphism groups, giving the main idea of the proof and postponing the complete details to Appendix A.  Then we compute the homotopy groups of $G_{c_1,c_2,c_3}$ and study the circle actions on $\Mccc$, using Karshon's classification of Hamiltonian circle actions.  Finally, at the end of this section, we state  and prove  in full detail Theorem \ref{mainthm m1}. In particular we give a geometric interpretation of the generators of the fundamental group of $G_{c_1,c_2,c_3}$ as circle actions on the manifold $\Mccc$. Section \ref{applications} is devoted to the applications of the main theorem. Up to this point we just studied the generic case, that is, we studied the topology of the group of symplectomorphisms $\Gccc$ when $ 1 > c_{1} + c_{2} > c_{1} + c_{3} > c_{1} > c_{2} > c_{3} $. In Section \ref{SpecialCases} we give some remarks about some of the remaining cases. In Appendix B we state some results on the differential and topological aspects of the space $ \mathcal{J}_{\omega} = \tilde{\mathcal{J}}_{c_{1},c_{2},c_{3}} $ of compatible almost complext structures on $ \widetilde{M}_{c_{1} , c_{2}, c_{3} }$.  Our main goal is to illustrate the correspondence between isometry groups and configurations of $J$--homomorphic curves. Appendix C is devoted to the study of some particular toric actions in the case $\mu >1$ that relate with the case $\mu=1$ and therefore are useful in the proof of the main theorem. 

\subsubsection*{Acknowledgements}
Both authors are very grateful to Jarek Kedra and Martin Pinsonnault for reading a preliminary version of this paper carefully and making several important suggestions and corrections that have helped to clarify various arguments and improve the paper. In particular, we would like to thank them for a very useful suggestion on how to simplify the proof of Lemma \ref{rank proposition} and pointing out that the same argument could be applied to compute the homotopy groups of $\Symp(\bbcp^2  \#\, 5\overline{ \bbcp}\,\!^2, \tilde\omega)$ (see Corollary \ref{mainapplication}). We also would like to thank the referee for his/her thorough work in reviewing the paper, in particular, for identifying a gap in the proof of Theorem 3.2 and for giving a suggestion to fix it.

\section{The structure of \texorpdfstring{$J$}{Lg}-holomorphic curves}\label{chp m1: Structure}

\subsection{Reduced classes}\label{sec m1: reduced}

We start by justifying the conditions in Theorem \ref{mainthm m1} on the sizes of the capacities, namely the assumption that $ 1 > c_{1} + c_{2} > c_{1} + c_{3} > c_{1} > c_{2} > c_{3} > 0 $. We will show that this assumption is sufficient to cover the generic case. The key component of the following Lemma is the notion of a \textit{reduced class}: 

\begin{definition}
A class $ A = a_{0} L - \sum_{1 \leq i \leq 4 } a_{i} V_{i} $ is \textit{reduced} with respect to the basis $ \{ L, V_{1} , ... , V_{4} \} $ if $ a_{1} \geq a_{2} \geq a_{3} \geq a_{4} $ and $ a_{0} \geq a_{1} + a_{2} + a_{3} $.
\end{definition}

\begin{lemma}\label{reduced m1}
Every symplectic form on $ \widetilde{M}_{c_{1} , c_{2}, c_{3} }$ is, after rescaling, diffeomorphic to a form Poincar\'{e} dual to $ B + F - c_{1} E_{1} - c_{2} E_{2} - c_{3} E_{3} $ with $ 0 < c_{3} \leq c_{2} \leq c_{1} < c_{1} + c_{3} \leq c_{1} + c_{2} \leq 1 $.
\end{lemma}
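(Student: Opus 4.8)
The plan is to pass to the rational surface model $\mathbb{X}_4 = \mathbb{CP}^2 \#\, 4\overline{\mathbb{CP}}\,\!^2$ via the change of basis already fixed in the introduction, show that every symplectic form on $\widetilde{M}_{c_1,c_2,c_3}$ can be normalized so that its Poincar\'e dual is \emph{reduced} in the sense of the preceding Definition, and then translate the reducedness inequalities back through the basis dictionary $L = B+F-E_1$, $V_1 = B-E_1$, $V_2 = F-E_1$, $V_3 = E_2$, $V_4 = E_3$ to obtain the claimed chain of inequalities on $c_1,c_2,c_3$.

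First I would invoke the diffeomorphism-invariance of the problem: since $\widetilde{M}_{c_1,c_2,c_3}$ is diffeomorphic to $\mathbb{X}_4$ (equivalently to $\mathbb{X}_3$ after absorbing one blow-up, but working in $\mathbb{X}_4$ keeps the symmetry clean), the classification of symplectic forms up to diffeomorphism reduces to a statement about cohomology classes in $H^2(\mathbb{X}_4)$. A symplectic form is determined up to deformation (hence, by the relevant uniqueness/Gromov-type results for rational surfaces, up to diffeomorphism after rescaling) by its cohomology class, so it suffices to work with the Poincar\'e dual class $A = a_0 L - \sum_{i=1}^4 a_i V_i$. Rescaling lets me normalize $a_0$ (or equivalently the total area), which is why the statement is only up to rescaling.

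The central step is the reduction to a reduced class. This is a standard Cremona-type argument: the group of diffeomorphisms acting on $H_2$ includes permutations of the exceptional classes $V_i$ (reordering the blow-ups) and the Cremona reflection based on $L$ and a triple $V_i,V_j,V_k$, which sends $L \mapsto 2L - V_i - V_j - V_k$ and permutes the three exceptional classes accordingly. Permutations let me arrange $a_1 \geq a_2 \geq a_3 \geq a_4$; the Cremona move strictly decreases a suitable positive quantity (for instance $a_0$) whenever $a_0 < a_1 + a_2 + a_3$, so by a descent/minimality argument one reaches a class satisfying $a_0 \geq a_1 + a_2 + a_3$ as well. These are exactly the two reducedness conditions. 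One must check that these lattice automorphisms are realized by genuine diffeomorphisms and that they preserve (or can be made to preserve) the property of being represented by a symplectic form — this is where the positivity/light-cone constraints enter and must be tracked.

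Finally I would carry out the bookkeeping: translate the normalized reduced class back into $B,F,E_1,E_2,E_3$ coordinates using the identifications above, read off that the Poincar\'e dual becomes $B + F - c_1 E_1 - c_2 E_2 - c_3 E_3$ after the rescaling, and verify that the inequalities $a_1 \geq a_2 \geq a_3 \geq a_4$ and $a_0 \geq a_1+a_2+a_3$ become precisely $0 < c_3 \leq c_2 \leq c_1$ and $c_1 + c_3 \leq c_1 + c_2 \leq 1$ (equivalently $c_2 \leq 1 - c_1$), together with the positivity $c_3 > 0$. The main obstacle I expect is not the linear algebra but justifying the \emph{reduction to a reduced class} at the level of symplectic forms rather than merely cohomology classes — i.e.\ ensuring that the Cremona moves and permutations are induced by diffeomorphisms compatible with the symplectic structure, so that the normalized class is actually represented by a form diffeomorphic to the original one. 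I would handle this by citing the uniqueness of symplectic blow-ups (McDuff, \cite{McD}) already quoted in the introduction, together with the known fact that for these rational surfaces the symplectic cone is cut out by the reduced condition up to the action of the diffeomorphism group.
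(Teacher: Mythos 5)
Your proposal is correct and takes essentially the same route as the paper: both reduce the statement to one about cohomology classes via McDuff's uniqueness of symplectic blow-ups, then use the fact that reduced classes form a fundamental domain for the action of the diffeomorphisms fixing the canonical class on the symplectic cone (the paper normalizes $K$ via \cite{LiLiu2} and cites \cite{LiLiu3} for the fundamental domain), and finally translate through the basis dictionary to get the stated inequalities. The only difference is cosmetic: you sketch the Cremona-reflection descent that proves the fundamental-domain statement, whereas the paper simply cites it — and you yourself fall back on citing that known fact to justify that the lattice moves are realized by diffeomorphisms.
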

\begin{proof}
The arguments in \cite{AnjPin} work equally here to give a proof of the lemma. 
Note first that diffeomorphic symplectic forms define symplectomorphism groups that are homeomorphic, and symplectomorphisms are invariant under rescalings of symplectic forms. So we need to describe a fundamental domain for the action of $ \Diff^{+} \times \mathbb{R_*} $ on the space $ \Omega _{+} $ of orientation-compatible symplectic forms defined on the 4--fold blow-up $ \mathbb{X}_{4} $. Taking $ \{ L, V_{1} , ... , V_{4} \} $ as the standard basis of $ H _{2} ( \mathbb{X}_{4} ; \mathbb{Z} ) $ as in the Introduction, the Poincar\'{e} dual of $ [ \omega_{\nu, \delta_{1}, \delta_{2}, \delta_{3}, \delta_{4}}]$ is $ \nu L - \Sigma _{i} \delta_{i} V_{i} $. Similarly, the first Chern class of any compatible almost-complex structure on $ \mathbb{X}_{4} $ is Poincar\'{e} dual to $ K : = 3L - \Sigma _{i} V _{i} $.

Now if $C$ stands for the Poincar\'{e} dual of the symplectic cone of $ \mathbb{X}_{4} $, then by  {\it uniqueness of symplectic blow-ups} in \cite{McD}, the diffeomorphism class of the form $ \omega_{\nu, \delta_{1}, \delta_{2}, \delta_{3}, \delta_{4}}$ only depends on its cohomology class. Therefore, it is enough to describe a fundamental domain of the action of $ \Diff^{+} \times \mathbb{R_*} $ on $C$.
Moreover, the canonical class $K$ is unique up to orientation preserving diffeomorphisms \cite{LiLiu2}, so it suffices to describe the action of the diffeomorphisms fixing $K$, $\Diff_K$, on
$$ C_{K} = \{ A \in  H_{2} ( \mathbb{X}_{4} ; \mathbb{R} ) \ : \ A = PD [ \omega ]  \ \mbox{for some} \ \omega \in \Omega_{K} \} $$

\noindent where $ \Omega _{K}$ is the set of orientation-compatible symplectic forms with $K$ as the symplectic canonical class.
By a result due to \cite{LiLiu3}, the set of reduced classes is a fundamental domain of $C_{K} ( \mathbb{X}_{4}  )$ under the action of $ \Diff_{K}$ so by the equivalence in the symplectic category given in \ref{sympequiv} we obtain the desired result. 
\end{proof}

\begin{remark}\label{c1c2c3 assumption m1}
Note that there are several possibilities within this condition. We don't know the relation between $ c_{1} + c_{2} + c_{3}$ and $1$, and between $ c_{2} + c_{3} $ and $ c_{1}$. We will show below that this does not make a difference in the homotopy type of $ G_{c_{1} , c_{2}, c_{3} }$ (see Remark \ref{No need for c1c2c3<1}).
\end{remark}

\subsection{Configurations of \texorpdfstring{$J$}{Lg}-holomorphic curves}\label{sec m1: configs}

Throughout this section we will assume that $ 1 > c_{1} + c_{2} > c_{1} + c_{3} > c_{1} > c_{2} > c_{3} > 0 $ (that is, without allowing for the equalities) and study the possible configurations of $ J $-holomorphic curves. In this study, we use two basic properties of $ J $-holomorphic curves on symplectic manifolds of dimension 4, namely the positivity of intersections and the adjunction formula (see Theorem 2.6.3 and Theorem 2.6.4 in \cite{McDSal}).

\textbf{Positivity of Intersections}: Two distinct closed $ J $-curves $ S $ and $ S' $ in an almost complex 4-manifold $(M, J)$ have only a finite number of intersection points. Each such point $ x $ contributes a number $ k_{x} \geq 1 $ to the algebraic intersection number $ [S] \cdot [S'] $. Moreover, $ k_{x} = 1 $ if and only if the curves $ S $ and $ S' $ intersect transversally at $ x $. Thus, $ [S] \cdot [S'] = 0 $ if and only if $S$ and $S'$ are disjoint, and $ [S] \cdot [S'] =1 $ if and only if the curves meet exactly once transversally and at a point which is non-singular on both curves.

\textbf{Adjunction Formula}: For $ S $ the image of a $ J $-holomorphic map $ u : \Sigma \rightarrow M$, we define the virtual genus of $ S $ as the number
$$ g_v(S) = 1 + \frac12([S] \cdot [S] - c_{1} ([S])).$$
Then, if $ u $ is somewhere injective, $ g_v(S) $ is an integer. Moreover $ g_v(S) \geq
\text{genus}(\Sigma) $ with equality if and only if $ S $ is embedded. If $\Sigma = S^2$ and $c_{1} ([S])- [S] \cdot [S] =2$ then $u$ is a smooth embedding. 

\begin{lemma}\label{B-curve m1}
Let $ J \in  \mathcal{J} _{c_{1} , c_{2}, c_{3} } $. Suppose $ A = pB + qF - r_{1} E_{1} - r_{2} E_{2} - r_{3} E_{3} \in H_{2}(  \widetilde{M}_{c_{1} , c_{2}, c_{3} } , \mathbb{Z} ) $ has a simple $J$-holomorphic representative. Then $ p \geq 0 $. Moreover
\begin{itemize}
\item if $p=0$, then A is one of the followings: $F$, $F-E_{i}$, $F-E_{i}-E_{j} $, $ E_{i} $, $ E_{i} - E_{j}$, $E_{1} - E_{2} - E_{3}$, $i,j \in \{ 1,2,3 \}$ and $i<j$;
\item if $p=1 $, then $r_{1}, r_{2}, r_{3} \in \{ 0,1 \} $.
\end{itemize}
Since $\mu=1$, a similar result holds for $q$. 
\end{lemma}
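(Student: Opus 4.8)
The plan is to read off everything from the two tools quoted just above the statement — positivity of intersections and the adjunction formula — applied to the intersection form on $\Mccc$, for which $B^2=F^2=0$, $B\cdot F=1$, $E_i\cdot E_j=-\delta_{ij}$, and $c_1=\mathrm{PD}(2B+2F-E_1-E_2-E_3)$. For $A=pB+qF-\sum_i r_iE_i$ these give $A\cdot A=2pq-\sum_i r_i^2$, $c_1(A)=2p+2q-\sum_i r_i$, $\omega(A)=p+q-\sum_i c_ir_i$, and, crucially, $A\cdot F=p$ and $A\cdot B=q$. The virtual genus is $g_v(A)=1+\tfrac12\bigl(A\cdot A-c_1(A)\bigr)$; since $A$ has a simple representative, $g_v(A)$ is a non-negative integer, and positivity of area forces $\omega(A)>0$. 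These four expressions are the entire engine of the proof.

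First I would prove $p\ge0$. Because $\mu=1$, both $F$ and $B$ are fibre classes of the two rulings of $S^2\times S^2$, and they stay nef after blowing up: pairing $A$ with a generic fibre, which is an embedded $J$-holomorphic sphere in class $F$, positivity of intersections gives $p=A\cdot F\ge0$, and the same argument with the other ruling gives $q=A\cdot B\ge0$. This is exactly where the hypothesis $\mu=1$ enters, and it is the reason that ``a similar result holds for $q$'': the two ruling classes are interchangeable. The case $p=1$ is then immediate from adjunction alone: substituting $p=1$ collapses the formula to $g_v(A)=\tfrac12\sum_i r_i(1-r_i)$, and since each integer term satisfies $r_i(1-r_i)\le0$ while $g_v(A)\ge0$, every term vanishes, i.e. $r_1,r_2,r_3\in\{0,1\}$.

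For $p=0$ write $A=qF-\sum_i r_iE_i$. Adjunction becomes $2q\le 2+\sum_i r_i(1-r_i)\le 2$, so $q\le1$, and combined with $q\ge0$ this leaves $q\in\{0,1\}$. If $q=0$ I set $A=\sum_i s_iE_i$ with $s_i=-r_i$; adjunction reads $\sum_i s_i(s_i+1)\le2$ with each term $\ge0$, so exactly one $s_i$ can be $1$ or $-2$ while the others lie in $\{0,-1\}$, and then the area condition $\sum_i c_is_i>0$ together with the ordering $c_1>c_2>c_3>0$ kills every possibility except $E_i$, $E_i-E_j$ with $i<j$, and $E_1-E_2-E_3$. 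If $q=1$, adjunction again forces $r_i\in\{0,1\}$, so $A=F-\sum_{i\in S}E_i$, and it only remains to exclude $S=\{1,2,3\}$.

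The main obstacle is precisely this last exclusion — ruling out $F-E_1-E_2-E_3$, and symmetrically $B-E_1-E_2-E_3$ — because this candidate passes adjunction ($g_v=0$) and, in the subregime $c_1+c_2+c_3<1$ allowed by Remark~\ref{c1c2c3 assumption m1}, also passes the area test. Here intersection-theoretic bookkeeping is not enough: $A\cdot F=0$, $A\cdot B=1$ and $A\cdot E_i=1$ are all non-negative, so one must instead invoke positivity against curves that are effective for the given $J$, pairing the hypothetical somewhere-injective curve with a representative of an appropriate reducible fibre class (such as $F-E_i$) and showing that the resulting configuration of smooth rational curves is incompatible with $A$ being a \emph{distinct} simple class. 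This is the step that genuinely uses the fine geometry of the $J$-holomorphic configurations rather than pure arithmetic, and it is where I would concentrate the work. The statement for $q$ then follows verbatim by exchanging $B$ and $F$, which is legitimate only because $\mu=1$ renders the two ruling classes symmetric.
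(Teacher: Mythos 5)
Your proposal stalls at a step that is not merely difficult but false, so it cannot be completed. The ``main obstacle'' you identify---excluding $F-E_{1}-E_{2}-E_{3}$ (and, symmetrically, $B-E_{1}-E_{2}-E_{3}$) from the $p=0$ list---is not an obstacle the paper overcomes; it is a claim the paper does not make. Under the standing hypotheses the relation between $c_{1}+c_{2}+c_{3}$ and $1$ is deliberately left open (Remark \ref{c1c2c3 assumption m1}), and when $c_{1}+c_{2}+c_{3}<1$ the class $F-E_{1}-E_{2}-E_{3}$ has $g_{v}=0$, positive area, and is genuinely represented by an embedded $J$-holomorphic sphere for $J$ in suitable strata: it appears explicitly in the configurations (e.g.\ configuration 5.8, Figure \ref{Config5.8}), and Lemma \ref{J-holomorphics m1} asserts the analogous fact for $B-E_{1}-E_{2}-E_{3}$. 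Accordingly, the paper's own proof keeps this class: Table \ref{values qr} lists $F-E_{1}-E_{2}-E_{3}$ among the solutions, with the caveat that it is represented only if $c_{1}+c_{2}+c_{3}<1$. (The list printed in the statement omits it; this is an omission in the statement, and the table in the proof is the correct enumeration.) So the step on which you say you ``would concentrate the work'' is both unexecuted in your write-up and unprovable; the correct move is to admit the class into the list, not to rule it out.

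There is a second, independent gap in your proof of $p\ge 0$. You pair $A$ with ``a generic fibre, which is an embedded $J$-holomorphic sphere in class $F$,'' but for the given, arbitrary $J\in\Jccc$ this is precisely what can fail: the stratification studied in this paper exists because $F$ (and $B$) degenerate into cusp curves such as $(F-E_{1})\cup E_{1}$ for non-generic $J$. To repair the argument you would need either the singular-foliation structure theorem for tamed almost complex structures on blow-ups of ruled surfaces, or an argument via $\mathrm{Gr}(F)\neq 0$ and Gromov compactness that also treats the case in which the simple representative of $A$ is itself a component of a degenerate fibre---and characterizing such components is essentially the content of the lemma, so this route risks circularity. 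The paper avoids all of this with pure arithmetic: assuming $p<0$, the adjunction inequality combined with $\omega(A)>0$ forces $g_{v}(A)<0$, a contradiction, uniformly in $J$ and with no information about which classes are represented. Your $p=1$ argument and your $p=0$, $q=0$ enumeration are correct and coincide with the paper's computation; the two points above are where the proposal breaks down.
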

\begin{proof}
By the adjunction inequality, we have
$$ 2 g_{v} ( A ) = 2(p-1) (q-1) - r_{1} (r_{1} - 1) - r_{2} (r_{2}-1) - r_{3} (r_{3}-1) \geq 0.$$

Also, since $ \omega (B) = \omega (F) = 1 \geq \omega (E_{1}) = c_{1} \geq \omega (E_{2}) = c_{2} \geq \omega (E_{3}) = c_{3} $, we have
 $ \omega (A) = p + q - c_{1} r_{1} - c_{2} r_{2} - c_{3} r_{3} > 0 $.

Now we show $ p \geq 0 $: Suppose $ p < 0$. Then  $ p < \dfrac{1}{2} $ and so
\begin{align*}
  \phantom{-2 g_{v}(A)}
  &\begin{aligned}
    \mathllap{-2 g_{v} (A) } > (q-1) + r_{1} (r_{1} - 1) + r_{2} (r_{2}-1) + r_{3} (r_{3}-1)
  \end{aligned}\\
  &\begin{aligned}
     \geq (c_{1} r_{1} + c_{2} r_{2} + c_{3} r_{3} - p -1) + r_{1} (r_{1} - 1) + r_{2} (r_{2}-1) + r_{3} (r_{3}-1)
  \end{aligned}\\
  &\begin{aligned}
   \geq c_{1} r_{1} + c_{2} r_{2} + c_{3} r_{3} + r_{1} (r_{1} - 1) + r_{2} (r_{2}-1) + r_{3} (r_{3}-1)
  \end{aligned}\\
  &\begin{aligned}
   = r_{1} (r_{1} -1 + c_{1}) + r_{2} (r_{2} -1 + c_{2}) + r_{3} (r_{3} -1 + c_{3}) \geq 0
  \end{aligned}
\end{align*}
which yields $ g_{v} (A) <0 $, contradicting the adjunction inequality. Now suppose $ p=0$. Then the adjunction formula reads
$$ 2 (q-1) + r_{1} (r_{1} - 1) + r_{2} (r_{2}-1) + r_{3} (r_{3}-1) \leq 0,$$
and the positivity of intersections yields $ q - c_{1} r_{1} - c_{2} r_{2} - c_{3} r_{3} \geq 0 $. The values for $q, r_{1}, r_{2}$ and $r_{3}$ that satisfy these inequalities are given in Table \ref{values qr}.

\begin{table}[thp]
\begin{center}
\begin{tabular}{ |m{3em}|m{3em}|m{3em}|m{3em}|m{8em}| } 
 \hline
 $ q $ & $ r_{1} $ & $ r_{2} $ & $ r_{3} $ & A \\ \hline
 1 & 0 & 0 & 0 & $ F $  \\ \hline
 1 & 0 & 0 & 1 & $ F-E_{3} $  \\ \hline
 1 & 0 & 1 & 0 & $ F-E_{2} $  \\ \hline 
 1 & 0 & 1 & 1 & $ F-E_{2}-E_{3} $  \\ \hline
 1 & 1 & 0 & 0 & $ F-E_{1} $  \\ \hline
 1 & 1 & 0 & 1 & $ F-E_{1}-E_{3} $  \\ \hline
 1 & 1 & 1 & 0 & $ F-E_{1}-E_{2} $  \\ \hline
 1 & 1 & 1 & 1 & $ F-E_{1}-E_{2}-E_{3} $  \\ \hline
 0 & -1 & 0 & 0 & $ E_{1} $  \\ \hline
 0 & -1 & 0 & 1 & $ E_{1}-E_{3} $  \\ \hline
 0 & -1 & 1 & 0 & $ E_{1}-E_{2} $  \\ \hline
 0 & -1 & 1 & 1 & $ E_{1}-E_{2}-E_{3} $  \\ \hline
 0 & 0 & 0 & -1 & $ E_{3} $  \\ \hline
 0 & 0 & -1 & 0 & $ E_{2} $  \\ \hline
 0 & 0 & -1 & 1 & $ E_{2}-E_{3} $  \\ \hline
\end{tabular}
\end{center}
\smallskip
\caption{The values for $q, r_{1}, r_{2}$ and $r_{3}$, and the resulting curves.}\label{values qr}
\end{table}
Note that the curves $F-E_{1}-E_{2}-E_{3}$ and $E_{1}-E_{2}-E_{3}$ are represented only if  $ c_{1} + c_{2} + c_{3} < 1 $ and $ c_{2} + c_{3} < c_{1}$, respectively.

Finally if $p=1$, we get $r_{1} (r_{1} - 1) + r_{2} (r_{2}-1) + r_{3} (r_{3}-1) \leq 0$ so that $ r_{1} , r_{2} , r_{3} \in \{0,1 \} $.
\end{proof}

\begin{lemma}\label{E3 m1}
Let $ J \in \mathcal{J} _{c_{1} , c_{2}, c_{3} } $. Then $E_{3}$ is represented by a unique embedded J-curve.
Hence, if $ A = pB + qF - r_{1} E_{1} - r_{2} E_{2} - r_{3} E_{3}$, with $A \neq E_3$,  has a simple J-representative, then $r_{3} \geq 0$.
\end{lemma}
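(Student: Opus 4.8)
The plan is to prove the two assertions in sequence, establishing uniqueness and embeddedness of the $E_3$-representative first, and then deducing the positivity $r_3 \geq 0$ for every other simple class. For the first part, I would argue that $E_3$ always has a $J$-holomorphic representative by the standard theory: $E_3$ is an exceptional class (it satisfies $E_3 \cdot E_3 = -1$ and $c_1(E_3) = 1$), and for exceptional classes with minimal symplectic area (here $\omega(E_3) = c_3$ is the smallest among all possible curve areas) Gromov--Witten invariants guarantee the existence of at least one embedded $J$-holomorphic representative for every $J \in \mathcal{J}_{c_1,c_2,c_3}$. The adjunction formula gives $g_v(E_3) = 1 + \tfrac12(-1 - 1) = 0$, and since $c_1(E_3) - E_3 \cdot E_3 = 1 + 1 = 2$, the embeddedness criterion stated in the excerpt applies directly, so any somewhere-injective representative is a smooth embedded sphere.

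The key step is \emph{uniqueness}. First I would observe that any $J$-representative of $E_3$ must in fact be simple (somewhere injective): if it decomposed as a multiple cover or a sum of several simple classes, positivity of area together with the classification in Lemma \ref{B-curve m1} would force the components to have classes of the form listed there, but no nonnegative integral combination of those classes with positive area sums to $E_3$ while respecting $\omega(E_3) = c_3$ being minimal. Then, to rule out two distinct embedded $E_3$-curves for the same $J$, I would invoke positivity of intersections: two distinct simple $J$-curves $S, S'$ both in class $E_3$ would satisfy $[S] \cdot [S'] = E_3 \cdot E_3 = -1$, which is impossible since positivity of intersections forces $[S] \cdot [S'] \geq 0$ for distinct curves. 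Hence the representative is unique.

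For the second assertion, suppose $A = pB + qF - r_1 E_1 - r_2 E_2 - r_3 E_3$ has a simple $J$-representative $S$ and $A \neq E_3$. Let $C$ denote the unique embedded $E_3$-curve produced above. Since $S$ and $C$ are distinct simple $J$-curves, positivity of intersections gives
\begin{equation*}
A \cdot E_3 = [S] \cdot [C] \geq 0.
\end{equation*}
Computing the intersection product in the basis $\{B, F, E_1, E_2, E_3\}$, where $B^2 = F^2 = 0$, $B \cdot F = 1$, $E_i \cdot E_j = -\delta_{ij}$, and the $E_i$ are orthogonal to $B$ and $F$, yields $A \cdot E_3 = -r_3 \cdot (E_3 \cdot E_3) = r_3$. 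Therefore $r_3 \geq 0$, as claimed.

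The main obstacle I anticipate is the uniqueness/simplicity argument, specifically the exclusion of reducible or multiply-covered representatives of $E_3$. This is where one must carefully use the minimality of $c_3 = \omega(E_3)$ among all positive curve areas together with Lemma \ref{B-curve m1}: every potential component class listed there has area at least $c_3$, so the only way to build a connected cycle of total area $c_3$ is a single copy of $E_3$ itself, which also forbids multiple covers. Making this area bookkeeping fully rigorous — ensuring no exotic combination slips through — is the delicate part; once $E_3$ is known to have a unique embedded simple representative, the positivity-of-intersections computation for $r_3 \geq 0$ is routine.
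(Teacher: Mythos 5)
Your overall route is the same as the paper's: rule out cusp-curve (reducible or multiply covered) representatives of $E_3$ using the classification of Lemma \ref{B-curve m1} plus area considerations, get uniqueness from positivity of intersections applied to $E_3 \cdot E_3 = -1$, and then deduce $r_3 = A \cdot E_3 \geq 0$. The uniqueness step and the final intersection computation are correct. However, the step that carries all the weight --- and that you yourself single out as the delicate one --- contains a genuine error. You claim that $c_3 = \omega(E_3)$ is minimal among the areas of all classes listed in Lemma \ref{B-curve m1}, so that any decomposition with at least two components would have total area exceeding $c_3$. This minimality is false: the list contains the classes $E_1 - E_2$, $E_1 - E_3$, $E_2 - E_3$, $E_1 - E_2 - E_3$ and $F - E_i - E_j$, whose areas $c_1 - c_2$, $c_1 - c_3$, $c_2 - c_3$, $c_1 - c_2 - c_3$, $1 - c_i - c_j$ can all be smaller than $c_3$ within the allowed range of capacities; for instance $c_1 = 0.4$, $c_2 = 0.35$, $c_3 = 0.3$ satisfies $1 > c_1 + c_2 > c_1 + c_3 > c_1 > c_2 > c_3 > 0$ and gives $\omega(E_2 - E_3) = 0.05 < c_3$. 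The same false claim also undermines your opening appeal to Gromov--Witten theory: for a non-generic $J$ the invariant only produces a possibly degenerate (cusp) representative, and embeddedness for \emph{every} $J$ is exactly what the lemma must establish.

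The paper's argument runs the area inequality in the opposite direction. If $E_3 = \sum_i m_i [C_i]$ were a cusp curve with $N \geq 2$ (or a multiple cover), then, since the quantities $m_i\,\omega(C_i) > 0$ sum to $c_3$, every component satisfies $\omega(C_i) < c_3$ strictly. In particular no component is $E_3$ itself, nor any of the classes of area $\geq c_3$ such as $F$, $F - E_i$, $E_1$, $E_2$. Moreover, since the $B$-coefficient of every simple class is non-negative by Lemma \ref{B-curve m1} and the total $B$-coefficient of $E_3$ is $0$, every component has $p = 0$ and hence lies in the explicit list of that lemma; and every class in that list other than $E_3$ has $E_3$-coefficient $\leq 0$. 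As all $m_i > 0$, the class $\sum_i m_i [C_i]$ then has $E_3$-coefficient $\leq 0$, which contradicts its being $E_3$. With this repaired, your remaining steps (two distinct simple curves in class $E_3$ would have intersection number $E_3 \cdot E_3 = -1 < 0$, impossible; and $A \cdot E_3 = r_3 \geq 0$ by positivity of intersections against the unique $E_3$-curve) complete the proof exactly as the paper does.
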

\begin{proof}
Suppose, by contradiction, that $E_{3}$ is represented by a cusp-curve

\begin{center}
 $ C = \bigcup \limits_{i=1}^{N} m_{i} C_{i}$, $N \geq 2$ and $C_{i}$ simple. 
\end{center}

Each $C_{i}$ is one of the options given in Lemma \ref{B-curve m1}. By area considerations ($ \omega (E_{3}) > \omega (C_{i})$ for all $i$), $C_{i}$ cannot be representing $F, F-E_{1}, F-E_{2},  E_{1},  E_{1}-E_{2},  F-E_{1}-E_{2}$ or $E_{3}$. But then the options we are left with all include $-E_{3}$, which is also impossible as $ m_{i} >0 $ and $N \geq 2$.
\end{proof}

\begin{remark} \label{E3 is important} The fact that the class $ E_{3} $ is always represented by a unique embedded $ J $-holomorphic curve will be extremely useful hereinafter. It allows us to analyze almost-complex structures as well as the symplectomorphisms on the manifold $ \widetilde{M}_{c_{1} , c_{2}, c_{3} }$ via the ones on $ \widetilde{M}_{c_{1} , c_{2}}$, for we can blow-down $ E_{3} $ to obtain the structures for the 2-point blow-up.
\end{remark}

\begin{remark} If we start with $E_{2}$ (instead of $E_{3}$) in Lemma \ref{E3 m1}, a similar argument shows that we must have $C_{i} = E_{2}-E_{3}$ unless we have a simple representative of $E_{2}$.
\end{remark}

Recall that if $(X, \omega)$ is a symplectic 4-manifold and $A \in H_2(X, \Z)$ is a homology class such that $k(A)=\frac12 (A \cdot A +c_1(A)) \geq 0$, then the Gromov invariant of $A$ counts, for a generic complex structure $J$ tamed by $\omega$, the algebraic number of embedded $J$-holomorphic curves in class $A$ passing through $k(A)$ generic points. These curves need not be connected. Therefore, components of non-negative self-intersection are embedded holomorphic curves of some genus $g \geq 0$, while all other components are exceptional spheres. It follows from Gromov's Compactness Theorem that given a tamed $J$, any class $A$ with non-zero Gromov invariant Gr($A$), is represented by a collection of $J$-holomorphic curves or cusp curves. It is also known that if $(X, \omega)$ is some blow-up of a rational ruled surface, then for every class $A \in H_2(X, \Z)$ verifying $k(A) \geq  0$ we have Gr($A$)$=\pm 1$ (it follows from \cite[Corollary 1.4]{LiLiu1}). 

Now consider the classes $D_i \in H_2 ( \Muccc; \Z)$, $ i \in \Z$, by setting 
\begin{align*}
  & D_{4k}= B+kF;  & &  D_{4k-2(j)}= B+kF-E_\ell -E_{\ell'}, \quad  j\neq \ell \neq \ell'; \\
 & D_{4k-1(j)}= B+kF-E_j, \quad j=1,2,3;  & &  D_{4k-3}= B+kF-E_1- E_2-E_3. 
\end{align*}

Note that we have 
$$
D_i \cdot D_i  = \left\{ 
\begin{array}{ll}
{ \frac{i}{2} }& \mbox{if}  \quad i=4k  \\ 
 & \\
{ \frac{i-1}{2} }& \mbox{if}  \quad i=4k-1  \\
  & \\
{ \frac{i}{2}-1} & \mbox{if}  \quad i=4k-2   \\
  & \\
{\frac{i-3}{2} }  & \mbox{if}  \quad i=4k-3
  \end{array} \right. 
  \quad \mbox{and} \quad
c_1(D_i ) = \left\{ 
\begin{array}{ll}
{ \frac{i}{2} +2}& \mbox{if}  \quad i=4k  \\ 
 & \\
{ \frac{i+3}{2} }& \mbox{if}  \quad i=4k-1  \\
  & \\
{ \frac{i}{2}+1} & \mbox{if}  \quad i=4k-2   \\
  & \\
{\frac{i+1}{2} }  & \mbox{if}  \quad i=4k-3.
  \end{array} \right. $$

  Thus the adjunction formula $g_v(D_i)= 1 + \frac12 (D_i \cdot D_i -c_1(D_i ))=0$ implies that any $J$-holomorphic sphere in class $D_i$ must be embedded. Moreover, the virtual dimension of the spaces of unparameterized $J$-curves representing $D_i$, $\dim {\mathcal M}(D_i, J) =2c_1(D_i)-2$, is 
$$
\dim {\mathcal M}(D_i, J) = \left\{ 
\begin{array}{ll}
{i+2 }& \mbox{if}  \quad i=4k  \\ 
 & \\
{ i+1 }& \mbox{if}  \quad i=4k-1  \\
  & \\
{ i} & \mbox{if}  \quad i=4k-2   \\
  & \\
{i-1}  & \mbox{if}  \quad i=4k-3
  \end{array} \right. 
  \quad \mbox{and} \quad
k(D_i)= \left\{ 
\begin{array}{ll}
{ \frac{i}{2} +1}& \mbox{if}  \quad i=4k  \\ 
 & \\
{ \frac{i+1}{2} }& \mbox{if}  \quad i=4k-1  \\
  & \\
{ \frac{i}{2}} & \mbox{if}  \quad i=4k-2   \\
  & \\
{\frac{i-1}{2} }  & \mbox{if}  \quad i=4k-3.
  \end{array} \right. $$

\smallskip

\begin{lemma}\label{existenceJ-curves_generic}
Given $i \geq 1$, the Gromov invariant of the class $D_i$ is non-zero. Therefore, for a generic tamed almost complex structure $J$, there exist an embedded $J$-holomorphic sphere representing $D_i$ and passing through a generic set of $k(D_i)$ points in $\Muccc$. 
\end{lemma}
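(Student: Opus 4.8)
The statement asserts that $\mathrm{Gr}(D_i) \neq 0$ for all $i \geq 1$, and therefore a generic tamed $J$ admits an embedded $J$-holomorphic sphere in class $D_i$ through $k(D_i)$ generic points. The second sentence is essentially immediate once the invariant is known to be nonzero: by the computation just before the lemma, $g_v(D_i) = 0$ forces any $J$-holomorphic sphere in class $D_i$ to be embedded, and $k(D_i) \geq 0$ is exactly the index condition making the Gromov invariant count curves through $k(D_i)$ generic points. So the real content is the nonvanishing of $\mathrm{Gr}(D_i)$.

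Let me think about this more carefully. The plan is to invoke the structural result quoted in the paragraph immediately preceding the lemma: if $(X,\omega)$ is a blow-up of a rational ruled surface, then every class $A$ with $k(A) \geq 0$ satisfies $\mathrm{Gr}(A) = \pm 1$, by \cite[Corollary 1.4]{LiLiu1}. Since $\Muccc$ is precisely such a blow-up and each $D_i$ with $i \geq 1$ has $k(D_i) \geq 0$ (the table gives $k(D_i) \in \{ \tfrac{i}{2}+1, \tfrac{i+1}{2}, \tfrac{i}{2}, \tfrac{i-1}{2}\}$, all nonnegative for $i \geq 1$), it follows directly that $\mathrm{Gr}(D_i) = \pm 1 \neq 0$. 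This is the cleanest route, and I expect it is the one the authors take.

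If one wanted a more self-contained argument rather than citing Li--Liu wholesale, the alternative would be to verify nonvanishing by an explicit geometric construction: exhibit, for the standard integrable complex structure $J_0$ coming from the toric/blow-up picture, an honest embedded holomorphic sphere in class $D_i$, and then argue that the relevant moduli space is unobstructed (e.g.\ $H^1$ of the normal bundle vanishes, since for an embedded sphere of self-intersection $m$ the normal bundle is $\mathcal{O}(m)$ with no higher cohomology when $m \geq -1$, and here $D_i \cdot D_i \geq 0$ for $i \geq 2$, with $D_1$ and the $i=4k-2$ cases needing separate checks). The Gromov invariant, being a signed count, would then be shown to equal the count through $k(D_i)$ generic points; one would need to rule out contributions from disconnected or multiply-covered configurations, which is where the virtual-genus-zero and positivity-of-intersections input from Lemmas \ref{B-curve m1} and \ref{E3 m1} becomes relevant.

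The main obstacle, in either approach, is confirming that each $D_i$ genuinely lies in the regime where the cited theorem applies and that the genericity of $J$ is compatible with passing through $k(D_i)$ prescribed points while keeping the curve embedded. The citation-based route sidesteps most of this, so the only real care needed is the bookkeeping: checking $k(D_i) \geq 0$ across the four residue classes of $i \bmod 4$, and observing that $g_v(D_i) = 0$ (already computed) guarantees embeddedness so that the abstract count of $\mathrm{Gr}$ translates into the existence statement about an actual embedded sphere. I would present the proof as a short deduction from \cite[Corollary 1.4]{LiLiu1} together with the adjunction and index computations displayed above.
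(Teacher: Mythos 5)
Your main route --- reading off $k(D_i)\geq 0$ from the displayed computations and invoking the fact, quoted just before the lemma, that $\mathrm{Gr}(A)=\pm 1$ for every class with $k(A)\geq 0$ on a blow-up of a rational ruled surface (\cite[Corollary 1.4]{LiLiu1}) --- is the same as the paper's, which routes the identical reduction through \cite[Lemma 2.9]{AnjPin} and \cite[Lemma 2.4]{Pin} to conclude $\mathrm{Gr}(D_i)=\pm 1$.

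There is, however, a genuine gap in your claim that the existence statement is ``essentially immediate'' once $\mathrm{Gr}(D_i)\neq 0$. As the paper's own setup emphasizes, the curves counted by the Gromov invariant need not be connected: nonvanishing of $\mathrm{Gr}(D_i)$ only produces, for a generic tamed $J$, a configuration through the $k(D_i)$ generic points which is a disjoint union of an embedded $J$-holomorphic curve of some genus $g\geq 0$ with a collection of exceptional spheres. Two things remain to be shown: that no exceptional-sphere components occur, and that the remaining connected component has genus zero. The paper settles the first by noting that $D_i$ has non-negative intersection with each exceptional class $E\in\{E_j,\, B-E_j,\, F-E_j\}$: if an exceptional sphere in class $E$ were a disjoint component of the configuration, pairwise disjointness would force $D_i\cdot E=E\cdot E=-1<0$, a contradiction; hence the representative is connected. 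Only then does adjunction finish the job, in the direction opposite to the one you use: the embedded connected curve has genus $g_v(D_i)=0$, hence is a sphere. Your appeal to adjunction (``a sphere in class $D_i$ must be embedded'') presupposes that a sphere representative exists, which is precisely what must be extracted from the possibly disconnected Gromov count. You flag this disconnectedness issue only in your alternative, self-contained route, but it is an indispensable step of the citation-based proof as well.
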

\begin{proof}
The argument is the same used in \cite[Lemma 2.9]{AnjPin} or in \cite[Lemma 2.4]{Pin}. As in these papers we can conclude that Gr($D_i$) $= \pm 1$. Hence given a generic $J$, $D_i$ is represented by the disjoint union of some embedded $J$-holomorphic curve of genus $g \geq 0$ with some exceptional $J$-spheres. Since $D_i$ has non-negative intersection with the exceptional classes $E_i, B-E_i, F-E_i$, $ i=1,2,3$, this representative is connected. Moreover, it is a sphere because its genus is zero, by the adjunction formula. 
\end{proof}

\begin{lemma}\label{J-holomorphics m1}
The set of tamed almost complex structures on $ \widetilde{M}_{c_{1} , c_{2}, c_{3} }$ for which the classes $B - E_{i} $, $i=1,2,3$ are represented by an embedded $J$-holomorphic sphere is open and dense in  $ \mathcal{J} _{c_{1} , c_{2}, c_{3} } $.
If for a given $J$ there are no such spheres, then either one of the classes $B - E_{i} - E_{j} $, $i<j$, or $B - E_{1} - E_{2} - E_{3} $ is represented by a unique embedded $J$-holomorphic sphere.
\end{lemma}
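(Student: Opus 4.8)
The statement has two essentially independent halves --- an openness-and-density claim and a degeneration (structure) claim --- and I would prove them in that order.

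\emph{Openness and density.} The plan is first to record that each $B-E_i$ is an exceptional class: a one-line computation gives $(B-E_i)^2=-1$, $c_1(B-E_i)=1$, hence $g_v(B-E_i)=0$ and $k(B-E_i)=0$. Since $k(B-E_i)\geq 0$, the Gromov invariant satisfies $\mathrm{Gr}(B-E_i)=\pm1$ (as recalled before Lemma~\ref{existenceJ-curves_generic}, via \cite[Corollary 1.4]{LiLiu1}); arguing as in Lemma~\ref{existenceJ-curves_generic}, for a generic (residual, hence dense) $J$ the class $B-E_i$ is represented by an embedded sphere, so each set $U_i=\{J : B-E_i \text{ has an embedded } J\text{-sphere}\}$ is dense. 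For openness I would invoke automatic transversality in dimension $4$ (see \cite{McDSal}): an embedded $J$-sphere $C$ with $c_1(C)=1>0$ is a regular point of its moduli space, so the implicit function theorem yields a nearby embedded $J'$-sphere in the same class for every $J'$ close to $J$; thus each $U_i$ is open. A finite intersection of open dense sets being open dense then gives the first half.

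\emph{Degeneration.} Now fix $J$ for which none of the $B-E_i$ has an embedded $J$-sphere; it suffices to run the argument for $A=B-E_1$. Since $\mathrm{Gr}(B-E_1)=\pm1$, Gromov compactness still yields a \emph{reducible} cusp-curve representative $B-E_1=\sum_k m_k A_k$ with each $A_k$ simple. By Lemma~\ref{B-curve m1} every $A_k$ has non-negative $B$- and $F$-coefficients; matching the totals $p=1$, $q=0$ forces exactly one component, of multiplicity one, to have the form $A_0=B-\sum_{j\in S}E_j$ with $S\subseteq\{1,2,3\}$, and all remaining components to lie among the $p=q=0$ classes $E_j$, $E_j-E_\ell$, $E_1-E_2-E_3$ of Lemma~\ref{B-curve m1}. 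Rewriting the homology identity over the non-$B$ components as $\sum m_k A_k'=\sum_{j\in S}E_j-E_1$ and observing that each admissible non-$B$ class has $E_1$-coefficient $\geq 0$, I get $1\in S$; moreover $S\neq\{1\}$, since otherwise the representative would be irreducible, i.e.\ the embedded sphere excluded by hypothesis. Hence $S\in\{\{1,2\},\{1,3\},\{1,2,3\}\}$ and
$$A_0\in\{\,B-E_1-E_2,\ B-E_1-E_3,\ B-E_1-E_2-E_3\,\},$$
each of which is one of the listed classes. Each has $g_v(A_0)=0$, so by the adjunction formula its simple representative (a component of the cusp curve) is an embedded sphere; and since $A_0^2<0$, positivity of intersections forbids two distinct representatives, giving uniqueness.

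\emph{Main obstacle.} The substance is the bookkeeping of the last paragraph: verifying that the enumeration of cusp components is exhaustive and that the coefficient constraint leaves exactly the three claimed classes, all of which rests on Lemma~\ref{B-curve m1}. Openness via automatic transversality is standard. One should also note that the $S=\{1,2,3\}$ branch is simply vacuous when $c_1+c_2+c_3\geq 1$ (then $B-E_1-E_2-E_3$ has non-positive area and cannot be represented), which is harmless for the conclusion.
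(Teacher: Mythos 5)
Your proof is correct and follows essentially the same route as the paper's: openness and density come from the standard theory of exceptional classes (which you make explicit via Gromov invariants and automatic transversality), and the degeneration claim comes from analyzing a cusp-curve decomposition using Lemma \ref{B-curve m1}, forcing the $B$-component to be one of $B-E_i-E_j$ or $B-E_1-E_2-E_3$ and then getting embeddedness from adjunction and uniqueness from negative self-intersection. The paper only sketches this argument (deferring to \cite{Pin} and \cite{AnjPin}); your coefficient bookkeeping forcing $1\in S$ and $S\neq\{1\}$ is a correct and complete filling-in of that sketch.
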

\begin{proof}
The proof of this Lemma is similar to the proof of Lemma 2.5 in \cite{Pin} and of Lemma 2.10 in \cite{AnjPin}. We only need to modify their proofs to include the various curves available in our case. We will confine ourselves with giving a sketch of the proof.

Since the curves $B - E_{i} $, $i=1,2,3$, are exceptional, the set of almost complex structures $ J $ for which they have an embedded $ J $-holomorphic representative is open and dense. Since they are primitive, they cannot have multiply-covered representatives. Suppose, then, that the representative of $ B - E_{i} $ is a cusp curve, which due to Lemma \ref{B-curve m1} and Lemma \ref{E3 m1} would decompose as $ B - r_{1} E_{1} - r_{2} E_{2} - r_{3} E_{3} $ and a linear combination of the following classes: $ F $, $ F - E_{j} $, $ F - E_{j} - E_{k} $, $ F - E_{1} - E_{2} - E_{3}$, $ E_{j} $, $ E_{j} - E_{k} $, $ E_{1} - E_{2} - E_{3}$. Here all the coefficients in the linear combination would be non-negative and $ r_{i} \in \{ 0,1 \} $. This implies that at least two of the  $ r_{i} $ should be non-zero, hence one of the classes $B - E_{i} - E_{j} $ $i<j$ or $B - E_{1} - E_{2} - E_{3} $ is represented by a unique embedded $J$-holomorphic sphere.
\end{proof}

\begin{remark}
By positivity of intersections, we cannot have $ F - E_{i} $ and $ F - E_{i} - E_{j} $ simultaneously represented by embedded $J$-holomorphic spheres. Similarly, it is not possible for the classes $ F - E_{i} $ and $ F - E_{1} - E_{2} - E_{3} $ to simultaneously have embedded $ J $-holomorphic representatives.
\end{remark}

Using the positivity of intersections and the results of this section, we thus get a set of possible configurations of $J$-holomorphic curves. Note that all the configurations can be obtained from the previous case of two blow-ups, that is, from the configurations of J-holomorphic curves in $\Mcc$, by blowing-up either inside a curve, or at an intersection between curves, or outside (see Remark \ref{E3 is important}). We will recall the configurations in \cite{AnjPin} and draw the configurations resulting from the next blow-up. For instance,  we reproduce in Figure \ref{Config1 of AP} configuration  (1) of Figure 2 in \cite{AnjPin}. Blowing-up at an intersection point (points 1 to 6 in Figure \ref{Config1 of AP}) we obtain the six configurations given in Figure \ref{Config1 of AP blown up 1-6}. In Figure  \ref{Config1 of AP blown up 7-12} we show the 6 configurations obtained by blowing-up at a point inside a curve (points 7 to 12 in Figure \ref{Config1 of AP}), and finally, blowing-up outside any of these curves (point 13) we obtain  configuration 1.13 in Figure \ref{Config1 of AP blown up 13}.

\begin{figure}[thp]

\begin{center}
\begin{tikzpicture}[scale=0.8,font=\scriptsize,squarednode/.style={rectangle, draw=blue!60, fill=blue!5, very thick, minimum size=5mm}]

	\node[squarednode] at (-3,3) (maintopic) {1} ;
         \draw (-1.4,2) -- (1.4,2);
	\draw (-0.8,2.4) -- (-2.4,-0.2);
	\draw (-2.4,0.2) -- (-0.8,-2.4);
	\draw (-1.4,-2) -- (1.4,-2);
	\draw (0.8,2.4) -- (2.4,-0.2);
	\draw (0.8,-2.4) -- (2.4,0.2);

	\node at (0,2.3) {$B-E_{2}$};
	\node at (2.1,1) {$E_{2}$};
	\node at (-2.5,1) {$F-E_{1} $};
	\node at (-2.1,-1) {$E_{1} $};
	\node at (0,-2.3) {$B-E_{1} $};
	\node at (2.4, -1) {$F-E_{2}$};
	\node at (1.2,-2.3){$\color{red} 1$};
	\node at (1.04,-2){$\color{red} \bullet $};
	\node at (2.6,0){$\color{red} 2$};
	\node at (2.28,0){$\color{red} \bullet $};
	\node at (1.2,2.3){$\color{red} 3$};
	\node at (1.04,2){$\color{red} \bullet $};
	\node at (-1.2,2.3){$\color{red} 4$};
	\node at (-1.04,2){$\color{red} \bullet $};
	\node at (-2.6,0){$\color{red} 5$};
	\node at (-2.28,0){$\color{red} \bullet $};
	\node at (-1.2,-2.3){$\color{red} 6$};
	\node at (-1.04,-2){$\color{red} \bullet $};
	\node at (0,-1.7) {$\color{red} 7$};
	\node at (0,-2){$\color{red} \bullet $};
	\node at (1.4, -1) {$\color{red} 8$};
	\node at (1.66,-1){$\color{red} \bullet $};
	\node at (1.4,1){$\color{red} 9$};
	\node at (1.66,1){$\color{red} \bullet $};
	\node at (0,1.7) {$\color{red} 10$};
	\node at (0,2){$\color{red} \bullet $};
	\node at (-1.3,1) {$\color{red} 11$};
	\node at (-1.66,1){$\color{red} \bullet $};
	\node at (-1.3,-1) {$\color{red} 12$};
	\node at (-1.66,-1){$\color{red} \bullet $};
	\node at (0,0){$\color{red} \bullet $};
	\node at (0.3,0){$\color{red} 13$};

\end{tikzpicture}
\end{center}

\caption{Configuration (1) in \cite{AnjPin} for $J$-holomorphic curves in $\Mcc$.}
\label{Config1 of AP}
\end{figure}

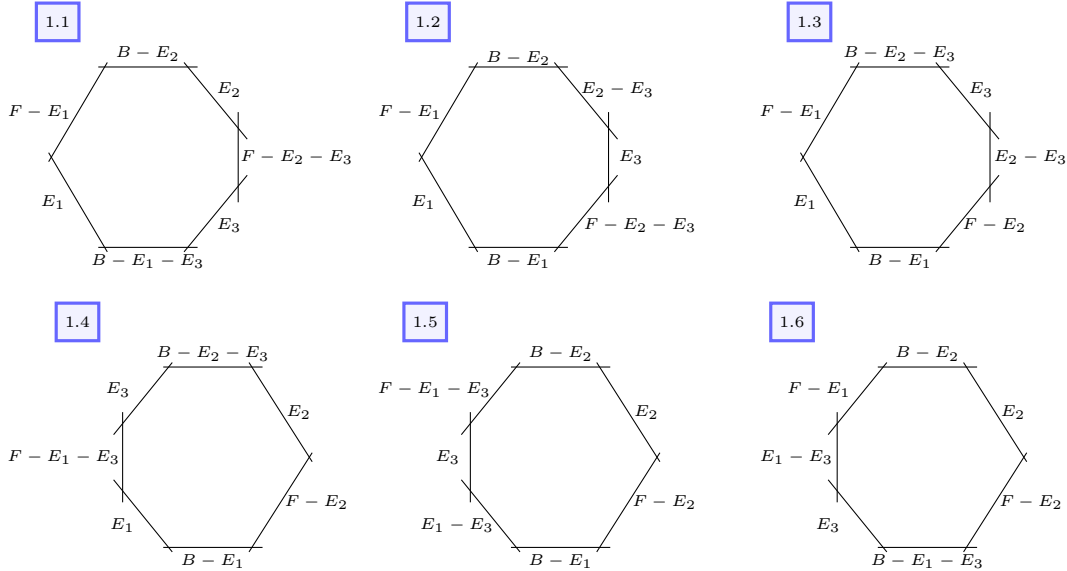
\begin{figure}[thp]

\bigskip

\begin{minipage}{.33\textwidth}
\begin{tikzpicture}[scale=0.6,squarednode/.style={rectangle, draw=blue!60, fill=blue!5, very thick, minimum size=5mm}, font=\tiny]

	\node[squarednode] at (-3,3) (maintopic) {1.1} ; 
    \draw (-2.1,2) -- (0.1,2);
	\draw (-1.9,2.1) -- (-3.2,-0.1);
	\draw (-3.2,0.1) -- (-1.9,-2.1);
	\draw (-2.1,-2) -- (0.1,-2);
	\draw (-0.2,-2.1) -- (1.2,-0.4);
	\draw (1,1) -- (1,-1);
	\draw (-0.2,2.1) -- (1.2,0.4);

	\node at (-1,2.3) {$B-E_{2}$};
	\node at (0.8,1.5) {$E_{2}$};
	\node at (-3.4,1) {$F-E_{1} $};
	\node at (-3.1,-1) {$E_{1} $};
	\node at (-1,-2.3) {$B-E_{1}-E_{3} $};
	\node at (0.8, -1.5) {$E_{3}$};
	\node at (2.3,0) {$F-E_{2}-E_{3}$};
\end{tikzpicture}
\end{minipage}%
\begin{minipage}{.34\textwidth}
\begin{tikzpicture}[scale=0.6,squarednode/.style={rectangle, draw=blue!60, fill=blue!5, very thick, minimum size=5mm}, font=\tiny]

	\node[squarednode] at (-3,3) (maintopic) {1.2} ; 
    \draw (-2.1,2) -- (0.1,2);
	\draw (-1.9,2.1) -- (-3.2,-0.1);
	\draw (-3.2,0.1) -- (-1.9,-2.1);
	\draw (-2.1,-2) -- (0.1,-2);
	\draw (-0.2,-2.1) -- (1.2,-0.4);
	\draw (1,1) -- (1,-1);
	\draw (-0.2,2.1) -- (1.2,0.4);

	\node at (-1,2.2) {$B-E_{2}$};
	\node at (1.2,1.5) {$E_{2}-E_{3}$};
	\node at (-3.4,1) {$F-E_{1} $};
	\node at (-3.1,-1) {$E_{1} $};
	\node at (-1,-2.3) {$B-E_{1} $};
	\node at (1.7, -1.5) {$F-E_{2}-E_{3}$};
	\node at (1.5,0) {$E_{3}$};

\end{tikzpicture}
\end{minipage}%
\begin{minipage}{.33\textwidth}
\begin{tikzpicture}[scale=0.6,squarednode/.style={rectangle, draw=blue!60, fill=blue!5, very thick, minimum size=5mm}, font=\tiny]

	\node[squarednode] at (-3,3) (maintopic) {1.3} ;  
    \draw (-2.1,2) -- (0.1,2);
	\draw (-1.9,2.1) -- (-3.2,-0.1);
	\draw (-3.2,0.1) -- (-1.9,-2.1);
	\draw (-2.1,-2) -- (0.1,-2);
	\draw (-0.2,-2.1) -- (1.2,-0.4);
	\draw (1,1) -- (1,-1);
	\draw (-0.2,2.1) -- (1.2,0.4);

	\node at (-1,2.3) {$B-E_{2}-E_{3}$};
	\node at (0.8,1.5) {$E_{3}$};
	\node at (-3.4,1) {$F-E_{1} $};
	\node at (-3.1,-1) {$E_{1} $};
	\node at (-1,-2.3) {$B-E_{1}$};
	\node at (1.1, -1.5) {$F-E_{2}$};
	\node at (1.9,0) {$E_{2}-E_{3}$};

\end{tikzpicture}

\end{minipage}

\bigskip

\begin{minipage}{.33\textwidth}

\begin{tikzpicture}[scale=0.6,squarednode/.style={rectangle, draw=blue!60, fill=blue!5, very thick, minimum size=5mm}, font=\tiny]

	\node[squarednode] at (-4,3) (maintopic) {1.4} ;
    \draw (-2.1,2) -- (0.1,2);
	\draw (-1.9,2.1) -- (-3.2,0.5);
	\draw (-3.2,-0.5) -- (-1.9,-2.1);
	\draw (-2.1,-2) -- (0.1,-2);
	\draw (-0.2,-2.1) -- (1.2,0.1);
	\draw (-0.2,2.1) -- (1.2,-0.1);
	\draw (-3,1) -- (-3,-1);

	\node at (-1,2.3) {$B-E_{2}-E_{3}$};
	\node at (0.9,1) {$E_{2}$};
	\node at (-3.1,1.5) {$E_{3} $};
	\node at (-3,-1.5) {$E_{1}$};
	\node at (-1,-2.3) {$B-E_{1}$};
	\node at (1.3, -1) {$F-E_{2}$};
	\node at (-4.3, 0) {$F-E_{1}-E_{3} $};

\end{tikzpicture}
\end{minipage}%
\begin{minipage}{.34\textwidth}
\begin{tikzpicture}[scale=0.6,squarednode/.style={rectangle, draw=blue!60, fill=blue!5, very thick, minimum size=5mm}, font=\tiny]

	\node[squarednode] at (-4,3) (maintopic) {1.5} ;

    \draw (-2.1,2) -- (0.1,2);
	\draw (-1.9,2.1) -- (-3.2,0.5);
	\draw (-3.2,-0.5) -- (-1.9,-2.1);
	\draw (-2.1,-2) -- (0.1,-2);
	\draw (-0.2,-2.1) -- (1.2,0.1);
	\draw (-0.2,2.1) -- (1.2,-0.1);
	\draw (-3,1) -- (-3,-1);

	\node at (-1,2.3) {$B-E_{2}$};
	\node at (0.9,1) {$E_{2}$};
	\node at (-3.8,1.5) {$F-E_{1}-E_{3} $};
	\node at (-3.3,-1.5) {$E_{1}-E_{3} $};
	\node at (-1,-2.3) {$B-E_{1}$};
	\node at (1.3, -1) {$F-E_{2}$};
	\node at (-3.5, 0) {$E_{3} $};

\end{tikzpicture}

\end{minipage}%
\begin{minipage}{.33\textwidth}

\begin{tikzpicture}[scale=0.6,squarednode/.style={rectangle, draw=blue!60, fill=blue!5, very thick, minimum size=5mm}, font=\tiny]

	\node[squarednode] at (-4,3) (maintopic) {1.6} ;
    \draw (-2.1,2) -- (0.1,2);
	\draw (-1.9,2.1) -- (-3.2,0.5);
	\draw (-3.2,-0.5) -- (-1.9,-2.1);
	\draw (-2.1,-2) -- (0.1,-2);
	\draw (-0.2,-2.1) -- (1.2,0.1);
	\draw (-0.2,2.1) -- (1.2,-0.1);
	\draw (-3,1) -- (-3,-1);

	\node at (-1,2.3) {$B-E_{2}$};
	\node at (0.9,1) {$E_{2}$};
	\node at (-3.4,1.5) {$F-E_{1} $};
	\node at (-3.2,-1.5) {$E_{3} $};
	\node at (-1,-2.3) {$B-E_{1}-E_{3} $};
	\node at (1.3, -1) {$F-E_{2}$};
	\node at (-3.9, 0) {$E_{1}-E_{3} $};

\end{tikzpicture}
\end{minipage}
         
\caption{Configurations 1.1 --  1.6}
\label{Config1 of AP blown up 1-6}
\end{figure}

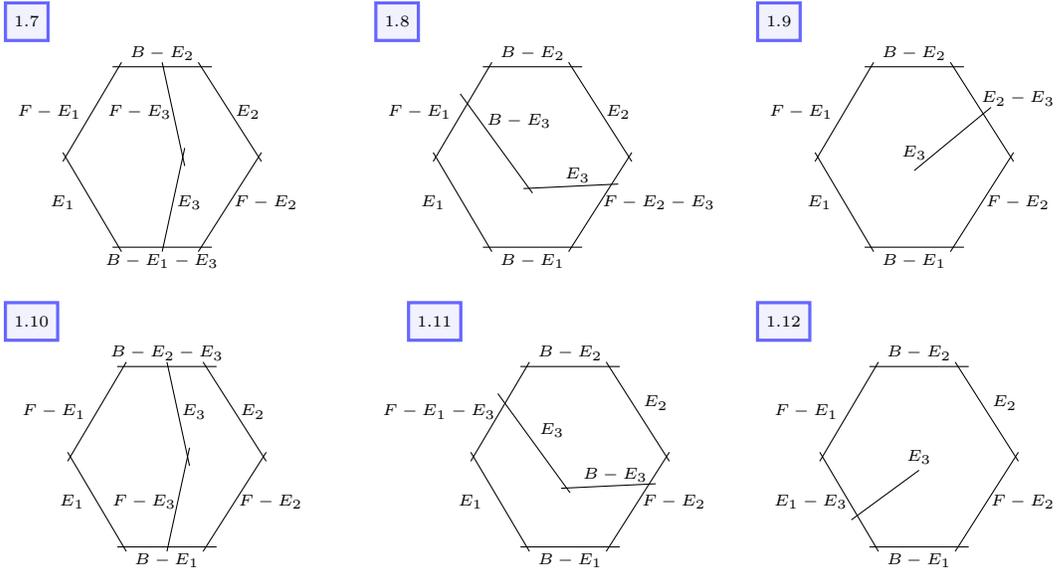
\begin{figure}[thp]

\bigskip

\begin{minipage}{.33\textwidth}
\begin{tikzpicture}[scale=0.6,squarednode/.style={rectangle, draw=blue!60, fill=blue!5, very thick, minimum size=5mm}, font=\tiny]

	\node[squarednode] at (-4,3) (maintopic) {1.7} ;
    \draw (-2.1,2) -- (0.1,2);
	\draw (-2.1,-2) -- (0.1,-2);
	\draw (-0.2,-2.1) -- (1.2,0.1);
	\draw (-0.2,2.1) -- (1.2,-0.1);
	\draw (-1.9,2.1) -- (-3.2,-0.1);
	\draw (-3.2,0.1) -- (-1.9,-2.1);
	\draw (-0.5,-0.2) -- (-1,2.1);
	\draw (-0.5,0.2) -- (-1,-2.1);

	\node at (-1,2.3) {$B-E_{2}$};
	\node at (0.9,1) {$E_{2}$};
	\node at (-3.5,1) {$F-E_{1} $};
	\node at (-3.2,-1) {$E_{1}$};
	\node at (-1,-2.3) {$B-E_{1}-E_{3}$};
	\node at (1.3, -1) {$F-E_{2}$};
	\node at (-1.5,1) {$F-E_{3} $};
	\node at (-0.4,-1) {$E_{3} $};

\end{tikzpicture}
\end{minipage}%
\begin{minipage}{.34\textwidth}

\begin{tikzpicture}[scale=0.6,squarednode/.style={rectangle, draw=blue!60, fill=blue!5, very thick, minimum size=5mm}, font=\tiny]

	\node[squarednode] at (-4,3) (maintopic) {1.8} ;
    \draw (-2.1,2) -- (0.1,2);
	\draw (-2.1,-2) -- (0.1,-2);
	\draw (-0.2,-2.1) -- (1.2,0.1);
	\draw (-0.2,2.1) -- (1.2,-0.1);
	\draw (-1.9,2.1) -- (-3.2,-0.1);
	\draw (-3.2,0.1) -- (-1.9,-2.1);
	\draw (-1.2,-0.7) -- (0.9,-0.6);
	\draw (-1,-0.8) -- (-2.6,1.4);

	\node at (-1,2.3) {$B-E_{2}$};
	\node at (0.9,1) {$E_{2}$};
	\node at (-3.5,1) {$F-E_{1} $};
	\node at (-3.2,-1) {$E_{1}$};
	\node at (-1,-2.3) {$B-E_{1}$};
	\node at (1.8, -1) {$F-E_{2}-E_{3}$};
	\node at (0, -0.4) {$E_{3} $};
	\node at (-1.3,0.8) {$B-E_{3} $};

\end{tikzpicture}
\end{minipage}%
\begin{minipage}{.33\textwidth}
\begin{tikzpicture}[scale=0.6,squarednode/.style={rectangle, draw=blue!60, fill=blue!5, very thick, minimum size=5mm}, font=\tiny]

	\node[squarednode] at (-4,3) (maintopic) {1.9} ;

    \draw (-2.1,2) -- (0.1,2);
	\draw (-2.1,-2) -- (0.1,-2);
	\draw (-0.2,-2.1) -- (1.2,0.1);
	\draw (-0.2,2.1) -- (1.2,-0.1);
	\draw (-1.9,2.1) -- (-3.2,-0.1);
	\draw (-3.2,0.1) -- (-1.9,-2.1);
	\draw (-1,-0.3) -- (0.7,1.1);

	\node at (-1,2.3) {$B-E_{2}$};
	\node at (1.3,1.3) {$E_{2}-E_{3}$};
	\node at (-3.5,1) {$F-E_{1} $};
	\node at (-3.1,-1) {$E_{1}$};
	\node at (-1,-2.3) {$B-E_{1}$};
	\node at (1.3, -1) {$F-E_{2}$};
	\node at (-1, 0.1) {$E_{3} $};

\end{tikzpicture}

\end{minipage}

\bigskip

\begin{minipage}{.33\textwidth}

\begin{tikzpicture}[scale=0.6,squarednode/.style={rectangle, draw=blue!60, fill=blue!5, very thick, minimum size=5mm}, font=\tiny]

	\node[squarednode] at (-4,3) (maintopic) {1.10} ;

    \draw (-2.1,2) -- (0.1,2);
	\draw (-2.1,-2) -- (0.1,-2);
	\draw (-0.2,-2.1) -- (1.2,0.1);
	\draw (-0.2,2.1) -- (1.2,-0.1);
	\draw (-1.9,2.1) -- (-3.2,-0.1);
	\draw (-3.2,0.1) -- (-1.9,-2.1);
	\draw (-0.5,-0.2) -- (-1,2.1);
	\draw (-0.5,0.2) -- (-1,-2.1);

	\node at (-1,2.3) {$B-E_{2}-E_{3}$};
	\node at (0.9,1) {$E_{2}$};
	\node at (-3.5,1) {$F-E_{1} $};
	\node at (-3.1,-1) {$E_{1}$};
	\node at (-1,-2.3) {$B-E_{1}$};
	\node at (1.3, -1) {$F-E_{2}$};
	\node at (-0.4,1) {$E_{3} $};
	\node at (-1.5,-1) {$F-E_{3} $};

\end{tikzpicture}
\end{minipage}%
\begin{minipage}{.34\textwidth}
\begin{tikzpicture}[scale=0.6,squarednode/.style={rectangle, draw=blue!60, fill=blue!5, very thick, minimum size=5mm}, font=\tiny]

	\node[squarednode] at (-4,3) (maintopic) {1.11} ;

    \draw (-2.1,2) -- (0.1,2);
	\draw (-2.1,-2) -- (0.1,-2);
	\draw (-0.2,-2.1) -- (1.2,0.1);
	\draw (-0.2,2.1) -- (1.2,-0.1);
	\draw (-1.9,2.1) -- (-3.2,-0.1);
	\draw (-3.2,0.1) -- (-1.9,-2.1);
	\draw (-1.2,-0.7) -- (0.9,-0.6);
	\draw (-1,-0.8) -- (-2.6,1.4);

	\node at (-1,2.3) {$B-E_{2}$};
	\node at (0.9,1.2) {$E_{2}$};
	\node at (-3.9,1) {$F-E_{1}-E_{3} $};
	\node at (-3.2,-1) {$E_{1}$};
	\node at (-1,-2.3) {$B-E_{1}$};
	\node at (1.3, -1) {$F-E_{2}$};
	\node at (0, -0.4) {$B-E_{3} $};
	\node at (-1.4,0.6) {$E_{3} $};

\end{tikzpicture}

\end{minipage}%
\begin{minipage}{.33\textwidth}

\begin{tikzpicture}[scale=0.6,squarednode/.style={rectangle, draw=blue!60, fill=blue!5, very thick, minimum size=5mm}, font=\tiny]

	\node[squarednode] at (-4,3) (maintopic) {1.12} ;
   \draw (-2.1,2) -- (0.1,2);
	\draw (-2.1,-2) -- (0.1,-2);
	\draw (-0.2,-2.1) -- (1.2,0.1);
	\draw (-0.2,2.1) -- (1.2,-0.1);
	\draw (-1.9,2.1) -- (-3.2,-0.1);
	\draw (-3.2,0.1) -- (-1.9,-2.1);
	\draw (-1,-0.3) -- (-2.5,-1.4);

	\node at (-1,2.3) {$B-E_{2}$};
	\node at (0.9,1.2) {$E_{2}$};
	\node at (-3.5,1) {$F-E_{1} $};
	\node at (-3.4,-1) {$E_{1}-E_{3}$};
	\node at (-1,-2.3) {$B-E_{1}$};
	\node at (1.3, -1) {$F-E_{2}$};
	\node at (-1, 0) {$E_{3} $};

\end{tikzpicture}
\end{minipage}

\caption{Configurations 1.7 -- 1.12}
\label{Config1 of AP blown up 7-12}

\bigskip

\end{figure}

\bigskip

\begin{figure}[thp]

\begin{center}
\begin{tikzpicture}[scale=0.8,squarednode/.style={rectangle, draw=blue!60, fill=blue!5, very thick, minimum size=5mm}, font=\scriptsize]

	\node[squarednode] at (-3,3) (maintopic) {1.13} ;
 
    \draw (-1.4,2) -- (1.4,2);
	\draw (-0.8,2.4) -- (-2.4,-0.2);
	\draw (-2.4,0.2) -- (-0.8,-2.4);
	\draw (-1.4,-2) -- (1.4,-2);
	\draw (0.8,2.4) -- (2.4,-0.2);
	\draw (0.8,-2.4) -- (2.4,0.2);
	\draw (-0.7,2.6) -- (1,-3.7);
	\draw (2.4,-2.4) -- (0.25,-0.25);
	\draw[dashed] (-0.25,0.25) -- (0.25,-0.25);
	\draw (-0.25,0.25) -- (-1.8,1.8);
	\draw (0.7,-3.6) -- (2.4,-2);

	\node at (0.2,2.2) {$B-E_{2}$};
	\node at (2.1,1) {$E_{2}$};
	\node at (-2.4,1) {$F-E_{1} $};
	\node at (-2.1,-1) {$E_{1} $};
	\node at (-0.2,-2.3) {$B-E_{1} $};
	\node at (2.4, -1) {$F-E_{2}$};
	\node at (1.7,-3) {$E_{3}$};
	\node[rotate=-75] at (0,-1.1) {$F-E_{3}$};
	\node[rotate=-45] at (1,-0.7) {$B-E_{3}$};

\end{tikzpicture}
\end{center}
\caption{Configuration 1.13}
\label{Config1 of AP blown up 13}
\end{figure}
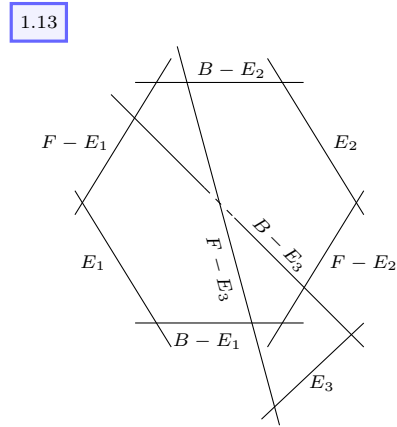

Choosing now configuration (3) of Figure 2 in \cite{AnjPin},\footnote{In order to simplify the presentation in further sections, we will change the labelling of the configurations.} which is represented in Figure \ref{Config3 of AP}, after blowing-up we obtain ten different configurations which are represented in Figures \ref{Config3 of AP blown up 1-5} and \ref{Config3 of AP blown up 6-10}. 

\begin{figure}[thp]

\begin{center}
\begin{tikzpicture}[scale=0.8,squarednode/.style={rectangle, draw=blue!60, fill=blue!5, very thick, minimum size=5mm}, font=\scriptsize]

	\node[squarednode] at (-3,3) (maintopic) {2} ;

    \draw (-1.4,2) -- (1.5,2.5);
	\draw (-0.8,2.4) -- (-2.4,-0.2);
	\draw (-2.4,0.2) -- (-0.8,-2.4);
	\draw (-1.4,-2) -- (1.4,-2);
	\draw (0.8,2.7) -- (2.9,0.7);

	\node at (-0.1,2.5) {$E_{1}$};
	\node at (-2.9,1) {$B-E_{1}-E_{2} $};
	\node at (-2.1,-1) {$E_{2} $};
	\node at (0.2,-2.3) {$F-E_{2} $};
	\node at (2.6,2) {$F-E_{1}$};

\end{tikzpicture}
\caption{Configuration (3) in \cite{AnjPin} for $J$-homomorphic curves in $\Mcc$}
\label{Config3 of AP}
\end{center}
\end{figure}

\begin{figure}[thp]

\begin{minipage}{.33\textwidth}
\begin{tikzpicture}[scale=0.6,squarednode/.style={rectangle, draw=blue!60, fill=blue!5, very thick, minimum size=5mm}, font=\tiny]

	\node[squarednode] at (-3,3) (maintopic) {2.1} ; 
    \draw (-2.1,2) -- (0.1,2);
	\draw (-1.9,2.1) -- (-3.2,-0.1);
	\draw (-3.2,0.1) -- (-1.9,-2.1);
	\draw (-2.1,-2) -- (0.1,-2);
	\draw (-0.2,-2.1) -- (1.2,-0.4);
	\draw (1,1) -- (1,-1);
	\draw (-0.2,2.1) -- (1.2,0.4);

	\node at (-1,2.3) {$E_{1}$};
	\node at (-3.9,1) {$B-E_{1}-E_{2} $};
	\node at (-3.1,-1) {$E_{2} $};
	\node at (-1,-2.3) {$F-E_{2}-E_{3} $};
	\node at (0.9, -1.5) {$E_{3}$};
	\node at (1.1,1.5) {$F-E_{1}$};
	\node at (1.8,0) {$B-E_{3}$};

\end{tikzpicture}

\end{minipage}%
\begin{minipage}{.34\textwidth}

\begin{tikzpicture}[scale=0.6, squarednode/.style={rectangle, draw=blue!60, fill=blue!5, very thick, minimum size=5mm}, font=\tiny]

	\node[squarednode] at (-3,3) (maintopic) {2.2} ; 
    \draw (-2.1,2) -- (0.1,2);
	\draw (-1.9,2.1) -- (-3.2,-0.1);
	\draw (-3.2,0.1) -- (-1.9,-2.1);
	\draw (-2.1,-2) -- (0.1,-2);
	\draw (-0.2,-2.1) -- (1.2,-0.4);
	\draw (1,1) -- (1,-1);
	\draw (-0.2,2.1) -- (1.2,0.4);

	\node at (-1,2.3) {$E_{1}$};
	\node at (-3.9,1) {$B-E_{1}-E_{2} $};
	\node at (-3.1,-1) {$E_{2} $};
	\node at (-1,-2.3) {$F-E_{2}$};
	\node at (1.8, 0) {$E_{3}$};
	\node at (1.6,1.5) {$F-E_{1}-E_{3}$};
	\node at (1.1, -1.5) {$B-E_{3}$};

\end{tikzpicture}
\end{minipage}%
\begin{minipage}{.33\textwidth}
\begin{tikzpicture}[scale=0.6,squarednode/.style={rectangle, draw=blue!60, fill=blue!5, very thick, minimum size=5mm}, font=\tiny]

	\node[squarednode] at (-3,3) (maintopic) {2.3} ; 
    \draw (-2.1,2) -- (0.1,2.2);
	\draw (-1.9,2.1) -- (-3.2,-0.1);
	\draw (-3.2,0.1) -- (-1.9,-2.1);
	\draw (-2.1,-2) -- (0.1,-2);
	\draw (1.2,1) -- (1.2,-1);
	\draw (-0.2,2.3) -- (1.3,0.7);

	\node at (-1,2.5) {$E_{1}-E_{3}$};
	\node at (-3.9,1) {$B-E_{1}-E_{2} $};
	\node at (-3.1,-1) {$E_{2} $};
	\node at (-1,-2.3) {$F-E_{2}$};
	\node at (-0.1, 0) {$F-E_{1}-E_{3}$};
	\node at (0.9,1.7) {$E_{3}$};

\end{tikzpicture}

\end{minipage}

\bigskip

\begin{minipage}{.5\textwidth}

\begin{center}
\begin{tikzpicture}[scale=0.6,squarednode/.style={rectangle, draw=blue!60, fill=blue!5, very thick, minimum size=5mm}, font=\tiny]

	\node[squarednode] at (-3,3) (maintopic) {2.4} ; 
    \draw (-2.1,2) -- (0.1,2.2);
	\draw (-1.9,2.1) -- (-3.2,-0.1);
	\draw (-3.2,0.1) -- (-1.9,-2.1);
	\draw (-2.1,-2) -- (0.1,-2);
	\draw (1.2,1) -- (1.2,-1);
	\draw (-0.2,2.3) -- (1.3,0.7);

	\node at (-1,2.3) {$E_{3}$};
	\node at (-4.4,1.2) {$B-E_{1}-E_{2}-E_{3} $};
	\node at (-3.1,-1) {$E_{2} $};
	\node at (-1,-2.3) {$F-E_{2}$};
	\node at (2, 0) {$F-E_{1}$};
	\node at (1.4,1.7) {$E_{1}-E_{3}$};

\end{tikzpicture}
\end{center}
\end{minipage}%
\begin{minipage}{.5\textwidth}
\begin{center}
\begin{tikzpicture}[scale=0.6,squarednode/.style={rectangle, draw=blue!60, fill=blue!5, very thick, minimum size=5mm}, font=\tiny]

	\node[squarednode] at (-3,3) (maintopic) {2.5} ; 
    \draw (-2.1,2) -- (0.1,2.2);
	\draw (-1.9,2.1) -- (-3.2,-0.1);
	\draw (-3.2,0.1) -- (-1.9,-2.1);
	\draw (-2.1,-2) -- (0.1,-2);
	\draw (1.2,1) -- (1.2,-1);
	\draw (-0.2,2.3) -- (1.3,0.7);

	\node at (-0.6,2.5) {$B-E_{1}-E_{2}-E_{3}$};
	\node at (-3.5,1) {$E_{3} $};
	\node at (-3.4,-1.1) {$E_{2}-E_{3}$};
	\node at (-1,-2.3) {$F-E_{2}$};
	\node at (0.4, 0) {$F-E_{1}$};
	\node at (0.9,1.7) {$E_{1}$};

\end{tikzpicture}
\end{center}
\end{minipage}

\caption{Configurations 2.1 -- 2.5}
\label{Config3 of AP blown up 1-5}
\end{figure}
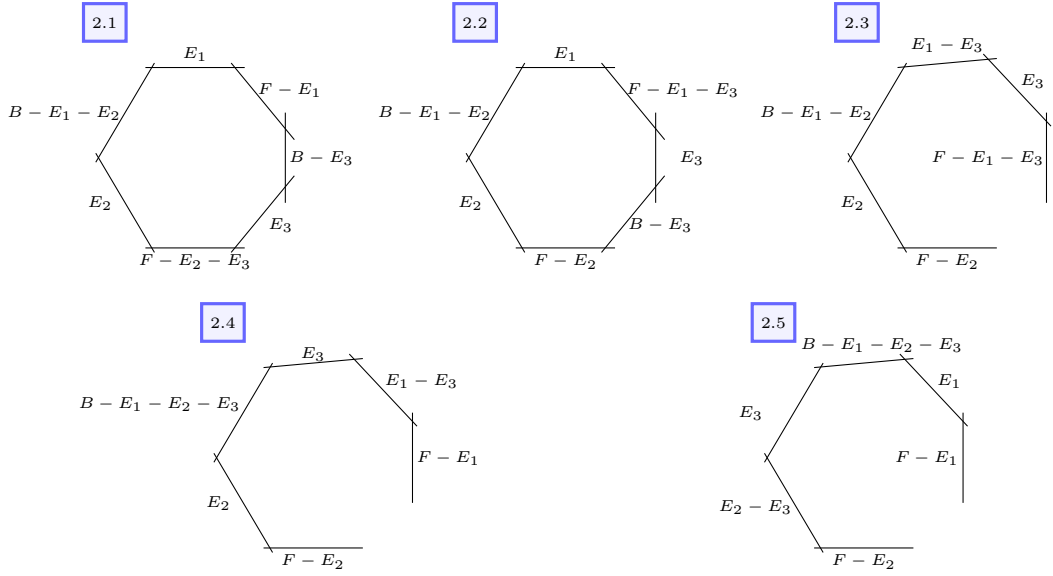

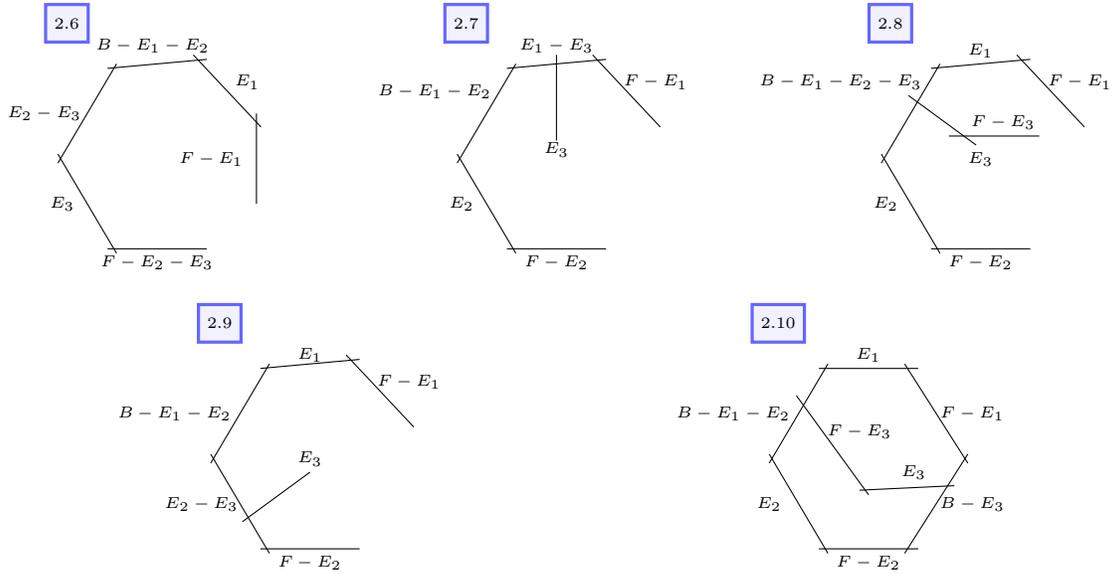
\begin{figure}[thp]

\begin{minipage}{.33\textwidth}

\begin{tikzpicture}[scale=0.6,squarednode/.style={rectangle, draw=blue!60, fill=blue!5, very thick, minimum size=5mm}, font=\tiny]

	\node[squarednode] at (-3,3) (maintopic) {2.6} ; 
    \draw (-2.1,2) -- (0.1,2.2);
	\draw (-1.9,2.1) -- (-3.2,-0.1);
	\draw (-3.2,0.1) -- (-1.9,-2.1);
	\draw (-2.1,-2) -- (0.1,-2);
	\draw (1.2,1) -- (1.2,-1);
	\draw (-0.2,2.3) -- (1.3,0.7);

	\node at (-1.1,2.5) {$B-E_{1}-E_{2}$};
	\node at (-3.5,1) {$E_{2}-E_{3} $};
	\node at (-3.1,-1) {$E_{3} $};
	\node at (-1,-2.3) {$F-E_{2}-E_{3}$};
	\node at (0.2, 0) {$F-E_{1}$};
	\node at (1,1.7) {$E_{1}$};

\end{tikzpicture}
\end{minipage}%
\begin{minipage}{.34\textwidth}
\begin{tikzpicture}[scale=0.6,squarednode/.style={rectangle, draw=blue!60, fill=blue!5, very thick, minimum size=5mm}, font=\tiny]

	\node[squarednode] at (-3,3) (maintopic) {2.7} ; 
    \draw (-2.1,2) -- (0.1,2.2);
	\draw (-1.9,2.1) -- (-3.2,-0.1);
	\draw (-3.2,0.1) -- (-1.9,-2.1);
	\draw (-2.1,-2) -- (0.1,-2);
	\draw (-0.2,2.3) -- (1.3,0.7);
	\draw (-1,0.4) -- (-1,2.3);

	\node at (-1,2.5) {$E_{1}-E_{3}$};
	\node at (-3.7,1.5) {$B-E_{1}-E_{2} $};
	\node at (-3.1,-1) {$E_{2} $};
	\node at (-1,-2.3) {$F-E_{2}$};
	\node at (1.2,1.7) {$F-E_{1}$};
	\node at (-1, 0.2) {$E_{3} $};

\end{tikzpicture}
\end{minipage}%
\begin{minipage}{.33\textwidth}

\begin{tikzpicture}[scale=0.6,squarednode/.style={rectangle, draw=blue!60, fill=blue!5, very thick, minimum size=5mm}, font=\tiny]

	\node[squarednode] at (-3,3) (maintopic) {2.8} ; 
    \draw (-2.1,2) -- (0.1,2.2);
	\draw (-1.9,2.1) -- (-3.2,-0.1);
	\draw (-3.2,0.1) -- (-1.9,-2.1);
	\draw (-2.1,-2) -- (0.1,-2);
	\draw (-0.2,2.3) -- (1.3,0.7);
	\draw (-1.1,0.3) -- (-2.6,1.4);
	\draw (-1.7,0.5) -- (0.3,0.5);
	
	\node at (-1,2.4) {$E_{1}$};
	\node at (-4.1,1.7) {$B-E_{1}-E_{2}-E_{3} $};
	\node at (-3.1,-1) {$E_{2} $};
	\node at (-1,-2.3) {$F-E_{2}$};
	\node at (1.2,1.7) {$F-E_{1}$};
	\node at (-1, 0) {$E_{3} $};
	\node at (-0.5,0.8) {$F-E_{3}$};

\end{tikzpicture}
\end{minipage}

\bigskip

\begin{minipage}{.5\textwidth}
\begin{center}
\begin{tikzpicture}[scale=0.6,squarednode/.style={rectangle, draw=blue!60, fill=blue!5, very thick, minimum size=5mm}, font=\tiny]

	\node[squarednode] at (-3,3) (maintopic) {2.9} ; 
    \draw (-2.1,2) -- (0.1,2.2);
	\draw (-1.9,2.1) -- (-3.2,-0.1);
	\draw (-3.2,0.1) -- (-1.9,-2.1);
	\draw (-2.1,-2) -- (0.1,-2);
	\draw (-0.2,2.3) -- (1.3,0.7);

	\node at (-1,2.3) {$E_{1}$};
	\node at (-4,1) {$B-E_{1}-E_{2} $};
	\node at (-3.4,-1) {$E_{2}-E_{3} $};
	\node at (-1,-2.3) {$F-E_{2}$};
	\node at (1.2,1.7) {$F-E_{1}$};

	\draw (-1,-0.3) -- (-2.5,-1.4);

	\node at (-1, 0) {$E_{3} $};

\end{tikzpicture}
\end{center}
\end{minipage}%
\begin{minipage}{.5\textwidth}
\begin{center}
\begin{tikzpicture}[scale=0.6,squarednode/.style={rectangle, draw=blue!60, fill=blue!5, very thick, minimum size=5mm}, font=\tiny]

	\node[squarednode] at (-3,3) (maintopic) {2.10} ; 

    \draw (-2.1,2) -- (0.1,2);
	\draw (-2.1,-2) -- (0.1,-2);
	\draw (-0.2,-2.1) -- (1.2,0.1);
	\draw (-0.2,2.1) -- (1.2,-0.1);
	\draw (-1.9,2.1) -- (-3.2,-0.1);
	\draw (-3.2,0.1) -- (-1.9,-2.1);
	\draw (-1.2,-0.7) -- (0.9,-0.6);
	\draw (-1,-0.8) -- (-2.6,1.4);

	\node at (-1,2.3) {$E_{1}$};
	\node at (1.3,1) {$F-E_{1}$};
	\node at (-4,1) {$B-E_{1}-E_{2} $};
	\node at (-3.2,-1) {$E_{2}$};
	\node at (-1,-2.3) {$F-E_{2}$};
	\node at (1.3,-1) {$B-E_{3}$};
	\node at (0, -0.3) {$E_{3} $};
	\node at (-1.2,0.6) {$F-E_{3} $};

\end{tikzpicture}
\end{center}
\end{minipage}

\caption{Configurations 2.6 -- 2.10}
\label{Config3 of AP blown up 6-10}
\end{figure}

Then  configuration (5) of Figure 2 in \cite{AnjPin}, reproduced in Figure \ref{Config5 of AP}, gives configurations 3.1 to 3.8 represented in Figures \ref{Config5 of AP blown up 1-4} and \ref{Config5 of AP blown up 5-8}. 

\begin{figure}[thp]
\begin{center}
\begin{tikzpicture}[scale=0.7,squarednode/.style={rectangle, draw=blue!60, fill=blue!5, very thick, minimum size=5mm}, font=\scriptsize]

	\node[squarednode] at (-3,3) (maintopic) {3} ;
    \draw (-1.4,2) -- (1.4,2);
	\draw (-0.8,2.4) -- (-2.4,-0.2);
	\draw (-2.4,0.2) -- (-0.8,-2.4);
	\draw (-1.4,-2) -- (1.4,-2);

	\node at (0.4,2.2) {$B-E_{1}-E_{2}$};
	\node at (-2.3,1) {$E_{2} $};
	\node at (-2.4,-1.2) {$E_{1}-E_{2} $};
	\node at (0,-2.3) {$F-E_{1} $};

\end{tikzpicture}
\end{center}

\caption{Configuration (5) in \cite{AnjPin} for $J$-homolorphic curves in $\Mcc$. }
\label{Config5 of AP}
\end{figure}

\begin{figure}[thp]

\begin{minipage}{.5\textwidth}
\begin{center}
\begin{tikzpicture}[scale=0.6, squarednode/.style={rectangle, draw=blue!60, fill=blue!5, very thick, minimum size=5mm}, font=\tiny]

	\node[squarednode] at (-3,3) (maintopic) {3.1} ; 
    \draw (-2.1,2) -- (0.1,2);
	\draw (-1.9,2.1) -- (-3.2,-0.1);
	\draw (-3.2,0.1) -- (-1.9,-2.1);
	\draw (-2.1,-2) -- (0.1,-2);
	\draw (-0.2,-2.1) -- (1.2,-0.4);
	\draw (1,1) -- (1,-1);
	\draw (-0.2,2.1) -- (1.2,0.4);

	\node at (-1,2.3) {$B-E_{1}-E_{2}$};
	\node at (1.2,1.5) {$F-E_{3}$};
	\node at (-3.1,1) {$E_{2} $};
	\node at (-3.4,-1) {$E_{1}-E_{2} $};
	\node at (-1,-2.3) {$F-E_{1}$};
	\node at (1.1, -1.5) {$B-E_{3}$};
	\node at (1.5,0) {$E_{3}$};

\end{tikzpicture}
\end{center}
\end{minipage}%
\begin{minipage}{.5\textwidth}
\begin{center}
\begin{tikzpicture}[scale=0.6,squarednode/.style={rectangle, draw=blue!60, fill=blue!5, very thick, minimum size=5mm}, font=\tiny]

	\node[squarednode] at (-3,3) (maintopic) {3.2} ; 
    \draw (-2.1,2) -- (0.1,2);
	\draw (-1.9,2.1) -- (-3.2,-0.1);
	\draw (-3.2,0.1) -- (-1.9,-2.1);
	\draw (-2.1,-2) -- (0.1,-2);
	\draw (-0.2,-2.1) -- (1.2,-0.4);
	\draw (1,1) -- (1,-1);

	\node at (-1,2.3) {$B-E_{1}-E_{2}$};
	\node at (-3.1,1) {$E_{2}$};
	\node at (-3.4,-1) {$E_{1}-E_{2}$};
	\node at (-1,-2.3) {$F-E_{1}-E_{3}$};
	\node at (0.8, -1.5) {$E_{3}$};
	\node at (1.8,0) {$B-E_{3}$};

\end{tikzpicture}
\end{center}
\end{minipage}

\bigskip

\begin{minipage}{.5\textwidth}
\begin{center}
\begin{tikzpicture}[scale=0.6,squarednode/.style={rectangle, draw=blue!60, fill=blue!5, very thick, minimum size=5mm}, font=\tiny]

	\node[squarednode] at (-3,3) (maintopic) {3.3} ; 
    \draw (-2.1,2) -- (0.1,2);
	\draw (-2.1,-2) -- (0.1,-2);
	\draw (-0.2,-2.1) -- (1.2,0.1);
	\draw (-1.9,2.1) -- (-3.2,-0.1);
	\draw (-3.2,0.1) -- (-1.9,-2.1);

	\node at (-1,2.3) {$B-E_{1}-E_{2}$};
	\node at (-3.2,1) {$E_{2}$};
	\node at (-3.8,-1.2) {$E_{1}-E_{2}-E_{3}$};
	\node at (-0.9,-2.3) {$E_{3}$};
	\node at (1.9, -1) {$F-E_{1}-E_{3}$};

\end{tikzpicture}
\end{center}
\end{minipage}%
\begin{minipage}{.5\textwidth}
\begin{center}
\begin{tikzpicture}[scale=0.6,squarednode/.style={rectangle, draw=blue!60, fill=blue!5, very thick, minimum size=5mm}, font=\tiny]

	\node[squarednode] at (-3,3) (maintopic) {3.4} ; 
    \draw (-2.1,2) -- (0.1,2);
	\draw (-2.1,-2) -- (0.1,-2);
	\draw (-0.2,-2.1) -- (1.2,0.1);
	\draw (-1.9,2.1) -- (-3.2,-0.1);
	\draw (-3.2,0.1) -- (-1.9,-2.1);

	\node at (-1,2.3) {$B-E_{1}-E_{2}$};
	\node at (-3.5,1) {$E_{2}-E_{3}$};
	\node at (-3.2,-1) {$E_{3}$};
	\node at (-1,-2.3) {$E_{1}-E_{2}-E_{3}$};
	\node at (1.3, -1) {$F-E_{1}$};

\end{tikzpicture}
\end{center}
\end{minipage}

\caption{Configurations 3.1 -- 3.4}
\label{Config5 of AP blown up 1-4}
\end{figure}
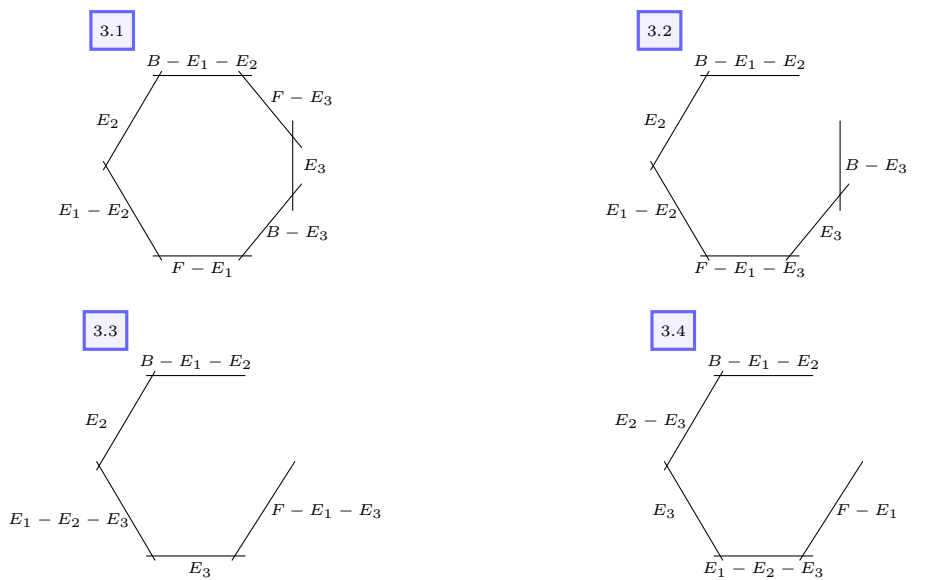

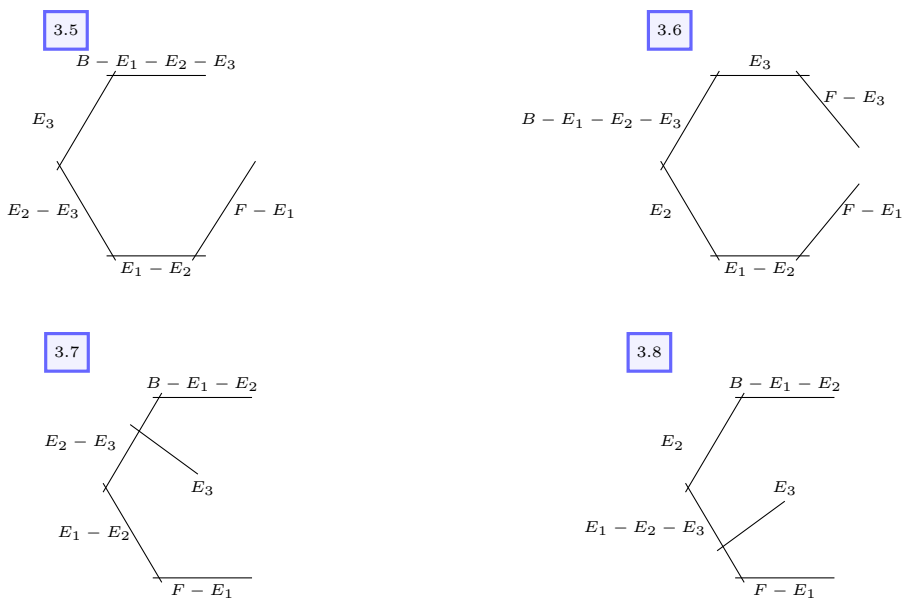
\begin{figure}[thp]

\bigskip

\begin{minipage}{.5\textwidth}
\begin{center}
\begin{tikzpicture}[scale=0.6,squarednode/.style={rectangle, draw=blue!60, fill=blue!5, very thick, minimum size=5mm}, font=\tiny]

	\node[squarednode] at (-3,3) (maintopic) {3.5} ; 
    \draw (-2.1,2) -- (0.1,2);
	\draw (-2.1,-2) -- (0.1,-2);
	\draw (-0.2,-2.1) -- (1.2,0.1);
	\draw (-1.9,2.1) -- (-3.2,-0.1);
	\draw (-3.2,0.1) -- (-1.9,-2.1);

	\node at (-1,2.3) {$B-E_{1}-E_{2}-E_{3}$};
	\node at (-3.5,1) {$E_{3}$};
	\node at (-3.5,-1) {$E_{2}-E_{3}$};
	\node at (-1,-2.3) {$E_{1}-E_{2}$};
	\node at (1.4, -1) {$F-E_{1}$};

\end{tikzpicture}
\end{center}
\end{minipage}%
\begin{minipage}{.5\textwidth}
\begin{center}
\begin{tikzpicture}[scale=0.6,squarednode/.style={rectangle, draw=blue!60, fill=blue!5, very thick, minimum size=5mm}, font=\tiny]

	\node[squarednode] at (-3,3) (maintopic) {3.6};
    \draw (-2.1,2) -- (0.1,2);
	\draw (-1.9,2.1) -- (-3.2,-0.1);
	\draw (-3.2,0.1) -- (-1.9,-2.1);
	\draw (-2.1,-2) -- (0.1,-2);
	\draw (-0.2,-2.1) -- (1.2,-0.4);
	\draw (-0.2,2.1) -- (1.2,0.4);

	\node at (-1,2.3) {$E_{3}$};
	\node at (-4.5,1) {$B-E_{1}-E_{2}-E_{3}$};
	\node at (-3.2,-1) {$E_{2}$};
	\node at (-1,-2.3) {$E_{1}-E_{2}$};
	\node at (1.5, -1) {$F-E_{1}$};
	\node at (1.1,1.5) {$F-E_{3}$};

\end{tikzpicture}
\end{center}

\end{minipage}

\bigskip 

\bigskip

\begin{minipage}{.5\textwidth}
\begin{center}
\begin{tikzpicture}[scale=0.6,squarednode/.style={rectangle, draw=blue!60, fill=blue!5, very thick, minimum size=5mm}, font=\tiny]

	\node[squarednode] at (-4,3) (maintopic) {3.7} ;

    \draw (-2.1,2) -- (0.1,2);
	\draw (-2.1,-2) -- (0.1,-2);
	\draw (-1.9,2.1) -- (-3.2,-0.1);
	\draw (-3.2,0.1) -- (-1.9,-2.1);
	\draw (-1.1,0.3) -- (-2.6,1.4);

	\node at (-1,2.3) {$B-E_{1}-E_{2}$};
	\node at (-3.7,1) {$E_{2}-E_{3} $};
	\node at (-3.4,-1) {$E_{1}-E_{2}$};
	\node at (-1,-2.3) {$F-E_{1}$};
	\node at (-1, 0) {$E_{3} $};

\end{tikzpicture}
\end{center}
\end{minipage}%
\begin{minipage}{.5\textwidth}
\begin{center}
\begin{tikzpicture}[scale=0.6,squarednode/.style={rectangle, draw=blue!60, fill=blue!5, very thick, minimum size=5mm}, font=\tiny]

	\node[squarednode] at (-4,3) (maintopic) {3.8} ;

    \draw (-2.1,2) -- (0.1,2);
	\draw (-2.1,-2) -- (0.1,-2);
	\draw (-1.9,2.1) -- (-3.2,-0.1);
	\draw (-3.2,0.1) -- (-1.9,-2.1);
	\draw (-1,-0.3) -- (-2.5,-1.4);

	\node at (-1,2.3) {$B-E_{1}-E_{2}$};
	\node at (-3.5,1) {$E_{2} $};
	\node at (-4.1,-0.9) {$E_{1}-E_{2}-E_{3}$};
	\node at (-1,-2.3) {$F-E_{1}$};
	\node at (-1, 0) {$E_{3} $};

\end{tikzpicture}
\end{center}
\end{minipage}

\caption{Configurations 3.5 -- 3.8}
\label{Config5 of AP blown up 5-8}
\end{figure}

Next we look at configuration (4) of Figure 2 in \cite{AnjPin}, which is reproduced in Figure \ref{Config42 of AP}. Note that this configuration coincides with the one in Figure \ref{Config5 of AP} if we interchange the role of $B$ and $F$. Therefore, blowing-up we will get configurations 4.1 -- 4.8 which can be obtained simply from 3.1 -- 3.8 by interchanging also the roles of $B$ and $F$. 

Similarly from configuration (2) of Figure 2 in \cite{AnjPin}, represented in Figure \ref{Config42 of AP}, we obtain configurations 5.1 -- 5.10 by interchanging the roles of $B$ and $F$ in configurations 2.1 -- 2.10. 

\begin{figure}[thp]

\begin{minipage}{.5\textwidth}
\begin{center}
\begin{tikzpicture}[scale=0.6,squarednode/.style={rectangle, draw=blue!60, fill=blue!5, very thick, minimum size=5mm}, font=\scriptsize]

	\node[squarednode] at (-3,3) (maintopic) {4} ;
    \draw (-1.4,2) -- (1.4,2);
	\draw (-0.8,2.4) -- (-2.4,-0.2);
	\draw (-2.4,0.2) -- (-0.8,-2.4);
	\draw (-1.4,-2) -- (1.4,-2);

	\node at (0.4,2.3) {$B-E_{1}$};
	\node at (-2.6,1) {$E_{1}-E_{2}$};
	\node at (-2.2,-1.2) {$E_{2} $};
	\node at (0.5,-2.3) {$F-E_{1}-E_{2}$};

\end{tikzpicture}
\end{center}
\end{minipage}%
\begin{minipage}{.5\textwidth}
\begin{center}
\begin{tikzpicture}[scale=0.6,squarednode/.style={rectangle, draw=blue!60, fill=blue!5, very thick, minimum size=5mm}, font=\scriptsize]

	\node[squarednode] at (-3,3) (maintopic) {5} ;

    \draw (-1.4,2) -- (1.5,2.5);
	\draw (-0.8,2.4) -- (-2.4,-0.2);
	\draw (-2.4,0.2) -- (-0.8,-2.4);
	\draw (-1.4,-2) -- (1.4,-2);
	\draw (0.8,2.7) -- (2.9,0.7);

	\node at (-0.1,2.5) {$E_{2}$};
	\node at (-3,1) {$F-E_{1}-E_{2} $};
	\node at (-2.1,-1) {$E_{1} $};
	\node at (0.2,-2.3) {$B-E_{1} $};
	\node at (2.6,2) {$B-E_{2}$};

\end{tikzpicture}
\end{center}
\end{minipage}
\caption{Configurations (4) and (2), respectively,  in \cite{AnjPin}  of $J$-holomorphic curves in $\Mcc$}
\label{Config42 of AP}

\end{figure}
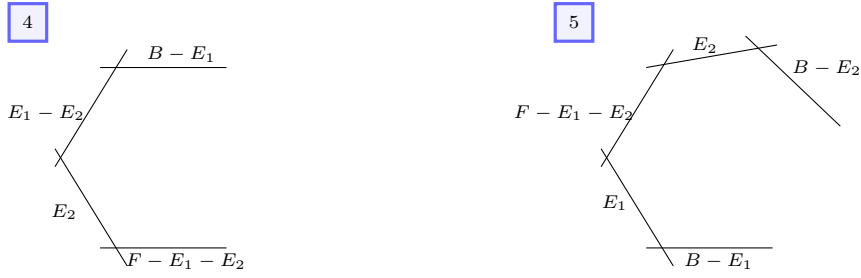

Finally, using configuration (6) of Figure 2 in \cite{AnjPin},  blowing-up at the points indicated in Figure \ref{Config6 of AP}, we obtain eight new configurations of $J$-holomorphic curves  in $\Mccc$, denoted by 6.1 -- 6.8. At this point it should be clear how to draw these eight configurations, so we do not show them in here. 

\begin{figure}[thp]

\begin{center}
\begin{tikzpicture}[scale=0.7,squarednode/.style={rectangle, draw=blue!60, fill=blue!5, very thick, minimum size=5mm}, font=\scriptsize]

	\node[squarednode] at (-3,3) (maintopic) {6} ;

    \draw (-1.4,2) -- (1.4,2);
	\draw (-1.2,2.2) -- (-1.2,-2.2);
	\draw (-1.4,-2) -- (1.4,-2);
	\draw (-1.4,0) -- (1.4,0);

	\node at (0.1,2.3) {$F-E_{1}$};
	\node at (-2.2,1) {$E_{1}-E_{2} $};
	\node at (0.1,-2.3) {$B-E_{1} $};
	\node at (1,0.3) {$E_{2}$};
	\node at (2.3,0){$\color{red} 1$};
	\node at (2,0){$\color{red} \bullet $};
	\node at (1.2,2.3){$\color{red} 2$};
	\node at (1.2,2){$\color{red} \bullet $};
	\node at (-1.5,2.3){$\color{red} 3$};
	\node at (-1.2,2){$\color{red} \bullet $};
	\node at (-1.5,0.3){$\color{red} 4$};
	\node at (-1.2,0){$\color{red} \bullet $};
	\node at (0,0.3){$\color{red} 5$};
	\node at (0,0){$\color{red} \bullet $};
	\node at (-1.5,-2.3){$\color{red} 6$};
	\node at (-1.2,-2){$\color{red} \bullet $};
	\node at (1.2,-1.7){$\color{red} 7$};
	\node at (1.2,-2){$\color{red} \bullet $};
	\node at (-1.5,-1){$\color{red} 8$};
	\node at (-1.2,-1){$\color{red} \bullet $};

\end{tikzpicture}
\end{center}
\caption{Configuration (6) in \cite{AnjPin} of  $J$-holomorphic curves in $\Mcc$.}
\label{Config6 of AP}
\end{figure}

Let $\Jcccm$ be the space of almost complex structures $J \in \Jccc$ such that the class $D_{-m}$,  $m=0,1,2,3$, is represented by an embedded $J$-holomorphic sphere. Moreover, let $\mathcal{J}_1$, $\mathcal{J}_{12,23}$ and $\mathcal{J}_{123}$ be, respectively, the spaces of almost complex structures 
$J \in \Jccc$ for which: at least two of the classes $B-E_i$, $i=1,2,3$; $E_1-E_2$ and $E_2-E_3$; and $E_1- E_2 -E_3$ are represented by embedded $J$-holomorphic spheres. The above considerations and Lemma \ref{J-holomorphics m1} show that the space $\Jccc$ is the disjoint union of the subspaces
\begin{multline}\label{disjointunion}
\Jccc = \mathcal{J}_{c_1,c_2,c_3,3} \sqcup  \mathcal{J}_{c_1,c_2,c_3,2(1)}  \sqcup  \mathcal{J}_{c_1,c_2,c_3,2(2)}  \sqcup  \mathcal{J}_{c_1,c_2,c_3,2(3)} \\
 \sqcup  \mathcal{J}_{1}  \sqcup  \left( \left(\mathcal{J}_{c_1,c_2,c_3,1(1)} \cap  \mathcal{J}_{123}\right) \cup \left(\mathcal{J}_{c_1,c_2,c_3,1(1)} \cap  \mathcal{J}_{12,23}\right) \right).
\end{multline}
Note that the last space in \eqref{disjointunion} is simply the union of the strata of almost complex structures $J$ for which the curves $B-E_1$ and $E_1-E_2-E_3$ are both represented by  $J$-holomorphic spheres with the strata for which the curves $B-E_1$ and $E_1-E_2$ and $E_2-E_3$ are all represented. 

\begin{lemma}\label{existenceJ-curves}
Given a tamed almost complex structure $J$ in one of the strata  of the disjoint union  \eqref{disjointunion}, a class $D_i$ $ i\neq m$, is represented by some embedded $J$-holomorphic sphere iff $D_i \cdot D_{-m} \geq 0$, $m=0,1,2,3$. In particular, the classe $D_i$ always has embedded $J$-holomorphic representatives for all $J \in \Jccc$ except if $A$, given as in the following list,  is also represented by an embedded $J$-holomorphic sphere:
\begin{itemize}
\item $i=4k-1(2)$ and $A$ is equal to $E_1-E_2$ or $E_1-E_2-E_3$;
\item $i=4k-1(3)$ and $A$  is  equal to $E_1-E_3$,  $E_2-E_3$ or $E_1-E_2-E_3$;
\item $i=4k-2(1)$ and $A$  is equal to $ F-E_2-E_3$,  $E_1-E_2-E_3$, $E_1-E_2$ or  $E_1-E_3$;
\item $i=4k-2(2)$ and $A$  is equal to $ F-E_1-E_3$,   or  $E_2-E_3$;
\item $i=4k-2(3)$ and $A= F-E_1-E_2$;
\item $i=4k-3$ and $A$  is equal to $ F-E_1-E_2-E_3$, $F-E_i-E_j$ or $E_1-E_2-E_3$.
\end{itemize}
\end{lemma}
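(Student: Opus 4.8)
The plan is to reduce the statement to a single principle: for $i\geq 1$ the class $D_i$ is represented by an embedded $J$-holomorphic sphere if and only if $D_i\cdot A\geq 0$ for every class $A$ that is itself represented by an embedded $J$-sphere of negative self-intersection. The criterion $D_i\cdot D_{-m}\geq 0$ isolates the \emph{section} (base-class) obstruction in each stratum of \eqref{disjointunion}, while the tabulated classes $A$ account for the remaining \emph{fibre} and \emph{diagonal} obstructions; both are then obtained by computing intersection numbers against the finite set of negative classes catalogued in Lemmas \ref{B-curve m1}, \ref{E3 m1} and \ref{J-holomorphics m1}.

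First I would record the existence of \emph{some} representative. Since $i\geq 1$ gives $k(D_i)\geq 0$, Lemma \ref{existenceJ-curves_generic} yields $\mathrm{Gr}(D_i)=\pm 1$, so by Gromov compactness $D_i$ is represented by a $J$-holomorphic curve (possibly a cusp-curve) for \emph{every} tamed $J$. As computed before Lemma \ref{existenceJ-curves_generic}, $g_v(D_i)=0$, so by the adjunction formula any somewhere-injective irreducible representative is an embedded sphere; it remains only to decide when such an irreducible representative exists. For the ``only if'' direction I would use positivity of intersections directly: if $A$ is represented by an embedded $J$-sphere and $D_i\cdot A<0$, then, since $D_i\neq A$ as homology classes (their $F$- or $B$-coefficients differ), every $J$-holomorphic representative of $D_i$ must contain the curve in class $A$ as a component, so $D_i$ has no irreducible—in particular no embedded—representative. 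Taking $A=D_{-m}$ in a stratum $\mathcal{J}_{c_1,c_2,c_3,m}$ gives the forward implication of $D_i\cdot D_{-m}\geq 0$.

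The ``if'' direction is the main obstacle. Here I would argue by compactness, following \cite[Lemma 2.4]{Pin} and \cite[Lemmas 2.9, 2.10]{AnjPin}: assuming $D_i\cdot A\geq 0$ for all represented negative classes $A$, approximate $J$ by generic $J_n$, for which $D_i$ has embedded representatives $C_n$ by Lemma \ref{existenceJ-curves_generic}, and pass to a Gromov limit. Since $D_i\cdot D_i\geq -1$ (see the table preceding Lemma \ref{existenceJ-curves_generic}), automatic transversality for embedded spheres applies and keeps the moduli space of the expected dimension $2c_1(D_i)-2$; the point is that the limit cannot split off a component $A$ with $D_i\cdot A<0$, which is exactly what the hypothesis forbids, so an embedded representative survives. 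Any putative cusp-curve decomposition is constrained by the allowed component list of Lemma \ref{B-curve m1} together with Lemmas \ref{E3 m1} and \ref{J-holomorphics m1}, so only finitely many component types must be checked. I expect this existence/regularity step to be the genuinely delicate part.

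Finally I would extract the explicit data. Writing $D_i=B+kF-\sum_a r_aE_a$ and $D_{-m}=B-\sum_{a\in S}E_a$, one gets $D_i\cdot D_{-m}=k-\#\{a\in S:\ r_a=1\}$, which records the section obstruction governing the strata $\mathcal{J}_{c_1,c_2,c_3,3}$ and $\mathcal{J}_{c_1,c_2,c_3,2(j)}$. Running through the remaining negative classes—the fibre classes $F-E_i$, $F-E_i-E_j$, $F-E_1-E_2-E_3$ and the diagonal classes $E_i-E_j$, $E_1-E_2-E_3$, which are the binding constraints in $\mathcal{J}_1$ and in the last piece of \eqref{disjointunion}—and computing $D_i\cdot A$ for each of the four residues of $i$ modulo $4$ produces precisely the tabulated obstruction classes. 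For example, for $i=4k-3$ one finds $D_i\cdot(F-E_1-E_2-E_3)=-2$, $D_i\cdot(F-E_i-E_j)=-1$ and $D_i\cdot(E_1-E_2-E_3)=-1$, while $D_i\cdot A\geq 0$ for every other negative class; the analogous computations for $i=4k-1(j)$ and $i=4k-2(j)$ yield the remaining items, and for $i=4k$ all intersections are non-negative, explaining why $D_{4k}$ is absent from the list. This bookkeeping is routine once the component list is fixed.
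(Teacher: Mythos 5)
Your ``only if'' direction (positivity of intersections), your reading of the exception list from $D_i\cdot A<0$, and your intersection computations are all correct and agree with the paper. The genuine gap is in the existence (``if'') direction, which you yourself flag as the delicate step. Your scheme --- approximate $J$ by generic $J_n$, take embedded representatives $C_n$ from Lemma \ref{existenceJ-curves_generic}, and pass to a Gromov limit, claiming the limit cannot degenerate because the hypothesis forbids components $A$ with $D_i\cdot A<0$ --- does not close. A Gromov limit can perfectly well be a cusp curve \emph{all} of whose components have non-negative intersection with $D_i$: for instance $D_4=B+F$ can a priori degenerate as $(B+F-E_1)+E_1$, and $D_4\cdot(B+F-E_1)=2\geq 0$, $D_4\cdot E_1=0\geq 0$, so nothing in your hypothesis excludes this splitting. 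Automatic transversality (which does apply here, since $c_1(D_i)\geq 1$) gives \emph{openness} of the set of $J$ admitting an embedded representative, not closedness; so the limiting argument by itself produces no embedded curve for the non-generic $J$ in the given stratum, which is exactly the case the lemma is about.

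What closes the gap --- and is what the paper does, adapting Lemma 2.6 of \cite{Pin} --- is to work with point constraints for the given $J$ itself rather than with limits from generic $J_n$. Since $\mathrm{Gr}(D_i)\neq 0$, for the fixed $J$ there is a $J$-holomorphic representative of $D_i$ passing through $k(D_i)=c_1(D_i)-1$ generic points. If this representative were a cusp curve $C=\bigcup_a m_aC_a$ with $N\geq 2$ components, its components decompose $D_i$ into some $D_r$ plus non-negative multiples of the classes of Lemma \ref{B-curve m1} (this is where the stratum hypothesis $D_r\cdot D_{-m}\geq 0$ enters); passing to the reduced cusp curve $\bar C$ (same image, hence the same special points) one counts $\dim\mathcal{M}(\bar C,J)=2c_1(\bar C)-2N\leq 2c_1(D_i)-2N<2c_1(D_i)-2=2k(D_i)$, so no reduced cusp curve can pass through $k(D_i)$ generic points. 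Hence the representative through those points is irreducible and somewhere injective, and since $g_v(D_i)=0$ the adjunction formula makes it an embedded sphere. In short: the dimension count against point constraints, not the absence of negative-intersection components, is the mechanism that rules out degeneration; without it your compactness step fails.
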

\begin{proof}
The proof is an adaptation of the proof of Lemma 2.6 in \cite{Pin}, so we just recall the main steps. By positivity of intersections if $D_i$ is represented by an embedded $J$-holomorphic curve then we have $D_i \cdot D_{-m} \geq 0$. On the other hand, given $J$ in one of the strata of the decomposition \eqref{disjointunion}, consider $D_i$ such that $D_i \cdot D_{-m} \geq 0$ and $D_i$ does not have any embedded $J$-holomorphic representative. This implies that $i \geq 0$ and therefore $\mathrm{Gr} (D_i) \neq 0$. Hence there must be a $J$-holomorphic cusp-curve representing $D_i$ and passing through $k=k(D_i)$  generic points $\{ p_1, \hdots, p_k\}$. Such a curve 
$$ C = \bigcup_{i=1}^N m_iC_i \quad \mbox{where}  \quad N \geq 2$$
defines a decomposition of $D_i$ as a sum of some $D_r$ and multiples of the classes $E_i$, $E_i-E_j$, $E_1-E_2-E_3$, $F$, $F-E_i$, $ F-E_i-E_j$ and $ F-E_1-E_2-E_3$, where the multiplicities of each curve are all nonnegative and $D_r \cdot D_{-m} \geq 0$ unless $r=-m$.  Then we remove all but one copy of the repeated components of $C$ and replace multiply-covered components by their underlying simple cover in order to obtain the reduced cusp curve $\bar{C}$. Note that the curves $C$ and $\bar{C}$ have the same image and therefore contain the same subset of special points in $\{ p_1, \hdots, p_k\}$. Now the idea is to show that the dimension of the space of reduced cusp curves is too small. More precisely, 
$$\dim \mathcal{M}(\bar{C}, J) = 2c_1(\bar{C}) -2N,$$
where $N$ is the number of components of $\bar{C}$. Since $c_1(\bar{C}) \leq c_1(D_i)$ it follows that 
$$ \dim \mathcal{M}(\bar{C}, J) < 2k(D_i)$$ 
and this implies that no such cusp-curve can pass through $k(D_i)$ points. The list of exceptions follows from the fact that for each pair $D_i$ and $A$ in the list we have $D_i \cdot A < 0 $, which is not allowed by positivity of intersections. 
\end{proof}

\section{Homotopy type of symplectomorphism groups}\label{chp m1: homotopy type}

In this section first we recall a result by J. Li, T. Li and W. Wu about the mapping class group of $\Guccc$, then state the main result about the stability of symplectomorphisms and study the homotopy groups of $\Gccc$. In particular we obtain an estimate for the rank of its fundamental group. In Subsections \ref{sec m1: generators} and \ref{sec m1:Samelsonproducts} we then study the generators of the fundamental group and their Samelson products, respectively. Putting all this together, finally in Subsection \ref{proof m1}, we state Theorem \ref{mainthm m1} in full detail and prove it. 

\begin{proposition} (\cite[Theorem 1.1]{LiLiWu})
The space $ G_{\mu , c_{1} , c_{2}, c_{3} } $ is connected for any $ \mu \geq 1 $.
\end{proposition}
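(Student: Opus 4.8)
The plan is to deduce $\pi_0(\Gccc)=0$ from the action of the group on the contractible space $\Jccc$, $\phi\cdot J=d\phi\circ J\circ d\phi^{-1}$, organised around the stratification by configurations of $J$-holomorphic curves. The naive hope that $G$ acts transitively on the open dense stratum fails, since that stratum contains both highly symmetric (integrable) structures and generic ones with no holomorphic symmetries, so their stabilisers are not conjugate. Instead I would pass to the space $\mathcal{C}$ of \emph{standard curve configurations}: for a fixed configuration the set of compatible $J$ making every curve holomorphic is non-empty and contractible, and (by Gromov theory and positivity of intersections) a generic $J$ determines its holomorphic representatives uniquely, so $J\mapsto(\text{its configuration})$ exhibits the top stratum as a bundle over $\mathcal{C}$ with contractible fibres. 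Since the complement of the top stratum has real codimension $\geq 2$ (Lemmas \ref{J-holomorphics m1} and \ref{existenceJ-curves}), the top stratum, and hence $\mathcal{C}$, is connected; connectedness of $G$ will then follow from an orbit--stabiliser fibration over $\mathcal{C}$.

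Concretely, and in line with Remark \ref{E3 is important}, I would realise this inductively by blowing down $E_3$, taking connectedness of $\Gcc$ from \cite{AnjPin} as the base case. By Lemma \ref{E3 m1} the class $E_3$ has a unique embedded $J$-representative for every $J$, so the map $\Jccc\to\mathcal{S}_{E_3}$ sending $J$ to that representative is a continuous surjection from a connected space; hence the space $\mathcal{S}_{E_3}$ of such exceptional spheres is connected. Blowing down this sphere identifies the subgroup of $\Gccc$ preserving it with the stabiliser $\Symp(\Mcc,B)$ of a symplectic ball $B$ of capacity $c_3$ in $\Mcc$, placing $\Gccc$ in a tower of two fibrations
\[
\Symp(\Mcc,B)\longrightarrow \Gccc\longrightarrow \mathcal{S}_{E_3},\qquad
\Symp(\Mcc,B)\longrightarrow \Gcc\longrightarrow \mathrm{Emb}_\omega(B,\Mcc),
\]
whose bases are both connected: $\mathcal{S}_{E_3}$ by the argument just given, and the space $\mathrm{Emb}_\omega$ of symplectic ball embeddings by McDuff's uniqueness of symplectic blow-ups \cite{McD}.

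Reading the long exact homotopy sequences then propagates connectedness. The crux is to show that the common fibre $\Symp(\Mcc,B)$ is connected: granting this, the first fibration gives $\pi_0(\Gccc)\hookrightarrow\pi_0(\mathcal{S}_{E_3})=0$, completing the induction. By the second fibration, connectedness of $\Symp(\Mcc,B)$ amounts to the vanishing of the connecting map $\partial\colon\pi_1\bigl(\mathrm{Emb}_\omega(B,\Mcc)\bigr)\to\pi_0\bigl(\Symp(\Mcc,B)\bigr)$, equivalently to the statement that every loop of ball embeddings is covered by a loop of ambient symplectomorphisms based at the identity. I expect this verification to be the hard part, and it is precisely where the explicit structure of $\pi_1(\Gcc)$ — generated by Hamiltonian circle actions — together with the inflation technique of Lalonde--McDuff must enter, by exhibiting ambient Hamiltonian loops that realise the generators of $\pi_1(\mathrm{Emb}_\omega)$ and thereby kill $\partial$. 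Everything else is a formal manipulation of the two long exact sequences.
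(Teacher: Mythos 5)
Your reduction to the two fibrations is sound in outline (connectedness of $\mathcal{S}_{E_3}$ via the map $J\mapsto$ its unique $E_3$-representative is a nice observation, and the identification of the sphere stabiliser upstairs with a ball stabiliser downstairs can be made precise with the Lalonde--Pinsonnault linearisation machinery of \cite{LalPin}). But the step you yourself flag as ``the hard part'' is not merely hard: as you have set it up, it is circular, and nothing in your proposal breaks the circle. By your first fibration, $\pi_0(\Gccc)$ is a quotient of $\pi_0\bigl(\Symp(\Mcc,B)\bigr)$, and by your second fibration $\pi_0\bigl(\Symp(\Mcc,B)\bigr)$ is the cokernel of $\pi_1(\Gcc)\to\pi_1\bigl(\mathrm{Emb}_\omega(B,\Mcc)\bigr)$. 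So the surjectivity you need is at least as strong as the connectedness you are trying to prove, and your plan to ``exhibit ambient Hamiltonian loops realising the generators of $\pi_1(\mathrm{Emb}_\omega)$'' has no independent starting point: the homotopy groups of spaces of symplectic balls in $\Mcc$ are not known a priori -- in the literature (Lalonde--Pinsonnault, \cite{AnjPin}, and indeed Corollary \ref{mainapplication} of this paper) they are \emph{computed from} the homotopy type of the symplectomorphism groups of the blow-up, i.e.\ from knowledge of $\Gccc$, not the other way around. The same objection applies to your first paragraph: the orbit--stabiliser fibration over $\mathcal{C}$ only shows that $\pi_0(G)$ is a quotient of $\pi_0(\mathrm{Stab}(C))$, and connectedness of the stabiliser of a configuration is precisely the hard content, which you never address.

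The way to break the circle is to trade the ball stabiliser for a point stabiliser, so that simple connectivity of the base does the work for free. For $c_3$ small one has $\Gccc\simeq \Symp_p(\Mcc)$ (this is Lemma \ref{small c3 m1}), and the evaluation fibration $\Symp_p(\Mcc)\to\Gcc\to\Mcc$ gives the exact sequence $\pi_1(\Mcc)\to\pi_0\bigl(\Symp_p(\Mcc)\bigr)\to\pi_0(\Gcc)$ with $\pi_1(\Mcc)=0$ and $\pi_0(\Gcc)=0$ by \cite{AnjPin}; no surjectivity onto the $\pi_1$ of any base is needed. Inflation (Theorem \ref{slight changes m1}) is then what extends the conclusion from small $c_3$ to the whole generic range -- that is where inflation belongs in this argument, not in killing the connecting map $\partial$. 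Note finally that the paper's own proof, following \cite{LiLiWu}, is different again: it fixes the full standard configuration 1.13 and proves connectedness of its stabiliser directly through the chain $\Symp_c(U)\to\mathrm{Stab}^1(C)\to\mathrm{Stab}^0(C)\to\mathrm{Stab}(C)\to\Symp_h(M,\omega)$, using contractibility of the compactly supported symplectomorphism group of the complement and the gauge groups of the normal bundles; that argument works uniformly for all $\mu\ge 1$, whereas your induction (even once repaired as above) is confined to the capacity range in which the blow-down lemmas and the stability theorem apply.
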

\begin{proof} (Outline)
We will give the main ingredients of the proof. We start by establishing the basic concepts, definitions and notation.

 A \textit{stable (spherical symplectic) configuration} in a symplectic manifold $ (M, \omega) $ is an ordered configuration of symplectic spheres $$ C = \bigcup \limits_{i=1}^{N} C_{i}$$ such that
\begin{itemize}
\item $ [C_{i}] \cdot [C_{i}] \geq 1 $ for all $ i $,
\item for $ i \neq j $, $ [C_{i}] \neq [C_{j}] $ and $ [C_{i}] \cdot [C_{j}] $ is zero or one,
\item there is a compatible almost complex structure $ J $ such that all the $ C_{i} $'s are $ J $-holomorphic.
\end{itemize}
A stable configuration is \textit{standard} if the $ C_{i} $'s intersect $ \omega $-orthogonally at every intersection point of the configuration. We denote by $ \mathcal{C}_{0} $ the space of all standard stable configurations (see \cite{Eva} for more details on this configuration space, namely, its topology). A standard stable configuration $ C \in \mathcal{C}_{0} $ is \textit{full} if $ H^{2}(M,C;\mathbb{R}) = 0 $.
 Note that the configuration 1.13 is full,  standard and stable. From now on, we will restrict our attention to this choice of $ C $ as our working configuration for the case $ \mu =1 $.

The first step in the proof is to show that the group $ \text{Symp} _{c} (U) $ of compactly supported symplectomorphisms of $ (U = M \setminus C , \omega | _{U} )$ is contractible. By Moser's theorem, $ \text{Symp} _{c} (U) $ is homotopy equivalent to the space $ \text{Stab}^{1}(C) $ of symplectomorphisms on $ M $ that fix $ C $ pointwise and that act trivially on the normal bundles of the $ C_{i} $'s.

Next, we consider the fibration $ \text{Stab}^{1}(C) \rightarrow \text{Stab}^{0}(C) \rightarrow \mathcal{G}(C) $, where $ \text{Stab}^{0}(C) $ is the set of symplectomorphisms on $ M $ that fix $ C $ pointwise, and $\mathcal{G}(C) $ is defined as follows: let $ \mathcal{G}_{k_{i}}(C_{i}) $ denote the group of gauge transformations of the normal bundle to $ C_{i} $ which equal the identity at the $ k_{i} $ intersection points. In our case, each of these are homotopy equivalent to $ \mathcal{G}_{k_{i}}(S^{2}) $. Then, for $ C = \bigcup \limits_{i=1}^{N} C_{i}$, we define $\mathcal{G}(C) = \prod_{i=1}^{N} \mathcal{G}_{k_{i}}(C_{i}) $. So, by the first step, we obtain $ \text{Stab}^{0}(C) \simeq \mathcal{G}(C) $.

The third step is to consider the set $ \text{Stab}(C) $ of symplectomorphisms on $ M $ that fix $ C $ as a set. For our choice of $ C $, it can be shown that $ \text{Stab}^{0}(C) \rightarrow \text{Stab}(C) \rightarrow \text{Symp}(C) $ is a homotopy fibration. Considering the long exact sequence
$$ ... \rightarrow \pi_{1}(\text{Stab}(C)) \rightarrow \pi_{1} (\text{Symp}(C)) \xrightarrow{\psi} \pi_{0}(\text{Stab}^{0}(C)) \rightarrow \pi_{0}(\text{Stab}(C)) \rightarrow \pi_{0}(\text{Symp}(C)),$$
it is possible to show that $ \psi $ is surjective and $ \text{Stab}(C) $ is connected.

Finally, we consider $ \text{Stab}(C) \rightarrow \text{Symp}_{h}(M,\omega) \rightarrow \mathcal{C}_{0} $. Here, the action of $ \text{Symp}_{h}(M,\omega) $ on $ \mathcal{C}_{0} $ is transitive and $ \mathcal{C}_{0} $ is connected. Since by the third step $ \text{Stab}(C) $ is connected, the homotopy fibration yields that $ \text{Symp}_{h}(M,\omega) $ is also connected. This finishes the proof of the proposition.

To summarize, the proof relies on the analysis of the following diagram, for a clever choice of $ C $:

\begin{displaymath}
    \xymatrix{
         \text{Symp}_{c}(U) \ar@{->}[r] & \text{Stab}^{1}(C) \ar@{->}[r] & \text{Stab}^{0}(C) \ar@{->}[d] \ar@{->}[r] & \text{Stab}(C) \ar@{->}[d] \ar@{->}[r] & \text{Symp}_{h}(M,\omega) \ar@{->}[d] \\
          &  &  \mathcal{G}(C) & \text{Symp}(C) & \mathcal{C}_{0}.}
\end{displaymath}

In fact, this diagram gives a uniform approach to show the connectedness of the symplectomorphism group  $ \text{Symp}_{h}(M,\omega) $, for $ M $ a n-fold blow-up of $ \mathbb{CP}^{2} $ for $ n \leq 4 $. For each case, the authors give the configuration that can be used to run the proof (\cite[Remark 3.4]{LiLiWu}).
\end{proof}

\subsection{Stability of symplectomorphism groups}\label{sec m1: stability}

\begin{theorem}\label{slight changes m1}
Consider $c_{1} , c_{2}, c_{3}, c'_{1}, c'_{2}, c'_{3}$ such that
\begin{center}
$ 0<c_{3} < c_{2} < c_{1} < c_{1} + c_{3} < c_{1} + c_{2} < 1$,

$ 0<c'_{3} < c'_{2} < c'_{1} < c'_{1} + c'_{3} < c'_{1} + c'_{2} < 1$,
\end{center}
Then the symplectomorphism groups $ G_{c_{1} , c_{2}, c_{3} } $ and $ G_{c'_{1} , c'_{2}, c'_{3} } $ are homotopy equivalent.
\end{theorem}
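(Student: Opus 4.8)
The plan is to prove the slightly stronger statement that the homotopy type of $G_{c_1,c_2,c_3}$ is locally constant on the parameter region
\[
P = \{(c_1,c_2,c_3)\in\mathbb{R}^3 : 0<c_3<c_2<c_1<c_1+c_3<c_1+c_2<1\}.
\]
This region is cut out by linear inequalities (it amounts to $0<c_3<c_2<c_1$ together with $c_1+c_2<1$), so it is an intersection of open half-spaces, hence convex and in particular path-connected. Local constancy on a connected set immediately gives $G_{c_1,c_2,c_3}\simeq G_{c'_1,c'_2,c'_3}$ for any two points of $P$. Accordingly I would first fix a smooth path $(c_1(t),c_2(t),c_3(t))$, $t\in[0,1]$, joining the two capacity vectors inside $P$, and realize it by a smooth family of symplectic forms $\omega_t$ Poincar\'e dual to $B+F-c_1(t)E_1-c_2(t)E_2-c_3(t)E_3$ on the fixed underlying smooth four-manifold $M$.

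The geometric engine is the inflation technique of Lalonde--McDuff combined with the constancy of the stratification of the spaces of almost complex structures established in Section~\ref{chp m1: Structure}. The content of that section is that the list of homology classes carrying embedded $J$-holomorphic spheres, together with the possible configurations they form, depends only on the chain of strict inequalities and therefore does not change as $t$ ranges over $[0,1]$. I would package the spaces $\mathcal{J}_{\omega_t}$ into a universal family $\widetilde{\mathcal{J}}=\{(t,J):J\in\mathcal{J}_{\omega_t}\}\to[0,1]$, whose fibers are contractible, together with its decomposition into universal strata $\widetilde{\mathcal{J}}_\alpha$ indexed by the ($t$-independent) configuration types. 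Inflation is exactly what glues the family together: given $J\in\mathcal{J}_{\omega_t}$ and the embedded curves of non-negative self-intersection it carries, inflating along those curves produces, for $t'$ near $t$, a symplectic form cohomologous to $\omega_{t'}$ that still tames (and after averaging is compatible with) $J$, so that $J$ persists as an element of the corresponding stratum for $\omega_{t'}$. This is what allows one to show that each $\widetilde{\mathcal{J}}_\alpha\to[0,1]$ is locally trivial, and hence that every stratum $\mathcal{J}^{(t)}_\alpha$ is homotopy equivalent to $\mathcal{J}^{(0)}_\alpha$.

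To transfer this information to the groups themselves I would use the action of $G_{\omega_t}$ on $\mathcal{J}_{\omega_t}$. In the present situation the action is transitive up to homotopy on each stratum, which yields homotopy fibrations $\mathrm{Stab}(J_t^\alpha)\to G_{\omega_t}\to \mathcal{J}^{(t)}_\alpha$, where $J_t^\alpha$ is a standard integrable representative and $\mathrm{Stab}(J_t^\alpha)$ is its K\"ahler isometry group, one of the fixed compact groups $\mathbb{T}^2$, $S^1$ or the identity. Choosing the $J_t^\alpha$ to vary in a smooth path of standard structures with constant isometry group, the previous step identifies the base spaces up to homotopy while the fibers coincide. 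Since $\mathcal{J}_{\omega_t}$ is contractible and is assembled from these strata according to the constant stratification combinatorics, with constant isotropy groups, the homotopy type of $G_{\omega_t}$ is thereby reconstructed as a functor of data that does not depend on $t$, giving $G_{\omega_0}\simeq G_{\omega_1}$.

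I expect the main obstacle to be precisely the point this conceptual skeleton glosses over, namely showing that the inflation constructions can be performed continuously in $t$ and simultaneously over all strata, including the positive-codimension ones, so that the maps $\widetilde{\mathcal{J}}_\alpha\to[0,1]$ are genuinely locally trivial fibrations rather than merely fiberwise homotopy equivalences, and so that the resulting homotopy equivalences are compatible with the incidence relations between strata. Controlling the behavior along the boundaries between strata, where curves degenerate and the stratum dimension jumps, is the delicate part; handling it carefully is presumably why the complete verification is deferred to the appendix, and it is the most likely place for a gap of the sort flagged by the referee to arise.
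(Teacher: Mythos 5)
Your proposal identifies the right engine (inflation along the curves supplied by the configuration analysis, which persists because the strict inequalities on the capacities are preserved), but the final transfer to the symplectomorphism groups is a genuine gap. At no point do you construct a map, or even a zigzag of maps, between $G_{c_{1},c_{2},c_{3}}$ and $G_{c'_{1},c'_{2},c'_{3}}$. The concluding claim that ``the homotopy type of $G_{\omega_t}$ is thereby reconstructed as a functor of data that does not depend on $t$'' is exactly the missing argument, and no such reconstruction exists: knowing that the contractible space $\mathcal{J}_{\omega_t}$ is stratified, that each stratum is weakly equivalent to $G_{\omega_t}/\mathrm{Iso}_\alpha$ for a fixed compact group $\mathrm{Iso}_\alpha\in\{\mathbb{T}^2,S^1,1\}$, and that the incidence combinatorics is constant in $t$, does not determine the homotopy type of $G_{\omega_t}$. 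The fibrations $\mathrm{Iso}_\alpha\to G_{\omega_t}\to\mathcal{J}^{(t)}_\alpha$ for $t=0$ and $t=1$ cannot be compared unless one already has a map between their total spaces --- which is the statement to be proved. (When stratifications of $\mathcal{J}_\omega$ are used in this subject, e.g.\ in Abreu--McDuff or Anjos--Pinsonnault, they feed into spectral sequence/Gysin-type computations requiring codimensions and normal bundle data, and they output cohomological information about a single group, not an equivalence between two groups.)

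The missing idea is Moser's theorem. By Moser, $\mathrm{Diff}_0$ acts transitively on the space $\mathcal{S}_{c_{1},c_{2},c_{3}}$ of symplectic forms isotopic to $\omega_{c_{1},c_{2},c_{3}}$ within its cohomology class, giving a fibration $G_{c_{1},c_{2},c_{3}}\to\mathrm{Diff}_0\to\mathcal{S}_{c_{1},c_{2},c_{3}}$ whose total space is the \emph{same} for all capacities; hence it suffices to produce a homotopy equivalence $\mathcal{S}_{c_{1},c_{2},c_{3}}\simeq\mathcal{S}_{c'_{1},c'_{2},c'_{3}}$ commuting up to homotopy with the identity of $\mathrm{Diff}_0$, and the fibers are then equivalent. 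This is where inflation enters, but in a much weaker form than the local triviality of stratum families you ask for: introducing $\mathcal{X}=\{(\omega,J):\omega\in\mathcal{S},\ \omega\text{ tames }J\}$, both projections are fibrations with contractible (convex) fibers, so $\mathcal{S}_{c_{1},c_{2},c_{3}}\simeq\mathcal{A}_{c_{1},c_{2},c_{3}}$, the space of almost complex structures tamed by \emph{some} form in the isotopy class; and inflation, applied configuration by configuration to the curves available for each fixed $J$ (after reducing to varying one capacity at a time), shows that $\mathcal{A}_{c_{1},c_{2},c_{3}}=\mathcal{A}_{c'_{1},c'_{2},c'_{3}}$ as sets. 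An equality of sets needs no compatibility with incidence relations between strata, no continuity in $t$, and no family version of inflation --- which removes precisely the difficulties you flag as the ``delicate part.'' Note also that inflation produces forms that \emph{tame} $J$, not forms compatible with $J$; this is why one must work with the tamed space $\mathcal{A}$ throughout, and your parenthetical fix (``after averaging is compatible'') cannot be implemented within a fixed cohomology class.
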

\begin{proof}
The proof of this theorem follows the same line of thought as the proof of Proposition 3.1 in \cite{Pin} and uses the inflation technique introduced by Lalonde and McDuff in \cite{LalMcD}.

We start by considering the natural action of the identity component of the diffeomorphism group of $ \widetilde{M}_{c_{1} , c_{2} , c_{3}}$ on the space $ \mathcal{S}_{c_{1} , c_{2} , c_{3}} $ of all symplectic forms on $ \widetilde{M}_{c_{1} , c_{2} , c_{3}}$ in the cohomology class $ [ \omega_{c_{1} , c_{2} , c_{3}}]$ that are isotopic to $ \omega_{c_{1} , c_{2} , c_{3}}$. By Moser's theorem, this action is transitive, and we obtain the following fibration:
$$ G_{c_{1} , c_{2}, c_{3} } = \text{Symp}(\widetilde{M} _{c_{1} , c_{2} , c_{3}}) \cap \text{Diff} _{0} (\widetilde{M}_{c_{1} , c_{2} , c_{3}}) \longrightarrow \text{Diff} _{0} (\widetilde{M}_{c_{1} , c_{2} , c_{3}}) \longrightarrow \mathcal{S}_{c_{1} , c_{2} , c_{3}}.$$
Doing the same with $c'_{1} , c'_{2}, c'_{3}$, we see that to show the required homotopy equivalence, it is sufficient to find a homotopy equivalence between the spaces $ \mathcal{S}_{c_{1} , c_{2} , c_{3}} $ and $ \mathcal{S}_{c'_{1} , c'_{2} , c'_{3}} $ that makes the below diagram commutative up to homotopy.

\begin{displaymath}
    \xymatrix{
         G_{c_{1} , c_{2}, c_{3} } \ar@{->}[r]  &  \text{Diff} _{0}  \ar@{->}[r] \ar@{=}[d] &  \mathcal{S}_{c_{1} , c_{2} , c_{3}} \ar@{<->}[d]  \\
         G_{c'_{1} , c'_{2}, c'_{3} }  \ar@{->}[r]  &  \text{Diff} _{0}  \ar@{->}[r] &  \mathcal{S}_{c'_{1} , c'_{2} , c'_{3}} }
\end{displaymath}
To this end, we follow an idea introduced by McDuff \cite{McD2} and consider the larger space $ \mathcal{X} _{c_{1} , c_{2} , c_{3}} $ defined as
$$ \mathcal{X} _{c_{1} , c_{2} , c_{3}} = \{ (\omega,J) \in \mathcal{S}_{c_{1} , c_{2} , c_{3}} \times \mathcal{A}_{c_{1} , c_{2} , c_{3}} \ : \ \omega \text{ tames } J \},$$
where $ \mathcal{A}_{c_{1} , c_{2} , c_{3}} $ is the space of almost complex structures that are tamed by some form in $ \mathcal{S}_{c_{1} , c_{2} , c_{3}} $. Then both projection maps $ \mathcal{X}_{c_{1} , c_{2} , c_{3}} \rightarrow \mathcal{A}_{c_{1} , c_{2} , c_{3}} $ and $ \mathcal{X}_{c_{1} , c_{2} , c_{3}} \rightarrow \mathcal{S}_{c_{1} , c_{2} , c_{3}} $ are fibrations with contractible fibers, and therefore they are homotopy equivalences. Below, we will prove that $ \mathcal{A}_{c_{1} , c_{2} , c_{3}} $ and $ \mathcal{A}_{c'_{1} , c'_{2} , c'_{3}} $ are in fact equal. We start by recalling the following result due to Lalonde and McDuff.

\begin{lemma}
[Inflation Lemma, \cite{LalMcD}] Let $ J $ be an $ \tau _{0} $-tame almost complex structure on a symplectic 4-manifold $ (M, \tau _{0} ) $ that admits a $ J $-holomorphic curve $ Z $ with non-negative self-intersection. Then there is a family $ \tau _{t} $, $ t \geq 0 $, of symplectic forms that all tame $ J $ and have cohomology class $ [ \tau _{t} ] = [ \tau _{0} ] + t \text{PD}(Z)  $, where $ \text{PD}(Z) $ is the Poincar\'{e} dual to the homology class $ Z $.
\end{lemma} 

This result was generalized by Buse to curves with negative self-intersection.

\begin{lemma}
[Buse, \cite{Bus}] Let $ J $ be an $ \tau _{0} $-tame almost complex structure on a symplectic 4-manifold $ (M, \tau _{0} ) $ that admits a $ J $-holomorphic curve $ Z $ with $ Z \cdot Z = -m $, $ m \in \mathbb{N} $. Then for all $ \epsilon > 0 $ there is a family $ \tau _{t} $ of symplectic forms that all tame $ J $ and have cohomology class $ [ \tau _{t} ] = [ \tau _{0} ] + t \text{PD}(Z)  $ for all $ 0 \leq t \leq \frac{\tau_{0}(Z)}{m} - \epsilon $, where $ \text{PD}(Z) $ is the Poincar\'{e} dual to the homology class $ Z $.
\end{lemma}

Note that to show the spaces of almost complex structures $ \mathcal{A}_{c_{1} , c_{2} , c_{3}}$ and $ \mathcal{A}_{c'_{1} , c'_{2} , c'_{3}} $ coincide for different choices of generic capacities, it is sufficient to show  the two spaces are the same when two of the capacities coincide: indeed, if we  consider two generic sets of capacities $\{c_3  < c_2 < c_1\}$ and  $\{c_3' < c_2' < c_1'\}$, first, fixing $c_2,c_2'$ and $c_3,c_3'$, we have $\mathcal{A}_{c_{1} , c_{2} , c_{3}} = \mathcal{A}_{c_{M} , c_{2} , c_{3}}$ and $ \mathcal{A}_{c'_{1} , c'_{2} , c'_{3}} =\mathcal{A}_{c_{M} , c'_{2} , c'_{3}}$, where $c_M=\mathrm{max}(c_1,c_1')$. Then fixing $c_2,c_2'$ and $c_M$ we obtain  $\mathcal{A}_{c_{M} , c_{2} , c_{3}} = \mathcal{A}_{c_{M} , c_{2} , c_{m}}$ and $ \mathcal{A}_{c_{M} , c'_{2} , c'_{3}} =\mathcal{A}_{c_{M} , c'_{2} , c_{m}}$ where  $c_m=\mathrm{min}(c_3,c_3')$. Finally fixing $c_M$ and $c_m$ we have $\mathcal{A}_{c_{M} , c_{2} , c_{m}} = \mathcal{A}_{c_{M} , c'_{2} , c_{m}}$. Therefore we obtain $ \mathcal{A}_{c_{1} , c_{2} , c_{3}}=\mathcal{A}_{c'_{1} , c'_{2} , c'_{3}}$.

Hence, considering any two sets of capacities $\{c_3  < c_2 < c_1\}$ and  $\{c_3' < c_2' < c_1'\}$, we just need to prove the following three  equalities: 

\textit{Step 1:} $ \mathcal{A}_{c'_{1} , c_{2} , c_{3}} = \mathcal{A}_{c_{1} , c_{2} , c_{3}} $ for $ c_{1} \leq c'_{1} $.

\textit{Step 2:} $ \mathcal{A}_{c_{1} , c'_{2} , c_{3}} = \mathcal{A}_{c_{1} , c_{2} , c_{3}} $ for $ c_{2} \leq c'_{2} $.

\textit{Step 3:} $ \mathcal{A}_{c_{1} , c_{2} , c'_{3}} = \mathcal{A}_{c_{1} , c_{2} , c_{3}} $ for $ c_{3} \leq c'_{3} $.

Here, we will only prove $ \mathcal{A}_{c'_{1} , c_{2} , c_{3}} \subset \mathcal{A}_{c_{1} , c_{2} , c_{3}} $ for $ c_{1} \leq c'_{1} $ in full detail. The remaining steps and inclusions are similar and are left to Appendix \ref{pf inflation}.

Take $ J \in \mathcal{A}_{c'_{1} , c_{2} , c_{3}} $ such that the embedded $ J $-holomorphic spheres satisfy  one of the following configurations from Section \ref{chp m1: Structure}:  1.1 -- 1.4, 1.7 -- 1.11, 1.13, 2.1,  2.2, 2.5, 2.6, 2.8 -- 2.10 (Figures \ref{Config1 of AP blown up 1-6},  \ref{Config1 of AP blown up 7-12}, \ref{Config1 of AP blown up 13}, \ref{Config3 of AP blown up 1-5} and \ref{Config3 of AP blown up 6-10}). Note that the curve $ E_{1} $ is represented by a $ J $-holomorphic curve in all these configurations. By definition, there is a symplectic form $ \omega_{c'_{1},c_{2},c_{3}} $ taming $ J $ and satisfying $ \omega_{c'_{1},c_{2},c_{3}}(B)=\omega_{c'_{1},c_{2},c_{3}}(F)=1 $, $ \omega_{c'_{1},c_{2},c_{3}}(E_{1})=c'_{1} $ and $ \omega_{c'_{1},c_{2},c_{3}}(E_{i})=c_{i} $ for $ i=2,3 $. To show that $ J \in \mathcal{A}_{c_{1} , c_{2} , c_{3}} $ we need to find a symplectic form $ \omega $ such that $ \omega(B)=\omega(F)=1 $ and $ \omega(E_{i})=c_{i} $ for $ i=1,2,3 $. We can use negative inflation along the curve $ E_{1} $ to define one-parameter family of symplectic forms all taming $ J $, $ \omega_{t} = \omega_{c'_{1},c_{2},c_{3}} + t \text{PD}(E_{1}) $ for $ 0 \leq t \leq c'_{1}- \epsilon $, where $ \epsilon $ can be chosen small enough so that we have $ t = t_{0} = c'_{1} - c_{1} $. For this value of $ t $, we obtain $ \omega = \omega_{c'_{1},c_{2},c_{3}} + (c'_{1}-c_{1}) \text{PD}(E_{1}) $, which satisfies $ \omega(B)=\omega(F)=1 $ and $ \omega(E_{i})=c_{i} $ for $ i=1,2,3 $, as desired.

Next, we consider $ J \in \mathcal{A}_{c'_{1} , c_{2} , c_{3}} $ with the following configurations: 1.5, 1.6, 1.12, 2.3, 2.4, 2.7. In this case, it follows from Lemma \ref{existenceJ-curves} that first we can inflate along $ B+F-E_{2} $ and $ B+F $, and then apply negative inflation to the resulting symplectic form along $ E_{1}-E_{3} $:

$$ \omega_{b,e} = \dfrac{\omega_{c'_{1},c_{2},c_{3}} + b \text{PD}(B+F-E_{2}) + e \text{PD}(B+F)}{1+b+e}, \quad  b,e \geq 0, $$
$$ \mbox{and} \quad \omega_{a}=\omega_{b,e} + a \text{PD}(E_{1}-E_{3}), \quad \mbox{with}  \quad a < \dfrac{\omega_{b,e}(E_{1}-E_{3})}{2}.$$
We get the desired symplectic form by setting $$ e= \dfrac{(1-c_{2})(c'_{1}-c_{1})}{(c_{1}+c_{3})}, \quad b = \dfrac{c_{2} e}{ 1-c_{2}} \quad \mbox{and} \quad a = \dfrac{c_{3} (b+e)}{1+b+e}. $$
These choices cover all the possible configurations of type 1 and 2. Note that, as explained at the end of Section \ref{chp m1: Structure}, the configurations of type 5 can be obtained by interchanging the roles of $ B $ and $ F $ in the configurations of type 2. Hence, we also covered the configurations of type 5.

For the configurations 3.1, 3.2 and 6.1, 6.2, again it follows from Lemma \ref{existenceJ-curves} that  we can use the curves $ B+F-E_{3} $, $ B+F $ in the inflation procedure together with the curve $ E_{1} - E_{2} $ which is represented in these configurations. This case is similar to the last one, and we only need to interchange the roles of $ E_{2} $ and $ E_{3} $ (and therefore of $ c_{2} $ and $ c_{3} $). For the configurations 3.3, 3.4, 3.8 and 6.3, 6.4, 6.6 we inflate along the curves $B+2F-E_{1}-E_{2} $, $ B+F $, $B$  and $ E_{1}-E_{2}-E_{3} $. In this case, we set

$$ \omega_{b,e,f} = \dfrac{\omega_{c'_{1},c_{2},c_{3}} + b \text{PD}(B+2F-E_{1}-E_{2}) + e \text{PD}(B+F)+f \text{PD}(B)}{1+b+e+f}, \quad b,e,f \geq 0, $$

$$ \mbox{and} \quad \omega_{a}=\omega_{b,e,f} + a \text{PD}(E_{1}-E_{2}-E_{3}) \quad \mbox{with}  \quad a < \dfrac{\omega_{b,e,f}(E_{1}-E_{2}-E_{3})}{3}.$$
We get the desired symplectic form by setting

$$ e= \dfrac{(1-2c_{2}+2c_{3})(c'_{1}-c_{1})}{(c_{1}-c_{2}+2c_{3})}, \quad  f=b = \dfrac{(c_{2}-c_{3}) e}{ 1-2c_{2}+2c_{3}},  \quad \mbox{and}\quad a = \dfrac{c_{3} (2b+e)}{1+2b+e}.$$

\noindent Note that here, to prove that 
$ 3a < \, \omega_{b,e}(E_{1}-E_{2}-E_{3}) $, we use the fact that $ c_{1}>c_{2}+c_{3} $ as otherwise $ E_{1}-E_{2}-E_{3} $ could not have a $ J $-holomorphic representative in the first place.

Then, for the configurations 3.5 -- 3.7 and 6.5, 6.7  we inflate along the curves $ B+3F-E_{1}-E_{2}-E_{3} $, $ B+F $, $B$ and $ E_{1}-E_{2}$. This case is similar to the previous case, and we use the fact that $ c_{1}+c_{2}+c_{3}<1 $, which makes it possible to have $ B-E_{1}-E_{2}-E_{3} $ in the configuration.

Since the configurations of type 4 can be obtained from configurations of type 3 by interchanging the roles of $B$ and $ F $, this finishes the proof of $ \mathcal{A}_{1 , c'_{1} , c_{2} , c_{3}} \subset \mathcal{A}_{c_{1} , c_{2} , c_{3}} $ for $ c_{1} \leq c'_{1} $. The inverse inclusion, as well as Step 2 and 3, are similar. In Appendix \ref{pf inflation}, we give a list of curves along which the inflation procedure can be used to produce the symplectic forms required in each case. This completes the proof of the theorem. 
\end{proof}

Next, we notice that one can get information on $ G_{c_{1} , c_{2}, c_{3} } $ via the group $ \text{Symp}  _{p} (\widetilde{M}_{c_{1} , c_{2} })$ of symplectomorphisms that fix a point $p$ in the manifold $ \widetilde{M}_{c_{1} , c_{2} }$, which is the manifold studied by Anjos and Pinsonnault (see \cite[Proposition 1.2]{AnjPin}). 

\begin{lemma}\label{small c3 m1}
Consider $c_1 , c_2 , c_3 \in (0,1) $ such that $ c_{3} < c_{2} \leq c_{1} < c_{1} + c_{3} < c_{1} + c_{2} \leq 1$. Then $ G_{c_{1} , c_{2}, c_{3} } $ is homotopy equivalent to $ Symp  _{p} (\widetilde{M}_{c_{1} , c_{2} })$.
\end{lemma}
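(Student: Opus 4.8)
The plan is to reduce the three--fold blow-up to the two--fold one by blowing down the exceptional sphere in class $E_3$, following the scheme used by Pinsonnault \cite{Pin} and Anjos--Pinsonnault \cite{AnjPin} to pass from one blow-up to the next. The whole argument rests on Lemma \ref{E3 m1}: under the hypothesis $c_3 < c_2 \le c_1$ the class $E_3$ has strictly smallest symplectic area among all exceptional classes, so for \emph{every} $J \in \Jccc$ it is represented by a \emph{unique} embedded $J$-holomorphic sphere. As already observed in Remark \ref{E3 is important}, this is exactly what makes the symplectic blow-down of $E_3$ canonical and lets one read off structures on $\Mccc$ from structures on $\Mcc$.

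First I would fix a symplectically embedded exceptional sphere $\Sigma_0$ in class $E_3$ and blow it down, producing $\Mcc$ together with a distinguished symplectically embedded ball $B$ of capacity $c_3$ centered at a point $p$. Lifting to the blow-up yields a natural map $\Symp(\Mcc, B) \to \Gccc$, where $\Symp(\Mcc, B)$ denotes the symplectomorphisms of $\Mcc$ preserving $B$ setwise; the goal is to show this map is a weak homotopy equivalence. The geometric input is that, because $E_3$ is the unique minimal exceptional class, any element of $\Gccc$ carries the (unique) $E_3$-curve of $J$ to the $E_3$-curve of its push-forward, and by McDuff's uniqueness results \cite{McD} all embedded spheres in class $E_3$ lie in a single orbit of the identity component of $\Gccc$. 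Thus $\Gccc$ fibers over the space $\mathcal{E}$ of such spheres (equivalently, over the space of embedded balls of capacity $c_3$ in $\Mcc$), with fiber the stabilizer of $\Sigma_0$, and this fibration is what must be compared with the blow-down.

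Finally I would identify $\Symp(\Mcc, B)$ with $\Symp_p(\Mcc)$: using a Darboux chart around $p$ one retracts the symplectomorphisms fixing $p$ onto those that are linear near $p$, hence preserve a standard ball $B$, and conversely a symplectomorphism preserving $B$ setwise retracts onto one fixing the center $p$. This gives $\Symp(\Mcc, B) \simeq \Symp_p(\Mcc)$, and combined with the previous step $\Gccc \simeq \Symp_p(\Mcc)$, as claimed. I expect the conditions $c_1 + c_3 < c_1 + c_2 \le 1$ to enter only to guarantee that $\Mcc$ is a genuine two-fold blow-up in the relevant parameter range, the essential hypothesis being $c_3 < c_2 \le c_1$.

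The main obstacle is the middle step --- promoting the blow-up/blow-down correspondence from a map to a genuine weak homotopy equivalence. The subtlety, already visible in the one-point model $\bbcp^2 \# \overline{\bbcp}\,\!^2$, is that the subgroup of $\Gccc$ fixing a single sphere $\Sigma_0$ is homotopically \emph{strictly larger} than $\Symp(\Mcc, B)$, the discrepancy being the $S^1$ of rotations of the normal bundle of $\Sigma_0$. One must check that this normal rotation becomes nullhomotopic inside $\Gccc$ --- it is precisely the image, under the connecting homomorphism of the fibration $\Symp(\Mccc,\Sigma_0)\to\Gccc\to\mathcal{E}$, of the family of $E_3$-spheres sweeping out a $2$-cycle --- so that the composite $\Symp(\Mcc, B) \to \Gccc$ is nevertheless an isomorphism on all homotopy groups. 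This is where the uniqueness of the $E_3$-curve and the homotopy type of the space of exceptional spheres (in the spirit of Lalonde--Pinsonnault) are used in an essential way, and it is handled exactly as in \cite{Pin, AnjPin}.
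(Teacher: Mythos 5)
Your skeleton is the same as the paper's --- blow down the unique $E_3$-sphere, pass to the stabilizer of a fixed sphere, and compare with $\Symp_p(\Mcc)$ --- but the decisive step has a genuine gap. Your identification $\Symp(\Mcc,B)\simeq\Symp_p(\Mcc)$ rests on the claim that symplectomorphisms fixing $p$ retract onto those that are linear near $p$, ``hence preserve a standard ball $B$''. The retraction-to-linear argument gives linearity only on \emph{some} ball whose capacity depends on the compact family being deformed, with no uniform lower bound; it does not produce maps linear on the given ball of capacity $c_3$. What one gets this way is an identification of $\Symp_p(\Mcc)$ with a direct limit of groups $\Symp^{U(2)}(\widetilde{M}_{c_1,c_2,c_3'},E_3)$ over capacities $c_3'\to 0$, and to return from arbitrarily small $c_3'$ to the fixed $c_3$ one must know that the restriction maps $r_{c_3',c_3}$ between these groups are homotopy equivalences. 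That is exactly what the paper extracts from the stability theorem (Theorem \ref{slight changes m1}), proved by inflation along $J$-holomorphic curves; your proposal never invokes stability or inflation, and without it the argument does not close --- this is the heart of the lemma, not a formality.

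Moreover, the obstacle you single out as the main one is not an obstacle. The normal-bundle rotation of $\Sigma_0$ does not have to become nullhomotopic in $\Gccc$: under the blow-down correspondence the $U(2)$-linear stabilizer of $\Sigma_0$ is identified with the group of symplectomorphisms of $\Mcc$ acting $U(2)$-linearly on $B$, and the normal rotation corresponds to the central scalar rotation of the ball, which fixes $p$ and defines a perfectly good loop in $\Symp_p(\Mcc)$ --- generically a nontrivial one, since such rotations are among the circle actions generating $\pi_1(\Gccc)$ in Theorem \ref{mainthm m1}. So the setwise stabilizer $\Symp(\Mccc,\Sigma_0)$ and $\Symp(\Mcc,B)$ have the same weak homotopy type; the passage from the setwise to the linear stabilizer is Lemma 2.3 of \cite{LalPin} (Step 2 of the paper's proof), not an $S^1$-killing argument. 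Two smaller points: (i) your ``natural map $\Symp(\Mcc,B)\to\Gccc$'' obtained by lifting is only well defined (smooth along the exceptional sphere) once linearity near $B$ is imposed, which is precisely why the $U(2)$-decorated groups are the right objects; and (ii) the retraction of $\Gccc$ onto the stabilizer of $\Sigma_0$ requires the space of embedded $E_3$-spheres to be weakly contractible, which follows from Lemma \ref{E3 m1} together with Lemma 2.4 of \cite{LalPin} --- transitivity of the action alone is not enough.
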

\begin{proof}
The proof is an adaptation of the proof of Proposition 2.1 in \cite{LalPin} to our case. We recall here the main steps. Denote by $\Sigma$ a fixed exceptional 2-sphere, whose homology class is $E_3$. 
Let $ \text{Symp}^{U(2)} (\widetilde{M}_{c_{1} , c_{2}, c_{3}} , E_{3}) $ denote the group of symplectomorphisms of the manifold $\widetilde{M}_{c_{1} , c_{2}, c_{3}}$ which act linearly in a small neighbourhood  $\mathcal N (\Sigma)$ of the exceptional fiber, where we also fix a symplectic identification of $\mathcal N (\Sigma)$ with a neighbourhood  of the zero section in the tautological bundle over $\CP^1$. The first step in the proof is that $ \text{Symp}^{U(2)} (\widetilde{M}_{c_{1} , c_{2}, c_{3}} , E_{3}) $ is homotopy equivalent to $ \text{Symp}_{p}(\widetilde{M}_{c_{1} , c_{2}}) $: every element of $ \text{Symp}^{U(2)} (\widetilde{M}_{c_{1} , c_{2}, c_{3}} , E_{3}) $ gives rise to a symplectomorphism of $ \widetilde{M}_{c_{1} , c_{2}} $ acting linearly near the embedded ball $ B_{c_{3}} $ and fixing its center $p$. Conversely, every homotopy class of the stabilizer $ \text{Symp}  _{p} (\widetilde{M}_{c_{1} , c_{2} })$ can be realized by a family of symplectomorphisms that act linearly on a ball $ B_{c'_{3}} $ of sufficiently small capacity $ c'_{3} $ centered at $p$. So we can lift such a representative to the group $ \text{Symp}^{U(2)} (\widetilde{M}_{c_{1} , c_{2}, c_{3}} , E_{3}) $. If $r$ is the restriction map, then the directed system of homotopy maps

$$ r_{c'_{3},c_{3}} \ : \ \text{Symp}^{U(2)} (\widetilde{M}_{c_{1} , c_{2}, c'_{3}} , E_{3}) \rightarrow \text{Symp}^{U(2)} (\widetilde{M}_{c_{1} , c_{2}, c_{3}} , E_{3}) $$

\noindent gives homotopy equivalences when $ c_{3} \leq c'_{3} $ are sufficiently small, by Theorem \ref{slight changes m1}. This system together with maps

$$ g_{c_{3}} \ : \ \text{Symp}^{U(2)} (\widetilde{M}_{c_{1} , c_{2}, c_{3}} , E_{3}) \rightarrow \text{Symp}_{p}(\widetilde{M}_{c_{1} , c_{2}})$$

\noindent yield a commutative diagram  for each pair $ c_{3} \leq c'_{3} $. It is then obvious that each $ g_{c_{3}} $ is a weak homotopy equivalence.

The second step in the proof is to prove that $ \text{Symp}^{U(2)} (\widetilde{M}_{c_{1} , c_{2}, c'_{3}} , E_{3}) $ is homotopy equivalent to $\text{Symp} (\widetilde{M}_{c_{1} , c_{2}, c'_{3}} , E_{3})$. This is Lemma 2.3 in \cite{LalPin}.

Finally, by Lemma \ref{E3 m1}, we know that that the curve $E_{3}$ cannot degenerate, that is, for every $J$ in  $\mathcal{J} _{c_{1} , c_{2}, c_{3} }$ there is a unique embedded $J$-holomorphic representative of $E_{3}$ and no cusp-curve in class $E_{3}$. Hence by Lemma 2.4 in \cite{LalPin} we conclude that $ G_{c_{1} , c_{2}, c_{3} } $ retracts onto its subgroup $\text{Symp} (\widetilde{M}_{c_{1} , c_{2}, c'_{3}} , E_{3})$.
\end{proof}

\subsection{Homotopy groups of \texorpdfstring{$\Gccc$}{Lg}}
\label{sec m1: homotopy}

We use the same tools (Poincaré series, Poincaré-Birkoff-Witt Theorem, spectral sequence) as in Section 4.1 of \cite{AnjPin} to calculate

\begin{center}
$ \widetilde{r} _{n} = \dim \pi _{n} ( \Omega \widetilde{M}_{c_{1} , c_{2}} ) \otimes \mathbb{Q} = \dim \pi _{n+1} (\widetilde{M}_{c_{1} , c_{2}}) \otimes \mathbb{Q} $.
\end{center}

It is known \cite{FelHalTho} that a simply-connected space with rational cohomology of finite type and finite category is either \textit{rationally elliptic} (i.e. its rational homotopy is finite dimensional) or \textit{rationally hyperbolic} (i.e. the dimensions of the rational homotopy groups grow exponentially). Also, since $b^{2}(\widetilde{M}_{c_{1} , c_{2} }) = \dim H^{2} (\widetilde{M}_{c_{1} , c_{2} }; \mathbb{Q}) > 3$ and since a 4-dimensional simply-connected finite CW-complex has $cat(X) \leq 2 $, the following theorem shows that $\widetilde{M}_{c_{1} , c_{2}} $ is rationally hyperbolic.

\begin{theorem}[\cite{FelHalTho}, Part VI] If an n-dimensional simply-connected space $X$ is rationally elliptic, then
\begin{enumerate}
\item $ \dim \pi _{even} (X) \otimes \mathbb{Q} \leq \dim \pi _{odd} (X) \otimes \mathbb{Q} \leq cat(X) $,
\item $\chi (X) \geq 0$, where $\chi$ denotes the Euler characteristic.
\end{enumerate}
\end{theorem}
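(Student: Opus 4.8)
The plan is to translate everything into the minimal Sullivan model of $X$ and reduce to a purely algebraic question about a finite-dimensional generating space. First I would take the minimal model $(\Lambda V,d)$, so that $V^{even}$ is dual to $\pi_{even}(X)\otimes\mathbb{Q}$ and $V^{odd}$ is dual to $\pi_{odd}(X)\otimes\mathbb{Q}$; rational ellipticity is precisely the statement that both $V$ and $H^*(\Lambda V,d)\cong H^*(X;\mathbb{Q})$ are finite-dimensional. Writing $q=\dim V^{even}$ and $p=\dim V^{odd}$, the three assertions become $q\le p$, $p\le\mathrm{cat}(X)$, and $\chi(X)\ge0$. The key preliminary reduction is to replace $d$ by its associated \emph{pure} differential $d_\sigma$ (obtained from the word-length filtration), which satisfies $d_\sigma(V^{even})=0$ and $d_\sigma(V^{odd})\subseteq\Lambda V^{even}$; this pure algebra has the same $p$ and $q$, is again elliptic, and is linked to $H^*(X)$ by a spectral sequence that preserves Euler characteristics. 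Hence I may compute with the explicit complex $B\otimes\Lambda W$, where $B=\mathbb{Q}[x_1,\dots,x_q]$ with $\deg x_i=2a_i$, $\Lambda W=\Lambda(y_1,\dots,y_p)$ with $\deg y_j=2b_j-1$, and $d_\sigma y_j=P_j\in B$.

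For $\dim\pi_{even}\le\dim\pi_{odd}$, i.e.\ $q\le p$, I would observe that the odd-word-length-zero part of the cohomology is the ring $B/(P_1,\dots,P_p)=\mathbb{Q}[x_1,\dots,x_q]/(P_1,\dots,P_p)$, which must be finite-dimensional since $H^*(X)$ is. A finite-dimensional quotient of a polynomial ring in $q$ variables forces the homogeneous ideal $(P_1,\dots,P_p)$ to have height $q$, so Krull's height theorem gives $p\ge q$. For $\chi(X)\ge0$ I would use the Poincaré series of the underlying graded vector space, $P(t)=\prod_i(1-t^{2a_i})^{-1}\prod_j(1+t^{2b_j-1})$. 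Because passing to cohomology of the pure complex preserves the Euler characteristic and $H^*(X)$ is finite-dimensional, $\chi(X)=\lim_{t\to-1}P(t)$. Near $t=-1$ each even factor is a simple pole, $1-t^{2a_i}\sim 2a_i(t+1)$, and each odd factor a simple zero, $1+t^{2b_j-1}\sim(2b_j-1)(t+1)$, so $P(t)\sim(t+1)^{\,p-q}\,\prod_j(2b_j-1)/\prod_i(2a_i)$. Finiteness of $\chi(X)$ re-proves $p\ge q$, and the limit equals $0$ when $p>q$ and the strictly positive number $\prod_j(2b_j-1)/\prod_i(2a_i)$ when $p=q$; in either case $\chi(X)\ge0$, and $\chi(X)>0$ precisely when $p=q$.

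The inequality $\dim\pi_{odd}\le\mathrm{cat}(X)$, i.e.\ $p\le\mathrm{cat}(X)$, is where I expect the genuine difficulty. I would bound the Lusternik--Schnirelmann category below by the rational Toomer invariant, $\mathrm{cat}(X)\ge\mathrm{cat}_0(X)\ge e_0(X)$, and then prove the structural estimate $e_0(X)\ge p$: the fundamental class of the Poincaré-duality algebra $H^*(X;\mathbb{Q})$ admits no representing cocycle of word length $<p$ in the minimal model. This is the crux and the main obstacle, since it is not a mere dimension count---it already saturates in cases like $S^3\times S^5$, where the top class $y_1y_2$ has word length $p=2$, and it must also hold in the complete-intersection range $p=q$, where the top class lies in the even part. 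I would establish it by analyzing Poincaré duality for the pure model and showing that the projection killing word length $\ge p$ annihilates top cohomology; this is the substance of the Félix--Halperin analysis of the category of elliptic spaces, which I would either reproduce or invoke from \cite{FelHalTho}. Assembling the three parts yields $\dim\pi_{even}\le\dim\pi_{odd}\le\mathrm{cat}(X)$ together with $\chi(X)\ge0$, as claimed.
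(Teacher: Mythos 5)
First, a point of comparison: the paper never proves this statement. It is quoted directly from \cite{FelHalTho} (it is the Friedlander--Halperin theorem on elliptic spaces) and is used only in contrapositive form, to conclude that $\widetilde{M}_{c_1,c_2}$ is rationally hyperbolic. So your proposal can only be judged on its own terms, not against an argument in the paper.

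Your first inequality is handled correctly: passing to the associated pure model, the odd-word-length-zero piece of its cohomology is $\mathbb{Q}[x_1,\dots,x_q]/(P_1,\dots,P_p)$; finite-dimensionality makes this ideal $\mathfrak{m}$-primary, hence of height $q$, and Krull's height theorem gives $q\le p$. The genuine gap is in the Euler-characteristic step: the identity $\chi(X)=\lim_{t\to -1}P(t)$ is false. Test it on $X=S^2$, whose (already pure) minimal model is $(\Lambda(x,y),\,dy=x^2)$: there $P(t)=(1+t^3)/(1-t^2)$, so $\lim_{t\to-1}P(t)=3/2$, while $\chi(S^2)=2$; for $\mathbb{CP}^n$ the limit is $(2n+1)/2$ versus $\chi=n+1$. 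The principle ``passing to cohomology preserves the Euler characteristic'' breaks down because the complex is infinite-dimensional: writing $P_{\Lambda V}(t)=P_H(t)+(1+t)Q(t)$, where $Q$ is the Poincar\'e series of the coboundaries, the rational function $Q$ has a pole at $t=-1$ (for $S^2$, $Q(t)=t^3/(1-t^2)$), so the correction term $(1+t)Q(t)$ does not vanish in the limit. Consequently neither your proof of $\chi(X)\ge 0$ nor your re-derivation of $p\ge q$ from finiteness of $\chi$ is valid as written. The standard repair is algebraic rather than analytic: in the pure model, total degree mod $2$ equals odd word length mod $2$, so $\chi=\sum_k(-1)^k\dim H_k$ is the Euler characteristic of the Koszul complex of $(P_1,\dots,P_p)$ over $B$. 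When $p=q$, an $\mathfrak{m}$-primary ideal generated by $\dim B$ elements is generated by a regular sequence, so $H=H_0$ is concentrated in even degrees and $\chi=\dim H_0>0$; when $p>q$ one gets $\chi=0$, for instance from Serre's theorem on Euler--Poincar\'e characteristics of Koszul complexes (vanishing when the number of elements exceeds the Krull dimension).

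Finally, for $\dim\pi_{odd}\le \mathrm{cat}(X)$ you correctly reduce along $\mathrm{cat}(X)\ge\mathrm{cat}_0(X)\ge e_0(X)$ and correctly isolate $e_0(X)\ge \dim V^{odd}$ as the crux, but you then propose to invoke that inequality from \cite{FelHalTho}. Since it is precisely the substance of the statement being proved, this is circular: as it stands, the hardest third of the theorem is assumed rather than established. Either the word-length analysis of the fundamental class of the pure model must actually be carried out, or this step must be flagged explicitly as taken on faith.
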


For a rationally hyperbolic space $ X $,  the Poincar\'{e} series is the formal series
$$ P_{X} = \sum_{0}^{\infty} \dim H_{n}(X;\mathbb{Q})z^{n}.$$

The Poincar\'{e}-Birkoff-Witt Theorem applied to topological spaces (see \cite[Section 33]{FelHalTho}) yields
$$ P_{\Omega X} = \dfrac{\prod_{n=0}^{\infty} (1+z^{2n+1})^{\widetilde{r}_{2n+1}}}{ \prod_{n=1}^{\infty} (1-z^{2n})^{\widetilde{r}_{2n}} }$$
so that we can recover the rank of the homotopy groups $ \widetilde{r}_{n} $ from $ \widetilde{h}_{n} = \dim H_{n}(\Omega \widetilde{M}_{c_{1} , c_{2}} ; \mathbb{Q} ) $. We compute the latter using the Serre spectral sequence for the path fibration
$$ \Omega \widetilde{M}_{c_{1} , c_{2}} \rightarrow P \widetilde{M}_{c_{1} , c_{2} } \rightarrow \widetilde{M}_{c_{1} , c_{2} }, $$
where $ P \widetilde{M}_{c_{1} , c_{2}} $ is the space of paths starting at a point $ x_{0} $ and the projection map $ p : P \widetilde{M}_{c_{1} , c_{2} } \rightarrow \widetilde{M}_{c_{1} , c_{2}} $ sends each path to its endpoint. The spectral sequence yields $ \widetilde{h}_{0}=1 $, $\widetilde{h}_{1}=4 $ and $ \widetilde{h}_{n} = 4 \widetilde{h}_{n-1} - \widetilde{h}_{n-2}$, which in turn gives, for instance, $ \widetilde{r}_{1} = 4 $, $ \widetilde{r}_{2} = 9 $, $ \widetilde{r}_{3} = 16 $ and $ \widetilde{r}_{4} = 27 $.

\begin{proposition}\label{rank upper limit}
Let $ 0 < c_{3} < c_{2} < c_{1} < c_{1} + c_{3} < c_{1} + c_{2} \leq 1$. Then
\begin{center}
$ \dim \pi_{1} (G_{c_{1}, c_{2}, c_{3}}) \leq 9 $,
\end{center}
and the homotopy groups satisfy
$$ \dim \pi_{n} (G_{c_{1}, c_{2}, c_{3}}) \leq \dim \pi_{n+1} (\widetilde{M}_{c_{1} , c_{2} }) + \dim \pi _{n}(G_{c_{1}, c_{2}}), $$
where $\Gcc$ denotes the group of symplectomorphisms of $\Mcc$ acting trivially in homology. 
\end{proposition}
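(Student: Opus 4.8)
The plan is to realize $\Gccc$ as the fiber of an evaluation fibration over $\Mcc$ whose total space is $\Gcc$, and then to extract both assertions from the associated long exact sequence in rational homotopy. First I would invoke Lemma \ref{small c3 m1}, which for capacities in the stated range provides a homotopy equivalence $\Gccc \simeq \Symp_p(\Mcc)$, where $\Symp_p$ denotes the homology-trivial symplectomorphisms fixing the point $p$. The key observation is that $\Symp_p(\Mcc)$ is exactly the stabilizer of $p$ under the evaluation map $\mathrm{ev}_p : \Gcc \to \Mcc$, $\phi \mapsto \phi(p)$. Since $\Gcc$ is connected and contains enough Hamiltonian flows to move $p$ to any nearby point, it acts transitively on the connected manifold $\Mcc$ and $\mathrm{ev}_p$ admits local sections, so it is a fibration
$$ \Gccc \simeq \Symp_p(\Mcc) \longrightarrow \Gcc \xrightarrow{\ \mathrm{ev}_p\ } \Mcc. $$

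Next I would pass to the long exact homotopy sequence of this fibration. Because $\Gccc$ and $\Gcc$ are topological groups, their fundamental groups are abelian, so the entire portion of the sequence we need consists of abelian groups; tensoring with $\mathbb{Q}$ is therefore exact and preserves
$$ \pi_{n+1}(\Mcc)\otimes\mathbb{Q} \xrightarrow{\ \partial\ } \pi_{n}(\Gccc)\otimes\mathbb{Q} \xrightarrow{\ i_*\ } \pi_{n}(\Gcc)\otimes\mathbb{Q}. $$
Exactness at the middle term gives $\ker i_* = \mathrm{im}\,\partial$, whence $\dim \pi_n(\Gccc)\otimes\mathbb{Q} = \dim \ker i_* + \dim \mathrm{im}\, i_*$. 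Since $\dim \ker i_* \leq \dim \pi_{n+1}(\Mcc)\otimes\mathbb{Q}$ and $\dim \mathrm{im}\, i_* \leq \dim \pi_n(\Gcc)\otimes\mathbb{Q}$, the subadditivity of rank across the exact sequence yields precisely the asserted inequality $\dim \pi_n(\Gccc) \leq \dim \pi_{n+1}(\Mcc) + \dim \pi_n(\Gcc)$.

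Finally, to obtain the bound $\dim \pi_1(\Gccc) \leq 9$, I would specialize to $n=1$, so that the inequality reads $\dim \pi_1(\Gccc) \leq \dim \pi_2(\Mcc) + \dim \pi_1(\Gcc)$. The first term equals $\widetilde{r}_1 = 4$ by the Poincar\'e--Birkhoff--Witt computation recorded just above this proposition, while the second term equals $5$ by the computation of the fundamental group of $\Gcc$ in \cite{AnjPin}; adding these gives $4+5=9$.

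The step I expect to require the most care is the identification of $\mathrm{ev}_p$ as a genuine fibration whose fiber is the \emph{homology-trivial} point stabilizer, rather than merely a submersion: this rests on the transitivity of the $\Gcc$-action on $\Mcc$ and on matching the fiber with the group $\Symp_p$ appearing in Lemma \ref{small c3 m1}. Once these identifications are secured, the long exact sequence and the rank bookkeeping are entirely routine, and the only arithmetic input is the two known ranks $\widetilde{r}_1 = 4$ and $\dim \pi_1(\Gcc) = 5$.
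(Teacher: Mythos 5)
Your proposal is correct and follows essentially the same route as the paper: Lemma \ref{small c3 m1} identifies $\Gccc$ with the point-stabilizer $\Symp_p(\Mcc)$, the evaluation fibration over $\Mcc$ gives a long exact sequence in homotopy, and the bounds follow from rank bookkeeping together with $\dim\pi_2(\Mcc)\otimes\Q=4$ and $\dim\pi_1(\Gcc)\otimes\Q=5$. The only cosmetic difference is that the paper writes the fibration with total space $\Symp(\Mcc)$ rather than $\Gcc$, which changes nothing for $\pi_n$ with $n\geq 1$.
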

\begin{proof}

We recall part of Proposition 4.2 in \cite{AnjPin} that would be relevant in our case:

\begin{proposition} (\cite[Proposition 4.2]{AnjPin})
\label{homotopygroups}
Let $0 < c_{2} < c_{1} < c_{2} + c_{1} < 1$. Then
$$ \pi_{1} (\Gcc ) = \mathbb{Q}^{5},  \quad \mbox{and}$$
$$ \pi_{n} (\Gcc ) = \mathbb{Q}^{r_{n}} \quad \mbox{for} \quad  n\geq 2,$$
where $ r_{n} = \dim \pi _{n}(\Omega \widetilde{M}_{c_{1}}) \otimes \mathbb{Q} = \dim \pi_{n+1}(\widetilde{M}_{c_{1}}) \otimes \mathbb{Q} $. For example, we have  $ r_{2}=r_{3}=5 $, $ r_{4}=10 $, and $ r_{5}=24 $.
\end{proposition}

From the evaluation fibration
$$ \Symp  _{p} (\widetilde{M}_{c_{1} , c_{2} }) \longrightarrow \Symp(\widetilde{M}_{c_{1} , c_{2} }) \xrightarrow{ev} \widetilde{M}_{c_{1} , c_{2} }, $$
using Lemma \ref{small c3 m1} and Theorem \ref{slight changes m1}, we get a long exact sequence that yields the upper bound for the rank of the homotopy groups $ \pi_{n} (G_{c_{1}, c_{2}, c_{3}})$. In particular the first terms of the long exact sequence are given by 
\begin{gather*}
\hdots   \rightarrow \pi_{2} (  \widetilde{M}_{c_{1} , c_{2} } ) \rightarrow \pi _{1} (G_{c_{1} , c_{2}, c_{3} } ) \rightarrow \pi _{1} (G_{c_{1} , c_{2}} ) \rightarrow \pi_{1} (  \widetilde{M}_{c_{1} , c_{2} } ) \rightarrow \hdots
\end{gather*}
Using Proposition \ref{homotopygroups}, we can write it as
\begin{gather*}
\rightarrow \mathbb{Q}^{4} \rightarrow \pi _{1} (G_{c_{1} , c_{2}, c_{3} } ) \rightarrow \mathbb{Q}^{5} \rightarrow  0 \rightarrow \hdots
\end{gather*}
so we can conclude that indeed $ \dim \pi_{1} (G_{c_{1}, c_{2}, c_{3}}) \leq 9 $.
\end{proof}

Therefore we expect to find at most 9 generators for $ \pi _{1} $. In the next section, we will look at toric pictures and see that the methods hitherto used in literature do not reduce the rank to less than 11 generators. We address this in Subsection \ref{section: new relation m1} .

\subsection{The generators}\label{sec m1: generators}

This section is dedicated to the description of the generators of the homotopy Lie algebra of $ G_{c_{1} , c_{2}, c_{3} }$. As these generators will appear as Hamiltonian circle actions, which then can be seen as elements of $ \pi_{1}(G_{c_{1} , c_{2}, c_{3} }) $, we will make extensive use of Delzant's classification of toric actions and Karshon's classification of Hamiltonian circle actions on symplectic manifolds. Therefore we first give a quick overview how these classifications work.

A \textit{Delzant polytope} in $ \mathbb{R}^{2} $ is a convex polytope satisfying

\begin{itemize}
\item simplicity, i.e., there are two edges meeting each vertex; 
\item rationality, i.e., the edges meeting at the vertex $ p $ are rational in the sense that each edge is of the form $ p + t u_{i} $, $ t \geq 0 $, where $ u_{i} \in \mathbb{Z}^{2} $;
\item smoothness, i.e., for each vertex, the corresponding $ u_{1} $, $ u_{2} $ can be chosen to be a $ \mathbb{Z} $-basis of $ \mathbb{Z}^{2} $.
\end{itemize}

Delzant's classification tells us that toric manifolds up to equivariant symplectomorphisms are classified by Delzant polytopes up to transformations by $ GL(2,\mathbb{Z}) $. The Delzant polytope corresponding to a toric manifold $ (M, \omega, \mathbb{T}^{2}, \phi ) $ is given by the image $ \phi(M) $ of the moment map (see \cite[Chapter 29]{daS} for the reverse direction in the correspondence). Note that in our case since we work with symplectomorphisms that act trivially on homology, we do not allow symplectomorphisms that interchange the roles of $ B $ and $ F $. Therefore, we shall consider Delzant polytopes up to transformations by $ SL(2,\mathbb{Z}) $.

On the other hand, Karshon's classification \cite{Kar} yields the correspondence between Hamiltonian circle actions and {\it decorated graphs}. A Hamiltonian circle action comes with a real-valued momentum map $ \Phi : M \rightarrow \mathbb{R} $, which is a Morse-Bott function with critical set corresponding to the fixed points. When the manifold is four-dimensional, the critical set consists of isolated points and two-dimensional submanifolds, and the latter can only occur at the extrema of $ \Phi $.

Karshon associates the following graph to $ (M, \omega, \Phi) $: for each isolated fixed point $ p $, there is a vertex $ \langle p \rangle $, labeled by the real number $ \Phi (p) $. For each two-dimensional invariant surface $ S $, there is a fat vertex $ \langle S \rangle $, labeled by two real numbers and one integer: the momentum map label $ \Phi(S) $, the area label $ \dfrac{1}{2 \pi} \int _{S} \omega $, and the genus $ g $ of the surface $ S $. A \textit{$ \mathbb{Z}_{k} $-sphere} is a gradient sphere in $ M $ on which $ S^{1} $ acts with isotropy $ \mathbb{Z}_{k} $. For each $ \mathbb{Z}_{k} $-sphere containing two fixed points $ p $ and $ q $, the graph has an edge connecting the vertices $ \langle p \rangle $ and $ \langle q \rangle $ labeled by the integer $ k $. Note that vertical translations of the graph correspond to equivariant symplectomorphisms, and flips correspond to automorphisms of the circle (see, for example, \cite[Section 3]{HolKes} for a good summary).

Karshon's classification tells us that there is a correspondence between these decorated graphs up to translation and flipping, and Hamiltonian circle actions up to equivariant symplectomorphisms that respect the moment maps. Namely, for two compact four-dimensional Hamiltonian $ S^{1} $ spaces $ (M,\omega, \Phi ) $ and $ (M',\omega', \Phi' ) $, an equivariant symplectomorphism $ F : M \rightarrow M' $ such that $ \Phi = F^{-1} \circ \Phi' \circ F $ induces an isomorphism on the corresponding graphs. Furthermore, this classification serves to keep track of symplectic blow-ups, as in Figure \ref{Karshon1} and \ref{Karshon2}.

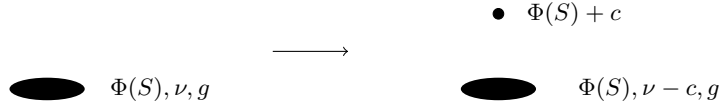
\begin{figure}[thp]
\begin{center}
\begin{tikzpicture}[font=\small]

	\fill[black] (2,-2) ellipse (0.5 and 0.15);
	\node at (3.5,-2) {$\Phi(S),\nu , g $};

	\draw[->] (5,-1.5) -- (6,-1.5);

	\filldraw [black] (8,-1) circle (2pt);
	\node at (9,-1) {$\Phi(S) + c$};
	\fill[black] (8,-2) ellipse (0.5 and 0.15);
	\node at (10,-2) {$\Phi(S),\nu - c, g$};

\end{tikzpicture}
\end{center}
\caption{Blowing-up at a point inside an invariant surface at the minimum value of $ \Phi $}\label{Karshon1}
\end{figure}

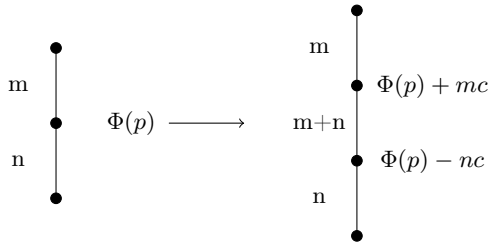
\begin{figure}[thp]
\begin{center}
\begin{tikzpicture}[font=\small]

	\filldraw [black] (2,1.5) circle (2pt);
	\filldraw [black] (2,0.5) circle (2pt);
	\node at (3,0.5) {$ \Phi(p) $};
	\filldraw [black] (2,-0.5) circle (2pt);

	\draw (2,1.5) -- (2,0.5);
	\node at (1.5,1) {m};
	\draw (2,0.5) -- (2,-0.5);
	\node at (1.5,0) {n};

	\draw[->] (3.5,0.5) -- (4.5,0.5);

	\filldraw[black] (6,2) circle (2pt);
	\filldraw [black] (6,1) circle (2pt);
	\node at (7,1) {$ \Phi(p) + m c $};
	\filldraw [black] (6,0) circle (2pt);
	\node at (7,0) {$ \Phi(p) - n c $};
	\filldraw [black] (6,-1) circle (2pt);

	\draw (6,2) -- (6,1);
	\node at (5.5,1.5) {m};
	\draw (6,1) -- (6,0);
	\node at (5.5,0.5) {m+n};
	\draw (6,0) -- (6,-1);
	\node at (5.5,-0.5) {n};
\end{tikzpicture}
\end{center}
\caption{Blowing-up at an interior fixed point}\label{Karshon2}
\end{figure}

Toric actions in 4--dimensional manifolds are generated by two Hamiltonian circle actions, which in the Delzant polytope can be seen as the projection to the axes. This relationship between polytopes and decorated graphs will be particularly useful in the following subsections.

\subsubsection{Toric actions}\label{toric actions}

In this section we will consider the Delzant polytopes of the symplectic manifold $ \widetilde{M}_{c_{1} , c_{2}, c_{3} }$ equipped with all possible toric actions. 

Let $ T^{4} \subset U(4) $ act in the standard way on $\mathbb{C}^{4}$. Given an integer $ n \geq 0 $, the action of the subtorus $ T^{2}_{0} := (s+t,t,s,s) $ is Hamiltonian with moment map
$$ (z_{1},...,z_{4}) \mapsto (|z_{1}|^{2} + |z_{3}|^{2} + |z_{4}|^{2} , |z_{1}|^{2} + |z_{2}|^{2}) $$
so that we identify $( S^{2} \times S^{2}, \sigma \oplus \sigma ) $ with the toric Hirzebruch surface $\mathbb{F}_{0}$ defined as the symplectic quotient $ \mathbb{C}^{4} // T^{2}_{0} $ at the regular value $ (1, 1) $ endowed with the residual action of the torus $ T(0) := (0,u,v,0) \subset T^{4} $. The image $ \Delta (0) $ of the moment map is the convex hull of
$$ \{ (0,0), (1,0), (1,1), (0, 1) \}.$$
We identify the symplectic blow-up $ \widetilde{M}_{c_{1}} $ at a ball of capacity $c_{1} $ with the equivariant blow-up of the Hirzebruch surface $ \mathbb{F}_{0} $.We define the even torus action $ \widetilde{T}(0) $ as the equivariant blow-up of the toric action of $T(0)$ on $ \mathbb{F}_{0} $ at the fixed point $ (0,0) $ with capacity $ c_{1} $. The image of the moment map then is the convex hull of
$$ \{ (1,1), (0, 1) , (0, c_{1} ), (c_{1} , 0), (1,0) \}.$$
The K\"{a}hler isometry group of  $ \mathbb{F}_{1} $ is $ N(T^{2} _{0}) / T^{2} _{0} $ where $ N(T^{2} _{0})$ is the normalizer of $T^{2} _{0} $ in $U(4)$. There is a natural isomorphism $N(T^{2} _{0}) / T^{2} _{0} \simeq SO(3) \times SO(3)  := K(0)$, and its restriction to the maximal torus is given in coordinates by
$$ (u,v) \mapsto (-u,v) \in T(0) := S^{1} \times S^{1} \subset K(0).$$
This identification implies that the moment polygons associated to the maximal tori $T(0)$ and $\widetilde{T}(0)$ are the images of $\Delta(0)$ and $ \widetilde{\Delta}(0) $, respectively, under the transformation
$$C_{0} =
\left( \begin{array}{cc}
-1 & 0 \\
0 & 1 \\
\end{array} \right).$$
Under the blow-down map, $ \widetilde{T}(0) $ is sent to the maximal torus of $ K(0)$. By \cite{LalPin} $ \text{Symp}(  \widetilde{M}_{c_{1}} ) $ is connected, hence the choices involved in the identification give the same maps up to homotopy.

We identify the symplectic blow-up $ \widetilde{M}_{c_{1},c_{2}} $ with the equivariant two blow-up of the Hirzebruch surface $ \mathbb{F}_{0} $ and obtain inequivalent toric structures. We define the torus actions $ \widetilde{T}_{i}(0) $, $i = 1, ... , 5$ as the equivariant blow-ups of the toric action of $T(0)$ on $ \widetilde{\mathbb{F}}_{0} $, with capacity $c_{2}$, at each one of the five fixed points, which correspond to the vertices of the moment polygon $ \widetilde{\Delta} (0) $.

Now, we blow-up each of the resulting toric actions in their six fixed points and obtain $ \widetilde{T}_{i,j}(0) $,  $j=1,...,6$. These toric pictures arise from the ones described in Section 4.2 of \cite{AnjPin} by applying one more symplectic blow-up, of capacity $c_{3}$, at each corner. In Figure \ref{Toric pictures}, we draw the toric pictures for $ \widetilde{T}_{i}(0) $, $ i=1,...,5 $, and plot the corners where the next blow-ups take place. Then, in each case we pick two Hamiltonian $S^{1}$-actions, $ x_{i,j} $ and $ y_{i,j} $, that generate the depicted toric action. More precisely, $ x_{i,j} $ and $ y_{i,j} $ are circle actions whose moment maps are, respectively, the first and second components of the moment map associated to the torus action $ \widetilde{T}_{i,j}(0) $. In fact, Y. Karshon explains in \cite{Kar} how to collect Hamiltonian $ S^{1} $-actions from these figures via graphs and how these graphs classify the circle actions. For a brief summary on decorated graphs, Delzant polytopes and the relationship between the two, see the beginning of this section.

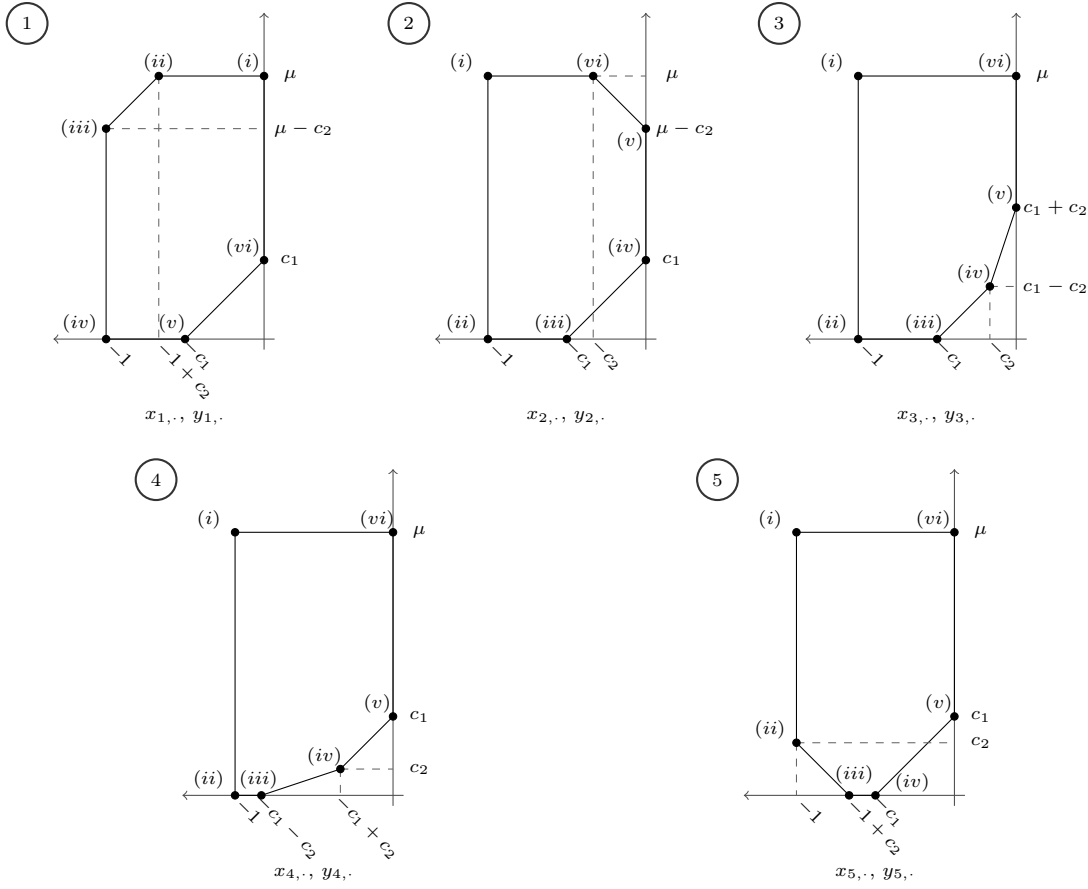
\begin{figure}[thp]
\begin{minipage}{.34\textwidth}
\begin{tikzpicture}[scale=0.70, roundnode/.style={circle, draw=black!80, thick, minimum size=3mm}, font=\scriptsize]

	\node[roundnode] at (-4.5,6) (maintopic) {1} ;
 
    \draw[<-][black!70] (-4,0) -- (0.2,0); 
    \draw[->][black!70] (0,-0.2) -- (0,6.2);

    \draw (-3,0) -- (-3,4);
    \draw (-3,4) -- (-2,5);
    \draw (-2,5) -- (0,5);
    \draw (0,5) -- (0,1.5);
    \draw (-1.5,0) -- (-3,0);
    \draw (-1.5,0) -- (0,1.5);
   
	\draw[dashed, black!60] (-3,4) -- (0,4);
	\draw[dashed, black!60] (-2,5) -- (-2,0);

    \node at (0.5,1.5) {$c_{1}$};
	\node at (0.5,5) {$\mu$};
	\node at (0.75,4) {$\mu-c_{2} $};
	\node[rotate=-45] at (-1.25,-0.3) {$ -c_{1} $};
	\node[rotate=-45] at (-1.5,-0.6) {$ -1 + c_{2} $};
	\node[rotate=-45] at (-2.75, -0.3) {$ -1 $};
	
	\filldraw [black] (-3,0) circle (2pt);
	\filldraw [black] (-3,4) circle (2pt);
	\filldraw [black] (-2,5) circle (2pt);
	\filldraw [black] (0,5) circle (2pt);
	\filldraw [black] (0,1.5) circle (2pt);
	\filldraw [black] (-1.5,0) circle (2pt);
	
    \node at (-0.3,5.25) {$(i)$};
    \node at (-2,5.25) {$(ii)$};
    \node at (-3.5,4) {$(iii)$};
    \node at (-3.5,0.3) {$(iv)$};    
    \node at (-1.75,0.3) {$(v)$};    
    \node at (-0.4,1.75) {$(vi)$};    
    
	\node at (-1.5,-1.5) {$x_{1,\cdot}$, $y_{1,\cdot}$ };

\end{tikzpicture}

\end{minipage}%
\begin{minipage}{.33\textwidth}

\begin{tikzpicture}[scale=0.70,roundnode/.style={circle, draw=black!80, thick, minimum size=3mm}, font=\scriptsize]

	\node[roundnode] at (-4.5,6) (maintopic) {2} ;
 
    \draw[<-][black!70] (-4,0) -- (0.2,0); 
    \draw[->][black!70] (0,-0.2) -- (0,6.2); 

    \draw (-3,0) -- (-3,5);
    \draw (-3,5) -- (-1,5);
    \draw (-1,5) -- (0,4);
    \draw (0,4) -- (0,1.5);
    \draw (0,1.5) -- (-1.5,0);
    \draw (-1.5,0) -- (-3,0);
   
	\draw[dashed, black!60] (-1,5) -- (0,5);
	\draw[dashed, black!60] (-1,5) -- (-1,0);

    \node at (0.5,1.5) {$c_{1}$};
	\node at (0.5,5) {$\mu$};
	\node at (0.75,4) {$\mu-c_{2} $};
	\node[rotate=-45] at (-1.25,-0.3) {$ -c_{1} $};
	\node[rotate=-45] at (-0.75,-0.3) {$ - c_{2} $};
	\node[rotate=-45] at (-2.75, -0.3) {$ -1 $};
	
	\filldraw [black] (-3,0) circle (2pt);
	\filldraw [black] (-3,5) circle (2pt);
	\filldraw [black] (-1,5) circle (2pt);
	\filldraw [black] (0,4) circle (2pt);
	\filldraw [black] (0,1.5) circle (2pt);
	\filldraw [black] (-1.5,0) circle (2pt);
	
    \node at (-3.5,5.25) {$(i)$};
    \node at (-3.5,0.3) {$(ii)$};    
    \node at (-1.75,0.3) {$(iii)$};    
    \node at (-0.4,1.75) {$(iv)$};   
    \node at (-0.3,3.75) {$(v)$};
    \node at (-1,5.25) {$(vi)$}; 
    
	\node at (-1.5,-1.5) {$x_{2,\cdot}$, $y_{2,\cdot}$ };

\end{tikzpicture}
\end{minipage}%
\begin{minipage}{.33\textwidth}
\begin{tikzpicture}[scale=0.70, roundnode/.style={circle, draw=black!80, thick, minimum size=3mm}, font=\scriptsize]

	\node[roundnode] at (-4.5,6) (maintopic) {3} ;
 
    \draw[<-][black!70] (-4,0) -- (0.2,0); 
    \draw[->][black!70] (0,-0.2) -- (0,6.2); 

    \draw (-3,0) -- (-3,5);
    \draw (-3,5) -- (0,5);
    \draw (0,5) -- (0,2.5);
    \draw (0,2.5) -- (-0.5,1);
    \draw (-0.5,1) -- (-1.5,0);
    \draw (-1.5,0) -- (-3,0);
   
	\draw[dashed, black!60] (-0.5,1) -- (0,1);
	\draw[dashed, black!60] (-0.5,1) -- (-0.5,0);

    \node at (0.75,2.5) {$c_{1} + c_{2}$};
	\node at (0.5,5) {$\mu$};
	\node at (0.75,1) {$ c_{1} - c_{2} $};
	\node[rotate=-45] at (-1.25,-0.3) {$ -c_{1} $};
	\node[rotate=-45] at (-0.25,-0.3) {$ - c_{2} $};
	\node[rotate=-45] at (-2.75, -0.3) {$ -1 $};
	
	\filldraw [black] (-3,0) circle (2pt);
	\filldraw [black] (-3,5) circle (2pt);
	\filldraw [black] (0,5) circle (2pt);
	\filldraw [black] (0,2.5) circle (2pt);
	\filldraw [black] (-0.5,1) circle (2pt);
	\filldraw [black] (-1.5,0) circle (2pt);
	
    \node at (-3.5,5.25) {$(i)$};
    \node at (-3.5,0.3) {$(ii)$};    
    \node at (-1.75,0.3) {$(iii)$};    
    \node at (-0.8,1.25) {$(iv)$};   
    \node at (-0.3,2.75) {$(v)$};
    \node at (-0.4,5.25) {$(vi)$}; 

	\node at (-1.5,-1.5) {$x_{3,\cdot}$, $y_{3,\cdot}$ };

\end{tikzpicture}
\end{minipage}

\bigskip

\begin{minipage}{.5\textwidth}
\begin{center}
\begin{tikzpicture}[scale=0.70, roundnode/.style={circle, draw=black!80, thick, minimum size=3mm}, font=\scriptsize]

	\node[roundnode] at (-4.5,6) (maintopic) {4} ;
 
    \draw[<-][black!70] (-4,0) -- (0.2,0); 
    \draw[->][black!70] (0,-0.2) -- (0,6.2); 

    \draw (-3,0) -- (-3,5);
    \draw (-3,5) -- (0,5);
    \draw (0,5) -- (0,1.5);
    \draw (0,1.5) -- (-1,0.5);
    \draw (-1,0.5) -- (-2.5,0);
    \draw (-2.5,0) -- (-3,0);
   
	\draw[dashed, black!60] (-1,0.5) -- (0,0.5);
	\draw[dashed, black!60] (-1,0.5) -- (-1,0);

    \node at (0.5,1.5) {$c_{1}$};
	\node at (0.5,5) {$\mu$};
	\node at (0.5,0.5) {$c_{2} $};
	\node[rotate=-45] at (-0.5,-0.6) {$ -c_{1} + c_{2} $};
	\node[rotate=-45] at (-2,-0.6) {$ -c_{1} - c_{2} $};
	\node[rotate=-45] at (-2.75, -0.3) {$ -1 $};
	
	\filldraw [black] (-3,0) circle (2pt);
	\filldraw [black] (-3,5) circle (2pt);
	\filldraw [black] (0,5) circle (2pt);
	\filldraw [black] (0,1.5) circle (2pt);
	\filldraw [black] (-1,0.5) circle (2pt);
	\filldraw [black] (-2.5,0) circle (2pt);
	
    \node at (-3.5,5.25) {$(i)$};
    \node at (-3.5,0.3) {$(ii)$};    
    \node at (-2.55,0.3) {$(iii)$};    
    \node at (-1.3,0.75) {$(iv)$};   
    \node at (-0.3,1.75) {$(v)$};
    \node at (-0.3,5.25) {$(vi)$}; 

	\node at (-1.5,-1.5) {$x_{4,\cdot}$, $y_{4,\cdot}$ };

\end{tikzpicture}
\end{center}
\end{minipage}%
\begin{minipage}{.5\textwidth}
\begin{center}
\begin{tikzpicture}[scale=0.70, roundnode/.style={circle, draw=black!80, thick, minimum size=3mm}, font=\scriptsize]

	\node[roundnode] at (-4.5,6) (maintopic) {5} ;
 
    \draw[<-][black!70] (-4,0) -- (0.2,0); 
    \draw[->][black!70] (0,-0.2) -- (0,6.2); 

    \draw (-3,5) -- (0,5);
    \draw (0,5) -- (0,1.5);
    \draw (0,1.5) -- (-1.5,0);
    \draw (-1.5,0) -- (-2,0);
    \draw (-2,0) -- (-3,1);
    \draw (-3,1) -- (-3,5);
   
	\draw[dashed, black!60] (-3,1) -- (0,1);
	\draw[dashed, black!60] (-3,1) -- (-3,0);

    \node at (0.5,1.5) {$c_{1}$};
	\node at (0.5,5) {$\mu$};
	\node at (0.5,1) {$c_{2} $};
	\node[rotate=-45] at (-1.25,-0.3) {$ -c_{1} $};
	\node[rotate=-45] at (-1.5,-0.6) {$ -1 + c_{2} $};
	\node[rotate=-45] at (-2.75, -0.3) {$ -1 $};
	
	\filldraw [black] (-3,1) circle (2pt);
	\filldraw [black] (-3,5) circle (2pt);
	\filldraw [black] (0,5) circle (2pt);
	\filldraw [black] (0,1.5) circle (2pt);
	\filldraw [black] (-1.5,0) circle (2pt);
	\filldraw [black] (-2,0) circle (2pt);
	
    \node at (-3.5,5.25) {$(i)$};
	\node at (-3.5,1.25) {$(ii)$};
    \node at (-1.9,0.4) {$(iii)$};
    \node at (-0.8,0.25) {$(iv)$};    
    \node at (-0.3,1.75) {$(v)$};    
    \node at (-0.4,5.25) {$(vi)$};
    
	\node at (-1.5,-1.5) {$x_{5,\cdot}$, $y_{5,\cdot}$ };

\end{tikzpicture}
\end{center}
\end{minipage}

\caption{Toric pictures for $ \widetilde{T}_{i}(0),$ $1 \leq i \leq 5$, with the next blow-ups plotted}
\label{Toric pictures}
\end{figure}

Some strata whose configurations are depicted in the figures in Section \ref{chp m1: Structure} correspond to a toric structure on $ \widetilde{M}_{c_{1} , c_{2}, c_{3} }$, unique up to equivariant symplectomorphism. These toric structures are given by the tori  $ \widetilde{T}_{i,j}(0) $. We will demonstrate this correspondence later in Appendix \ref{isoconfig m1} (see Propositon \ref{complextransitive}, Lemma \ref{isometrygroups} and Corollary \ref{homotopytype}). More accurately, the relationship between the toric pictures and the configurations is, conveniently,
$$ \widetilde{T}_{i,j}(0) \longleftrightarrow  \mbox{Configuration} (i,j)  \quad \mbox{for} \quad  i=1,...,5  \ \mbox{and} \ j=1,...,6.$$
Besides the circle actions coming from these tori $ \widetilde{T}_{i,j}(0) $, there are circle actions obtained by blowing up the toric actions $ \widetilde{T}_{i}(0) $ at an interior point of a curve, as for example in configurations 1.7 -- 1.12. Moreover, there is one configuration for which there is no correspondent $ S^{1} $--action, namely 1.13. We will now study each of these situations successively, starting with the toric pictures.

Using Karshon's classification, we use projections on the Delzant polytopes of the toric actions $ \widetilde{T}_{i,j}(0)$, onto the x-axis and onto the y-axis to obtain the graphs of the two selected Hamiltonian $S^{1}$--actions. This procedure gives the graphs in Figures \ref{S1-actions x0-x5}, \ref{S1-actions x6-x11}, \ref{S1-actions y0-y5} and \ref{S1-actions y6-y11}.

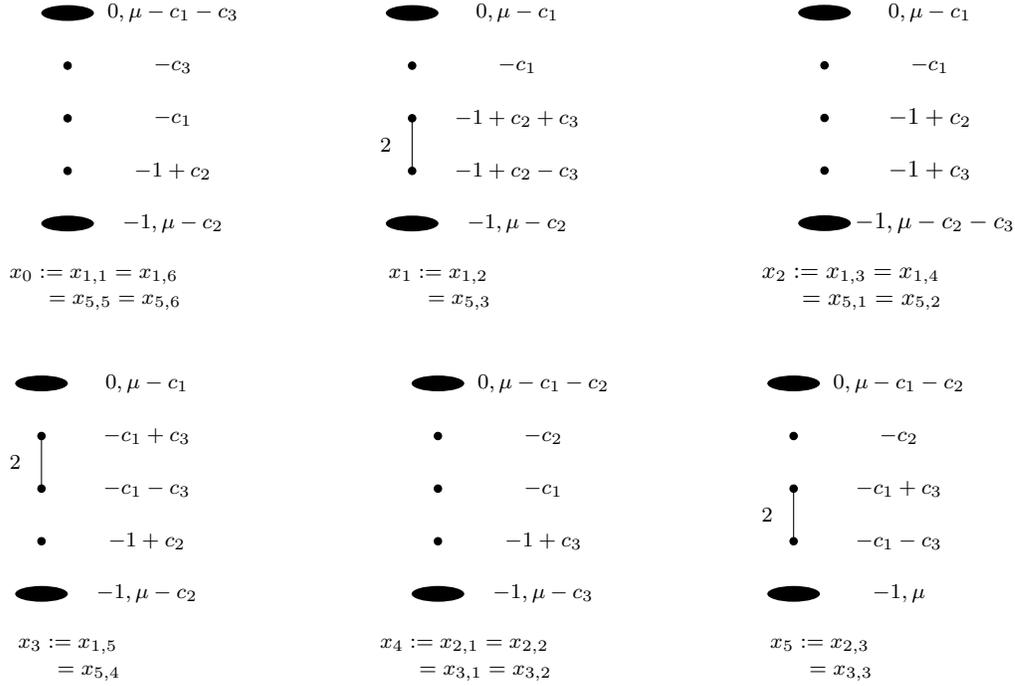
\begin{figure}[thp]
\begin{minipage}{.33\textwidth}
\begin{tikzpicture}[scale=0.7,font=\footnotesize]

		\fill[black] (2,2) ellipse (0.5 and 0.15);
	\node at (4,2) {$0,\mu - c_{1} - c_{3}$};
	\filldraw [black] (2,1) circle (2pt);
	\node at (4,1) {$ - c_{3} $};
	\filldraw [black] (2,0) circle (2pt);
	\node at (4,0) {$ - c_{1} $};
	\filldraw [black] (2,-1) circle (2pt);
	\node at (4,-1) {$-1+ c_{2} $};
	\fill[black] (2,-2) ellipse (0.5 and 0.15);
	\node at (4,-2) {$-1,\mu - c_{2}$};
       \node at (2.5,-3) {$x_{0} := x_{1,1}= x_{1,6}$};
	\node at (2.9, -3.5) {$  = x_{5,5} = x_{5,6}$};
\end{tikzpicture}
\end{minipage}%
\begin{minipage}{.34\textwidth}
\begin{tikzpicture}[scale=0.7,font=\footnotesize]

		\fill[black] (2,2) ellipse (0.5 and 0.15);
	\node at (4,2) {$0,\mu - c_{1}$};
	\filldraw [black] (2,1) circle (2pt);
	\node at (4,1) {$ - c_{1} $};
	\filldraw [black] (2,0) circle (2pt);
	\node at (4,0) {$ -1 + c_{2} + c_{3} $};
	\filldraw [black] (2,-1) circle (2pt);
	\node at (4,-1) {$-1 + c_{2} - c_{3} $};
	\fill[black] (2,-2) ellipse (0.5 and 0.15);
	\node at (4,-2) {$-1,\mu - c_{2}$};
	\draw (2,0) -- (2,-1);
	\node at (1.5,-0.5) {2};
	\node at (2.5,-3) {$x_{1} := x_{1,2}$};
	\node at (2.9,-3.5) {$  = x_{5,3} $};

\end{tikzpicture}
\end{minipage}%
\begin{minipage}{.33\textwidth}
\begin{tikzpicture}[scale=0.7,font=\footnotesize]
	
	\fill[black] (2,2) ellipse (0.5 and 0.15);
	\node at (4,2) {$0,\mu - c_{1}$};
	\filldraw [black] (2,1) circle (2pt);
	\node at (4,1) {$ - c_{1} $};
	\filldraw [black] (2,0) circle (2pt);
    \tikzstyle{every node}=[font=\small]
	\node at (4,0) {$ -1 + c_{2} $};
	\filldraw [black] (2,-1) circle (2pt);
	\node at (4,-1) {$-1 + c_{3} $};
	\fill[black] (2,-2) ellipse (0.5 and 0.15);
	\node at (4.1,-2) {$-1,\mu - c_{2} - c_{3}$};
	\node at (2.5,-3) {$x_{2} := x_{1,3}= x_{1,4} $};
	\node at (2.9,-3.5) {$  = x_{5,1}= x_{5,2}  $};
\end{tikzpicture}
\end{minipage}

\bigskip

\bigskip

\begin{minipage}{.33\textwidth}

\begin{tikzpicture}[scale=0.7,font=\footnotesize]

		\fill[black] (2,2) ellipse (0.5 and 0.15);
	\node at (4,2) {$0,\mu - c_{1}$};
	\filldraw [black] (2,1) circle (2pt);
	\node at (4,1) {$ - c_{1} + c_{3} $};
	\filldraw [black] (2,0) circle (2pt);
	\node at (4,0) {$ - c_{1} - c_{3} $};
    \draw (2,1) -- (2,0);
    \node at (1.5,0.5) {2};
	\filldraw [black] (2,-1) circle (2pt);
	\node at (4,-1) {$-1 + c_{2} $};
	\fill[black] (2,-2) ellipse (0.5 and 0.15);
	\node at (4,-2) {$-1,\mu - c_{2}$};
	\node at (2.5,-3) {$x_{3} := x_{1,5}$};
	\node at (2.9,-3.5) {$  = x_{5,4} $};

\end{tikzpicture}
\end{minipage}%
\begin{minipage}{.34\textwidth}
\begin{tikzpicture}[scale=0.7,font=\footnotesize]
	
	\fill[black] (2,2) ellipse (0.5 and 0.15);
	\node at (4,2) {$0,\mu - c_{1} -c_{2}$};
	\filldraw [black] (2,1) circle (2pt);
	\node at (4,1) {$ - c_{2} $};
	\filldraw [black] (2,0) circle (2pt);
	\node at (4,0) {$ - c_{1} $};
	\filldraw [black] (2,-1) circle (2pt);
	\node at (4,-1) {$-1 + c_{3} $};
	\fill[black] (2,-2) ellipse (0.5 and 0.15);
	\node at (4,-2) {$-1,\mu - c_{3}$};
	\node at (2.5,-3) {$x_{4} := x_{2,1}= x_{2,2}$};
	\node at (2.9,-3.5) {$  = x_{3,1}= x_{3,2}  $};
\end{tikzpicture}
\end{minipage}%
\begin{minipage}{.33\textwidth}

\begin{tikzpicture}[scale=0.7,font=\footnotesize]
  	\fill[black] (2,2) ellipse (0.5 and 0.15);
	\node at (4,2) {$0,\mu - c_{1} -c_{2}$};
	\filldraw [black] (2,1) circle (2pt);
	\node at (4,1) {$ - c_{2} $};
	\filldraw [black] (2,0) circle (2pt);
	\node at (4,0) {$ - c_{1} + c_{3} $};
	\filldraw [black] (2,-1) circle (2pt);
	\node at (4,-1) {$- c_{1} - c_{3} $};
    \draw (2,0) -- (2,-1);
    \node at (1.5,-0.5) {2};
	\fill[black] (2,-2) ellipse (0.5 and 0.15);
	\node at (4,-2) {$-1,\mu$};
	\node at (2.5,-3) {$x_{5} := x_{2,3}$};
	\node at (2.9,-3.5) {$  = x_{3,3} $};

\end{tikzpicture}
\end{minipage}

\caption{Graphs of the $ S^{1}$--actions $ x_{0},x_{1} x_2,x_3,x_4$ and $x_5$.}
\label{S1-actions x0-x5}
\end{figure}

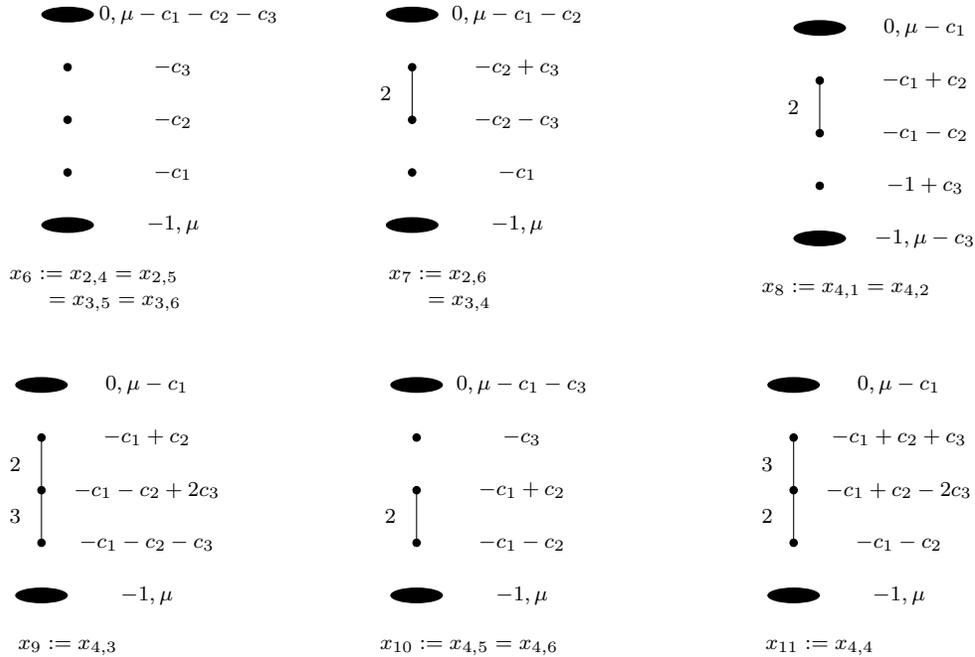
\begin{figure}[thp]
\vspace*{.7cm}
\begin{minipage}{.33\textwidth}
\begin{tikzpicture}[scale=0.7,font=\footnotesize]
	\fill[black] (2,2) ellipse (0.5 and 0.15);
	\node at (4.3,2) {$0,\mu - c_{1} - c_{2} - c_{3}$};
	\filldraw [black] (2,1) circle (2pt);
	\node at (4,1) {$ - c_{3} $};
	\filldraw [black] (2,0) circle (2pt);
	\node at (4,0) {$ - c_{2} $};
	\filldraw [black] (2,-1) circle (2pt);
	\node at (4,-1) {$ - c_{1} $};
	\fill[black] (2,-2) ellipse (0.5 and 0.15);
	\node at (4,-2) {$-1,\mu$};
	\node at (2.5,-3) {$x_{6} := x_{2,4}=x_{2,5}$};
	\node at (2.9,-3.5) {$  = x_{3,5} = x_{3,6} $};
\end{tikzpicture}
\end{minipage}%
\begin{minipage}{.34\textwidth}
\begin{tikzpicture}[scale=0.7,font=\footnotesize]
	\fill[black] (2,2) ellipse (0.5 and 0.15);
	\node at (4,2) {$0,\mu - c_{1} -c_{2}$};
	\filldraw [black] (2,1) circle (2pt);
	\node at (4,1) {$ - c_{2} + c_{3} $};
	\filldraw [black] (2,0) circle (2pt);
	\node at (4,0) {$ - c_{2} - c_{3} $};
    \draw (2,1) -- (2,0);
    \node at (1.5,0.5) {2};	
	\filldraw [black] (2,-1) circle (2pt);
	\node at (4,-1) {$- c_{1} $};
	\fill[black] (2,-2) ellipse (0.5 and 0.15);
	\node at (4,-2) {$-1,\mu$};
	\node at (2.5,-3) {$x_{7} := x_{2,6}$};
	\node at (2.9,-3.5) {$  = x_{3,4} $};
\end{tikzpicture}
\end{minipage}%
\begin{minipage}{.33\textwidth}
\begin{tikzpicture}[scale=0.7,font=\footnotesize]
	\fill[black] (2,2) ellipse (0.5 and 0.15);
	\node at (4,2) {$0,\mu - c_{1}$};
	\filldraw [black] (2,1) circle (2pt);
	\node at (4,1) {$ - c_{1} + c_{2} $};
	\filldraw [black] (2,0) circle (2pt);
	\node at (4,0) {$ - c_{1} - c_{2} $};
    \draw (2,1) -- (2,0);
    \node at (1.5,0.5) {2};	
	\filldraw [black] (2,-1) circle (2pt);
	\node at (4,-1) {$-1 + c_{3} $};
	\fill[black] (2,-2) ellipse (0.5 and 0.15);
	\node at (4,-2) {$-1,\mu - c_{3}$};
	\node at (2.5,-3) {$x_{8} := x_{4,1}= x_{4,2} $};
\end{tikzpicture}
\end{minipage}

\bigskip

\bigskip

\begin{minipage}{.33\textwidth}
\begin{tikzpicture}[scale=0.7,font=\footnotesize]
\fill[black] (2,2) ellipse (0.5 and 0.15);
	\node at (4,2) {$0,\mu - c_{1}$};
	\filldraw [black] (2,1) circle (2pt);
	\node at (4,1) {$ - c_{1} + c_{2} $};
	\filldraw [black] (2,0) circle (2pt);
	\node at (4,0) {$ - c_{1} - c_{2} + 2 c_{3} $};
    \draw (2,1) -- (2,0);
    \node at (1.5,0.5) {2};	
	\filldraw [black] (2,-1) circle (2pt);
	\node at (4,-1) {$ - c_{1} - c_{2} - c_{3} $};
    \draw (2,0) -- (2,-1);
    \node at (1.5,-0.5) {3};	
	\fill[black] (2,-2) ellipse (0.5 and 0.15);
	\node at (4,-2) {$-1,\mu$};
	\node at (2.5,-3) {$x_{9} := x_{4,3}$};
\end{tikzpicture}
\end{minipage}%
\begin{minipage}{.34\textwidth}
\begin{tikzpicture}[scale=0.7,font=\footnotesize]
		\fill[black] (2,2) ellipse (0.5 and 0.15);
	\node at (4,2) {$0,\mu - c_{1} - c_{3}$};
	\filldraw [black] (2,1) circle (2pt);
	\node at (4,1) {$ - c_{3} $};
	\filldraw [black] (2,0) circle (2pt);
	\node at (4,0) {$ - c_{1} + c_{2} $};
	\filldraw [black] (2,-1) circle (2pt);
	\node at (4,-1) {$ - c_{1} - c_{2} $};
    \draw (2,0) -- (2,-1);
    \node at (1.5,-0.5) {2};	
	\fill[black] (2,-2) ellipse (0.5 and 0.15);
	\node at (4,-2) {$-1,\mu $};
	\node at (3,-3) {$x_{10} := x_{4,5}= x_{4,6} $};
	\end{tikzpicture}
\end{minipage}%
\begin{minipage}{.33\textwidth}
\begin{tikzpicture}[scale=0.7,font=\footnotesize]
	\fill[black] (2,2) ellipse (0.5 and 0.15);
	\node at (4,2) {$0,\mu - c_{1}$};
	\filldraw [black] (2,1) circle (2pt);
	\node at (4,1) {$ - c_{1} + c_{2} + c_{3} $};
	\filldraw [black] (2,0) circle (2pt);
	\node at (4,0) {$ - c_{1} + c_{2} - 2 c_{3} $};
    \draw (2,1) -- (2,0);
    \node at (1.5,0.5) {3};	
	\filldraw [black] (2,-1) circle (2pt);
	\node at (4,-1) {$ - c_{1} - c_{2} $};
    \draw (2,0) -- (2,-1);
    \node at (1.5,-0.5) {2};	
	\fill[black] (2,-2) ellipse (0.5 and 0.15);
	\node at (4,-2) {$-1,\mu$};
	\node at (2.5,-3) {$x_{11} := x_{4,4}$};	
\end{tikzpicture}
\end{minipage}
\caption{Graphs of the $ S^{1}$-actions $ x_{6}, x_{7},x_8,x_9,x_{10} $ and $x_{11}$.}
\label{S1-actions x6-x11}
\end{figure}

\begin{figure}[thp]

\bigskip

\bigskip

\begin{minipage}{.33\textwidth}
\begin{tikzpicture}[scale=0.7,font=\footnotesize]
		\fill[black] (2,2) ellipse (0.5 and 0.15);
	\node at (4,2) {$\mu, 1 - c_{2}$};
	\filldraw [black] (2,1) circle (2pt);
	\node at (4,1) {$ \mu - c_{2} $};
	\filldraw [black] (2,0) circle (2pt);
	\node at (4,0) {$ c_{1} $};
	\filldraw [black] (2,-1) circle (2pt);
	\node at (4,-1) {$ c_{3} $};
	\fill[black] (2,-2) ellipse (0.5 and 0.15);
	\node at (4,-2) {$0,1 - c_{1} - c_{3}$};
	\node at (2.5,-3) {$y_0 := y_{1,4}= y_{1,5} $};
	\node at (2.9,-3.5) {$  = y_{2,2}= y_{2,3} $};
\end{tikzpicture}
\end{minipage}%
\begin{minipage}{.34\textwidth}
\begin{tikzpicture}[scale=0.7,font=\footnotesize]
	\fill[black] (2,2) ellipse (0.5 and 0.15);
	\node at (4,2) {$\mu ,1 - c_{2}$};
	\filldraw [black] (2,1) circle (2pt);
	\node at (4,1) {$ \mu - c_{2} + c_{3} $};
	\filldraw [black] (2,0) circle (2pt);
	\node at (4,0) {$ \mu - c_{2} - c_{3} $};
    \draw (2,1) -- (2,0);
    \node at (1.5,0.5) {2};
	\filldraw [black] (2,-1) circle (2pt);
	\node at (4,-1) {$c_{1}$};
	\fill[black] (2,-2) ellipse (0.5 and 0.15);
	\node at (4,-2) {$0,1 - c_{1}$};
	\node at (2.5,-3) {$y_{1} := y_{1,3}$};
	\node at (2.9,-3.5) {$  = y_{2,5} $};
\end{tikzpicture}
\end{minipage}%
\begin{minipage}{.33\textwidth}
\begin{tikzpicture}[scale=0.7,font=\footnotesize]
	\fill[black] (2,2) ellipse (0.5 and 0.15);
	\node at (4,2) {$\mu,1 - c_{2} - c_{3}$};
	\filldraw [black] (2,1) circle (2pt);
	\node at (4,1) {$ \mu - c_{3} $};
	\filldraw [black] (2,0) circle (2pt);
	\node at (4,0) {$ \mu - c_{2} $};
	\filldraw [black] (2,-1) circle (2pt);
	\node at (4,-1) {$ c_{1} $};
	\fill[black] (2,-2) ellipse (0.5 and 0.15);
	\node at (4,-2) {$0,1 - c_{1}$};
	\node at (2.5,-3) {$y_2 := y_{1,1}= y_{1,2} $};
	\node at (2.9,-3.5) {$  = y_{2,1}= y_{2,6}  $};
\end{tikzpicture}
\end{minipage}

\bigskip

\bigskip

\begin{minipage}{.33\textwidth}
\begin{tikzpicture}[scale=0.7,font=\footnotesize]
	\fill[black] (2,2) ellipse (0.5 and 0.15);
	\node at (4,2) {$\mu , 1 - c_{2}$};
	\filldraw [black] (2,1) circle (2pt);
	\node at (4,1) {$ \mu - c_{2}$};
	\filldraw [black] (2,0) circle (2pt);
	\node at (4,0) {$ c_{1} + c_{3} $};
	\filldraw [black] (2,-1) circle (2pt);
	\node at (4,-1) {$ c_{1} - c_{3} $};
    \draw (2,0) -- (2,-1);
    \node at (1.5,-0.5) {2};
	\fill[black] (2,-2) ellipse (0.5 and 0.15);
	\node at (4,-2) {$0,1 - c_{1}$};
	\node at (3,-3) {$y_{3} := y_{1,6}= y_{2,4} $};
\end{tikzpicture}
\end{minipage}%
\begin{minipage}{.34\textwidth}
\begin{tikzpicture}[scale=0.7,font=\footnotesize]
	\fill[black] (2,2) ellipse (0.5 and 0.15);
	\node at (4,2) {$\mu, 1 -c_{3}$};
	\filldraw [black] (2,1) circle (2pt);
	\node at (4,1) {$ \mu - c_{3} $};
	\filldraw [black] (2,0) circle (2pt);
	\node at (4,0) {$ c_{1} $};
	\filldraw [black] (2,-1) circle (2pt);
	\node at (4,-1) {$ c_{2} $};
	\fill[black] (2,-2) ellipse (0.5 and 0.15);
	\node at (4,-2) {$0,1- c_{1} - c_{2}$};
	\node at (2.5,-3) {$y_4 := y_{4,1}= y_{4,6}$};
	\node at (2.9,-3.5) {$  = y_{5,1} = y_{5,6} $};
\end{tikzpicture}
\end{minipage}%
\begin{minipage}{.33\textwidth}
\begin{tikzpicture}[scale=0.7,font=\footnotesize]
	\fill[black] (2,2) ellipse (0.5 and 0.15);
	\node at (4,2) {$\mu, 1$};
	\filldraw [black] (2,1) circle (2pt);
	\node at (4,1) {$ c_{1} + c_{3} $};
	\filldraw [black] (2,0) circle (2pt);
	\node at (4,0) {$ c_{1} - c_{3} $};
    \draw (2,1) -- (2,0);
    \node at (1.5,0.5) {2};
	\filldraw [black] (2,-1) circle (2pt);
	\node at (4,-1) {$ c_{2} $};
	\fill[black] (2,-2) ellipse (0.5 and 0.15);
	\node at (4,-2) {$0,1 - c_{1} - c_{2} $};
	\node at (3,-3) {$y_5 := y_{4,5}= y_{5,5}$};
\end{tikzpicture}
\end{minipage}
\caption{Graphs of the  $ S^{1}$-actions $y_0$, $y_1$, $ y_2 $, $ y_{3} $, $ y_4 $,  and $ y_5  $}
\label{S1-actions y0-y5}
\end{figure}
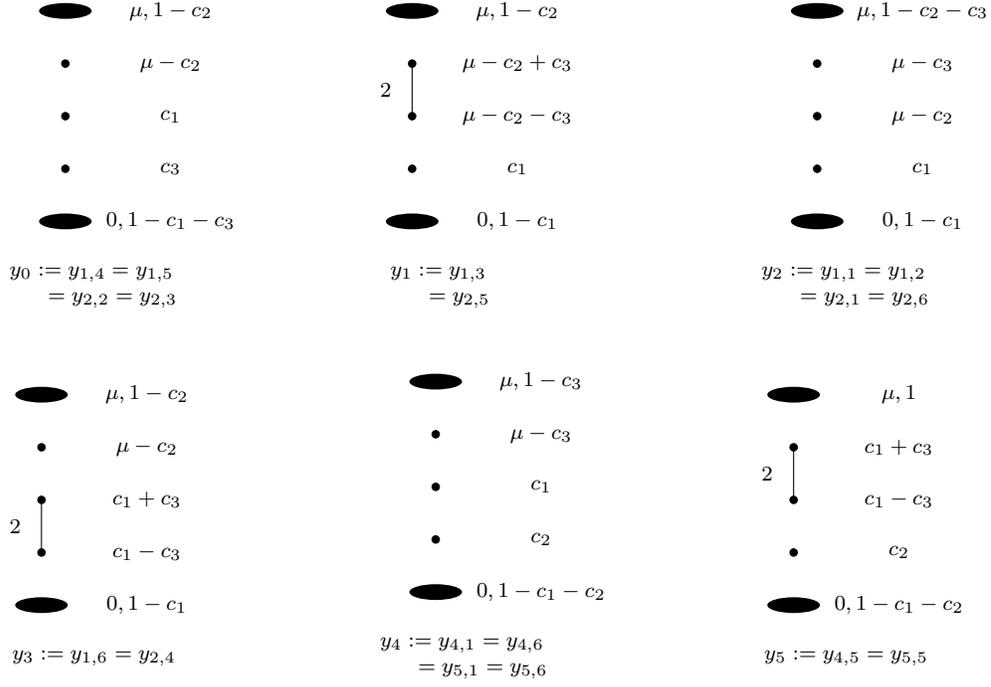

\begin{figure}[thp]

\bigskip
\begin{minipage}{.33\textwidth}
\begin{tikzpicture}[scale=0.7,font=\footnotesize]
	\fill[black] (2,2) ellipse (0.5 and 0.15);
	\node at (4,2) {$\mu,1$};
	\filldraw [black] (2,1) circle (2pt);
	\node at (4,1) {$ c_{1} $};
	\filldraw [black] (2,0) circle (2pt);
	\node at (4,0) {$ c_{2} $};
	\filldraw [black] (2,-1) circle (2pt);
	\node at (4,-1) {$ c_{3} $};
	\fill[black] (2,-2) ellipse (0.5 and 0.15);
	\node at (4.3,-2) {$0,1 - c_{1} - c_{2} - c_{3}$};
	\node at (2.5,-3) {$y_6 := y_{4,2}= y_{4,3}$};
	\node at (2.9,-3.5) {$  = y_{5,3} = y_{5,4} $};
\end{tikzpicture}
\end{minipage}%
\begin{minipage}{.34\textwidth}
\begin{tikzpicture}[scale=0.7,font=\footnotesize]
	\fill[black] (2,2) ellipse (0.5 and 0.15);
	\node at (4,2) {$\mu, 1$};
	\filldraw [black] (2,1) circle (2pt);
	\node at (4,1) {$ c_{1} $};
	\filldraw [black] (2,0) circle (2pt);
	\node at (4,0) {$ c_{2} + c_{3} $};
	\filldraw [black] (2,-1) circle (2pt);
	\node at (4,-1) {$ c_{2} - c_{3} $};
    \draw (2,0) -- (2,-1);
    \node at (1.5,-0.5) {2};
	\fill[black] (2,-2) ellipse (0.5 and 0.15);
	\node at (4,-2) {$0,1-c_{1}-c_{2}$};
	\node at (3,-3) {$y_7 := y_{4,4}= y_{5,2} $};
\end{tikzpicture}
\end{minipage}%
\begin{minipage}{.33\textwidth}
\begin{tikzpicture}[scale=0.7,font=\footnotesize]
	\fill[black] (2,2) ellipse (0.5 and 0.15);
	\node at (4,2) {$\mu, 1 - c_{3}$};
	\filldraw [black] (2,1) circle (2pt);
	\node at (4,1) {$ \mu - c_{3} $};
	\filldraw [black] (2,0) circle (2pt);
	\node at (4,0) {$ c_{1} + c_{2} $};
	\filldraw [black] (2,-1) circle (2pt);
	\node at (4,-1) {$ c_{1} - c_{2} $};
    \draw (2,0) -- (2,-1);
    \node at (1.5,-0.5) {2};
	\fill[black] (2,-2) ellipse (0.5 and 0.15);
	\node at (4,-2) {$0,1 - c_{1}$};
	\node at (3,-3) {$y_8 := y_{3,1}= y_{3,6}$};
\end{tikzpicture}
\end{minipage}

\bigskip

\bigskip

\begin{minipage}{.33\textwidth}
\begin{tikzpicture}[scale=0.7,font=\footnotesize]
	\fill[black] (2,2) ellipse (0.5 and 0.15);
	\node at (4,2) {$\mu,1$};
	\filldraw [black] (2,1) circle (2pt);
	\node at (4,1) {$ c_{1} + c_{2} + c_{3} $};
	\filldraw [black] (2,0) circle (2pt);
	\node at (4,0) {$ c_{1} + c_{2} - 2c_{3} $};
    \draw (2,1) -- (2,0);
    \node at (1.5,0.5) {3};
	\filldraw [black] (2,-1) circle (2pt);
	\node at (4,-1) {$ c_{1} - c_{2} $};
    \draw (2,0) -- (2,-1);
    \node at (1.5,-0.5) {2};
	\fill[black] (2,-2) ellipse (0.5 and 0.15);
	\node at (4,-2) {$0,1-c_{1}$};
	\node at (2.5,-3) {$y_9 := y_{3,5}$};
	\node at (2.9,-3.5) {$\ \ $};
\end{tikzpicture}
\end{minipage}%
\begin{minipage}{.34\textwidth}
\begin{tikzpicture}[scale=0.7,font=\footnotesize]
	\fill[black] (2,2) ellipse (0.5 and 0.15);
	\node at (4,2) {$\mu,1$};
	\filldraw [black] (2,1) circle (2pt);
	\node at (4,1) {$ c_{1} + c_{2} $};
	\filldraw [black] (2,0) circle (2pt);
	\node at (4,0) {$ c_{1} - c_{2} $};
    \draw (2,1) -- (2,0);
    \node at (1.5,0.5) {2};
	\filldraw [black] (2,-1) circle (2pt);
	\node at (4,-1) {$c_{3} $};
	\fill[black] (2,-2) ellipse (0.5 and 0.15);
	\node at (4,-2) {$0,1-c_{1}-c_{3}$};
	\node at (3,-3) {$y_{10} := y_{3,2}= y_{3,3}$};
\end{tikzpicture}
\end{minipage}%
\begin{minipage}{.33\textwidth}
\begin{tikzpicture}[scale=0.7,font=\footnotesize]
	\fill[black] (2,2) ellipse (0.5 and 0.15);
	\node at (4,2) {$\mu, 1$};
	\filldraw [black] (2,1) circle (2pt);
	\node at (4,1) {$ c_{1} + c_{2}$};
	\filldraw [black] (2,0) circle (2pt);
	\node at (4,0) {$ c_{1} - c_{2} + 2 c_{3} $};
    \draw (2,1) -- (2,0);
    \node at (1.5,0.5) {2};	
	\filldraw [black] (2,-1) circle (2pt);
	\node at (4,-1) {$ c_{1} - c_{2} - c_{3} $};
    \draw (2,0) -- (2,-1);
    \node at (1.5,-0.5) {3};	
	\fill[black] (2,-2) ellipse (0.5 and 0.15);
	\node at (4,-2) {$0,1 - c_{1}$};
	\node at (3,-3) {$y_{11} := y_{3,4}$};	
\end{tikzpicture}
\end{minipage}
\caption{Graphs of the  $ S^{1}$-actions $y_6$, $y_7$, $ y_8 $, $ y_9 $, $ y_{10} $ and $ y_{11} $}
\label{S1-actions y6-y11}
\end{figure}
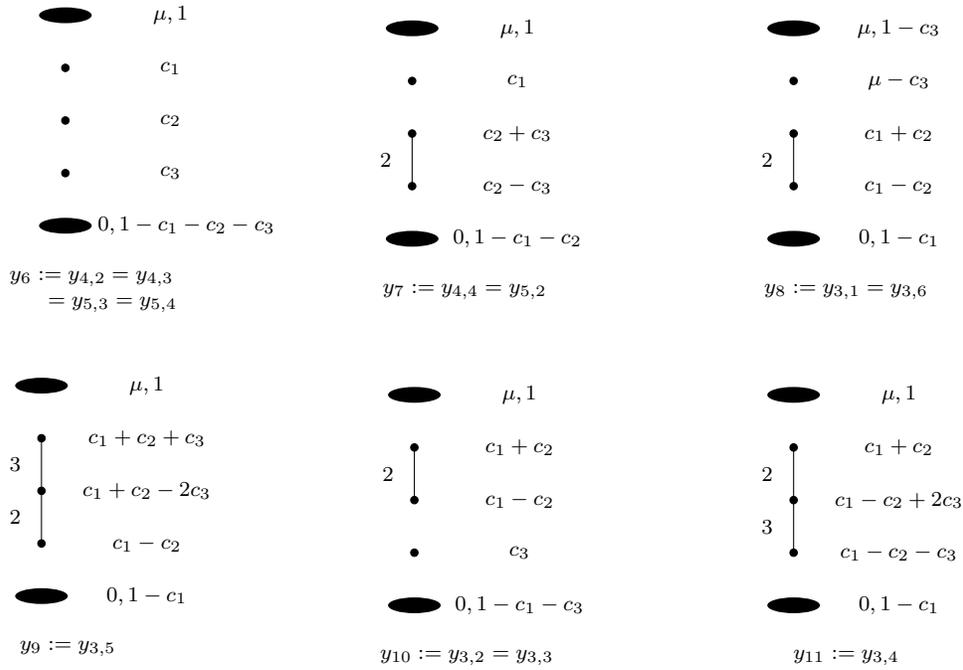

\begin{remark}\label{injectivehomotopy}
 As explained by D. McDuff and S. Tolman in \cite{McDTol}, the maps $ \widetilde{T}_{i,j}(0) \rightarrow G_{c_{1}, c_{2}, c_{3}} $ induce injective maps of fundamental groups. More precisely, this follows from combining
Theorem 1.3 and Theorem 1.25 (i) in their paper, as explained in Remark 1.26, since those theorems imply that if $(M, \omega, T, \phi)$ is a symplectic toric 4--manifold with moment polytope $\Delta$ which has five or more facets then the induced map on the fundamental groups is injective. Therefore we can see the actions coming from the toric structures as elements  of the fundamental group of $ G_{c_{1}, c_{2}, c_{3}} $. Using Karshon's classification, we can now find relations between the corresponding elements in $ \pi _{1} $. Note that since we work only with topological groups up to rational equivalences and in order to simplify notation, we will also denote by $ (x_{i,j},y_{i,j})$ the elements in $\pi_1(\widetilde{T}_{i,j}(0))$ and in $\pi_1(\Gccc)$. 
\end{remark}

Transforming the polytopes by $ SL(2, \mathbb{Z}) $ transformations, we get several relations that we will denote by type--0 relations. For instance, consider the polytopes obtained from the toric picture for $\widetilde{T}_1(0)$ in Figure 
\ref{Toric pictures} blowing-up at the points $(ii)$ and $(iii)$. These contain the toric actions represented by $(x_{1,2},y_{1,2})=(x_1,y_2)$ and $(x_{1,3},y_{1,3})=(x_2,y_1)$, respectively. Performing the $ SL(2, \mathbb{Z}) $ transformation represented by the matrix 
$$ \left( \begin{array}{cc} 1 & 0 \\ -1 & 1 \end{array} \right)$$
to both polytopes and then projecting onto the $y$-axis we obtain the same graph, so we conclude, using Karshon's classification, that as circle actions we have $ y_{2} - x_{1} = y_{1} - x_{2} $. 

Below, we list all the type--0 relations together with the reference to the polytopes that give each relation. 

Type--0 relations:
\begin{align*}
& (1) \    y_{2} - x_{1} = y_{1} - x_{2},& &  1(ii)  \leftrightarrow  1(iii); & 
& (2) \   y_0 - x_{3} = y_{3} - x_{0},& & 1(v)  \leftrightarrow  1(vi);\\
 & (3) \   y_0 - x_{5} = y_{3} - x_{6},& & 2(iii)  \leftrightarrow  2(iv);&
& (4) \  y_{1} + x_{6} = y_{2} + x_{7},& &2(v)  \leftrightarrow  2(vi);\\
& (5) \  y_{8} - x_{4} = y_{4} - x_{8},& &3(i)  \leftrightarrow  4(i); & 
& (6) \   y_{10} - x_{4} = y_{6} - x_{8},& & 3(ii)  \leftrightarrow  4(ii);\\
& (7) \   y_{6} - x_{9} = y_{9} - x_{6},& & 3(v)  \leftrightarrow  4(iii);& 
& (8) \   y_{8} - x_{6} = y_{4} - x_{10},& & 3(vi)  \leftrightarrow  4(vi);\\
& (9) \   y_{7} + x_{2} = y_{6} + x_{1},& & 5(ii)  \leftrightarrow  5(iii);& 
& (10) \  y_{6} - x_{3} = y_{5} - x_{0},& & 5(iv) \leftrightarrow  5(v);\\
& (11) \   y_{10} - x_{5} = y_{11} - x_{7},& & 3(iii)  \leftrightarrow  3(iv);&
& (12) \  y_{11} - 2 x_{7} = y_{9} - 2 x_{6},& & 3(iv)  \leftrightarrow  3(v);\\
& (13) \   x_{9}-2 y_{6}   = x_{11}-2 y_{7}  ,& & 4(iii)  \leftrightarrow  4(iv);&
 & (14) \  y_{7} - x_{11} = y_{5} - x_{10},& & 4(iv)  \leftrightarrow  4(v);\\
& (15) \   y_{11} - x_{7} = y_{5} - x_{10},& & 3(iv)  \leftrightarrow  4(v);&
& (16) \   y_{10} - x_{5} = y_{7} - x_{11},& & 3(iii)  \leftrightarrow  4(iv)
\end{align*}

\subsubsection{New relations}\label{section: new relation m1}

There are two more relations of a somewhat different nature:

\begin{proposition} Consider the circle actions $x_0,x_2,x_4,x_6$  and $y_0,y_2,y_{4}, y_{6}$  defined  in Section \ref{toric actions} as elements of the fundamental group $\pi_1(\Gccc)$.  We have the following relations:
\begin{equation}\label{the x-relation}
x_{0} + x_{4} = x_{2} + x_{6}.
\end{equation}
\begin{equation}\label{the y-relation}
y_0 + y_{4} = y_{2} + y_{6}
\end{equation}

\begin{proof}
Note that for any $ \mu > 1 $, we can also draw the auxiliary Delzant polytopes in Figure \ref{New relation toric pictures}. We name the $S^{1}$-actions corresponding to these polytopes in accordance with the more general case explained later in Appendix \ref{app: relations m>1}.

\begin{figure}[thp]
\begin{center}
\begin{tikzpicture}[roundnode/.style={circle, draw=black!80, thick, minimum size=7mm}, font=\small]

	\node[roundnode] at (-1,6) (maintopic) {9$(vi)$} ;
 
    \draw[->][black!70] (-0.2,0) -- (1.2,0); 
    \draw[->][black!70] (0,-1.1) -- (0,5.2); 

    \draw[brown] (1,5) -- (1,-0.9);
    \draw[brown] (1,-0.9) -- (0.9,-0.9);
    \draw[brown] (0.9,-0.9) -- (0.5,-0.5);
    \draw[brown] (0.5,-0.5) -- (0,0.5);
    \draw[brown] (0,0.5) -- (0,3.7);
    \draw[brown] (0,3.7) -- (0.3,4.3);
    \draw[brown] (0.3,4.3) -- (1,5);
   
	\draw[dashed, black!60] (1,5) -- (0,5);
	\draw[dashed, black!60] (0.9,-0.9) -- (0,-0.9);
	\draw[dashed, black!60] (0.9,-0.9) -- (0.9,0);
	\draw[dashed, black!60] (0.5,-0.5) -- (0.5,0);
	\draw[dashed, black!60] (0.5,-0.5) -- (0,-0.5);
	\draw[dashed, black!60] (0.3,4.3) -- (0.3,0);
	\draw[dashed, black!60] (0.3,4.3) -- (0,4.3);

	\node at (-0.3,5) {$\mu$};
	\node at (-0.9,4.3) {$\mu -1 + c_{2}$};
	\node at (-0.9,3.7) {$\mu -1 - c_{2} $};
	\node at (-0.5,0.5) {$c_{1}$};
	\node at (-0.5,-0.5) {$-c_{1}$};
	\node at (-0.8,-0.9) {$-1+c_{3}$};
	\node[rotate=45] at (0.3,0.3) {$ c_{2} $};
	\node[rotate=45] at (0.6,0.3) {$ c_{1} $};
	\node[rotate=45] at (1.3,0.5) {$ 1 - c_{3}$};
    
	\node at (0,-2) {($x_{2,2,6}$, $y_{2,2,6}$)};

	\node[roundnode] at (7,6) (maintopic) {11$(vi)$} ;
 
    \draw[->][black!70] (7.8,0) -- (9.2,0); 
    \draw[->][black!70] (8,-1.1) -- (8,5.2); 

    \draw[brown] (9,5) -- (9,-0.9);
    \draw[brown] (9,-0.9) -- (8.9,-0.9);
    \draw[brown] (8.9,-0.9) -- (8.5,-0.5);
    \draw[brown] (8.5,-0.5) -- (8.3,-0.1);
    \draw[brown] (8.3,-0.1) -- (8,0.8);
    \draw[brown] (8,0.8) -- (8,4);
    \draw[brown] (8,4) -- (9,5);
   
	\draw[dashed, black!60] (9,5) -- (8,5);
	\draw[dashed, black!60] (8.9,-0.9) -- (8,-0.9);
	\draw[dashed, black!60] (8.9,-0.9) -- (8.9,0);
	\draw[dashed, black!60] (8.5,-0.5) -- (8.5,0);
	\draw[dashed, black!60] (8.5,-0.5) -- (8,-0.5);
	\draw[dashed, black!60] (8.3,-0.1) -- (8.3,0);
	\draw[dashed, black!60] (8.3,-0.1) -- (8,-0.1);

	\node at (7.7,5) {$\mu$};
	\node at (7.5,4) {$\mu -1 $};
	\node at (7.2,0.8) {$c_{1} + c_{2} $};
	\node at (7.2,-0.1) {$c_{1} - 2 c_{2}$};
	\node at (7.5,-0.5) {$-c_{1}$};
	\node at (7.2,-0.9) {$-1+c_{3}$};
	\node[rotate=45] at (8.3,0.3) {$ c_{2} $};
	\node[rotate=45] at (8.7,0.3) {$ c_{1} $};
	\node[rotate=45] at (9.3,0.5) {$ 1 - c_{3}$};
    
	\node at (8,-2) {($x_{2,3,6}$, $y_{2,3,6}$)};

\end{tikzpicture}

\bigskip

\bigskip

\begin{tikzpicture}[roundnode/.style={circle, draw=black!80, thick, minimum size=7mm}, font=\small]

	\node[roundnode] at (-1,6) (maintopic) {9$(v)$} ;
 
    \draw[->][black!70] (-0.2,0) -- (1.2,0); 
    \draw[->][black!70] (0,-1.1) -- (0,5.2);

    \draw[brown] (1,5) -- (1,-1);
    \draw[brown] (1,-1) -- (0.6,-0.6);
    \draw[brown] (0.6,-0.6) -- (0.4,-0.3);
    \draw[brown] (0.4,-0.3) -- (0,0.5);
    \draw[brown] (0,0.5) -- (0,3.7);
    \draw[brown] (0,3.7) -- (0.3,4.3);
    \draw[brown] (0.3,4.3) -- (1,5);
   
	\draw[dashed, black!60] (1,5) -- (0,5);
	\draw[dashed, black!60] (0.6,-0.6) -- (0,-0.6);
	\draw[dashed, black!60] (0.6,-0.6) -- (0.6,0);
	\draw[dashed, black!60] (0.4,-0.3) -- (0.4,0);
	\draw[dashed, black!60] (0.4,-0.3) -- (0,-0.3);
	\draw[dashed, black!60] (0.3,4.3) -- (0.3,0);
	\draw[dashed, black!60] (0.3,4.3) -- (0,4.3);

	\node at (-0.5,5) {$\mu$};
	\node at (-1,4.3) {$\mu -1 + c_{2}$};
	\node at (-1,3.7) {$\mu -1 - c_{2} $};
	\node at (-0.5,0.5) {$c_{1}$};
	\node at (-0.8,-0.3) {$-c_{1} + 2 c_{3}$};
	\node at (-0.8,-0.6) {$-c_{1} - c_{3}$};
	\node[rotate=45] at (0.3,0.5) {$ c_{2} $};
	\node[rotate=45] at (0.7,0.5) {$ c_{1} - c_{3} $};
	\node[rotate=45] at (1.1,0.5) {$ c_{1} + c_{3}$};
    
	\node at (0,-2) {($x_{2,2,5}$, $y_{2,2,5}$)};

	\node[roundnode] at (7,6) (maintopic) {11$(v)$} ;
 
    \draw[->][black!70] (7.8,0) -- (9.2,0); 
    \draw[->][black!70] (8,-1.1) -- (8,5.2); 

    \draw[brown] (9,5) -- (9,-1);
    \draw[brown] (9,-1) -- (8.6,-0.6);
    \draw[brown] (8.6,-0.6) -- (8.4,-0.3);
    \draw[brown] (8.4,-0.3) -- (8.3,-0.1);
    \draw[brown] (8.3,-0.1) -- (8,0.8);
    \draw[brown] (8,0.8) -- (8,4);
    \draw[brown] (8,4) -- (9,5);
   
	\draw[dashed, black!60] (9,5) -- (8,5);
	\draw[dashed, black!60] (8.6,-0.6) -- (8,-0.6);
	\draw[dashed, black!60] (8.6,-0.6) -- (8.6,0);
	\draw[dashed, black!60] (8.4,-0.3) -- (8.4,0);
	\draw[dashed, black!60] (8.4,-0.3) -- (8,-0.3);
	\draw[dashed, black!60] (8.3,-0.1) -- (8.3,0);
	\draw[dashed, black!60] (8.3,-0.1) -- (8,-0.1);

	\node at (7.5,5) {$\mu$};
	\node at (7.2,4) {$\mu -1 $};
	\node at (7.5,0.8) {$c_{1} + c_{2} $};
	\node at (7.2,-0.1) {$c_{1} - 2c_{2}$};
	\node at (7.2,-0.5) {$-c_{1} + 2c_{3}$};
	\node at (7.2,-0.9) {$-c_{1}-c_{3}$};
	\node[rotate=45] at (8.3,0.5) {$ c_{2} $};
	\node[rotate=45] at (8.7,0.5) {$ c_{1} - c_{3} $};
	\node[rotate=45] at (9.1,0.5) {$ c_{1} + c_{3}$};
    
	\node at (8,-2) {($x_{2,3,5}$, $y_{2,3,5}$)};

\end{tikzpicture}
\end{center}
\caption{The auxiliary Delzant polytopes}\label{New relation toric pictures}
\end{figure}
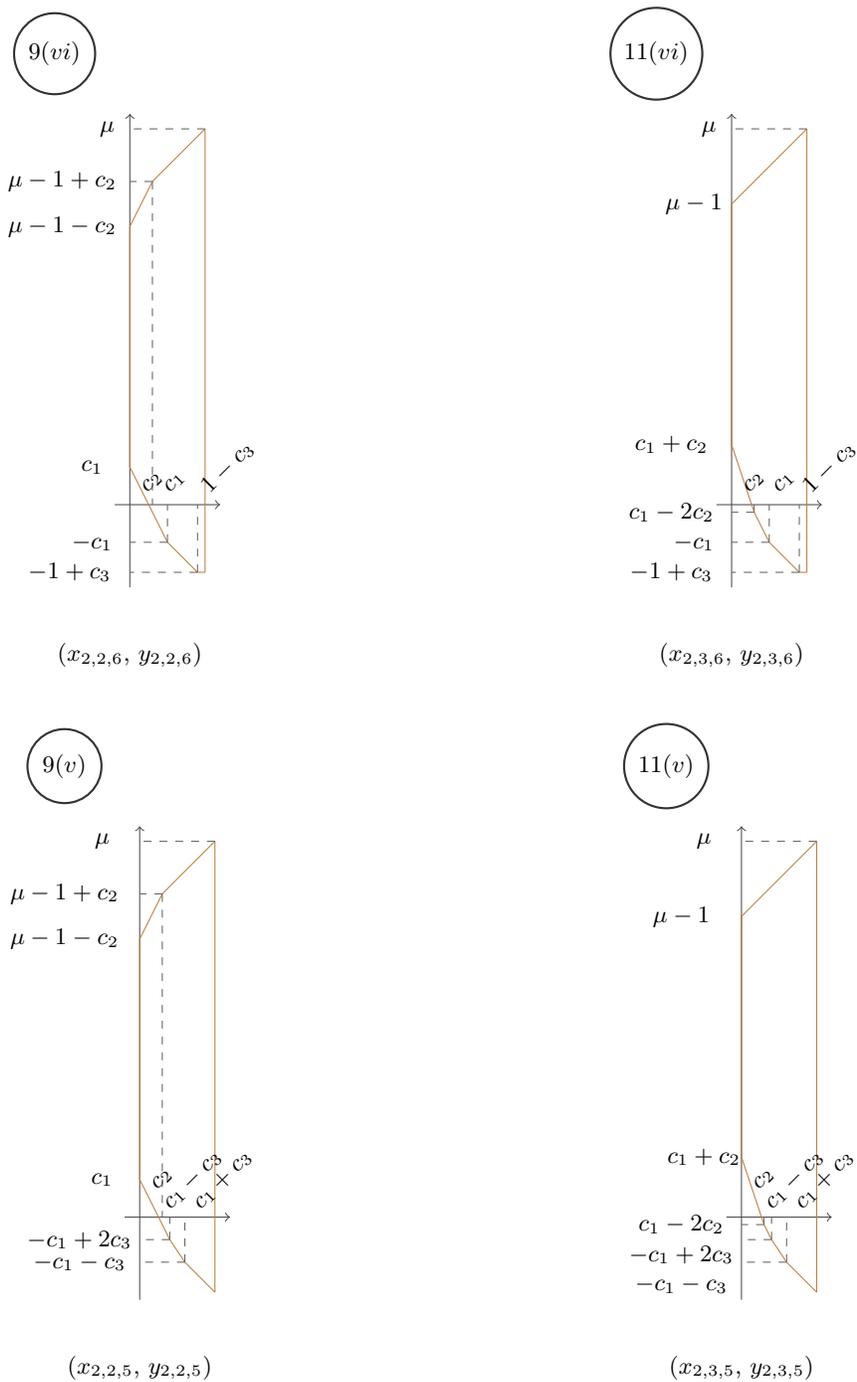

\bigskip

Using Karshon's classification one more time, we obtain the  relations below. As before, for each relation we also list the two polytopes  from which we deduce the relation through a $SL(2, \Z)$ transformation and/or projection onto one of the axis. 

\begin{align*}
& [(S1)] \ x_{2,2,5} = x_{2,3,5}, &  & 9(v)  \leftrightarrow 11(v);\\
& [(S2)] \ x_{2,2,6} = x_{2,3,6}, &  & 9(vi)  \leftrightarrow  11(vi);\\
& [(S3)] \  y_{2,2,5} = x_{1,5} + y_{1,5}, &  & 9(v)  \leftrightarrow  1(v);\\
& [(S4)] \ y_{2,2,6} = x_{1,4} + y_{1,4}, &  &  9(vi)  \leftrightarrow  1(iv);\\
& [(S5)] \ y_{2,3,5} = x_{3,3} + y_{3,3}, &  & 11(v)  \leftrightarrow  3(iii);\\
& [(S6)] \ y_{2,3,6} = x_{3,2} + y_{3,2}, &  & 11(vi)  \leftrightarrow  3(ii);\\
& [(S7)] \ x_{2,2,5} + y_{2,2,5} = x_{2,2,6} + y_{2,2,6}, &  & 9(v)  \leftrightarrow  9(vi);\\
& [(S8)] \ x_{2,3,5} + y_{2,3,5} = x_{2,3,6} + y_{2,3,6}, &  &  11(v)  \leftrightarrow  11(vi).
\end{align*}

Using (S8) together with (S1) and (S2), we get $ x_{2,2,5} + y_{2,3,5} = x_{2,2,6} + y_{2,3,6} $. Then (S7) yields $ y_{2,2,6} + y_{2,3,5} = y_{2,2,5} + y_{2,3,6} $. Combining this with (S3), (S4), (S5) and (S6) gives
$$ x_{1,4} + y_{1,4} + x_{3,3} + y_{3,3}   = x_{1,5} + y_{1,5} + x_{3,2} + y_{3,2}.$$
By the definition of the $x_i's$ and $y_i's$ given in Figures \ref{S1-actions x0-x5} and \ref{S1-actions y0-y5}, this is equivalent to
\begin{gather*}
 x_{5} + y_{10} + x_{2} + y_0 = x_{4} + y_{10} + x_{3} + y_0, \\
 x_{5} + x_{2} = x_{4} + x_{3}.
\end{gather*}
Finally using the type--0 relations (0.2) and (0.3) to replace $ x_{3} $ and $ x_{5} $ we obtain relation \eqref{the x-relation}:
\begin{gather*}
 y_0 - y_{3} + x_{6} + x_{2} = x_{4} + y_0 - y_{3} + x_{0}, \\
 x_{2} + x_{6} = x_{0} + x_{4}.
\end{gather*}

There is a more geometrical interpretation of this relation.  We trace the blow-ups of the symplectic manifold $ \widetilde{M}_{c_1} $ at the invariant surfaces of the action whose moment map corresponds to the first component of the torus action described in the blow-up of capacity $c_1$ of the Hirzebruch surface $ \mathbb{F}_{0} $. Tracking each one of the blow-ups via the arrows in Figure \ref{New relation graphs compared}, we can  see how the relation emerges. More precisely, it shows up by pairing the elements, that is,  $ x_{6} - x_{4} = x_{10} - x_{8} = x_{0} - x_{2} $. Using type--0 relations we note that indeed these are equivalent to \eqref{the x-relation}. Since the relation between the graphs clearly does not depend on the value $\mu$ and the map  $ \widetilde{T}_{i,j}(0) \rightarrow G_{\mu, c_{1}, c_{2}, c_{3}} $ induces an injective map in $\pi_1$, the relation must hold for $ \mu = 1 $ as well.

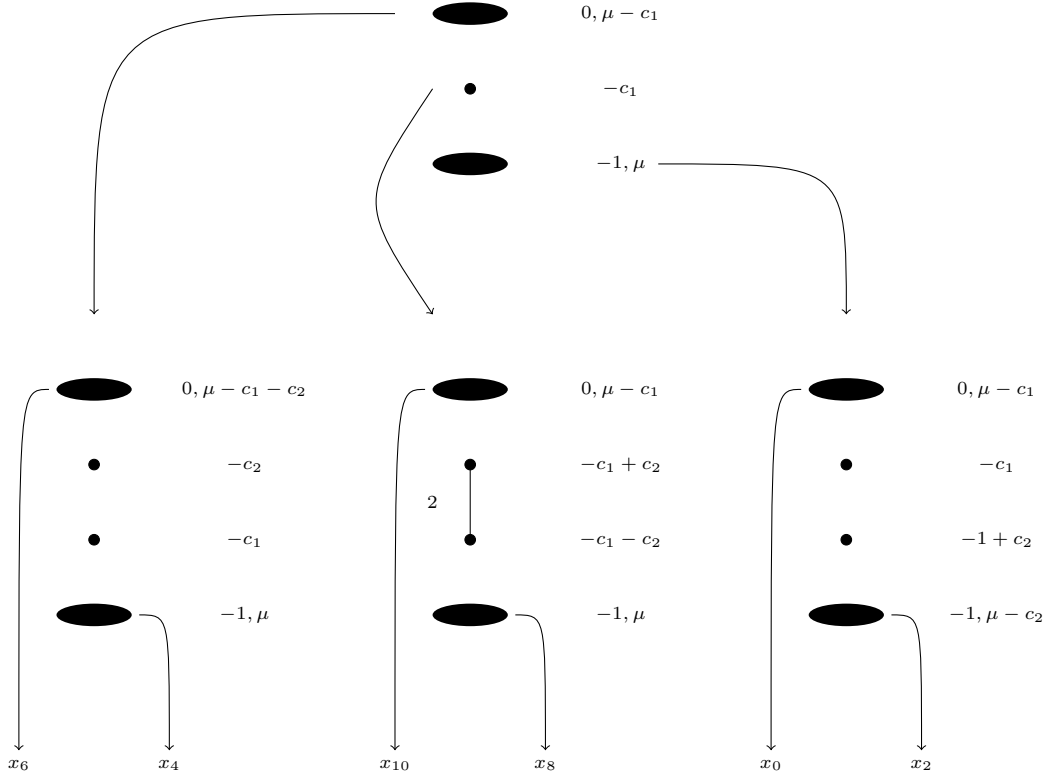
\begin{figure}[thp]
\begin{tikzpicture}[font=\scriptsize]

	\fill[black] (5,1) ellipse (0.5 and 0.15);
	\node(f1) at (7,1) {$0,\mu - c_{1}$};
	\filldraw [black] (5,0) circle (2pt);
	\node(second) at (7,0) {$ - c_{1} $};
	\fill[black] (5,-1) ellipse (0.5 and 0.15);
	\node(third) at (7,-1) {$-1,\mu$};        

	\draw[->] (4,1) .. controls (0,1) .. (0,-3);

	\fill[black] (0,-4) ellipse (0.5 and 0.15);
	\node(domates) at (2,-4) {$0,\mu - c_{1} - c_{2}$};
	\filldraw [black] (0,-5) circle (2pt);
	\node at (2,-5) {$ - c_{2} $};
	\filldraw [black] (0,-6) circle (2pt);
	\node at (2,-6) {$ - c_{1} $};
	\fill[black] (0,-7) ellipse (0.5 and 0.15);
	\node at (2,-7) {$-1,\mu$};

	\draw[->] (4.5,0) .. controls (3.5,-1.5) .. (4.5,-3);

	\fill[black] (5,-4) ellipse (0.5 and 0.15);
	\node at (7,-4) {$0,\mu - c_{1}$};
	\filldraw [black] (5,-5) circle (2pt);
	\node at (7,-5) {$ - c_{1} + c_{2} $};
	\filldraw [black] (5,-6) circle (2pt);
	\node at (7,-6) {$ - c_{1} - c_{2}$};
	\draw (5,-5) -- (5,-6);
	\node at (4.5,-5.5) {2};
	\fill[black] (5,-7) ellipse (0.5 and 0.15);
	\node at (7,-7) {$-1,\mu$};

	\draw[->] (7.5,-1) .. controls (10,-1) .. (10,-3);

	\fill[black] (10,-4) ellipse (0.5 and 0.15);
	\node at (12,-4) {$0,\mu - c_{1}$};
	\filldraw [black] (10,-5) circle (2pt);
	\node at (12,-5) {$ - c_{1} $};
	\filldraw [black] (10,-6) circle (2pt);
	\node at (12,-6) {$ -1 + c_{2} $};
	\fill[black] (10,-7) ellipse (0.5 and 0.15);
	\node at (12,-7) {$-1,\mu - c_{2}$};

	\node at (-1,-9) {$ x_{6} $};
	\node at (1,-9) {$ x_{4} $};
	\node at (4,-9) {$ x_{10} $};
	\node at (6,-9) {$ x_{8} $};
	\node at (9,-9) {$ x_{0} $};
	\node at (11,-9) {$ x_{2} $};
	
	\draw[->] (-0.6,-4) .. controls (-1,-4) .. (-1,-8.8);
	\draw[->] (0.6,-7) .. controls (1,-7) .. (1,-8.8);
	\draw[->] (4.4,-4) .. controls (4,-4) .. (4,-8.8);
	\draw[->] (5.6,-7) .. controls (6,-7) .. (6,-8.8);
	\draw[->] (9.4,-4) .. controls (9,-4) .. (9,-8.8);
	\draw[->] (10.6,-7) .. controls (11,-7) .. (11,-8.8);
\end{tikzpicture}
\caption{Comparison of the graphs of the $ S^{1} $-actions in the new relation \eqref{the x-relation}}\label{New relation graphs compared}
\end{figure}

By symmetry, we obtain relation \eqref{the y-relation}. More precisely, we have a similar picture for the y's, where we begin with the graph of Figure \ref{y-relation basic} and follow the blow-ups at analogous points, which yields $ y_{6} - y_{4} = y_{10} - y_{8} = y_0 - y_{2} $.
\begin{figure}[thp]
\begin{center}
\begin{tikzpicture}[font=\scriptsize]

	\fill[black] (5,1) ellipse (0.5 and 0.15);
	\node(f1) at (7,1) {$\mu,1$};
	\filldraw [black] (5,0) circle (2pt);
	\node(second) at (7,0) {$ c_{1} $};
	\fill[black] (5,-1) ellipse (0.5 and 0.15);
	\node(third) at (7,-1) {$0,1 - c_{1}$};        

\end{tikzpicture}
\end{center}
\caption{Graph of the $ S^{1} $-action which gives  relation \eqref{the y-relation} after blowing-up.}
\label{y-relation basic}
\end{figure}
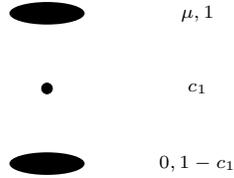
\end{proof}
\end{proposition}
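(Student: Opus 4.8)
The plan is to prove both relations by the same device that produces the type--0 relations: each Hamiltonian $S^1$-action is recorded by its Karshon graph, and an equality of graphs obtained after an $SL(2,\Z)$-transformation and/or a coordinate projection of a Delzant polytope yields an equality of the corresponding classes in $\pi_1(\Gccc)$. The twist here, and the reason the two relations are of ``a different nature'', is that there is no single pair of moment polytopes of $\widetilde{M}_{c_1,c_2,c_3}$ at $\mu=1$ whose comparison produces \eqref{the x-relation} directly. I would therefore work first with $\mu>1$, where the auxiliary pictures of Figure~\ref{New relation toric pictures} are honest Delzant polytopes, derive the relation there, and only at the end push it down to $\mu=1$.

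First I would introduce the four auxiliary toric structures $9(v),11(v),9(vi),11(vi)$ and the associated circle actions $x_{2,2,5},x_{2,3,5},x_{2,2,6},x_{2,3,6}$ together with their partners $y_{2,\cdot,\cdot}$. Each of these polytopes has at least five facets, so by the McDuff--Tolman injectivity result (Remark~\ref{injectivehomotopy}) the maps $\widetilde{T}\to G_{\mu,c_1,c_2,c_3}$ are injective on $\pi_1$ and these actions are bona fide elements of $\pi_1(G_{\mu,c_1,c_2,c_3})$. Reading vertex labels under the appropriate shears and projections then gives the eight identities $(S1)$--$(S8)$: $(S1),(S2)$ from the horizontal comparisons $9(v)\leftrightarrow 11(v)$ and $9(vi)\leftrightarrow 11(vi)$; $(S3)$--$(S6)$ matching the vertical projections of the auxiliary polytopes to the sums $x_{1,\cdot}+y_{1,\cdot}$ and $x_{3,\cdot}+y_{3,\cdot}$ coming from the pictures of Figure~\ref{Toric pictures}; and the ``diagonal'' identities $(S7),(S8)$, each obtained by comparing the two projections of a single polytope. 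Each of these is routine label bookkeeping.

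The algebraic core is to eliminate the auxiliary actions. Combining $(S8)$ with $(S1)$ and $(S2)$ gives $x_{2,2,5}+y_{2,3,5}=x_{2,2,6}+y_{2,3,6}$; substituting into $(S7)$ yields $y_{2,2,6}+y_{2,3,5}=y_{2,2,5}+y_{2,3,6}$; and feeding in $(S3)$--$(S6)$ collapses the whole system to $x_5+x_2=x_4+x_3$ once the generators are renamed via Figures~\ref{S1-actions x0-x5} and~\ref{S1-actions y0-y5}. A final use of the type--0 relations $(2)$ and $(3)$ to rewrite $x_3$ and $x_5$ produces $x_0+x_4=x_2+x_6$, which is \eqref{the x-relation}. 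For \eqref{the y-relation} I would invoke the $B\leftrightarrow F$ symmetry (equivalently, run the identical argument on the conjugate family of polytopes starting from Figure~\ref{y-relation basic}), so that only the interchange needs to be indicated.

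To transport the conclusion to $\mu=1$, I would observe that the graph equalities underlying $(S1)$--$(S8)$, and hence the derived relation $x_0+x_4=x_2+x_6$ among the abstract $S^1$-actions, are insensitive to $\mu$: the integer edge-labels and the combinatorics of the Karshon graphs do not change as $\mu$ varies. By injectivity of $\widetilde{T}_{i,j}(0)\to G_{\mu,c_1,c_2,c_3}$ on $\pi_1$ the relation then holds in $\pi_1(\Gccc)$ at $\mu=1$. A cleaner geometric reading, which I would also record, is that blowing up the first-component $S^1$-action on $\widetilde{M}_{c_1}$ at its invariant surfaces (Figure~\ref{New relation graphs compared}) exhibits the pairings $x_6-x_4=x_{10}-x_8=x_0-x_2$, equivalent to \eqref{the x-relation} after the type--0 relations, and symmetrically $y_6-y_4=y_{10}-y_8=y_0-y_2$ for \eqref{the y-relation}. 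The step I expect to be the main obstacle is precisely this transport: one must be sure that the auxiliary classes survive nontrivially in $\pi_1$ (this is exactly where the five-facet hypothesis is indispensable) and that no degeneration peculiar to $\mu=1$ breaks the chain of substitutions; the $SL(2,\Z)$ bookkeeping in $(S1)$--$(S8)$ is laborious but carries no conceptual difficulty.
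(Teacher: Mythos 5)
Your proposal follows the paper's own proof essentially verbatim: the same auxiliary polytopes $9(v),9(vi),11(v),11(vi)$ for $\mu>1$, the same eight relations $(S1)$--$(S8)$, the same elimination leading to $x_5+x_2=x_4+x_3$ and then to \eqref{the x-relation} via the type--0 relations, the same $\mu$-independence/injectivity argument to descend to $\mu=1$, and the same $B\leftrightarrow F$ symmetry for \eqref{the y-relation}. The argument is correct and matches the paper's route, including the geometric reading via the blow-up pairings $x_6-x_4=x_{10}-x_8=x_0-x_2$.
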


Now we pick nine generators for the fundamental group $\pi_1(\Gccc) \otimes \Q$. We choose the actions   $ x_{0}$, $x_{1}$, $x_{2}$, $x_{4}$, $y_0$, $y_2$, $y_3$, $y_{4}$, $z=y_{8}-x_{4}$ and we show that the remaining elements can indeed be written  in terms of these generators. 
\begin{lemma}
\label{linear combinations}
Let $1 > c_1 +c_2 > c_1 +c_3 > c_1 >c_2 > c_3 >0$. Then the following identifications hold in $\pi_1 (\Gccc) \otimes \Q$. 
\begin{align*}
& x_3 = x_0 +y_0 -y_3 , & & x_{5} = x_0 -x_{2} + x_{4} +y_0 -y_3,  \\
&  x_{6} = x_{0} - x_{2} + x_{4},  & & x_{7} = x_{0} - x_{1} + x_{4},\\  
& x_{8} = y_{4} - z, &  & x_{9} = x_0- x_{1} +y_0 -y_3 + y_{4} - z, \\ 
& x_{10} = x_{0} - x_{2} + y_{4} -z,  & & x_{11} = x_0 +x_{1}-2x_{2} + y_0 -y_3 + y_{4} - z, \\ 
 & y_{1} = -x_{1} + x_{2} + y_{2},  & & y_{5}= -y_2 +y_3 +y_{4},  \\ 
 & y_{6} =   y_0 -y_{2}+ y_{4}, &  & y_{7} = x_{1} - x_{2}+y_0- y_2 + y_{4}, \\ 
 & y_{8} = x_{4} + z,  &  & y_{9}=  x_{1} - x_{2} + x_{4} - y_{2} + y_3 +z, \\
 & y_{10}= x_{4} + y_0 - y_{2} +z,  &  & y_{11}=-x_{1} +x_{2} +x_{4} -y_2 +y_3 +z.
 \end{align*}
\end{lemma}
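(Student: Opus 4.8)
The statement is a purely linear problem over $\Q$: every listed element is to be written as a $\Q$-linear combination of the nine generators $x_0,x_1,x_2,x_4,y_0,y_2,y_3,y_4,z$, and all the available relations are linear, namely the sixteen type--0 relations $(1)$--$(16)$ together with the two new relations \eqref{the x-relation} and \eqref{the y-relation}. The plan is therefore to solve this system by successive substitution, choosing at each step a relation that expresses one new element in terms of the generators and of the elements already resolved.

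A convenient order is the following. First I would record the three immediate identities: $x_6=x_0-x_2+x_4$ is exactly \eqref{the x-relation}, $y_6=y_0-y_2+y_4$ is exactly \eqref{the y-relation}, and $y_8=x_4+z$ is the definition of $z$. Next come the elements resolved by a single type--0 relation: $x_3=x_0+y_0-y_3$ from $(2)$, $y_1=-x_1+x_2+y_2$ from $(1)$, and $x_8=y_4-z$ from $(5)$ (using $y_8-x_4=z$). Building on these I would obtain $x_5$ from $(3)$ via $x_6$, then $x_7=-x_1+x_2+x_6$ from $(4)$ via $y_1$ and $x_6$, then $y_7$ from $(9)$ via $y_6$, $y_{10}$ from $(6)$ via $y_6$ and $x_8$, $x_{10}$ from $(8)$ via $y_8$ and $x_6$, and $y_5$ from $(10)$ via $y_6$ and $x_3$. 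The remaining elements are then forced: $x_{11}$ from $(16)$, $y_{11}$ from $(15)$, and finally the two ``weighted'' relations $(12)$ and $(13)$, which carry the edge-labels $2$ and $3$, yield $y_9$ and $x_9$. At each step the substitution is mechanical; reading off the numerical vertex labels from Figures \ref{S1-actions x0-x5}--\ref{S1-actions y6-y11} is needed only to confirm the relations, not to derive the combinations.

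Two points deserve care. The essential ingredients are the new relations: without \eqref{the x-relation} and \eqref{the y-relation} the elements $x_6$ and $y_6$, and everything derived from them, would remain independent, which is precisely why the classical type--0 relations alone leave eleven generators rather than nine, as noted after Proposition \ref{rank upper limit}. The main obstacle I anticipate is not the individual substitutions but the \emph{consistency} of the system: the eighteen relations are redundant, so once all sixteen combinations are written down I must verify that the relations unused in the derivation --- namely $(7)$, $(11)$ and $(14)$ --- hold identically for the resulting expressions. This over-determination is also the most likely source of error, concentrated in the coefficient-bearing relations $(12)$ and $(13)$; re-deriving $x_9$ both through $(13)$ and through the pair $(7)$, $(12)$ provides a reliable safeguard. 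Together with the upper bound $\dim\pi_1(\Gccc)\le 9$ from Proposition \ref{rank upper limit}, these identities confirm that the nine chosen actions indeed generate $\pi_1(\Gccc)\otimes\Q$.
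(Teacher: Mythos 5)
Your proposal is correct and is essentially the paper's own proof: the paper disposes of this lemma in one line ("follows from the type--0 relations together with relations \eqref{the x-relation} and \eqref{the y-relation}"), and your substitution scheme is exactly that argument carried out explicitly — I checked each derivation (e.g. $x_{11}$ via $(16)$, then $y_9$ via $(12)$ and $x_9$ via $(13)$) and all sixteen expressions come out as stated, with the unused relations $(7)$, $(11)$, $(14)$ indeed holding identically. The only remark worth making is that the final consistency check, while a sound safeguard against arithmetic slips, is not logically required: every listed relation genuinely holds in $\pi_1(\Gccc)\otimes\Q$, so no contradiction could arise from the unused ones.
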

\begin{proof}
The proof just follows from the type--0 relations together with relations  \eqref{the x-relation} and  \eqref{the y-relation}.
\end{proof}

\subsubsection{Other circle actions}\label{Other circle actions}

Besides the circle actions coming from the Delzant polytopes outlined above, there are other circle actions obtained by blowing up the toric actions at interior points of $ J $-holomorphic curves. Again, using Karshon's classification theorem, we can show that they can be expressed in terms of the generators chosen in the previous section. This is an easy  (yet cumbersome) task so we will not write the calculations up. Instead, we will show one elucidatory example on how the calculations go through, and leave the remaining ones for the interested reader. 

We recall Configuration 5.8 \footnote{This is a simplified version of the actual configuration 5.8, which further has a $ J $-holomorphic representative of the class $ B-E_{3} $, a curve that intersects $ E_{3} $  exactly once. We use the simplified version as it makes it clearer to follow the symplectic blow-downs.} in Figure \ref{Config5.8}. Although this configuration does not correspond to any 2-torus action, we observe that blowing down the exceptional curve $E_{3}$ gives Configuration 5 in Figure \ref{Config42 of AP}, which in turn corresponds to the toric action whose moment polytope is polyope \circled{5} in Figure \ref{Toric pictures}, with the corresponding $ S^{1} $-actions $ x_{0,5} $ and $ y_{0,5} $. Because the blow-up takes place at an interior point of the curve representing the class $F-E_{1}-E_{2}$, it can be traced on the circle action corresponding to the projection to y-axis, as can be seen in Figure \ref{An example of how S1-action is created}.

\begin{figure}[thp]
\begin{center}
\begin{tikzpicture}[scale=0.7,squarednode/.style={rectangle, draw=blue!60, fill=blue!5, very thick, minimum size=5mm}, font=\scriptsize]
	\node[squarednode] at (-3,3) (maintopic) {5.8} ; 
    \draw (-2.1,2) -- (0.1,2.2);
	\draw (-1.9,2.1) -- (-3.2,-0.1);
	\draw (-3.2,0.1) -- (-1.9,-2.1);
	\draw (-2.1,-2) -- (0.1,-2);
	\draw (-0.2,2.3) -- (1.3,0.7);
	\draw (-1.1,0.3) -- (-2.6,1.4);
	\node at (-1,2.3) {$E_{1}$};
	\node at (-4.4,1) {$F-E_{1}-E_{2}-E_{3} $};
	\node at (-3.1,-1) {$E_{2} $};
	\node at (-1,-2.3) {$B-E_{2}$};
	\node at (1.2,1.7) {$B-E_{1}$};
	\node at (-1, 0) {$E_{3} $};
\end{tikzpicture}
\end{center}
\caption{Configuration 5.8}\label{Config5.8}
\end{figure}
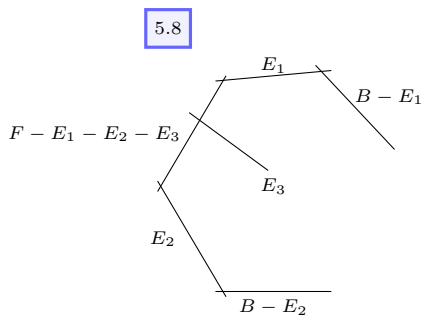

\bigskip

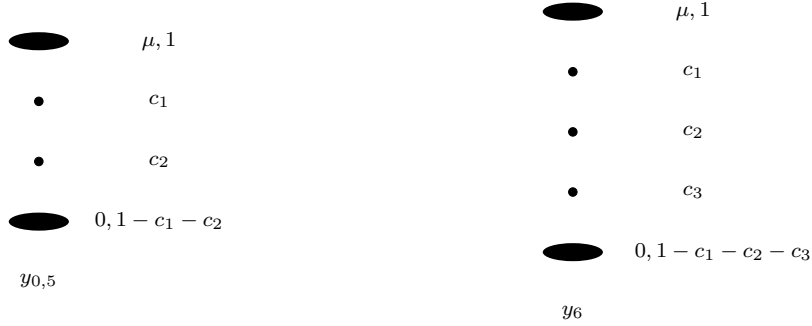
\begin{figure}[thp]
\begin{minipage}{.5\textwidth}
\begin{center}
\begin{tikzpicture}[scale=0.8,font=\footnotesize]
	\fill[black] (2,2) ellipse (0.5 and 0.15);
	\node at (4,2) {$\mu,1$};
	\filldraw [black] (2,1) circle (2pt);
	\node at (4,1) {$ c_{1} $};
	\filldraw [black] (2,0) circle (2pt);
	\node at (4,0) {$ c_{2} $};
	\fill[black] (2,-1) ellipse (0.5 and 0.15);
	\node at (4,-1) {$0,1 - c_{1} - c_{2}$};
	\node at (2,-2) {$ y_{0,5} $};
\end{tikzpicture}
\end{center}
\end{minipage}%
\begin{minipage}{.5\textwidth}
\begin{center}
\begin{tikzpicture}[scale=0.8,font=\footnotesize]
	\fill[black] (2,2) ellipse (0.5 and 0.15);
	\node at (4,2) {$\mu,1$};
	\filldraw [black] (2,1) circle (2pt);
	\node at (4,1) {$ c_{1} $};
	\filldraw [black] (2,0) circle (2pt);
	\node at (4,0) {$ c_{2} $};
	\filldraw [black] (2,-1) circle (2pt);
	\node at (4,-1) {$ c_{3} $};
	\fill[black] (2,-2) ellipse (0.5 and 0.15);
	\node at (4.5,-2) {$0,1 - c_{1} - c_{2} - c_{3}$};
	\node at (2,-3) {$ y_{6} $};
\end{tikzpicture}
\end{center}
\end{minipage}
\caption{Graph of the circle action corresponding to Configuration 5.8.}
\label{An example of how S1-action is created}
\end{figure}

Hence the circle action corresponding to Configuration 5.8, seen in $\pi _{1} (G_{c_{1}, c_{2}, c_{3}} )$, is equivalent to $ y_{6} $.

Finally, we note that not all configurations correspond to $S^{1}$-actions, and thereby cannot be read from Delzant polytopes. As mentioned before, there is exactly one configuration that does not correspond to any $S^{1}$-action, and it is when $B - E_{i} $ and  $F - E_{i} $ with $i=1,2,3$ are represented by $J$-holomorphic spheres. This is Configuration 1.13. In fact, J. Evans showed in \cite{Eva} that in the monotone case (i.e. when $ \mu = 1 $ and $ c_{1} = c_{2} = c_{3} = 1/2$), the symplectomorphism group is contractible. In this case, the automorphism group of the generic structure $ J_{0} $ is trivial as it is isomorphic to the stabilizer of four generic points in $ \mathbb{CP}^{2} $. The only possible configuration is 1.13, that is, the space of almost complex structures contains a single stratum, which is therefore contractible. In the non-monotone case, more $ J $-holomorphic curves are allowed to exist and that stratum corresponds to the open stratum of $ \mathcal{J} _{\mu=1 , c_{1} , c_{2}, c_{3} } $. So, the open stratum corresponding to this configuration does not yield new generators in $ \pi _{1} $.

\subsection{Samelson products} \label{sec m1:Samelsonproducts}

In the previous sections, we studied the circle actions $x_{i,j}, y_{i,j}$ for $ i=1,..,5 $ and $ j=1,...6 $, which are embedded into $ \pi _{1} (G_{c_{1} , c_{2}, c_{3} }) \otimes \Q $ and we established the type--0 relations, as well as the relations \eqref{the x-relation} and \eqref{the y-relation}. Note that we also have $ [ x_{i,j}, y_{i,j}] = 0$, where $[ \cdot, \cdot ]$ denotes the Samelson product in $ \pi_* (\Gccc) \otimes \Q$, since the actions $ x_{i,j}$ and $ y_{i,j}$ have commuting representatives in the torus action $ \widetilde{T}_{i,j}(0)$. Picking generators as in Lemma \ref{linear combinations} and writing down the afore-mentioned relations yield the linearly independent relations in the next Lemma. 
\begin{lemma}
\label{Samelson products}
The Samelson products of the generators $ x_{0}$, $x_{1}$, $x_{2}$, $x_{4}$, $y_0$, $y_2$, $y_3$, $y_{4}$, $z=y_{8}-x_{4} \in \pi_1(\Gccc)\otimes \Q$ satisfy the following relations:
\begin{gather}
  [x_{0}, y_0]  = [y_{0}, y_{3}],  \qquad   [x_{2}, y_2]  = [x_{1},x_{2}], \qquad [x_{2}, y_3]  = [x_{4},y_{3}],   \label{relations138} \\ 
  [x_{1}, y_0]    + [x_{1},y_{4}]=0, \label{relation2} \\ 
  [x_{2}, z] + [x_{2},x_{4}]  = [x_{0},x_{4}] + [x_{0},z] , \label{relation5} \\ 
   [y_0, z] +[y_2, y_{4}] =   [y_0, y_{4}] + [y_{2}, z] , \label{relation7}\\
    [x_{1}, x_{4}] +[x_{0},x_{1}] =  [x_{0},x_{2}] + [x_{2},x_{4}] ,  \label{relation4} \\ 
  [ y_{3},y_4] +  [y_{0},y_2]=  [y_{0},y_{4}] + [y_{2},y_{3}],  \label{relation6} \\
  [x_{2}, y_{3}]  + [y_{2}, y_4]+[y_{3}, z] =  [x_{2},y_{2}] + [y_{3},y_{4}] + [y_{2},z],  \label{relation9}\\
  [x_{1},x_{2}] + [x_{1},x_{4}] + [x_{1},y_{3}] +  [x_{1}, z]  = [x_{0},x_{4}] + [x_{4},y_{3}] + [x_{0},z]  \label{relation10}
   \end{gather}
   and all the following Samelson products vanish:
\begin{equation} \label{relation11-20}
[x_{0}, y_{2}], [x_{0}, y_{3}], [x_{0}, y_{4}],  [x_{1}, y_{2}], [x_{2}, y_0], [x_{2}, y_{4}], [x_{4},y_0],  [x_{4}, y_{2}], [x_{4},z] , [y_{4},z].
\end{equation}
 \end{lemma}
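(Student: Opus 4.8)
The plan is to extract every relation from the single structural fact that two circle actions lying in a common torus have vanishing Samelson product, combined with bilinearity and the linear identities of Lemma \ref{linear combinations}. I would use three inputs. First, the Samelson product is $\mathbb{Q}$-bilinear and, since all generators sit in degree $1$, the antisymmetry rule specialises to $[a,b]=[b,a]$; in particular it suffices to record each bracket once. Second, each of $x_0,x_1,x_2,x_4,y_0,y_2,y_3,y_4$ is represented by a genuine Hamiltonian $S^1$-action, whose image is an abelian subgroup of $\Gccc$; hence the commutator map $S^1\times S^1\to\Gccc$ is constant and $[a,a]=0$ for each such generator. Third, for every torus $\widetilde{T}_{i,j}(0)$, $i=1,\dots,5$, $j=1,\dots,6$, the circle actions $x_{i,j}$ and $y_{i,j}$ commute, so $[x_{i,j},y_{i,j}]=0$ in $\pi_2(\Gccc)\otimes\mathbb{Q}$. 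This gives thirty vanishing brackets.

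Next I would translate these thirty identities into relations among the Samelson products of the nine chosen generators. For each pair $(i,j)$ one reads off from Figures \ref{S1-actions x0-x5}--\ref{S1-actions y6-y11} the identification of $x_{i,j}$ and $y_{i,j}$ with one of $x_0,\dots,x_{11},y_0,\dots,y_{11}$, and then substitutes Lemma \ref{linear combinations} to write both in the basis $\{x_0,x_1,x_2,x_4,y_0,y_2,y_3,y_4,z\}$. Expanding $[x_{i,j},y_{i,j}]=0$ bilinearly and using $[a,a]=0$ for the circle generators produces, in each case, one linear relation among the products $[g_a,g_b]$. For instance, $\widetilde{T}_{1,2}(0)$ gives $[x_1,y_2]=0$ and $\widetilde{T}_{1,3}(0)$ gives $[x_2,y_1]=0$; substituting $y_1=-x_1+x_2+y_2$ into the latter and using $[x_2,x_2]=0$ yields $[x_2,y_2]=[x_1,x_2]$, the second identity of \eqref{relations138}. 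Similarly $\widetilde{T}_{1,5}(0)$ gives $[x_3,y_0]=0$, and $x_3=x_0+y_0-y_3$ together with $[y_0,y_0]=0$ gives $[x_0,y_0]=[y_0,y_3]$. The relations \eqref{the x-relation} and \eqref{the y-relation} require no separate treatment: they are already incorporated in Lemma \ref{linear combinations}, so they enter automatically through the substitutions.

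Finally I would assemble the resulting linear system over $\mathbb{Q}$ in the unknowns $[g_a,g_b]$ and perform Gaussian elimination to select a linearly independent generating set of relations; this set is precisely the list \eqref{relations138}--\eqref{relation11-20}. Several vanishing products in \eqref{relation11-20} (such as $[x_0,y_3]=0$ from $\widetilde{T}_{1,6}(0)$, or $[x_4,y_0]=0$ from $\widetilde{T}_{2,2}(0)$) are read off directly, while the longer identities \eqref{relation9} and \eqref{relation10} collect the contributions of the tori whose reduced expressions involve $z$.

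The main obstacle is bookkeeping rather than conceptual: one must expand thirty brackets, each a sum of up to thirty-six elementary Samelson products after substitution, track the identifications through Figures \ref{S1-actions x0-x5}--\ref{S1-actions y6-y11}, and verify both that the selected relations span the relation space and that they are mutually independent. A secondary point to watch is the element $z=y_{8}-x_{4}$, which is a difference of circle actions rather than a single one, so $[z,z]$ need not vanish a priori; one checks that $z$ never occupies both slots of a single torus bracket, so that only products $[z,\cdot]$ with $\cdot$ a circle generator appear and no $[z,z]$ term survives. With these checks in place, the listed relations follow.
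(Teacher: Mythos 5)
Your proposal is correct and takes essentially the same route as the paper: the paper likewise derives all twenty identities by expanding the thirty torus relations $[x_{i,j},y_{i,j}]=0$ through the substitutions of Lemma \ref{linear combinations} and bilinearity, working out $[x_1,y_2]=0$ and relation \eqref{relation2} explicitly and noting the remaining cases are similar. Your supplementary checks --- that $[a,a]=0$ holds for the circle-action generators and that $z$ never occupies both slots of a single torus bracket, so no $[z,z]$ term can arise --- are sound and simply make explicit points the paper leaves implicit.
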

\begin{proof}
We show the proof for $[ x_{1} , y_{2}] = 0$ and relation \eqref{relation2}. The proof for the remaining relations is similar. 

Since $ x_{1}=x_{1,2} $ and $ y_{2}=y_{1,2} $ have commuting representatives in the torus action $ \widetilde{T}_{1,2}(0)$, we obtain immediately $ [ x_{1} , y_{2}] = 0 $. Similarly, using $ \widetilde{T}_{5,3}(0)$ we get 
\begin{align*}
0 &=  [x_{5,3},y_{5,3}]= [x_{1}, y_{6}]\\
     &= [x_{1},-y_{2}+y_0+y_{4}],  \quad \mbox{ by  Lemma \ref{linear combinations},}\\
   &= [x_{1},y_0+y_{4}], \quad  \mbox{ using $ [ x_{1} , y_{2}] = 0 $,}
\end{align*}
so we get relation \eqref{relation2}.  Writing all the relations $ [ x_{i,j}, y_{i,j}] = 0$ and simplifying them, we obtain the resulting list of 20 equalities.
\end{proof}

In addition to these relations coming from the toric pictures, there are two extra relations, of a different nature. 
\begin{proposition} \label{specialSamelson}
The Samelson products of the generators $x_0,x_2, x_4, y_0,y_2,y_{4} \in\pi_1(\Gccc) \otimes \Q$ satisfy the following relations in $\pi_2(\Gccc) \otimes \Q$: 
\begin{equation}\label{x-Samelson}
[x_{0}, x_{4}] = [x_{0}, x_{2}] + [x_{2}, x_{4}],
\end{equation}
\begin{equation}\label{y-Samelson}
[y_0, y_{4}] = [y_{0}, y_2] + [y_{2}, y_{4}].
\end{equation}
\end{proposition}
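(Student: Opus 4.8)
The plan is to prove \eqref{x-Samelson} and \eqref{y-Samelson} as the degree--two analogues of the fundamental--group relations \eqref{the x-relation} and \eqref{the y-relation}, by lifting the derivation of the latter to the level of Samelson products. As in the proof of those relations, I would work first over $\mu>1$ and use the four auxiliary toric structures $\widetilde{T}_{2,2,5}(0)$, $\widetilde{T}_{2,2,6}(0)$, $\widetilde{T}_{2,3,5}(0)$, $\widetilde{T}_{2,3,6}(0)$ of Figure \ref{New relation toric pictures}. The new ingredient, absent from the purely linear $\pi_1$ argument, is that for each of these tori the two generating circle actions have commuting representatives, so their Samelson product vanishes:
$$[x_{2,2,5}, y_{2,2,5}] = [x_{2,2,6}, y_{2,2,6}] = [x_{2,3,5}, y_{2,3,5}] = [x_{2,3,6}, y_{2,3,6}] = 0$$
in $\pi_2(\Guccc) \otimes \Q$.

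Next I would expand these four identities using the bilinearity of the Samelson product over $\Q$, the fact that it is symmetric on $\pi_1$ (since $[u,v] = (-1)^{\deg u \deg v + 1}[v,u] = [v,u]$ when $\deg u = \deg v = 1$), and that $[w,w]=0$ for any circle action $w$, as its image is abelian. Substituting for the vertical actions by means of the relations (S3)--(S6), namely $y_{2,2,5}=x_{1,5}+y_{1,5}$, $y_{2,2,6}=x_{1,4}+y_{1,4}$, $y_{2,3,5}=x_{3,3}+y_{3,3}$ and $y_{2,3,6}=x_{3,2}+y_{3,2}$, turns the four vanishing brackets into relations among the $x_i$ and $y_i$. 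I would then combine them following exactly the linear pattern that produced \eqref{the x-relation}: (S1) and (S2) identify the horizontal classes across the two blow--up positions, while (S7) and (S8) relate the diagonal sums, and together they force the undetermined classes $x_{2,2,5}$ and $x_{2,2,6}$ to drop out. After simplifying with the type--0 relations and the identities of Lemma \ref{Samelson products}, this should reduce to \eqref{x-Samelson}. Relation \eqref{y-Samelson} then follows by the $B\leftrightarrow F$ symmetry, reading the second projection of the same polytopes (equivalently, the mirror of the blow--up tracing in Figure \ref{New relation graphs compared}).

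I expect the main obstacle to be precisely this cancellation. The classes $x_{2,2,5}$ and $x_{2,2,6}$ are not a priori among the nine chosen generators, and expanding a single vanishing bracket always leaves a term $[x_{2,2,j},\,\cdot\,]$ alongside contributions from the vertical actions. The way around it is to exploit that each horizontal class commutes with \emph{two} distinct vertical actions: since $x_{2,2,5}=x_{2,3,5}$, this class annihilates both $y_{2,2,5}$ and $y_{2,3,5}$, and symmetrically for $x_{2,2,6}$; subtracting the matching pairs then eliminates $x_{2,2,5}$ and $x_{2,2,6}$ and leaves a relation among the generators. One must finally check, using the vanishing products \eqref{relation11-20} and the remaining identities of Lemma \ref{Samelson products}, that the surviving $y$-- and $z$--terms cancel, so that what remains is the pure--$x$ statement \eqref{x-Samelson}. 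A secondary point is the passage from $\mu>1$ to $\mu=1$: because the auxiliary polytopes exist only for $\mu>1$, one argues as in the $\pi_1$ case that the relation among the decorated graphs is independent of $\mu$, so the identity descends to $\mu=1$ once the toric maps $\widetilde{T}_{i,j}(0)\to\Guccc$ are known to be injective on the relevant homotopy group (\cite{McDTol}).
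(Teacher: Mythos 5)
Your proposal is correct, and it takes a genuinely different -- and leaner -- route than the paper's own proof. The paper does not reuse the four polytopes of Figure \ref{New relation toric pictures}; instead it introduces further auxiliary polytopes \circled{6}--\circled{10} from Appendix \ref{app: relations m>1}, extracts the relations (r1)--(r24) from their blow-ups, and works with an additional class $y = y_{1,1,1} \in \pi_1(\Guccc)$, $\mu>1$, which is \emph{not} a combination of the nine generators; relation \eqref{x-Samelson} is then obtained by chaining four commutation identities (\eqref{Samelson flower}, \eqref{Samelson daisy}, \eqref{Samelson tree} and a final one) that successively eliminate $y$. Your scheme stays entirely inside the toolkit of Section \ref{section: new relation m1}: the four vanishing brackets $[x_{2,2,j},y_{2,2,j}]=[x_{2,3,j},y_{2,3,j}]=0$, $j=5,6$, combined with (S1)--(S2) and the identity $x_5-x_3=x_4-x_2$ (a consequence of the $\pi_1$ relations for $\mu>1$), give $[x_{2,2,5},v]=[x_{2,2,6},v]=0$ for $v=(x_4-x_2)+(y_{10}-y_0)$, and (S7) converts their difference into $[x_2-x_3,v]=0$, so the undetermined classes drop out exactly as you describe. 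I verified the final reduction: substituting $x_3=x_0+y_0-y_3$ and $y_{10}=x_4+y_0-y_2+z$ (Lemma \ref{linear combinations}, whose identities persist for $\mu>1$ since they rest only on the type--0 relations and \eqref{the x-relation}--\eqref{the y-relation}), then applying \eqref{relations138}, \eqref{relation9}, \eqref{relation6}, \eqref{relation7}, \eqref{relation5} and the vanishing products \eqref{relation11-20}, the $y$-- and $z$--terms cancel as you predicted and what remains is precisely \eqref{x-Samelson}; the descent from $\mu>1$ to $\mu=1$ and the $B\leftrightarrow F$ symmetry giving \eqref{y-Samelson} are handled the same way in both arguments. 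What each approach buys: yours is shorter and makes Proposition \ref{specialSamelson} an almost immediate corollary of the machinery already set up for the $\pi_1$ relations, while the paper's heavier route simultaneously develops the Appendix \ref{app: relations m>1} material -- in particular the identification of the extra generator $y_{1,1,1}$ and the relations (r1)--(r24) -- which has independent interest for describing $\pi_1(\Guccc)$ when $\mu>1$.
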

\begin{proof}
In order to prove these relations, we will need, as in Section \ref{section: new relation m1}, auxiliary polytopes and relations which arise more naturally in the setup for $ \mu > 1 $. However, we will need to introduce several new polytopes, given in Appendix \ref{app: relations m>1}, and deduce a total of 22 new relations. More precisely, consider the polytopes obtained by blowing-up the polytopes in Appendix \ref{app: relations m>1} at particular points, namely: 
\begin{itemize}
\item \circled{6} at  $(i)$  and $ (iii)$;
\item \circled{7} at  $(i)$  and $ (iii)$;
\item \circled{8} at  $(iii), (iv)$ and  $(vi)$;
\item \circled{9} at $(i), (iv)$ and  $(vi)$. 
\item \circled{10} at $(i)$ and $(ii)$.    
\end{itemize}

Then, using appropriate $SL(2,\Z)$ transformations and Karshon's classification we obtain the following relations: 

\begin{align*}
(r1) \  \ & x_{1,1,1}  = y_{1,5} - x_{1,5}, &  (r13)  \   \ y_{2,1,3} &  = x_{2,1} +y_{2,1}, \\
(r2) \  \ & x_{1,1,3} = y_{1,4} - x_{1,4}, &  (r14)  \   \ y_{2,1,4} & = x_{2,4} +y_{2,4},\\
(r3) \   \ & x_{1,2,1} = y_{2,3} - x_{2,3}, & (r15)  \  \  y_{2,1,6} & = x_{2,2} +y_{2,2}, \\
 (r4) \ \ & x_{1,2,3} = y_{2,2} - x_{2,2}, &  (r16)  \  \ x_{1,2,3} &  = y_{2,2} - x_{2,2}, \\
(r5) \   \ & x_{2,1,4} + y_{2,1,4} = 2 x_{1,1,1} + y_{1,1,1}, & (r17)  \   \ y_{2,2,1}  & = x_{1,1} +y_{1,1}, \\
 (r6) \  \ & x_{2,1,6} + y_{2,1,6} = 2 x_{1,1,3} + y_{1,1,3}, & (r18)  \  \  y_{2,2,4} & = x_{1,6} +y_{1,6}, \\
 (r7) \  \ & x_{2,2,4} + y_{2,2,4} = 2 x_{1,2,1} + y_{1,2,1},& (r19)  \   \ y_{2,2,6} & = x_{1,4} +y_{1,4}, \\
  (r8)  \  \ & x_{2,2,6} + y_{2,2,6} = 2 x_{1,2,3} + y_{1,2,3}, & (r20)  \  \ y_{2,5,1} & = x_{5,6} +y_{5,6},\\
 (r9) \  \ & x_{2,2,6} + y_{2,2,6} = 2 x_{1,2,3} + y_{1,2,3}, & (r21)  \  \  y_{2,5,2} & = x_{5,1} +y_{5,1}, \\
 (r10) \  \ & x_{2,1,3} - y_{2,1,3} = x_{2,2,1} - y_{2,2,1}, & (r22)  \   \ x_{2,1,6} & = x_{2,5,1}, \\
 (r11) \  \ & x_{2,1,4} - y_{2,1,4} = x_{2,2,4} - y_{2,2,4}, & (r23)  \  \ x_{2,1,3} & = x_{2,1,4} = x_{2,5,2} ,\\
 (r12) \  \ & x_{2,5,1} - y_{2,5,1} = x_{2,5,2} - y_{2,5,2}, &  (r24)  \   \ x_{2,2,1} & = x_{2,2,6},
  \end{align*}
where $(x_{i,j}, y_{i,j})$ are the circle actions in $\widetilde{T}_{i,j}(0)$ defined in Section \ref{sec m1: generators}. 
For example, in order to obtain the relation $(r1)$ we first perform the $SL(2,\Z)$ transformation 
$$ \left( \begin{array}{cc} 1 & 0 \\ -1 & 1 \end{array} \right)$$ 
to the polytope obtained from \circled{1} in Figure \ref{Toric pictures} blowing-up at the vertex $(v)$. Then projecting onto the $y$-axis we obtain the graph representing a circle action that we can identify with $ y_{1,5} - x_{1,5}$. Finally, we just observe that this graph is equivalent to the one of the circle action $x_{1,1,1}$, which is obtained by blowing-up polytope \circled{6} at vertex $(i)$ (see Figure \ref{Config10 of AP} with $k=1$) and then projecting onto the $x$-axis. 

The relations above together with the identifications in Figures \ref{S1-actions x0-x5} -- \ref{S1-actions y6-y11} and Lemma \ref{linear combinations} yield:
\begin{align*}
 & x_{1,1,1} = y_0-x_{3}, &
&  x_{1,2,1} = y_0-x_{5} = y_0 + x_{2} - x_{3} - x_{4},\\
& x_{1,1,3} = y_0-x_{2}, &
& x_{1,2,3} = y_0-x_{4},\\
& y_{2,1,3} = x_{4} + y_{2}, &
& y_{2,1,4} = x_{6} + y_{3} = 2x_{0} - x_{2} -x_{3} +x_{4} +y_0,\\
& y_{2,1,6} = x_{4} + y_0, &
& y_{2,2,4} = x_{0} + y_{3} = 2x_{0}-x_{3}+y_0,  \\
&   y_{2,2,1} = x_{0} + y_{2},&
&  y_{2,2,6} = x_{2} + y_0, \\
&  y_{2,5,1} = x_{0} + y_{4}, &
& y_{2,5,2} = x_{2} + y_{4}. 
\end{align*}
Define $ y := y_{1,1,1} $. 
Next, we express the elements $y_{1,2,1}, y_{1,2,3}$ and  $ y_{1,1,3}$ also in terms of the chosen generators for $\pi_1 (\Gccc) \otimes \Q$. This will be useful in the last step of the proof.  

First note that by relation $(r5)$ we have
\begin{equation}\label{x_214}
x_{2,1,4} = 2x_{1,1,1} + y_{1,1,1} - y_{2,1,4} = -2x_{0} + x_{2} - x_{3} -x_{4} +y_0 +y.
\end{equation}
Then by $(r11)$, $ x_{2,2,4} = x_{2,1,4} - y_{2,1,4} + y_{2,2,4} = -2x_{0} + 2x_{2} - x_{3} -2x_{4} +y_0 +y $. Hence, using $(r7)$, we obtain 
\begin{equation}\label{1st}
 y_{1,2,1} = x_{2,2,4} + y_{2,2,4} - 2x_{1,2,1} = y.
\end{equation}
For the second one combine relation \eqref{x_214} with relation $(r23)$, so we get $ x_{2,1,3} = x_{2,1,4} = -2x_{0} + x_{2} - x_{3} -x_{4} +y_0 +y $. Then it follows from $(r10)$ that  $ x_{2,2,1} = x_{2,1,3} - y_{2,1,3} + y_{2,2,1} = -x_{0} + x_{2} - x_{3} -2x_{4} +y_0 +y $, so that $(r24)$ implies $ x_{2,2,6} = x_{2,2,1} = -x_{0} + x_{2} - x_{3} -2x_{4} +y_0 +y $. Finally, by $(r8)$, we obtain
\begin{equation}\label{2nd}
 y_{1,2,3} = x_{2,2,6} + y_{2,2,6} -2 x_{1,2,3} = -x_{0} + 2x_{2} - x_{3} +y.
 \end{equation}
In order to obtain an expression for $y_{1,1,3}$ note that relation \eqref{x_214} together  with relation $(r23)$ yields $ x_{2,5,2} = x_{2,1,4} = -2x_{0} + x_{2} - x_{3} -x_{4} +y_0 +y $. Then, by $(r12)$, $ x_{2,5,1} = x_{2,5,2} - y_{2,5,2} + y_{2,5,1} = -x_{0} - x_{3} - x_{4} +y_0 +y $, therefore, it follows from $(r22)$ that  $ x_{2,1,6} = x_{2,5,1} = -x_{0} - x_{3} - x_{4} +y_0 +y $. Finally, by $(r6)$, we get 
\begin{equation}\label{3rd}
y_{1,1,3} = x_{2,1,6} + y_{2,1,6} -2 x_{1,1,3} = -x_{0} + 2x_{2} - x_{3} +y.
\end{equation}

We now have all the ingredients to complete the proof of the proposition.
Since $x_{1,1,1}$ and $y_{1,1,1}$ have commuting representatives in the torus action which corresponds to the polytope obtained from  \circled{6} blowing-up at the vertex $(i)$ (see Figure \ref{Config10 of AP} in Appendix \ref{app: relations m>1}), it follows that 
 $ 0=[x_{1,1,1},y_{1,1,1}]=[y_0-x_{3},y] $, so that
\begin{equation}\label{Samelson flower}
[x_{3},y] = [y_0,y].
\end{equation}
Similarly, blowing-up \circled{7} at $(i)$ it follows from \eqref{1st}  that $ 0=[x_{1,2,1},y_{1,2,1}]=[x_{2}-x_{3}-x_{4}+y_0,y] $, which, using relation \eqref{Samelson flower}, gives
\begin{equation}\label{Samelson daisy}
[x_{2},y] = [x_{4},y].
\end{equation}
Then, using the blow-up of \circled{6} at $(iii)$ and \eqref{3rd}, we obtain
\begin{align*}
  0 &= [x_{1,1,3},y_{1,1,3}]\\
 &= [-x_{2}+y_0, -x_{0}+2x_{2}-x_{3}+y]\\
  &= [x_{0},x_{2}] + [x_{2},x_{3}] - [x_{2},y] - [x_{0},y_0] + [y_0,y]\\
   &= [x_{0},x_{2}] + [x_{2},x_{3}] - [x_{4},y] - [x_{0},x_{3}] + [y_0,y],
\end{align*}
where we also used $ [x_{0}, y_0] = [x_{0}, x_{3}] $ from Lemma \ref{Samelson products} and relation \eqref{Samelson daisy}. This yields
\begin{equation}\label{Samelson tree}
[x_{4},y] = [x_{0},x_{2}] - [x_{0},x_{3}] + [x_{2},x_{3}] + [y_0,y].
\end{equation}
Finally,  the polytope obtained from blowing-up polytope \circled{8}  at $(iii)$ together with \eqref{2nd} gives
\begin{align*}
  0 &= [x_{1,2,3},y_{1,2,3}]\\
     &= [-x_{4}+y_0, -x_{0}+2x_{2}-x_{3}+y]\\
   &= [x_{0},x_{4}] - 2 [x_{2},x_{4}] + [x_{3},x_{4}] - [x_{4},y] - [x_{0},y_0] + [y_0,y].
\end{align*}
Using relations $ [x_{0}, y_0] = [x_{0}, x_{3}] $ and $ [x_{3}, x_{4}] = -[x_{0},x_{2}] + [x_{0},x_{4}] + [x_{2},x_{3}] $ from Lemma \ref{Samelson products} and relation \eqref{Samelson tree}, this becomes 
\begin{align*}
0 &= [x_{0},x_{4}] - 2 [x_{2},x_{4}]  - [x_{0},x_{2}] + [x_{0},x_{4}] + [x_{2},x_{3}] \\
    &\qquad \quad \quad  \  \ - ([x_{0},x_{2}] - [x_{0},x_{3}] + [x_{2},x_{3}] + [y_0,y]) - [x_{0},x_{3}] + [y_0,y]  \\
    &= 2[x_{0},x_{4}] - 2 [x_{2},x_{4}] -2[x_{0},x_{2}].
\end{align*}
Therefore we obtain relation \eqref{x-Samelson}. 

 In order to understand this new relation geometrically, we recall the graphs of these circle actions (see Figure \ref{S1-actions x0-x5}).

\begin{figure}[thp]

\begin{tikzpicture}[scale=0.7,font=\footnotesize]

	\node at (3,-3) {$x_{0}$};
	\fill[black] (2,2) ellipse (0.5 and 0.15);
	\node at (4,2) {$0,\mu - c_{1} - c_{3}$};
	\filldraw [black] (2,1) circle (2pt);
	\node at (4,1) {$ - c_{3} $};
	\filldraw [black] (2,0) circle (2pt);
	\node at (4,0) {$ - c_{1} $};
	\filldraw [black] (2,-1) circle (2pt);
	\node at (4,-1) {$-1+ c_{2} $};
	\fill[black] (2,-2) ellipse (0.5 and 0.15);
	\node at (4,-2) {$-1,\mu - c_{2}$};

	\node at (8.5,-3) {$x_{2}$};
	\fill[black] (7.5,2) ellipse (0.5 and 0.15);
	\node at (9.5,2) {$0,\mu - c_{1}$};
	\filldraw [black] (7.5,1) circle (2pt);
	\node at (9.5,1) {$ - c_{1} $};
	\filldraw [black] (7.5,0) circle (2pt);
    \tikzstyle{every node}=[font=\small]
	\node at (9.5,0) {$ -1 + c_{2} $};
	\filldraw [black] (7.5,-1) circle (2pt);
	\node at (9.5,-1) {$-1 + c_{3} $};
	\fill[black] (7.5,-2) ellipse (0.5 and 0.15);
	\node at (9.7,-2) {$-1,\mu - c_{2} - c_{3}$};

	\node at (14,-3) {$x_{4}$};
	\fill[black] (13,2) ellipse (0.5 and 0.15);
	\node at (15,2) {$0,\mu - c_{1} -c_{2}$};
	\filldraw [black] (13,1) circle (2pt);
	\node at (15,1) {$ - c_{2} $};
	\filldraw [black] (13,0) circle (2pt);
	\node at (15,0) {$ - c_{1} $};
	\filldraw [black] (13,-1) circle (2pt);
	\node at (15,-1) {$-1 + c_{3} $};
	\fill[black] (13,-2) ellipse (0.5 and 0.15);
	\node at (15,-2) {$-1,\mu - c_{3}$};
\end{tikzpicture}
\caption{The $ S^{1}$-actions $ x_{0} $, $ x_{2} $ and $ x_{4} $}\label{x0x2x4}
\end{figure}
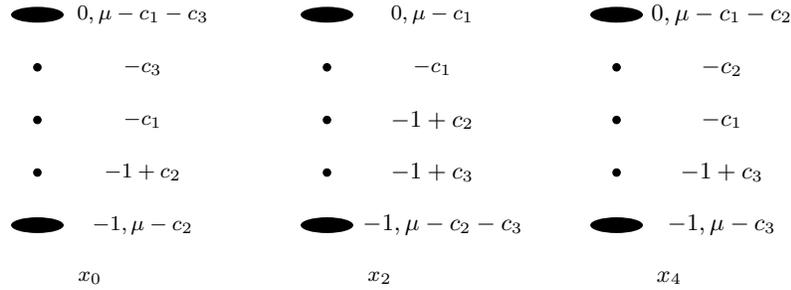

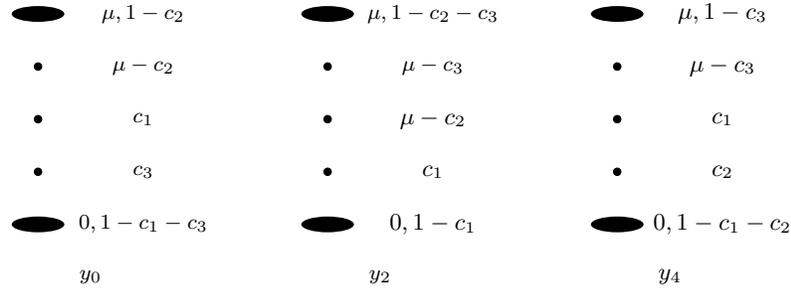
\begin{figure}[thp]
\begin{tikzpicture}[scale=0.7,font=\footnotesize]

	\node at (3,-3) {$y_0$};
	\fill[black] (2,2) ellipse (0.5 and 0.15);
	\node at (4,2) {$\mu, 1 - c_{2}$};
	\filldraw [black] (2,1) circle (2pt);
	\node at (4,1) {$ \mu - c_{2} $};
	\filldraw [black] (2,0) circle (2pt);
	\node at (4,0) {$ c_{1} $};
	\filldraw [black] (2,-1) circle (2pt);
	\node at (4,-1) {$c_{3} $};
	\fill[black] (2,-2) ellipse (0.5 and 0.15);
	\node at (4,-2) {$0,1 - c_{1} - c_{3}$};

	\node at (8.5,-3) {$y_{2}$};
	\fill[black] (7.5,2) ellipse (0.5 and 0.15);
	\node at (9.5,2) {$\mu,1 - c_{2} - c_{3}$};
	\filldraw [black] (7.5,1) circle (2pt);
	\node at (9.5,1) {$ \mu - c_{3} $};
	\filldraw [black] (7.5,0) circle (2pt);
    \tikzstyle{every node}=[font=\small]
	\node at (9.5,0) {$ \mu - c_{2} $};
	\filldraw [black] (7.5,-1) circle (2pt);
	\node at (9.5,-1) {$c_{1} $};
	\fill[black] (7.5,-2) ellipse (0.5 and 0.15);
	\node at (9.5,-2) {$0,1- c_{1}$};

	\node at (14,-3) {$y_{4}$};
	\fill[black] (13,2) ellipse (0.5 and 0.15);
	\node at (15,2) {$\mu,1  - c_{3}$};
	\filldraw [black] (13,1) circle (2pt);
	\node at (15,1) {$ \mu - c_{3} $};
	\filldraw [black] (13,0) circle (2pt);
	\node at (15,0) {$ c_{1} $};
	\filldraw [black] (13,-1) circle (2pt);
	\node at (15,-1) {$ c_{2} $};
	\fill[black] (13,-2) ellipse (0.5 and 0.15);
	\node at (15,-2) {$0,1 - c_{1} -c_{2}$};

\end{tikzpicture}
\caption{The $ S^{1}$-actions $ y_0$, $ y_{2} $ and $ y_{4} $}\label{y0y2y7}
\end{figure}

Looking at  Figure \ref{x0x2x4}, we see that the roles of $ c_{2} $ and $ c_{3} $ are interchanged in the actions $  x_{0}$ and $ x_{4} $, and relation \eqref{x-Samelson} tells us that these  "commute up" to $ x_{2}$.  As in Section \ref{section: new relation m1}, since the relation between  the graphs remains the same when the value of $\mu$ is changed, we conclude that this relation does not depend on its value. Since the map  $ T_{i,j} \rightarrow G_{\mu, c_{1}, c_{2}, c_{3}} $ allowing us to see these actions as elements of $ \pi_{1}(G_{\mu, c_{1}, c_{2}, c_{3}}) $ is injective in homotopy, the relation must also hold for $ \mu = 1 $.

Interchanging the roles of $ B $ and $ F $ (and flipping the graphs, which gives equivalent graphs) we see that we obtain the graphs of $ y_0$, $ y_{2} $ and $ y_{4} $, respectively (see Figure \ref{y0y2y7}). 

Hence, by symmetry, a similar result has to hold for these actions, that is, equation \eqref{y-Samelson} holds. This completes the proof of the proposition.
\end{proof}

\subsection{Proof of the main theorem}\label{proof m1}

We first give the complete statement of the main theorem.  

\medskip

\noindent \textbf{Theorem \ref{mainthm m1}.} \textit{Let $ \widetilde{M} _{c_{1} , c_{2}, c_{3} } $ denote the symplectic manifold obtained from $ (S ^{2} \times S ^{2}, \sigma \oplus \sigma )$, by performing three successive blow-ups of capacities of $ c_{1}$, $ c_{2}$ and $c_{3}$, with $ 1 > c_{1} + c_{2} > c_{1} + c_{3} > c_{1} > c_{2} > c_{3}. $ Let $ G_{c_{1} , c_{2}, c_{3} }$ denote the group of symplectomorphisms of $ \widetilde{M}_{c_{1} , c_{2}, c_{3} } $ acting trivially on homology. Define $ \widetilde{ \Lambda} $ as the Lie graded algebra over $\mathbb{Q}$ generated by elements of degree 1 denoted by $ x_{0}$, $ x_{1} $, $x_{2}$, $x_{4}$, $y_0$, $y_2$, $y_{3}$, $y_{4}$, $z$, where the Lie bracket satisfies the following relations
\begin{gather*}
 [x_{0}, y_0]  = [y_{0}, y_{3}],  \qquad   [x_{2}, y_2]  = [x_{1},x_{2}], \qquad [x_{2}, y_3]  = [x_{4},y_{3}],   \\ 
  [x_{1}, y_0]    + [x_{1},y_{4}]=0, \\ 
  [x_{2}, z]  = [x_{0},x_{2}] + [x_{0},z] ,   \qquad [y_0, z]  =   [y_0, y_{2}] + [y_{2}, z] ,\\
  [x_{0}, x_{4}] = [x_{0}, x_{2}] + [x_{2}, x_{4}], \qquad [y_0, y_{4}] = [y_{0}, y_2] + [y_{2}, y_{4}],\\
  [x_{0},x_{4}] = [x_{0},x_{1}]+  [x_{1}, x_{4}] ,  \qquad [ y_{3},y_4] =  [y_{2},y_{3}] + [y_{2},y_{4}], \\
  [x_{2}, y_{3}] +[y_{3}, z] =  [x_{2},y_{2}] + [y_{2},y_{3}] + [y_{2},z], \\
  [x_{1},x_{2}]  + [x_{1},y_{3}] +  [x_{1}, z]  = [x_{0},x_{1}] + [x_{4},y_{3}] + [x_{0},z] 
  \end{gather*} 
and the following Lie brackets vanish
\begin{equation*}
[x_{0}, y_{2}], [x_{0}, y_{3}], [x_{0}, y_{4}],  [x_{1}, y_{2}], [x_{2}, y_0], [x_{2}, y_{4}], [x_{4},y_0],  [x_{4}, y_{2}], [x_{4},z] , [y_{4},z].
\end{equation*}
Then there is an isomorphism of Lie graded algebras between $ \widetilde{ \Lambda} $ and 
$\pi _{*} (G_{\mu=1, c_{1} , c_{2}, c_{3} }) \otimes \mathbb{Q}. $}

\begin{remark}
Note that  the Lie algebra $ \pi _{*} (G_{c_{1} , c_{2}, c_{3} }) $ contains a subalgebra, generated by $ x_{2} $, $ x_{4} $, $ y_0 $, $ y_{4} $ and $ z $, that is isomorphic to the Lie graded algebra of $G_{\mu =1, c_{1} , c_{2} }$, described in \cite{AnjPin}. The underlying reason for this is that the $ S^{1} $-actions on the manifold $ \widetilde{M}_{\mu =1, c_{1} , c_{2}, c_{3} } $ are lifts of $ S^{1} $-actions on $ \widetilde{M}_{\mu =1, c_{1} , c_{2}} $. One easy way to see this is to consider the configurations 1.4, 2.2, 3.1, 4.1 and 5.1 in Section \ref{sec m1: configs}. These configurations correspond to toric pictures $ \widetilde{T}_{i,j}(0) $ with the same labelling and thus yield, respectively,
\begin{center}
$ [x_{2},y_0] = [x_{4},y_0] = [x_{4},z] = [y_{4},z] = [x_{2},y_{4}] = 0$.
\end{center}
Moreover, if we blow-down the exceptional curve $ E_{3} $ in those configurations, we observe that the ones obtained downstairs, in $ \widetilde{M}_{\mu =1, c_{1} , c_{2}} $, give the circle actions $ \bar{x_{0}}$, $\bar{y_0}$, $\bar{x_{1}}$, $\bar{z}$, $\bar{y_{1}}$ (in accordance with the notation in \cite{AnjPin}) which generate the algebra $\pi _{*} (G_{\mu =1, c_{1} , c_{2}})$. Therefore, we can conclude that the homotopy Lie algebra of $G_{\mu =1, c_{1} , c_{2} }$ lifts as a subalgebra of $\pi_{*}(G_{c_{1} , c_{2}, c_{3}})$.
\end{remark}

\begin{proof}

First, using the results from the previous sections, we show there is a well-defined  homomorphism of Lie algebras from $ \widetilde{\Lambda} $ into $ \pi_{*} (G_{c_{1},c_{2},c_{3}}) \otimes \Q$. Next, we will show that these two algebras have the same rank in each dimension so that this homomorphism is in fact an isomorphism.

The definition of the algebra $\widetilde{\Lambda}$ makes the definition of the homomorphism 
$$ \widetilde{\Lambda}  \longrightarrow \pi_{*} (G_{c_{1},c_{2},c_{3}}) \otimes \Q$$ obvious. Therefore  
each generator of $ \widetilde{\Lambda} $ corresponds to a generator of  $\pi_{1}(G_{c_{1},c_{2},c_{3}}) \otimes \Q$, which is induced by a circle action on the manifold. Note that the Lie bracket of two generators of $ \widetilde{\Lambda} $ is mapped, under the homomorphism, to the Samelson product of the correspondent generators in the fundamental group of $\Gccc$. Moreover, combining Lemma \ref{Samelson products} together with Proposition \ref{specialSamelson} we obtain a set of relations for the Samelson products that coincide with the relations in the statement of the theorem, so the homomorphism is a well-defined homomorphism of Lie algebras. 

Let $ \widetilde{ \lambda} _{n} $ be the rank of $ \widetilde{ \Lambda} $ in dimension $n$.  We now show that the two algebras have the same rank in each dimension.

We need to recall a theorem due to Milnor and Moore. Let $L$ be a graded Lie algebra and $TL$ the tensor algebra on the graded vector space $L$. Take the ideal
$$I := \{ x \otimes y \ - (-1)^{\deg x \deg y} y \otimes x - [x,y] \ : \ x,y \in L \} ,$$
where $ [\cdot , \cdot ] $ denotes the Lie bracket on $ L $. We define the \textit{universal enveloping algebra} $ \mathcal{U} L $ of $L$ as $TL / I $.

\begin{theorem}
(Milnor-Moore, \cite{MilMoo}) \label{Milnor-Moore-thm} If X is a simply-connected topological space then
\begin{enumerate}
\item[(1)] $ \pi _{*} ( \Omega X) \otimes \mathbb{Q} $ is a graded Lie algebra, $ L_{X} $;
\item[(2)] The Hurewicz homomorphism extends to an isomorphism of graded algebras
\begin{center}
$\mathcal{U} L_{X} \overset{\cong}{\longrightarrow} H_{*} ( \Omega X ; \mathbb{Q} )$
\end{center}
\end{enumerate}
\end{theorem}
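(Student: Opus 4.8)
The plan is to deduce the theorem from two essentially independent ingredients: a purely algebraic structure theorem for Hopf algebras over a field of characteristic zero, together with a topological identification of rational homotopy with primitive homology classes. The starting observation is that the loop space $\Omega X$ is a grouplike (homotopy associative, homotopy unital, homotopy invertible) $H$-space, so that loop concatenation endows $H_*(\Omega X; \mathbb{Q})$ with the structure of a connected graded $\mathbb{Q}$-algebra, the Pontryagin ring. The diagonal $\Omega X \to \Omega X \times \Omega X$ is cocommutative and compatible with concatenation, so $H := H_*(\Omega X; \mathbb{Q})$ is in fact a connected, graded-cocommutative Hopf algebra. This Hopf-algebraic packaging is what makes the whole statement tractable.

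For part (1) I would argue exactly as in the case of a topological group recalled in Section~\ref{section: intro}: since $\Omega X$ is grouplike, the commutator map $(\alpha,\beta) \mapsto \alpha\beta\alpha^{-1}\beta^{-1}$ descends to a well-defined pairing on homotopy groups, the Samelson product, which after tensoring with $\mathbb{Q}$ satisfies graded antisymmetry and the graded Jacobi identity. Thus $L_X := \pi_*(\Omega X) \otimes \mathbb{Q}$ is a graded Lie algebra; no new idea beyond the group case is needed, since only the grouplike $H$-structure is used.

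For part (2) I would proceed in two steps. The first, topological, step is the rational Hurewicz/Cartan--Serre comparison: the rational Hurewicz homomorphism $h : \pi_*(\Omega X) \otimes \mathbb{Q} \to H$ lands in the subspace $P(H)$ of primitive elements, it carries the Samelson product to the graded commutator of the Pontryagin product, and --- this is the content of the Cartan--Serre theorem for $H$-spaces --- it maps $L_X$ \emph{isomorphically} onto $P(H)$ as graded Lie algebras. One proves this by rationalizing and building a minimal model, or equivalently by noting that over $\mathbb{Q}$ an $H$-space has the rational homotopy type of a product of Eilenberg--MacLane spaces, for which the statement is immediate and additive. The second, algebraic, step is the structure theorem: for a connected graded-cocommutative Hopf algebra $H$ over a field of characteristic zero, the canonical map $\mathcal{U}(P(H)) \to H$ induced by the inclusion of primitives is an isomorphism of Hopf algebras. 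Composing this with the Lie-algebra isomorphism $L_X \cong P(H)$ of the first step, and using functoriality of $\mathcal{U}$, yields $\mathcal{U}L_X \cong H = H_*(\Omega X; \mathbb{Q})$, realized by the Hurewicz map as claimed.

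The main obstacle is the algebraic structure theorem, and within it the two halves behave quite differently. Surjectivity of $\mathcal{U}(P(H)) \to H$ amounts to showing that $H$ is generated as an algebra by its primitives; here characteristic zero is essential, and the argument is an induction on degree using cocommutativity to produce, modulo decomposables, a primitive representative of each indecomposable (the characteristic-$p$ analogue fails, which is why the theorem is restricted to $\mathbb{Q}$). Injectivity is the more delicate Poincar\'{e}--Birkhoff--Witt-type statement: one filters $\mathcal{U}(P(H))$ by powers of the augmentation ideal and compares associated graded objects, the key point being that the associated graded of $H$ is graded-commutative and cocommutative, hence by the Leray--Borel structure theorem a free graded-commutative algebra, against which the PBW basis of $\mathcal{U}(P(H))$ maps bijectively. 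I expect the bookkeeping of the grading (even versus odd degree primitives, so that $\mathcal{U}$ involves both a symmetric and an exterior factor) to be the most error-prone part, while the conceptual heart is the interplay of cocommutativity with the characteristic-zero hypothesis.
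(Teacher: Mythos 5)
The paper does not prove Theorem \ref{Milnor-Moore-thm} at all: it is quoted as a classical result and attributed to \cite{MilMoo} (with the enveloping-algebra formalism recalled just above it), so there is no internal argument to measure your proposal against; the comparison is with the standard literature proof, and your outline reproduces it faithfully. Part (1) indeed uses only the grouplike $H$-space structure of $\Omega X$, exactly as in the group case recalled in Section \ref{section: intro}; part (2) factors, as in \cite{MilMoo} and \cite[Ch.~21]{FelHalTho}, into the Cartan--Serre identification of $\pi_*(\Omega X)\otimes\mathbb{Q}$ with the primitives $P(H)$ of the Pontryagin Hopf algebra $H = H_*(\Omega X;\mathbb{Q})$, followed by the algebraic structure theorem $\mathcal{U}(P(H))\xrightarrow{\cong} H$ for connected graded-cocommutative Hopf algebras in characteristic zero, and your surjectivity/injectivity split of the latter (primitive generation via cocommutativity in characteristic $0$; a PBW filtration argument against the Leray--Hopf--Borel form of the associated graded) is the correct and standard decomposition. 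Two caveats are worth recording. First, your route to the Cartan--Serre step via the rational splitting of $H$-spaces into products of Eilenberg--MacLane spaces requires $H_*(\Omega X;\mathbb{Q})$ to be of finite type (or a preliminary direct-limit reduction over finite-type stages), a hypothesis suppressed by the bare phrase ``simply connected'' in the statement; this is harmless here, since every space to which the paper applies the theorem (e.g.\ $\Mcc$, $\Mccc$, and the classifying space $BG_{c_1,c_2,c_3}$) has finite-type rational homology, but it should be flagged. Second, you were right to insist on an \emph{independent} proof of the Cartan--Serre comparison (via the splitting, or via minimal models): in several textbook treatments Cartan--Serre is deduced as a corollary of Milnor--Moore, and quoting it without such care would make the argument circular.
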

Then, combining this result with the Poincar\'{e}-Birkoff-Witt Theorem (see \cite[Section 33]{FelHalTho}, as applied to topological spaces) it follows that $ \widetilde{ \lambda} _{n} $ satisfies
\begin{equation}\label{Poin}
\sum_{n=0}^{+ \infty}  \widetilde{h'_n} z^{n} = \frac{\prod_{n=0}^{\infty} (1+z^{2n+1})^{\widetilde{\lambda}_{2n+1}}}{ \prod_{n=1}^{\infty} (1-z^{2n})^{\widetilde{\lambda}_{2n}} }
\end{equation}
where the left hand side of the equation is the Poincar\'{e} series of the universal enveloping algebra of $ \widetilde{\Lambda} $, as described in the Milnor-Moore theorem.

In order to calculate $ \widetilde{ \lambda} _{n} $, we first establish some notation. Let $ \widetilde{h}_{n} = \dim H_{n} ( \Omega (\widetilde{M}_{c_{1} , c_{2} }) , \mathbb{Q}) $ and 
$ \widetilde{r} _{n} = \dim \pi _{n} ( \Omega (\widetilde{M}_{c_{1} , c_{2} }) \otimes \mathbb{Q} = \dim \pi _{n+1} (\widetilde{M}_{c_{1} , c_{2} }) \otimes \mathbb{Q} $.
The former, as explained earlier in Section \ref{sec m1: homotopy}, satisfies $ \widetilde{h}_{0} = 1 $, $ \widetilde{h}_{1} = 4 $ and $ \widetilde{ h}_{n} = 4 \widetilde{h}_{n-1} - \widetilde{h}_{n-2} $ for $ n \geq 2 $.

 We also need to recall some notation and results from \cite{AnjPin}. Let $ h_{n} = \dim H_{n} ( \Omega (\widetilde{M}_{c_{1}}) , \mathbb{Q})$.
Recall from \cite[Section 4]{AnjPin} that $h_{0}=1 $, $ h_{1}=3 $, and $h_{n}= 3 h_{n-1} - h_{n-2} $ for $ n \geq 2 $. Let $ \Lambda $ denote the Lie graded algebra $ \pi _{*} ( G_{c_{1},c_{2}}) \otimes \mathbb{Q} $, where $ G_{c_{1},c_{2}} $, for $ \mu \geq 1 $, is the group of symplectomorphisms, acting trivially on homology, of the sympletic manifold $ M_{\mu,c_{1},c_{2}} $ obtained from $ (S^{2} \times S^{2}, \sigma \oplus \sigma) $ by performing two sympletic blow-ups of capacities $ c_{1}$ and $c_{2} $. Let $ h'_{n}$ denote the coefficient of $ z^{n} $ of the Poincar\'{e} series of the universal enveloping algebra of $ \Lambda $. As also explained in \cite[Section 4]{AnjPin}, these satisfy $ h'_{0} =1 $ and $ h'_{n} = 5 h_{n-1} $ for $ n \geq 1 $. Finally, let $\lambda_{n}$ denote the rank of the algebra $ \Lambda $ in dimension $ n $.

\begin{proposition}
\label{rank proposition} We have $ \dim \pi_{1} (G_{c_{1},c_{2},c_{3}}) \otimes \Q = 9 $, and
$$ \dim \pi_{n} (G_{c_{1},c_{2},c_{3}})  \otimes \Q= \dim \pi_{n} (G_{c_{1},c_{2}})  \otimes \Q + \dim \pi_{n+1}(\widetilde{M}_{c_{1},c_{2}})  \otimes \Q,  \quad \mbox{for} \quad  n>1.$$
\end{proposition}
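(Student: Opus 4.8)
The plan is to combine the homotopy equivalence $\Gccc \simeq \Symp_p(\Mcc)$ of Lemma~\ref{small c3 m1} with the evaluation fibration, exactly as in Proposition~\ref{rank upper limit}, but now reading off exact ranks instead of a mere upper bound. Since $\Gcc$ is connected and acts transitively on $\Mcc$ with point--stabilizer $\Symp_p(\Mcc)$, evaluation at a base point gives a fibration
\[
\Symp_p(\Mcc) \longrightarrow \Gcc \xrightarrow{\;ev\;} \Mcc ,
\]
whose fibre is identified up to homotopy with $\Gccc$ by Lemma~\ref{small c3 m1}. Tensoring the induced long exact sequence with $\Q$ gives
\[
\cdots \to \pi_{n+1}(\Mcc)\otimes\Q \xrightarrow{\partial} \pi_n(\Gccc)\otimes\Q \to \pi_n(\Gcc)\otimes\Q \xrightarrow{ev_*} \pi_n(\Mcc)\otimes\Q \to \cdots,
\]
and the whole statement will follow once this sequence is shown to split into short exact pieces; for that it is enough to prove that $ev_*$ is rationally trivial in degrees at least two.

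The crucial step is therefore the claim that $ev_*\colon \pi_n(\Gcc)\otimes\Q \to \pi_n(\Mcc)\otimes\Q$ vanishes for every $n\geq 2$. I would prove this structurally, which is precisely why it transfers verbatim to $\Symp(\bbcp^2\#5\overline{\bbcp}^2)$ (see Corollary~\ref{mainapplication}). By the Anjos--Pinsonnault computation in \cite{AnjPin}, the graded Lie algebra $\pi_*(\Gcc)\otimes\Q$ is generated under the Samelson product by its degree--one part, namely the classes of Hamiltonian circle actions. The orbit map $ev$ sends Samelson products in $\pi_*(\Gcc)$ to Whitehead products in $\pi_*(\Mcc)$; since $\Mcc$ is simply connected, each degree--one generator $x$ satisfies $ev_*(x)\in\pi_1(\Mcc)=0$, and by induction on bracket length every iterated Samelson bracket maps to a Whitehead product both of whose factors already vanish, hence is itself zero. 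Thus $ev_*$ kills all of $\pi_{\geq 2}(\Gcc)\otimes\Q$.

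Granting this vanishing, the sequence splits. For $n\geq 2$ the maps $ev_*$ in degrees $n$ and $n+1$ are zero, so $\partial$ is injective and $\pi_n(\Gccc)\otimes\Q\to\pi_n(\Gcc)\otimes\Q$ is surjective, yielding the short exact sequence
\[
0 \to \pi_{n+1}(\Mcc)\otimes\Q \to \pi_n(\Gccc)\otimes\Q \to \pi_n(\Gcc)\otimes\Q \to 0 ,
\]
which is exactly the asserted rank identity for $n>1$. For $n=1$, using $\pi_1(\Mcc)=0$ together with the vanishing of $ev_*$ in degree two, the sequence collapses to
\[
0 \to \pi_2(\Mcc)\otimes\Q \to \pi_1(\Gccc)\otimes\Q \to \pi_1(\Gcc)\otimes\Q \to 0 ,
\]
and since $\dim\pi_2(\Mcc)\otimes\Q = \widetilde{r}_{1} = 4$ and $\dim\pi_1(\Gcc)\otimes\Q = 5$ by Proposition~\ref{homotopygroups}, we conclude $\dim\pi_1(\Gccc)\otimes\Q = 9$, in agreement with the bound of Proposition~\ref{rank upper limit}.

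The main obstacle is the rational vanishing of $ev_*$. It rests on two external inputs that must be cited carefully: the degree--one generation of $\pi_*(\Gcc)\otimes\Q$ from \cite{AnjPin}, and the classical compatibility of the orbit map with Samelson and Whitehead products. A secondary point to check is that the fibre of the evaluation fibration is the stabilizer taken inside $\Gcc$, not inside the full symplectomorphism group; this is guaranteed by Lemma~\ref{small c3 m1} together with the connectedness of $\Gcc$.
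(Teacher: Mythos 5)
You have reproduced the paper's scaffolding faithfully---the identification of $\Gccc$ with the stabilizer via Lemma~\ref{small c3 m1}, the evaluation fibration, the rational long exact sequence, the reduction of both assertions to the rational triviality of $ev_*$, and the final count $9=4+5$---but the step that carries all the weight is not correct as stated. You assert that the orbit map intertwines Samelson products with Whitehead products, $ev_*[a,b]=\pm[ev_*a,ev_*b]_W$, and run an induction on bracket length. This identity is not merely unproved; it is dimensionally impossible: for $a\in\pi_m$ and $b\in\pi_n$ the Samelson product lies in $\pi_{m+n}$, while the Whitehead product of the images lies in $\pi_{m+n-1}$, so the two sides of your formula live in different homotopy groups. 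The genuine classical facts in this vicinity are different ones: the Samelson product in $\pi_*(G)$ corresponds to the Whitehead product in $\pi_{*+1}(BG)$ under the isomorphism $\pi_*(G)\cong\pi_{*+1}(BG)$ (this involves the classifying space, not the orbit map), and for the orbit map one has the Gottlieb property $[ev_*a,\gamma]_W=0$ for every $\gamma\in\pi_*(\Mcc)$, which constrains the image of $ev_*$ but does not make it vanish. Nor do orbit maps annihilate Samelson products in general: for the Hopf fibration $SU(2)=S^3\to S^2$, the Samelson square of the fundamental class generates $\pi_6(S^3)\cong\Z_{12}$ and maps isomorphically into $\pi_6(S^2)$. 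So the induction you propose has no valid base step to rest on.

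The conclusion you want ($ev_*\otimes\Q=0$ in degrees $\geq 2$) is true, and the paper derives it from the equivalent statement on the other side of the exact sequence, namely that $\alpha_*\colon\pi_*(\Gccc)\otimes\Q\to\pi_*(\Gcc)\otimes\Q$ is surjective. The mechanism is a lifting argument that stays inside the group instead of pushing brackets forward to the manifold: since $\pi_1(\Mcc)=0$, exactness shows that $\pi_1(\Symp_p(\Mcc))\to\pi_1(\Symp(\Mcc))$ is onto, so each degree-one generator admits a representative loop of symplectomorphisms fixing $p$; the explicit commutator formula defining the Samelson product then shows that any iterated bracket of such loops is again represented by a family fixing $p$, hence lies in the image of $\alpha_*$. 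Degree-one generation of $\pi_*(\Gcc)\otimes\Q$---the same input from \cite{AnjPin} that you invoke---then gives rational surjectivity of $\alpha_*$, hence triviality of $ev_*$ by exactness. From that point on, your splitting into short exact sequences and the rank computations, including the $n=1$ case, go through verbatim.
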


\begin{proof}

We recall the long exact sequence in the proof of Proposition \ref{rank upper limit}:
   $$ . . . \xrightarrow{ev_{*+1}} \pi_{*+1} (  \widetilde{M}_{c_{1} , c_{2} } ) \xrightarrow{\gamma_{*+1}} \pi _{*} (G_{c_{1} , c_{2}, c_{3} } ) \xrightarrow{\alpha_{*}}  \pi _{*} (G_{c_{1} , c_{2}} ) \xrightarrow{ev_{*}} \pi_{*} (  \widetilde{M}_{c_{1} , c_{2} } )  \xrightarrow{\gamma_{*}} . . .$$
   First note that $\pi_1(\Mcc)=0$ implies that the map 
   $$\pi_1 (\Symp_p(\Mcc)) \longrightarrow \pi_1 (\Symp(\Mcc))$$
   is surjective. This means that we can choose representatives of the generators which fix the point $p$. Moreover, if $\xi, \eta: S^1 \rightarrow \Symp(\Mcc)$, with $\xi (s)(p) = p$ and $\eta (t) (p)= p$, are such representatives then their Samelson product 
   $$ (s,t)  \mapsto [\xi,\eta](s,t) = \xi (s) \eta (t) \xi (s) ^{-1} \eta(t)^{-1}$$
 clearly  also fixes the point $p$. Therefore, since we know that the homotopy Lie graded algebra $\pi_*(\Symp(\Mcc))$ is generated by elements of degree 1 (induced by circle actions on $\Mcc$), it follows that the map $\alpha_*$ is surjective and hence the map 
 $$ \pi _{*} (G_{c_{1} , c_{2}} ) \xrightarrow{ev_{*}} \pi_{*} (  \widetilde{M}_{c_{1} , c_{2} } ) $$
 is trivial, which makes the map $ \gamma_{*+1} $ injective. This concludes the proof of the proposition.  
  \end{proof}

\begin{lemma}\label{rank lemma} With the above notation, we have $ \widetilde{\lambda}_{n} = \widetilde{r}_{n} + \lambda _{n}$.
\begin{proof}
First we prove that $  \widetilde{h'_n} = h'_{n} + 4 \widetilde{h'_{n-1}} - \widetilde{h'_{n-2}} $ for $ n \geq 2 $: to compute $  \widetilde{h'_n} $ for $ n \geq 2 $, we begin with $ h'_{n} $ elements coming from the enveloping algebra of $ \Lambda $ plus $ 4 \widetilde{h'_{n-1}} $ elements: the number of classes in dimension $ n-1 $ times the four remaining generators (we should add $ 9 \widetilde{h'_{n-1}} $ elements because, by Proposition \ref{rank proposition}, we have $ 9 $ generators in total, but $ 5 \widetilde{h'_{n-1}} $ are already counted in $ h'_{n} $). Then, due to the relations, we can check that we have to remove $\widetilde{h'_{n-2}} $ elements.

Next, we show that $  \widetilde{h'_n} = \sum_{i+j=n} \widetilde{h}_{i} h'_{j} $ by induction. It clearly holds for $ n=0 $. For $ n=1 $, the afore-mentioned results yield $ \widetilde{h}_{0} h'_{1} + \widetilde{h}_{1} h'_{0} = 1 \cdot 5 + 4 \cdot 1 = 9$, which is indeed the number $ \widetilde{h}_{1}^{'} $ of generators of $ \pi _{1} $. For the inductive step, we use $  \widetilde{h'_n} = h'_{n} + 4 \widetilde{h'_{n-1}} - \widetilde{h'_{n-2}} $, as follows:
\begin{align*}
  \widetilde{h'_n} &= h'_{n} + 4 \widetilde{h'_{n-1}} - \widetilde{h'_{n-2}}\\
     &= h'_{n} + 4(\widetilde{h}_{n-1} h'_{0} + \widetilde{h}_{n-2} h'_{1} + \widetilde{h}_{n-3} h'_{2} + ... + \widetilde{h}_{1} h'_{n-2} + \widetilde{h}_{0} h'_{n-1})\\
      &\qquad \ - (\widetilde{h}_{n-2} h'_{0} + \widetilde{h}_{n-3} h'_{1} + \widetilde{h}_{n-4} h'_{2} + ... + \widetilde{h}_{0} h'_{n-2}) \\
   &= \widetilde{h}_{0} h'_{n} + 4 h'_{n-1} + (4 \widetilde{h}_{1}-\widetilde{h}_{0})h'_{n-2} + ... \\
	&\qquad + (4\widetilde{h}_{n-3}-\widetilde{h}_{n-4}) h'_{2} + (4\widetilde{h}_{n-2}-\widetilde{h}_{n-3}) h'_{1} + (4\widetilde{h}_{n-1}-\widetilde{h}_{n-2}) h'_{0} \\
   &= \widetilde{h}_{0} h'_{n} + 4 h'_{n-1} + \widetilde{h}_{2} h'_{n-2} + ... + \widetilde{h}_{n-2} h'_{2} + \widetilde{h}_{n-1} h'_{1} + \widetilde{h}_{n} h'_{0} \\
   &= \widetilde{h}_{0} h'_{n} + \widetilde{h}_{1} h'_{n-1} + \widetilde{h}_{2} h'_{n-2} + ... + \widetilde{h}_{n} h'_{0} \\
  &= \sum _{i+j=n} \widetilde{h}_{i} h'_{j}
\end{align*}
so that 
\begin{align*}
 \frac{\prod_{n=0}^{\infty} (1+z^{2n+1})^{\widetilde{\lambda}_{2n+1}}}{ \prod_{n=1}^{\infty} (1-z^{2n})^{\widetilde{\lambda}_{2n}} } &  = \sum_{n=0}^{+ \infty}  \widetilde{h'_n} z^{n} = \sum_{n=0}^{+ \infty} ( \sum_{i+j=n} \widetilde{h}_{i} h'_{j}) z^{n}\\
  &= \left( \sum_{n=0}^{\infty} \widetilde{h}_{n} z^{n} \right) \left( \sum_{n=0}^{\infty} h'_{n} z^{n} \right)   \\
  &= \left( \frac{\prod_{n=0}^{\infty} (1+z^{2n+1})^{\widetilde{r}_{2n+1}}}{ \prod_{n=1}^{\infty} (1-z^{2n})^{\widetilde{r}_{2n}} } \right) \left( \frac{\prod_{n=0}^{\infty} (1+z^{2n+1})^{\lambda_{2n+1}}}{ \prod_{n=1}^{\infty} (1-z^{2n})^{\lambda_{2n}} } \right) \\
 &= \left( \frac{\prod_{n=0}^{\infty} (1+z^{2n+1})^{\widetilde{r}_{2n+1}+\lambda_{2n+1}}}{ \prod_{n=1}^{\infty} (1-z^{2n})^{\widetilde{r}_{2n}+\lambda_{2n} }} \right) .
\end{align*}

This gives the desired result.
\end{proof}
\end{lemma}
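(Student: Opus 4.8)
The plan is to reduce the statement to a Hilbert--series identity for the universal enveloping algebra $\mathcal{U}\widetilde{\Lambda}$ and then read off the Lie--algebra ranks from the Poincar\'e--Birkhoff--Witt factorization \eqref{Poin}. Writing $\widetilde{h'_n}=\dim(\mathcal{U}\widetilde{\Lambda})_n$, \eqref{Poin} expresses $\sum_n \widetilde{h'_n}z^n$ as
\[
\frac{\prod_{n\geq 0}(1+z^{2n+1})^{\widetilde{\lambda}_{2n+1}}}{\prod_{n\geq 1}(1-z^{2n})^{\widetilde{\lambda}_{2n}}},
\]
so it suffices to prove the purely combinatorial identity $\widetilde{h'_n}=\sum_{i+j=n}\widetilde{h}_i\,h'_j$. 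Indeed, once this is established the right-hand side factors as the product of the PBW series of $\Omega\widetilde{M}_{c_1,c_2}$ (whose exponents are the $\widetilde{r}_n$) and of $\mathcal{U}\Lambda$ (whose exponents are the $\lambda_n$), and matching exponents degree by degree forces $\widetilde{\lambda}_n=\widetilde{r}_n+\lambda_n$.

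First I would exploit the subalgebra structure noted in the Remark preceding this proof: $\Lambda$ embeds in $\widetilde{\Lambda}$ as the subalgebra generated by $x_2,x_4,y_0,y_4,z$, while the four remaining degree-one generators are $x_0,x_1,y_2,y_3$. Because $\Lambda\subset\widetilde{\Lambda}$ is a sub-Lie-algebra, $\mathcal{U}\widetilde{\Lambda}$ is a free graded $\mathcal{U}\Lambda$-module, and a PBW normal form lets one assemble a basis of $(\mathcal{U}\widetilde{\Lambda})_n$ from a basis of $\mathcal{U}\Lambda$ together with monomials in the four extra generators. Counting degree by degree, and using that each degree-two relation in the presentation eliminates exactly one would-be basis element, I expect the recursion
\[
\widetilde{h'_n}=h'_n+4\,\widetilde{h'_{n-1}}-\widetilde{h'_{n-2}}\qquad(n\geq 2),
\]
where $h'_n$ is the $\mathcal{U}\Lambda$-contribution, $4\,\widetilde{h'_{n-1}}$ comes from premultiplying lower-degree elements by the four extra generators, and $-\widetilde{h'_{n-2}}$ records the linear dependencies imposed by the quadratic relations of Lemma \ref{Samelson products} and Proposition \ref{specialSamelson}.

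With the recursion in hand, the identity $\widetilde{h'_n}=\sum_{i+j=n}\widetilde{h}_i h'_j$ follows by induction on $n$. The base cases $n=0,1$ are immediate, with $\widetilde{h}_0 h'_1+\widetilde{h}_1 h'_0=1\cdot 5+4\cdot 1=9$ recovering the number of generators; the inductive step feeds the convolution hypothesis into the recursion and uses $\widetilde{h}_n=4\widetilde{h}_{n-1}-\widetilde{h}_{n-2}$ for the loop-space Betti numbers of $\widetilde{M}_{c_1,c_2}$ (together with $h'_n=5h_{n-1}$ from \cite{AnjPin}) to reassemble $\sum_{i+j=n}\widetilde{h}_i h'_j$. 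Substituting into \eqref{Poin} and recognizing each factor $\sum\widetilde{h}_n z^n$, $\sum h'_n z^n$ as a PBW product with exponents $\widetilde{r}_n$, $\lambda_n$ respectively yields
\[
\frac{\prod(1+z^{2n+1})^{\widetilde{r}_{2n+1}+\lambda_{2n+1}}}{\prod(1-z^{2n})^{\widetilde{r}_{2n}+\lambda_{2n}}}=\frac{\prod(1+z^{2n+1})^{\widetilde{\lambda}_{2n+1}}}{\prod(1-z^{2n})^{\widetilde{\lambda}_{2n}}},
\]
and the uniqueness of these exponents (extracted inductively starting from the lowest nontrivial degree, or via logarithmic derivatives) gives $\widetilde{\lambda}_n=\widetilde{r}_n+\lambda_n$.

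The hard part will be justifying the recursion, that is, the coefficient count in the second step. One must verify that the degree-two relations impose exactly $\widetilde{h'_{n-2}}$ independent dependencies in each degree $n$, with no further hidden syzygies in higher degrees that would make the naive count overshoot. I would settle this by fixing a monomial (PBW) ordering on the nine generators and checking that the listed relations furnish a confluent, complete set of leading terms, so that the surviving normal-form monomials are enumerated without overcounting; equivalently, one checks that $\widetilde{\Lambda}$ is precisely the extension of $\Lambda$ by the quadratically-presented object on the four extra generators, whose enveloping algebra then contributes the factor $\sum\widetilde{h}_n z^n$. Granting this, the remaining steps are the routine induction and exponent comparison described above.
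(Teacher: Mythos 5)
Your proposal follows essentially the same route as the paper's proof: the same recursion $\widetilde{h'_n} = h'_n + 4\widetilde{h'_{n-1}} - \widetilde{h'_{n-2}}$, the same inductive verification of the convolution identity $\widetilde{h'_n} = \sum_{i+j=n}\widetilde{h}_i h'_j$, and the same matching of PBW exponents in the Poincar\'e series to conclude $\widetilde{\lambda}_n = \widetilde{r}_n + \lambda_n$. The only difference is that you explicitly flag, and propose to close via a normal-form/confluence argument, the counting step ("the relations remove exactly $\widetilde{h'_{n-2}}$ elements") that the paper asserts without detailed justification.
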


Combining Proposition \ref{rank proposition} and Lemma \ref{rank lemma}, we obtain
$$ \dim \pi_{n} (G_{c_{1},c_{2},c_{3}})  \otimes \Q= \dim \pi_{n} (G_{c_{1},c_{2}})  \otimes \Q+ \dim \pi_{n+1}(\widetilde{M}_{c_{1},c_{2}}) \otimes \Q = \lambda _{n} + \widetilde{r}_{n} = \widetilde{\lambda}_{n},$$
 so the algebras $ \pi_{n} (G_{c_{1},c_{2},c_{3}})  \otimes \Q$ and $ \widetilde{\Lambda} $ have the same rank in each dimension. It follows that  the homomorphism $ \widetilde{\Lambda} \rightarrow \pi_{*} (G_{c_{1},c_{2},c_{3}}) $ is in fact an isomorphism.
 \end{proof}

\begin{remark}
We note that the first statement of Proposition \ref{rank proposition} agrees with the result obtained by J. Li, T. J. Li and W. Wu in \cite[Table 5]{LiLiWu2} regarding the rank of the fundamental group, which was studied using different techniques from ours. 

\end{remark}

\begin{remark}\label{No need for c1c2c3<1}
The generators and the relations stated in Theorem \ref{mainthm m1} do not depend on assumptions as $ 1 > c_{1} + c_{2} + c_{3} $ and $ c_{1} > c_{2} + c_{3}  $, although there are some $ S^{1} $-actions and therefore some toric pictures that do not exist in those cases. They hold also when $ 1 \leq c_{1} + c_{2} + c_{3} $ and $ c_{1} \leq c_{2} + c_{3}  $, so the fundamental group and the rational homotopy Lie algebra of the group $ G_{c_{1},c_{2},c_{3}} $ do not depend on these conditions.
\end{remark}

\begin{remark} The relations in the statement of Theorem \ref{mainthm m1} imply that the following elements form a basis for $ \pi _{2} (G_{c_{1} , c_{2}, c_{3} }) \otimes \Q \simeq \Q^{14}$ as a vector space: $ [x_{0},x_{1}] $, $ [x_{0},x_{2}] $, $ [x_{0},y_0] $, $ [x_{0},z] $,  $ [x_{1},y_{4}] $, $ [x_{1},z] $, $ [x_{2}, x_{4}] $, $ [x_{2},y_{2}] $, $ [x_{2},y_{3}] $, $ [x_{4},y_{4}] $, $ [y_0, y_2] $, $ [y_2, y_3] $, $ [y_{2}, y_{4}] $, $ [y_{2}, z] $, which agrees with the rank predicted by Proposition \ref{rank proposition}  because $\pi_2(\Gcc) \otimes \Q \simeq \Q^5$ and $\pi_3 (\Mcc) \otimes \Q \simeq \Q^9$.
\end{remark}

\section{Applications}\label{applications}

Our main application of Theorem \ref{mainthm m1} is the computation of the rank of the homotopy groups of the group of symplectomorphisms, acting trivially in homology, $\Symp_h(\X_5, \omega)$ where $\X_5$ is $\bbcp^2\#\,5\overline{ \bbcp}\,\!^2$ and $\omega$ is a particular symplectic form as we explain below. The monotone case was first studied by P. Seidel in \cite{Seidel} where he proved  that $\pi_0$ is an infinite discrete group generated by square Lagrangian Dehn twists while J. Evans proved later, in \cite{Eva}, that this group is weakly homotopy equivalent to $\Diff^+(S^2,5)$, the group of orientation-preserving diffeomorphisms of $S^2$ preserving 5 points. On the other hand, D. McDuff in \cite{McD4} noticed that for a certain symplectic form such that the blow-up size is small and there is no Lagrangian spheres, the group 
$\Symp_h(\bbcp^2\#\,5\overline{ \bbcp}\,\!^2)$ is connected. Recently, J. Li, T-J. Li and W. Wu in \cite[Section 5.5]{LiLiWu2} gave a lower bound for the rank of the fundamental group of $\Symp_h(\bbcp^2\#\,5\overline{ \bbcp}\,\!^2)$ with an arbitrary symplectic form, which together with an upper bound obtained from D. McDuff in \cite{McD4} gives the rank of the fundamental group. Note that $\X_5$ equipped with the standard symplectic structure is symplectomorphic to   the symplectic manifold obtained from $ (S ^{2} \times S ^{2}, \sigma \oplus \sigma )$, by performing four successive blow-ups of capacities $ c_{1}$, $ c_{2}$, $c_{3}$, and $c_4$, $\Mucccc$, where the equivalence is given in a similar way to the case of $\X_4$. We will consider the particular case of a small blow-up when $\mu=1$. 

\begin{corollary}\label{mainapplication}
Let  $ 1 > c_{1} + c_{2} > c_{1} + c_{3} > c_1+c_4> c_{1} > c_{2} > c_{3}>c_4$ and $ G_{c_{1} , c_{2}, c_{3},c_4}$ denote the group of symplectomorphisms of $ \widetilde{M}_{c_{1} , c_{2}, c_{3},c_4 } $ acting trivially on homology. Then 
$$\dim \pi_n (\Gcccc) \otimes \Q = \dim \pi_n (\Gccc) \otimes \Q + \dim \pi_{n+1}(\Mccc) \otimes \Q.$$
In particular $\pi_1(\Gcccc) \otimes \Q \simeq \Q^{14}$. 
\end{corollary}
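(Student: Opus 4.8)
The plan is to reproduce, one blow-up higher, the argument of Proposition~\ref{rank proposition}, comparing $\Gcccc$ with $\Gccc$. First I would secure the two geometric inputs that let the three-fold reasoning transfer. Since $c_4 < c_3$, the class $E_4$ has strictly smallest area, so the proof of Lemma~\ref{E3 m1} applies verbatim to show that $E_4$ is represented by a unique embedded $J$-holomorphic sphere, with no cusp-curve degeneration, for every $J$ taming a form on $\Mcccc$. Feeding this into the argument of Lemma~\ref{small c3 m1} (the adaptation of Proposition~2.1 in \cite{LalPin}), together with the stability Theorem~\ref{slight changes m1} extended to four capacities by the same inflation technique, yields a homotopy equivalence
$$\Gcccc \simeq \Symp_p(\Mccc),$$
the stabilizer of a point $p \in \Mccc$, once $c_4$ is sufficiently small.

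With this identification I would run the evaluation fibration
$$\Symp_p(\Mccc) \longrightarrow \Gccc \xrightarrow{\ ev\ } \Mccc,$$
whose homotopy long exact sequence is
$$\cdots \xrightarrow{ev_{*+1}} \pi_{*+1}(\Mccc) \xrightarrow{\gamma_{*+1}} \pi_*(\Gcccc) \xrightarrow{\alpha_*} \pi_*(\Gccc) \xrightarrow{ev_*} \pi_*(\Mccc) \to \cdots.$$
The heart of the proof is to show that $ev_*$ vanishes, equivalently that $\alpha_*$ is surjective. For this I would copy the reasoning of Proposition~\ref{rank proposition}: as $\Mccc$ is simply connected, the map $\pi_1(\Symp_p(\Mccc)) \to \pi_1(\Gccc)$ is onto, so the nine generating circle actions of $\pi_1(\Gccc)$ furnished by Theorem~\ref{mainthm m1} have representatives fixing $p$, and the Samelson product of two such representatives again fixes $p$. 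Because Theorem~\ref{mainthm m1} asserts that the whole graded Lie algebra $\pi_*(\Gccc)\otimes\Q$ is generated in degree one by these circle actions, every rational homotopy class lifts to $\Symp_p(\Mccc)$; hence $\alpha_*$ is rationally surjective, $ev_*$ is rationally trivial, and $\gamma_{*+1}$ is rationally injective. This produces the short exact sequences
$$0 \to \pi_{n+1}(\Mccc)\otimes\Q \to \pi_n(\Gcccc)\otimes\Q \to \pi_n(\Gccc)\otimes\Q \to 0,$$
valid for all $n \geq 1$ (in degree one using $\pi_1(\Mccc)=0$), and the dimension formula follows by additivity of ranks.

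For the final assertion I would specialize to $n=1$. Theorem~\ref{mainthm m1} gives $\dim \pi_1(\Gccc)\otimes\Q = 9$, while $\Mccc = S^2\times S^2 \#\,3\overline{\bbcp}\,\!^2$ is simply connected with $b_2 = 5$, so Hurewicz gives $\dim \pi_2(\Mccc)\otimes\Q = 5$; thus $\dim \pi_1(\Gcccc)\otimes\Q = 9 + 5 = 14$. I expect the only genuinely delicate point to be the surjectivity of $\alpha_*$: it relies essentially on the structural content of Theorem~\ref{mainthm m1}, namely that $\pi_*(\Gccc)$ is generated by circle actions, since this is precisely what allows arbitrary homotopy classes---and hence their Samelson brackets---to be lifted to the point-stabilizer. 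The two geometric lemmas (unique representability of $E_4$ and four-capacity stability) are routine adaptations of the three-fold case and should present no serious difficulty.
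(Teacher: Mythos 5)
Your proposal is correct and follows essentially the same route as the paper: reduce $\Gcccc$ to the stabilizer $\Symp_p(\Mccc)$ via the stability/inflation argument and the analogue of Lemma \ref{E3 m1} for $E_4$, then run the evaluation fibration and kill $ev_*$ rationally using the fact that $\pi_*(\Gccc)\otimes\Q$ is generated in degree one by circle actions whose representatives (and hence Samelson products) can be chosen to fix $p$. The paper's proof is exactly this, with the geometric inputs you spell out (uniqueness of the $E_4$-sphere, four-capacity stability) left as "similar to Lemma \ref{small c3 m1} and Section \ref{sec m1: stability}".
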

\begin{proof}
The conditions on the capacities of the blow-ups ensure, as in Lemma \ref{reduced m1}, that we have a reduced symplectic form. 
There are two main steps in the proof. First we need to prove that the group $ G_{c_{1} , c_{2}, c_{3},c_4}$ has the homotopy type of a stabilizer, that is, it is homotopy equivalent to $\Symp_p(\Mccc)$. This is similar to Lemma \ref{small c3 m1} and requires also a previous analysis of the stability of symplectomorphism groups, as in Section \ref{sec m1: stability}, using the inflation technique.  Therefore, the evaluation fibration
$$ \Symp_p(\Mccc) \longrightarrow \Symp(\Mccc) \xrightarrow{ev} \Mccc$$
induces a long exact homotopy sequence 
$$ . . . \xrightarrow{ev_{*+1}} \pi_{*+1} ( \Mccc ) \xrightarrow{\gamma_{*+1}} \pi _{*} (\Gcccc ) \xrightarrow{\alpha_{*}}  \pi _{*} (\Gccc ) \xrightarrow{ev_{*}} \pi_{*} ( \Mccc)  \xrightarrow{\gamma_{*}} . . .$$
Then, since the homotopy Lie algebra $\pi_*(\Gccc) \otimes \Q$ is  $\pi_1$ generated it follows, as in the proof of Proposition \ref{rank proposition}, that the map $ev_*$ is trivial, so the sequence splits and we obtain the desired result. In particular, we know that the fundamental group of $\Gccc$ has 9 generators and $\pi_2(\Mccc) \otimes \Q= H_2 (\Mccc; \Q)=\Q^5$ so the rank of the fundamental group of $\Gcccc$ is indeed 14.  
\end{proof}

\begin{remark}
Note that this result agrees with the computation of the rank of the fundamental group of $\Symp_h(\bbcp^2\#\,5\overline{ \bbcp}\,\!^2)$ by J. Li, T. -J. Li and W. Wu in \cite{LiLiWu}. More precisely, they claim that 
the rank of $ \pi_1 (\Symp_h(\bbcp^2\#\,5\overline{ \bbcp}\,\!^2)$ is equal to $N-5$ where $ N$ is the number of -2 symplectic spheres classes, which they prove is 19 in this particular case (see \cite[Table 6]{LiLiWu}).
\end{remark}

The other applications concern the Pontryagin ring and the rational cohomology algebra of the topological group $G_{c_{1},c_{2},c_{3}} $. We recall that for a topological group $ G $, the Pontryagin product in $ H_{*}(G;\mathbb{Z}) $ is related to the Samelson product in $ \pi_{*}(G) $ by the formula
\begin{center}
$ [x,y] = xy - (-1)^{\deg x \deg y} yx $, for $ x,y \in \pi_{*}(G) $,
\end{center}
where we simplify the notation by suppressing the Hurewicz homomorphism $ \rho : \pi_{*}(G) \rightarrow H_{*}(G;\mathbb{Z}) $. We denote by $ \mathbb{Q} \langle x_{1},...,x_{n} \rangle $ the free non-commutative algebra over $ \mathbb{Q} $ with generators $ x_{i} $. Then, applying the Milnor-Moore Theorem (Theorem \ref{Milnor-Moore-thm}) to the classifying space $ B G_{c_{1},c_{2},c_{3}}  $ of the space of symplectomorphisms of $ \widetilde{M}_{c_{1} , c_{2}, c_{3}} $, we obtain the following consequence of Theorem \ref{mainthm m1}.

\begin{corollary} Let $ 1 > c_{1} + c_{2} > c_{1} + c_{3} > c_{1} > c_{2} > c_{3}  $. Let $ G_{c_{1} , c_{2}, c_{3} }$ denote the group of symplectomorphisms of $ \widetilde{M}_{c_{1} , c_{2}, c_{3} } $ that act trivially on homology. Then the Pontryagin ring of $ G_{c_{1} , c_{2}, c_{3} }$ is given by
$$ H_{*}( G_{c_{1} , c_{2}, c_{3}} ;\mathbb{Q}) = \mathbb{Q} \langle x_{0}, x_{1}, x_{2}, x_{4}, y_0, y_2, y_3, y_{4}, z \rangle / R $$
where all generators have degree 1, and $ R $ consists of the relations in the statement of Theorem \ref{mainthm m1} together with $ x_{0}^{2}= x_{1}^{2}= x_{2}^{2} = x_{4}^{2}= y_0^{2}= y_2^{2}= y_{3}^{2}= y_{4}^{2}= z^{2} =0  $.
\end{corollary}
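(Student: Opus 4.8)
The plan is to apply the Milnor--Moore theorem (Theorem \ref{Milnor-Moore-thm}) to the classifying space $X = BG_{c_1,c_2,c_3}$. Since $G_{c_1,c_2,c_3}$ is connected, $\pi_1(BG_{c_1,c_2,c_3}) = \pi_0(G_{c_1,c_2,c_3}) = 0$, so $BG_{c_1,c_2,c_3}$ is simply connected and the theorem applies. As $G_{c_1,c_2,c_3}$ is a topological group we have a homotopy equivalence $\Omega BG_{c_1,c_2,c_3} \simeq G_{c_1,c_2,c_3}$, under which the loop-space Pontryagin product corresponds to the Pontryagin product on $H_*(G_{c_1,c_2,c_3};\mathbb{Q})$. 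Milnor--Moore then yields an isomorphism of graded algebras $H_*(G_{c_1,c_2,c_3};\mathbb{Q}) \cong \mathcal{U}L_X$, where $L_X = \pi_*(\Omega BG_{c_1,c_2,c_3}) \otimes \mathbb{Q} = \pi_*(G_{c_1,c_2,c_3}) \otimes \mathbb{Q}$. By Theorem \ref{mainthm m1} this graded Lie algebra is isomorphic to $\widetilde{\Lambda}$, so it remains only to compute the universal enveloping algebra $\mathcal{U}\widetilde{\Lambda}$ explicitly.

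Next I would unwind the definition $\mathcal{U}\widetilde{\Lambda} = T\widetilde{\Lambda}/I$. Because all nine generators $x_0,x_1,x_2,x_4,y_0,y_2,y_3,y_4,z$ sit in degree $1$, for any two of them $u,v$ the generator $u\otimes v - (-1)^{\deg u \deg v} v \otimes u - [u,v]$ of $I$ reads $uv + vu - [u,v]$; equivalently, the Samelson relations are recovered from the Pontryagin product through the formula $[u,v] = uv - (-1)^{\deg u \deg v} vu$ recalled just before the statement. Hence every relation of $\widetilde{\Lambda}$ listed in Theorem \ref{mainthm m1} (among brackets of distinct generators) transcribes verbatim into a relation of $\mathcal{U}\widetilde{\Lambda}$, and conversely these, together with the relations coming from the self-brackets $[u,u]$, generate $I$. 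For a single degree-$1$ generator $u$ the relation reads $2u^2 = [u,u]$, so producing the squaring relations $u^2 = 0$ amounts to showing that every self-Samelson product $[u,u]$ vanishes.

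The crux of the argument is therefore this last vanishing, which I expect to be the only genuinely delicate point; everything else is a formal transcription of Milnor--Moore and Theorem \ref{mainthm m1}. Each of the eight generators $x_0,x_1,x_2,x_4,y_0,y_2,y_3,y_4$ is represented by a Hamiltonian circle action, that is, by a homomorphism $a\colon S^1 \to G_{c_1,c_2,c_3}$; since $S^1$ is abelian, the commutator map $(s,t) \mapsto a(s)a(t)a(s)^{-1}a(t)^{-1}$ defining $[u,u]$ is constant, so $[u,u] = 0$ and $u^2 = 0$ in the Pontryagin ring. For the remaining generator $z = y_8 - x_4$ there is no single circle action to invoke, but bilinearity and graded antisymmetry give $[z,z] = [y_8,y_8] - 2[x_4,y_8] + [x_4,x_4]$; here $x_4$ and $y_8$ are both circle actions, so their self-brackets vanish, while $[x_4,y_8] = [x_4, x_4 + z] = [x_4,z] = 0$ by $y_8 = x_4 + z$ (Lemma \ref{linear combinations}) and the vanishing relation $[x_4,z]=0$ of Lemma \ref{Samelson products}. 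Thus $[z,z]=0$ and $z^2=0$. Assembling these facts, $\mathcal{U}\widetilde{\Lambda}$ is exactly $\mathbb{Q}\langle x_0,x_1,x_2,x_4,y_0,y_2,y_3,y_4,z\rangle$ modulo the relations of Theorem \ref{mainthm m1} and the squares $x_0^2 = \cdots = z^2 = 0$, which is the asserted description of $H_*(G_{c_1,c_2,c_3};\mathbb{Q})$.
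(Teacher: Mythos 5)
Your proposal is correct and follows essentially the same route as the paper, which obtains this corollary precisely by applying the Milnor--Moore theorem to $BG_{c_{1},c_{2},c_{3}}$ (using connectedness of the group and $\Omega BG \simeq G$) together with Theorem \ref{mainthm m1} and the Pontryagin--Samelson formula. Your only addition is to make explicit why each square vanishes --- namely $[u,u]=0$ for the eight generators represented by circle actions since they are homomorphisms from an abelian group, and $[z,z]=0$ by bilinearity, $y_{8}=x_{4}+z$, and $[x_{4},z]=0$ --- a point the paper leaves implicit but which is needed, since graded antisymmetry alone does not kill self-brackets of odd-degree elements.
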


\begin{corollary}
Let $ 1 > c_{1} + c_{2} > c_{1} + c_{3} > c_{1} > c_{2} > c_{3}  $. Let $ G_{c_{1} , c_{2}, c_{3} }$ denote the group of symplectomorphisms of $ \widetilde{M}_{c_{1} , c_{2}, c_{3} } $ that act trivially on homology. Then the rational cohomology algebra of $ G_{c_{1} , c_{2}, c_{3} }$ is infinitely generated.
\end{corollary}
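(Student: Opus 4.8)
The plan is to exploit the fact that $G := G_{c_{1},c_{2},c_{3}}$ is a connected topological group, so that its rational cohomology is a commutative Hopf algebra, and to combine this with the infinite-dimensionality of $\pi_{*}(G)\otimes\mathbb{Q}$ already established in Theorem \ref{mainthm m1}. First I would observe that $G$ is a connected $H$-space of finite rational type: each $H_{n}(G;\mathbb{Q})$ is finite dimensional by the Pontryagin ring computation of the previous corollary. The group multiplication $m\colon G\times G\to G$ then induces, via the rational K\"{u}nneth isomorphism, a comultiplication $m^{*}\colon H^{*}(G;\mathbb{Q})\to H^{*}(G;\mathbb{Q})\otimes H^{*}(G;\mathbb{Q})$, which together with the (always graded-commutative) cup product makes $H^{*}(G;\mathbb{Q})$ a connected, graded-commutative Hopf algebra of finite type over $\mathbb{Q}$.

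Next I would invoke the structure theorem for such Hopf algebras (Hopf--Leray, or Milnor--Moore in characteristic zero): a connected graded-commutative Hopf algebra of finite type over $\mathbb{Q}$ is free graded-commutative, i.e.\ a tensor product of a polynomial algebra on its even-degree generators with an exterior algebra on its odd-degree ones. In particular the module of indecomposables $QH^{*}(G;\mathbb{Q}) = H^{+}(G;\mathbb{Q})/(H^{+}(G;\mathbb{Q}))^{2}$ is exactly the graded space of algebra generators, so the algebra is infinitely generated if and only if $\dim_{\mathbb{Q}} QH^{*}(G;\mathbb{Q}) = \infty$.

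To compute this dimension I would dualize. Since $H^{*}(G;\mathbb{Q})$ and $H_{*}(G;\mathbb{Q})$ are graded-dual Hopf algebras of finite type, the indecomposables $QH^{*}(G;\mathbb{Q})$ are degreewise dual to the primitives $PH_{*}(G;\mathbb{Q})$. By the Milnor--Moore Theorem (Theorem \ref{Milnor-Moore-thm}, applied to $X=BG$ so that $\Omega X\simeq G$), $H_{*}(G;\mathbb{Q})$ is the universal enveloping algebra $\mathcal{U}L$ of the graded Lie algebra $L=\pi_{*}(G)\otimes\mathbb{Q}$, and the primitives of $\mathcal{U}L$ are precisely $L$. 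Hence $QH^{*}(G;\mathbb{Q})$ is degreewise dual to $\pi_{*}(G)\otimes\mathbb{Q}\cong\widetilde{\Lambda}$, and it suffices to show that $\widetilde{\Lambda}$ is infinite dimensional. This is immediate from the rank computation: by Lemma \ref{rank lemma} we have $\widetilde{\lambda}_{n}=\widetilde{r}_{n}+\lambda_{n}\geq\widetilde{r}_{n}$, where $\widetilde{r}_{n}=\dim\pi_{n+1}(\widetilde{M}_{c_{1},c_{2}})\otimes\mathbb{Q}$; since $\widetilde{M}_{c_{1},c_{2}}$ is rationally hyperbolic (Section \ref{sec m1: homotopy}), the recursion $\widetilde{h}_{n}=4\widetilde{h}_{n-1}-\widetilde{h}_{n-2}$ forces $\widetilde{r}_{n}>0$ for every $n\geq 1$. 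Thus $\widetilde{\lambda}_{n}>0$ in infinitely many degrees, $QH^{*}(G;\mathbb{Q})$ carries a nonzero class in infinitely many degrees, and $H^{*}(G_{c_{1},c_{2},c_{3}};\mathbb{Q})$ requires infinitely many algebra generators.

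The step I expect to be the main obstacle, or at least the one demanding the most care, is the justification of the Hopf-algebra machinery in the present setting, since $G$ is an infinite-dimensional Fr\'{e}chet Lie group rather than a finite CW-complex. Concretely, one must confirm that the finite-type hypothesis holds degreewise (which it does, by the explicit Pontryagin ring), that the K\"{u}nneth and graded-duality isomorphisms are available, and that the duality between $QH^{*}$ and $PH_{*}$ is compatible with the Milnor--Moore identification $H_{*}(G;\mathbb{Q})\cong\mathcal{U}(\pi_{*}(G)\otimes\mathbb{Q})$. Once degreewise finiteness is in hand these are standard, so the argument reduces, as above, to the purely algebraic fact that $\widetilde{\Lambda}$ has nonzero rank in infinitely many degrees.
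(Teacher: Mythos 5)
Your proposal is correct and follows essentially the same route as the paper: where you re-derive the key facts from Leray's structure theorem for graded-commutative Hopf algebras together with Milnor--Moore duality (indecomposables of $H^{*}(G;\mathbb{Q})$ dual to primitives of $H_{*}(G;\mathbb{Q})=\mathcal{U}(\pi_{*}(G)\otimes\mathbb{Q})$, which are exactly $\pi_{*}(G)\otimes\mathbb{Q}$), the paper simply cites the Cartan--Serre theorem, which packages precisely this statement that the number of algebra generators of $H^{*}(G;\mathbb{Q})$ in degree $d$ equals $\dim\pi_{d}(G)\otimes\mathbb{Q}$. The only nitpick is your claim that the recursion forces $\widetilde{r}_{n}>0$ for \emph{every} $n\geq 1$: what the exponential growth of $\widetilde{h}_{n}$ actually guarantees is that $\widetilde{r}_{n}>0$ in infinitely many degrees (finitely many generators would yield a Poincar\'e series with at most polynomial coefficient growth), but that weaker statement is all your argument needs.
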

\begin{proof}
By the Cartan-Serre theorem (see \cite[Theorem 1.1 and Theorem 1.2]{Nov}), if the rational homology of $ G_{c_{1} , c_{2}, c_{3} }$ is finitely generated in each dimension, then its rational cohomology is a Hopf algebra for the cup product and the coproduct induced by the product in $ G_{c_{1} , c_{2}, c_{3} }$ and it is generated as an algebra by elements that are dual to the spherical classes in homology. Furhermore, the number of generators of odd dimension $ d $ appearing in the anti-symmetric part of the rational cohomology algebra is equal to the dimension of $ \pi_{d}( G_{c_{1} , c_{2}, c_{3} }) \otimes \mathbb{Q} $, and the number of generators of even dimension $ d $ appearing in the symmetric part of the rational cohomology algebra is equal to the dimension of $ \pi_{d}( G_{c_{1} , c_{2}, c_{3} }) \otimes \mathbb{Q} $. Therefore, we conclude that the rational cohomology algebra of $ G_{c_{1} , c_{2}, c_{3} }$ is infinitely generated.
\end{proof}

\section{Special Cases} \label{SpecialCases}
Up to this point we just studied the generic case, that is, we studied the topology of the group of symplectomorphisms $\Gccc$ when $ 1 > c_{1} + c_{2} > c_{1} + c_{3} > c_{1} > c_{2} > c_{3} $. In this section  we give some remarks about some of the remaining cases, which means that we allow equalities in these relations between the capacities. More precisely, we will consider particular cases when we can apply the same argument we used in the generic case, that is, when the symplectomorphism group $\Gccc$ has the homotopy type of a stabilizer, and therefore the capacity $c_3$ of the third  blow-up is small. Next we illustrate how to study these cases with two examples.

The first example is when $c_1=c_2=1/2 > c_3$. In this case there are several classes, namely  $E_1 -E_2$, $E_1 -E_2 -E_3$, $ B-E_1 -E_2$, $ B-E_1 -E_2 -E_3$,  $F-E_1 -E_2$ and $F-E_1 -E_2-E_3$, which cannot be represented by a $J$-holomorphic sphere for all $J$ in the space of almost complex structures, that is, not all configurations of Section \ref{sec m1: configs} can be realized by $J$-holomorphic spheres. The isometries corresponding to the configurations that still exist in this case contain the following $S^1$-actions: $ x_{0}, x_{1}, x_{2}, x_{3}, y_0, y_1,y_2,y_3$ with the relations $y_3-x_0=y_0-x_3$ and $y_1-x_2=y_2-x_1$. 
Moreover, the evaluation fibration 
$$ \Symp_p (\widetilde{M}_{1/2 , 1/2 }) \rightarrow \Symp(\widetilde{M}_{1/2 , 1/2 }) \xrightarrow{ev} \widetilde{M}_{1/2,1/2 }  $$
can be still used as the space $G_{1/2,1/2,c_{3}} $ is homotopy equivalent to $ \Symp_{p} (\widetilde{M}_{1/2 , 1/2 })$. The argument for the proof of this fact is similar to the one of  Lemma \ref{small c3 m1}. 
Since $ G_{1/2 , 1/2 } $ is a torus (see \cite{Eva} or \cite{AnjPin})  and $\pi_2 (\widetilde{M}_{1/2 , 1/2 }) \otimes \Q \simeq \Q^4$, it follows that the long exact sequence 
\begin{equation*}
0    \rightarrow \pi_{2} (  \widetilde{M}_{1/2 , 1/2 } ) \rightarrow \pi _{1} (G_{1/2 , 1/2, c_3 } ) \rightarrow \pi _{1} (G_{1/2 , 1/2 }) \rightarrow \pi_{1} (  \widetilde{M}_{1/2 , 1/2 } ) \rightarrow \hdots
\end{equation*}
yields that the rank of the fundamental group of $G_{1/2,1/2,c_{3}} $ is 6 in this case. Furthemore, as generators of $\pi_1 ( G_{1/2,1/2,c_{3}}) \otimes \Q$,  and therefore, as generators of the homotopy Lie algebra $\pi_* ( G_{1/2,1/2,c_{3}}) \otimes \Q$, we can pick a subset of the generators of the generic case, namely $ \{ x_{0}, x_{1}, x_{2}, y_0, y_2, y_3 \} $.  It follows that the algebra $\pi_* ( G_{1/2,1/2,c_{3}}) \otimes \Q$ is a Lie subalgebra of the one in the generic case. 
Note that, on one hand, the rank of the fundamental group agrees with the result obtained by Li, Li and Wu in \cite{LiLiWu2} (this case  corresponds to the case $\lambda=1; c_1 =c_2=c_3 >c_4$, in their notation, in Table 5). On the other hand, it follows from the long exact sequence that 
\begin{equation*}
0    \rightarrow \pi_{3} (  \widetilde{M}_{1/2 , 1/2 } ) \rightarrow \pi _{2} (G_{1/2 , 1/2, c_3 } ) \rightarrow 0
\end{equation*}
since $\pi _{i} (G_{1/2 , 1/2 }) \simeq \pi _{i} (T^2) \simeq 0$ if $i \geq 2$, so $\pi _{2} (G_{1/2 , 1/2, c_3 } ) \simeq \pi_{3} (  \widetilde{M}_{1/2 , 1/2 } ) \simeq \Q^9 $ and it is simple to check that the generators of $\pi_2(G_{1/2,1/2,c_{3}})$ as a vector space can be given by the following nine Samelson products: $[x_0,x_1], [x_0,x_2],[x_0,y_0], [x_1,y_0], [x_1,y_3], [x_2,y_2], [x_2,y_3], [y_0,y_2]$ and $ [y_2,y_3]$. 

Similarly, we can study the case   $1 = c_{1} + c_{2} > c_{1} + c_{3} > c_{1} > c_{2} > c_{3}$ using the evaluation fibration since we can see the symplectomorphism group again as a stabilizer and thereby deduce information using the results for the case of two blow-ups. More precisely, $ G_{c_{1}+c_{2}=1, c_{3}} \simeq \Symp _{p} (\widetilde{M}_{c_{1}+c_{2}=1}) $, so,  using  $ \pi _{1}(G_{c_{1}+c_{2}=1}) \otimes \Q \simeq \mathbb{Q}^{3} $ (by \cite{AnjPin}) and  $ \pi _{2}(\widetilde{M}_{c_{1}+c_{2}=1}) \otimes \Q \simeq \mathbb{Q}^{4} $ it follows that the long exact sequence of the fibration
$$ \Symp  _{p} (\widetilde{M}_{c_{1} + c_{2}=1 }) \longrightarrow \Symp(\widetilde{M}_{c_{1} + c_{2}=1 }) \xrightarrow{ev} \widetilde{M}_{c_{1} + c_{2}=1}, $$
yields  rank 7 for the fundamental group, which coincides with Li, Li and Wu computation ($\lambda <1; c_1=c_2=c_3 > c_4$ in \cite[Table 5]{LiLiWu}).  In this case the $S^1$-actions contained in the isometries are represented by the elements $ x_{0}, x_{1}, x_{2}, x_{3}, y_0, y_1,y_2,y_3, z$, so we can choose as a set of generators of the homotopy Lie algebra  $\pi_*(G_{c_{1}+c_{2}=1, c_{3}}) \otimes \Q$ the following subset: $ \{ x_{0}, x_{1}, x_{2}, y_0, y_2, y_3, z \} $.

We remark that in all these cases we are using the fact that the map $ev_*$  in the long exact sequence, induced in homotopy by the evaluation map is trivial. This follows, as in the proof of Proposition \ref{rank proposition}, from the manifold being simply connected together with the fact that the homotopy Lie algebra $\pi_* (\Symp (\Mcc))$ is generated by elements of degree 1 in all  cases. Therefore, as is the generic case, the homotopy Lie algebras of these special cases contain subalgebras isomorphic to homotopy Lie algebras of the special cases of the two blow-up. 

In the cases the group $\Gccc$ does not have the homotopy type of  a stabilizer we cannot compute its homotopy groups as before. For example, when $c_1 + c_2 = c_1 +c_3=1$ and $ c_1 > c_2$ or the monotone case, that is, when $ c_1 =c_2 = c_3 = 1/2$.  The former case was studied by J. Evans in \cite{Eva}, as explained before: since there is only one stratum, the symplectomorphism group $G_{1/2,1/2,1/2}$ is homotopy equivalent to the group of isometries of the complex structure in that stratum, which is trivial, so the group is contractible. For the remaining cases we need to use different techniques to study the group of symplectomorphisms, in particular we need to study the limit of $\Guccc$ as $\mu \to \infty$, so we will leave this study out of this paper. Nevertheless, we conjecture that 
in all special cases the homotopy Lie algebra of the symplectomorphism group $\Gccc$ is a subalgebra of the one for the generic case. Moreover, we should remove from the set of generators the ones which correspond to configurations that are not allowed to exist, because some particular homology classes are not represented by $J$-homolorphic spheres.

\begin{appendices}

\section{Proof of Theorem \ref{slight changes m1}}\label{pf inflation}

This section is dedicated to complement the proof of Theorem \ref{slight changes m1}. As the calculations are similar in all cases, we will confine ourselves with listing the curves used in inflation. Note that the existence of such curves follows from Lemmas \ref{existenceJ-curves_generic} and \ref{existenceJ-curves}. Moreover, we need to be careful so that our choices do not lay in the list of exceptions of Lemma \ref{existenceJ-curves}.

\textit{Step 1:} $ \mathcal{A}_{c'_{1} , c_{2} , c_{3}} = \mathcal{A}_{c_{1} , c_{2} , c_{3}} $ for $ c_{1} \leq c'_{1} $.

The inclusion $ \mathcal{A}_{c'_{1} , c_{2} , c_{3}} \subset \mathcal{A}_{ c_{1} , c_{2} , c_{3}} $ was proved previously. To show $ \mathcal{A}_{ c_{1} , c_{2} , c_{3}} \subset \mathcal{A}_{c'_{1} , c_{2} , c_{3}} $, we use Table \ref{inflation c1 to c1'} to pick the curves for each configuration.

\begin{table}[thp]
\begin{center}
\begin{tabular}{ |m{11em}|m{25em}| } 
 \hline
 Configurations & Curves  \\ \hline
 1: 1-13; 2: 1,2,3,6,7,9,10 & $B$, $ B+F-E_{1} $, $ B+F-E_{2} $, $ B+2F-E_{1}-E_{2}-E_{3} $  \\ \hline
 2: 4,5,8 & $B$, $ B+F-E_{1} $,   $ B+F-E_{2} $, $ B+3F-E_{1}-E_{2}-E_{3} $  \\ \hline
3: 1,2,6; 6: 1,2,7 &  $B$, $ B+F-E_{1} $, $ B+F-E_{3} $, $ B+2F-E_{1}-E_{2}-E_{3} $ \\ \hline
 3: 3; 6: 3 & $B$,  $ B+F-E_{1} $, $ B+2F-E_{1}-E_{2} $, $ F-E_{1}-E_{3} $ \\ \hline
 3: 4,8; 6: 4,6 & $B$, $ B+F-E_{1} $, $ B+2F-E_{1}-E_{2} $, $ E_{1}-E_{2}-E_{3} $ \\ \hline
 3: 5,7; 6: 5 & $B$, $ B+F-E_{1} $, $ B+2F-E_{1}-E_{2} $, $ B+3F-E_{1}-E_{2}-E_{3} $ \\ \hline
\end{tabular}
\end{center}
\bigskip 
\caption{Curves used in the inflation process to show that $ \mathcal{A}_{ c_{1} , c_{2} , c_{3}} \subset \mathcal{A}_{c'_{1} , c_{2} , c_{3}} $}\label{inflation c1 to c1'}
\end{table}
\begin{remark} Note that in table \ref{inflation c1 to c1'} the first two rows look quite similar. However, we cannot simply interchange their roles, as the configurations in the second row have $ B-E_{1}-E_{2}-E_{3} $ represented by a $ J $-holomorphic curve and indeed the proof in this case uses the inequality $ c_{1} + c_{2} + c_{3} <1 $. In the first row, on the other hand, there is no a priori reason to assume this inequality.
\end{remark}

Since interchanging the roles os $ B $ and $ F $ in the configurations  of types  2 and  3 yield the configurations  of types 5 and  4, respectively, an obvious adaptation of Table \ref{inflation c1 to c1'} implies all the cases.

\textit{Step 2:} $ \mathcal{A}_{c_{1} , c'_{2} , c_{3}} = \mathcal{A}_{c_{1} , c_{2} , c_{3}} $ for $ c_{2} \leq c'_{2} $.

Starting with the inclusion $ \mathcal{A}_{c_{1} , c'_{2} , c_{3}} \subset \mathcal{A}_{c_{1} , c_{2} , c_{3}} $, using negative inflation along the curve $ E_{2} $ suffices for the following configurations:  1: 1,4-8,10-13;  2: 1-4,7,8,10;  3: 1,2,3,6,8;  6: 1,2,3,6,7. For the remaining cases, we use $ B+F+E_{1} $, $ B+F $ and $ E_{2} - E_{3} $.

\begin{table}[thp]
\begin{center}
\begin{tabular}{ |m{10em}|c|m{15em}| } 
\hline
Configurations &  & Curves \\ \hline
 1: 1-13;  2: 1-3,6,7,9,10 &  & $B$, $ B+F-E_{1} $, $ B+F-E_{2} $, $ B+2F-E_{1}-E_{2}-E_{3} $ \\ \hline
 2: 4,5,8 &  & $B$, $ B+F-E_{1} $, $ B+F-E_{2} $, $ B+3F-E_{1}-E_{2}-E_{3} $ \\ \hline
\multirow{2}{10em}{3: 1,2,7;  6: 1,2,5,7 } & $ 2c_{1}<1 $ & $B$, $ B+F $, $ B+2F-E_{1}-E_{2} $, $ B+2F-E_{1}-E_{2}-E_{3} $ \\ 
& $ 2c_{1} \geq 1 $ & $B$, $ B+F-E_{1} $, $ B+2F-E_{1}-E_{2} $, $ B+2F-E_{1}-E_{2}-E_{3} $ \\ \hline
\multirow{2}{10em}{3: 3,4,8;  6: 3,4,6 } & $ 2c_{1}+2c_{3}<1 $ & $B$, $ B+F $, $ B+2F-E_{1}-E_{2} $, $ E_{1}-E_{2}-E_{3} $ \\ 
& $ 2c_{1} + 2c_{3} \geq 1 $ & $B$, $ B+F-E_{1} $, $ B+2F-E_{1}-E_{2} $, $ E_{1}-E_{2}-E_{3} $ \\ \hline
 3: 5,6 &  & $B$, $ B+F-E_{1} $, $ B+3F-E_{1}-E_{2}-E_{3} $, $ E_{1}-E_{2} $ \\ \hline
\end{tabular}
\end{center}
\bigskip
\caption{Curves used in the inflation process to show that $ \mathcal{A}_{ c_{1} , c_{2} , c_{3}} \subset \mathcal{A}_{c_{1} , c'_{2} , c_{3}} $}\label{inflation c2 to c2'}
\end{table}

For the reverse inclusion, we use Table \ref{inflation c2 to c2'}. Here too, since interchanging the roles of $ B $ and $ F $ yield  configurations of types 4 and  5 too, again an adaptation of this table covers all the possible configurations.

\textit{Step 3:} $ \mathcal{A}_{c_{1} , c_{2} , c'_{3}} = \mathcal{A}_{c_{1} , c_{2} , c_{3}} $ for $ c_{3} \leq c'_{3} $.

To show the inclusion $ \mathcal{A}_{c_{1} , c_{2} , c'_{3}} \subset \mathcal{A}_{c_{1} , c_{2} , c_{3}} $, it suffices to use negative inflation over $ E_{3} $, for, by Lemma \ref{E3 m1}, $ E_{3} $ is represented by an embedded $ J $-curve for all $ J $.

For the reverse inclusion, we pick the curves in Table \ref{inflation c3 to c3'}.

\begin{table}[thp]
\begin{center}
\begin{tabular}{ |m{10em}|c|m{15em}| } 
\hline
Configurations &  & Curves \\ \hline
 1: 1,4,7,8,10,11,13;  2: 1,2,10;  3: 1,2;  6: 1,2,7 &  & $B$, $ B+F-E_{1} $, $ B+F-E_{3} $, $ B+2F-E_{1}-E_{2}-E_{3} $ \\ \hline
\multirow{2}{10em}{1: 2,3,9;  2: 6,9 } & $ 2c_{1}<1 $ & $B$, $ B+F-E_{2} $, $ B+2F-E_{1}-E_{2}-E_{3} $, $ E_{2}-E_{3} $ \\ 
& $ 2c_{1} \geq 1 $ & $B$, $ B+F-E_{1} $, $ B+2F-E_{1}-E_{2}-E_{3} $, $ E_{2}-E_{3} $ \\ \hline
 1: 5,6,12;  2: 3,7 &  & $B$, $ B+F-E_{1} $, $ B+2F-E_{1}-E_{2}-E_{3} $, $ E_{1}-E_{3} $ \\ \hline
\multirow{2}{10em}{2: 4,8} & $ c_{1}+2c_{2}<1 $ & $B$, $ B+F-E_{1} $, $ B+3F-E_{1}-E_{2}-E_{3} $, $ E_{1}-E_{3} $ \\ 
& $ c_{1}+2c_{2}\geq 1 $ & $B$, $ B+F-E_{1} $, $ B+3F-E_{1}-E_{2}-E_{3} $, $B+2F-E_{1}-E_{2} $ \\ \hline
 2: 5 &  & $ B+F-E_{1} $, $ B+F-E_{2} $, $ E_{2}-E_{3} $ \\ \hline
\multirow{2}{10em}{3: 3,8;  6: 3,6 } & $ 2c_{1}+2c_{2}<1 $ & $B$, $ B+F $, $B+2F-E_{1}-E_{3} $, $ E_{1}-E_{2}-E_{3} $ \\ 
& $ 2c_{1} + 2c_{2} \geq 1 $ & $B$, $ B+F-E_{1} $, $ B+2F-E_{1}-E_{3} $, $ E_{1}-E_{2}-E_{3} $ \\ \hline
\multirow{2}{10em}{3: 4,5,7;  6: 4,5 } & $ 2c_{1}<1 $ & $B$, $ B+F $, $B+2F-E_{1}-E_{2} $, $ E_{2}-E_{3} $ \\ 
& $ 2c_{1} \geq 1 $ & $B$, $ B+F-E_{1} $, $B+2F-E_{1}-E_{2} $, $ E_{2}-E_{3} $ \\ \hline
\multirow{2}{10em}{3: 6 } & $ c_{1}+2c_{2}<1 $ & $B$, $ B+F-E_{1} $, $B+3F-E_{1}-E_{2}-E_{3} $, $ B+F-E_{3} $ \\ 
& $ c_{1}+2c_{2} \geq 1 $ & $B$, $ B+F-E_{1} $, $B+3F-E_{1}-E_{2}-E_{3} $, $ B+2F-E_{1}-E_{2} $ \\ \hline
\end{tabular}
\end{center}
\bigskip
\caption{Curves used in inflation process to show that $ \mathcal{A}_{ c_{1} , c_{2} , c_{3}} \subset \mathcal{A}_{c_{1} , c_{2} , c'_{3}} $}\label{inflation c3 to c3'}
\end{table}

Finally, as noted previously, an adaptation of Table \ref{inflation c3 to c3'} covers the configurations  of types 4 and  5 as well. This finishes Step 3 and the proof of the theorem.

\eject

\section{The correspondence between isometry groups and configurations}\label{isoconfig m1}

In this section, we will state some of the basic results on the differential and topological aspects of the space $ \mathcal{J}_{\omega} = \tilde{\mathcal{J}}_{c_{1},c_{2},c_{3}} $ of compatible almost complext structures on $ \widetilde{M}_{c_{1} , c_{2}, c_{3} }$.  Our main goal is to illustrate the correspondence between isometry groups and configurations, as was used in Section \ref{sec m1: generators}.

By Section \ref{chp m1: Structure}, we know that the space $ \mathcal{J}_{\omega} $ is a disjoint union of finitely many strata, each of which is characterized by the existence of a unique chain of holomorphic spheres (configurations). Our first remark is that these sets indeed give a stratification of $ \mathcal{J}_{\omega} $ into Fr\'{e}chet manifolds.

\begin{proposition} Let $ U_{\mathcal{A}} \subset \mathcal{J}_{\omega} $ be a stratum characterized by the existence of a configuration of $ J $-holomorphic embedded spheres $ C_{1} \cup C_{2} \cup ... \cup C_{N} $ representing a given set of distinct homology classes, $ \mathcal{A} = \{ A_{1} , ... , A_{N} \} $ of negative self-intersection. Then $ U_{\mathcal{A}} $ is a cooriented Fr\'{e}chet submanifold of $ \mathcal{J}_{\omega} $ of real codimension $ 2N - 2 c_{1}(A_{1}+...+A_{N}) $.
\end{proposition}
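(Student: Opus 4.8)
The plan is to describe $U_{\mathcal{A}}$, near an arbitrary point $J_0 \in U_{\mathcal{A}}$, as the zero set of a Fredholm section of a finite-rank obstruction bundle over a neighbourhood in $\mathcal{J}_\omega$, and then to read off both the codimension and the co-orientation from that bundle. First I would record that the configuration attached to $J_0$ is \emph{unique}: since each $A_i$ has $A_i \cdot A_i < 0$, two distinct $J_0$-curves in the same class would have negative intersection, contradicting positivity of intersections, and by the area and positivity arguments of Section \ref{sec m1: configs} a class of negative self-intersection cannot degenerate into a genuine cusp-curve. Hence the assignment sending $J$ to its configuration in the classes $\mathcal{A}$ is well defined on $U_{\mathcal{A}}$, and the local model below is unambiguous.

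The core is the single-component analysis. Fix one embedded component $C_i$ in class $A_i$, with normal bundle $N_i \to S^2$ of degree $A_i \cdot A_i$. Working in Sobolev completions $W^{k,p}$ and using elliptic regularity to return to the $C^\infty$/Fr\'echet setting, the linearised Cauchy--Riemann operator at $C_i$ splits to leading order into a tangential part and a normal part $D_{N_i}$, the latter a real Cauchy--Riemann operator on sections of $N_i$ of real index $\mathrm{ind}\, D_{N_i} = 2(A_i\cdot A_i + 1)$. Because $A_i\cdot A_i<0$, a negative line bundle over $S^2$ has no nonzero holomorphic sections, so $D_{N_i}$ is injective and
$$\dim_{\mathbb{R}}\mathrm{coker}\, D_{N_i} = -2(A_i\cdot A_i + 1) = 2 - 2c_1(A_i),$$
where the last equality uses the adjunction identity $c_1(A_i) = A_i\cdot A_i + 2$ valid for an embedded sphere. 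This nonvanishing cokernel is exactly the failure of automatic transversality (Hofer--Lizan--Sikorav) in the range $A_i\cdot A_i \le -2$, and it is the obstruction to be annihilated by varying $J$.

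Next I would assemble these into a global statement. Considering the universal moduli space of pairs $(J,C)$ with $C$ a $J$-curve near $C_i$, a Sard--Smale argument shows it is a Banach manifold, the decisive point being that perturbations of $J$ supported near an injective point of $C_i$ already surject onto $\mathrm{coker}\, D_{N_i}$; projecting to $\mathcal{J}_\omega$ then exhibits the locus where $A_i$ survives as a submanifold with normal space $\mathrm{coker}\, D_{N_i}$. To treat the whole configuration at once I would use that the $C_i$ are distinct embedded spheres meeting transversally in finitely many points, so each $C_i$ carries an open arc disjoint from the others along which $J$ may be perturbed independently; this produces a transverse section whose vertical derivative surjects onto $\bigoplus_{i=1}^N \mathrm{coker}\, D_{N_i}$, identifying this direct sum with the normal bundle of $U_{\mathcal{A}}$. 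Each $\mathrm{coker}\, D_{N_i}$ inherits a complex structure, being the cokernel of a complex-linear-to-leading-order operator, hence is canonically oriented, which furnishes the required co-orientation. Summing the ranks gives
$$\mathrm{codim}_{\mathbb{R}}\, U_{\mathcal{A}} = \sum_{i=1}^N \bigl(2 - 2c_1(A_i)\bigr) = 2N - 2c_1\!\left(\textstyle\sum_{i=1}^N A_i\right),$$
as claimed.

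The hard part is the transversality step: verifying that variations of $J$ localised near each $C_i$ genuinely surject onto the obstruction spaces $\mathrm{coker}\, D_{N_i}$ \emph{simultaneously and independently}, so that $U_{\mathcal{A}}$ is cut out transversally rather than with excess. This is where one must use essentially that the components are embedded spheres of negative self-intersection meeting transversally, which lets the perturbations be supported on pairwise disjoint arcs, together with the standard fact that a nonzero cokernel element cannot be killed by every admissible variation of $J$ near an injective point. Once this independence is established, the codimension count and the complex co-orientation follow formally.
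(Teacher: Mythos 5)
Your proof is correct and takes essentially the same route as the paper, whose proof of this proposition consists of citing \cite[Proposition 7.1]{AnjPin}: that argument is exactly the standard scheme you describe --- uniqueness of the embedded representatives of negative classes via positivity of intersections, injectivity of the normal Cauchy--Riemann operators $D_{N_i}$ on line bundles of negative degree over $S^2$ (so that $\dim_{\mathbb{R}}\mathrm{coker}\,D_{N_i}=2-2c_1(A_i)$ by the index formula and adjunction), surjectivity onto $\bigoplus_i \mathrm{coker}\,D_{N_i}$ via perturbations of $J$ supported near injective points of each component, and the canonical (complex) orientation of these cokernels supplying the coorientation. The one inaccuracy is your side remark that a class of negative self-intersection cannot be represented by a cusp-curve, which is false in general (in this very paper $B-E_1$, of square $-1$, degenerates into $(B-E_1-E_2)\cup E_2$ on other strata), but it is harmless: well-definedness of the configuration map on $U_{\mathcal{A}}$ needs only the uniqueness of the embedded representatives, which positivity of intersections alone provides.
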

\begin{proof}
The proof of this proposition is identical to the proof of Proposition 7.1 in \cite{AnjPin}.
\end{proof}

Consider the subspace $ \mathcal{J}^{int}_{\omega} \subset \mathcal{J}_{\omega} $ of compatible, integrable, complex structures. Given a stratum $ U_{i} $, set $ V_{i} = U_{i} \cap \mathcal{J}^{int}_{\omega} $.

\begin{lemma}
For any $ J \in \mathcal{J}^{int}_{\omega} $, the complex surface $ (\mathbb{X}_{4},J) $ is the 3-fold blow-up of a Hirzebruch surface.
\end{lemma}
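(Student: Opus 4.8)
The plan is to show that any compatible integrable complex structure $J$ on $\X_4 = \bbcp^2 \# 4\overline{\bbcp}\,\!^2$ (recall $\X_4$ is diffeomorphic to $\widetilde{M}_{c_1,c_2,c_3}$) yields a complex surface that arises as a $3$-fold blow-up of a Hirzebruch surface. The natural route is through the Enriques--Kodaira classification of compact complex surfaces. First I would identify where $(\X_4, J)$ sits in that classification by computing its basic holomorphic invariants. Since the underlying smooth $4$-manifold is a rational surface (it is obtained from $S^2 \times S^2$ by blow-ups, equivalently from $\bbcp^2$), its Betti numbers give $b_1 = 0$ and $b^+ = 1$, so the geometric genus $p_g$ and irregularity $q$ both vanish. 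The key point is that a complex structure compatible with a symplectic form is K\"ahler (indeed $\omega$ itself is a compatible K\"ahler form), so $(\X_4, J)$ is a K\"ahler surface with $b_1 = 0$, whence $q = 0$ and $p_g = 0$.

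The main step is then to invoke the classification: a K\"ahler surface with $q = p_g = 0$ and with the given topology must be rational. Concretely, I would argue that the Kodaira dimension is $-\infty$ (this follows because $\X_4$ carries symplectic, hence non-trivial, spheres of positive self-intersection, or more directly because all plurigenera vanish for a blow-up of $\bbcp^2$; one can also cite that the canonical class $K = PD(3L - \sum V_i)$ is not pseudo-effective). By the Castelnuovo rationality criterion, a surface with $q = 0$ and $P_2 = 0$ is rational. A minimal rational surface is either $\bbcp^2$ or a Hirzebruch surface $\mathbb{F}_k$. Since $(\X_4, J)$ has $b_2 = 5$, its minimal model has $b_2 = 5 - m$ where $m$ is the number of blow-ups needed, and one checks that $(\X_4, J)$ is obtained from its minimal model by exactly $m$ blow-ups. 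If the minimal model were $\bbcp^2$ (with $b_2 = 1$), then $m = 4$, but a $4$-fold blow-up of $\bbcp^2$ at points in sufficiently general position is the same as a $3$-fold blow-up of $\mathbb{F}_0 = \bbcp^1 \times \bbcp^1$; the birational equivalence in homology is precisely the identification $L \leftrightarrow B + F - E_1$, $V_1 \leftrightarrow B - E_1$, etc., recorded in the Introduction. I would use this to reduce to the Hirzebruch picture: blowing up one point of $\bbcp^2$ produces $\mathbb{F}_1$, and further blow-ups realize the surface as a $3$-fold blow-up of some $\mathbb{F}_k$.

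The subtle point I expect to be the main obstacle is controlling \emph{which} Hirzebruch surface appears and ensuring the count of blow-ups is exactly three, since a priori the minimal model and the number of infinitely-near or coincident blown-up points can vary with $J$. I would handle this by using that $(\X_4, J)$ has $b_2 = 5$ so its minimal rational model has $b_2 \in \{1, 2\}$; if the model is a Hirzebruch surface ($b_2 = 2$) we blow up three times and are done immediately, while if the model is $\bbcp^2$ ($b_2 = 1$) we blow up once to reach some $\mathbb{F}_k$ and then three more times, but the first blow-up of $\bbcp^2$ gives $\mathbb{F}_1$, and one absorbs it so that the remaining description is a $3$-fold blow-up of $\mathbb{F}_0$ or $\mathbb{F}_1$ after a possible change of ruling. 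Since the statement only asserts that $(\X_4, J)$ is \emph{a} $3$-fold blow-up of \emph{a} Hirzebruch surface without pinning down $k$, this flexibility is exactly what the lemma allows, so I would phrase the conclusion to permit the relevant values of $k$. Throughout, the fact that compatibility with $\omega$ forces the K\"ahler condition is what licenses the use of the full classification, and I would state this at the outset as the conceptual heart of the argument.
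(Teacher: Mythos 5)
Your proposal is correct in essence, but it takes a genuinely different route from the paper's. The paper's proof is a one-line reduction: by Lemma~\ref{E3 m1}, the class $E_{3}$ is represented by a unique embedded $J$-holomorphic sphere for \emph{every} compatible $J$; when $J$ is integrable this is a holomorphic $(-1)$-curve, and blowing it down produces a complex surface $(\mathbb{X}_{3},J)$, which is a two-fold blow-up of a Hirzebruch surface by Lemma 7.2 of \cite{AnjPin}. The paper thus inherits the statement inductively from the two-blow-up case, staying entirely inside the $J$-holomorphic curve machinery of Section~\ref{chp m1: Structure}. You instead argue intrinsically through the Enriques--Kodaira classification: compatibility forces the pair $(\omega,J)$ to be K\"ahler, $b_{1}=0$ and $b^{+}=1$ give $q=p_{g}=0$, Kodaira dimension $-\infty$ plus Castelnuovo's criterion give rationality, and the minimal-model/$b_{2}$ count finishes the proof. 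Both arguments are valid; yours is self-contained (no appeal to \cite{AnjPin} or to the curve analysis), while the paper's is much shorter and keeps the blow-down structure aligned with the configuration analysis it uses afterwards in Appendix B.

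Two caveats about your Kodaira-dimension step, which is the only delicate point. First, the justification ``all plurigenera vanish for a blow-up of $\bbcp^2$'' is circular: that $(\mathbb{X}_{4},J)$ \emph{is} such a blow-up as a complex surface is exactly what is being proved, and diffeomorphism type alone does not control plurigenera without deep Seiberg--Witten input; similarly, symplectic spheres of positive self-intersection are not $J$-holomorphic for the given $J$, so that remark does not directly bear on the complex geometry. Your third justification is the right one, but it needs two ingredients to be airtight: (i) $c_{1}(J)$ is Poincar\'e dual to $3L-\sum_{i}V_{i}$ for \emph{every} compatible $J$, which holds because the space of compatible almost complex structures is connected (as the paper records in Section~\ref{sec m1: reduced}); and (ii) a sign correction --- $3L-\sum_{i}V_{i}$ is dual to $c_{1}$, i.e.\ to the \emph{anti}canonical class, so the statement should be that the canonical class pairs negatively with the K\"ahler class $[\omega]$, whence no multiple of it is effective and all plurigenera vanish. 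With that single justification retained and the other two dropped, your argument is complete. Finally, your worry about ``general position'' in the $\bbcp^2$ case is unnecessary: any factorization of $X\to\bbcp^2$ into four point blow-ups passes through the one-point blow-up of $\bbcp^2$, which is $\mathbb{F}_{1}$, so truncating the factorization there exhibits $X$ as a three-fold blow-up of $\mathbb{F}_{1}$ regardless of the positions or infinitely-near structure of the centers.
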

\begin{proof}
Given $ J \in \mathcal{J}^{int}_{\omega} $, the class $ E_{3} $ is always represented by an exceptional curve. Blowing it down yields a surface $ (\mathbb{X}_{3},J) $, which is a twofold blow-up of a Hirzebruch surface by Lemma 7.2 in \cite{AnjPin}. See Section \ref{chp m1: Structure} for an explanation of how these configurations appear.
\end{proof}

\begin{remark} As detailed in \cite[Theorem 2.3]{AbGrKi} and \cite[Lemma 7.5]{AnjPin}, the space $ \mathcal{J}^{int}_{\omega} $ is in fact a Fr\'{e}chet submanifold of $ \mathcal{J}_{\omega} $.
\end{remark}

We denote the group of diffeomorphisms of $ \mathbb{X}_{4} $ acting trivially on homology by Diff$ _{h} $, let $ \text{Aut}_{h}(J) \subset \text{Diff}_{h} $ be the subgroup of complex automorphisms of $ (M,J) $ and $ \text{Iso}_{h}(\omega,J) \subset \text{Aut}_{h}(J) $ be the K\"{a}hler isometry group of $ (M,\omega, J) $.

\begin{proposition}\label{complextransitive}
Let $ J_{1},J_{2} \in V_{i} $ be two integrable compatible structures in the same stratum. Then there exists $ \phi \in \text{Diff}_{h} $ such that $ J_{2} = \phi_{*}J_{1} $. Hence we have
\begin{center}
$ V_{i} = U_{i} \cap \mathcal{J}^{int}_{\omega} = (\text{Diff}_{h} \cdot J_{i}) \cap \mathcal{J}_{\omega} $, for any $ J_{i} \in V_{i} $.
\end{center}
\end{proposition}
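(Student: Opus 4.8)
The plan is to follow the strategy of the analogous statement in \cite{AnjPin}, reducing the comparison of the two integrable structures on the $3$-fold blow-up $\mathbb{X}_{4}$ down to a Hirzebruch surface, where complex structures are essentially unique for fixed $n$ and the relevant automorphism groups are well understood. First I would invoke the Lemma above: since $J_{1}, J_{2} \in V_{i} \subset \mathcal{J}^{int}_{\omega}$, each complex surface $(\mathbb{X}_{4}, J_{1})$ and $(\mathbb{X}_{4}, J_{2})$ is the $3$-fold blow-up of a Hirzebruch surface. Because $J_{1}$ and $J_{2}$ lie in the same stratum $U_{i}$, they carry the identical configuration of embedded holomorphic spheres (the same classes $A_{1}, \ldots, A_{N}$ with the same intersection pattern). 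In particular the chain of exceptional curves to be contracted, and the combinatorial type of the resulting configuration of (possibly infinitely near) points on the Hirzebruch surface, agree for the two structures.

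Blowing down the appropriate $(-1)$-curves then produces, for $k=1,2$, an integrable structure $\bar{J}_{k}$ on the \emph{same} Hirzebruch surface $\mathbb{F}_{n}$ (the value of $n$ being dictated by the stratum) together with a labelled configuration of points $\mathbf{p}_{k}$ of a fixed combinatorial type. Since for fixed $n$ all complex structures on $\mathbb{F}_{n}$ are biholomorphic, I may normalize $\bar{J}_{1} = \bar{J}_{2}$ to the standard structure and reduce the problem to comparing the two point configurations $\mathbf{p}_{1}$ and $\mathbf{p}_{2}$. The essential input is then that the connected automorphism group of $\mathbb{F}_{n}$ acts transitively on configurations of the prescribed combinatorial type, including the tangency and infinitely-near incidence conditions that encode the strata obtained by blowing up a point lying on an already existing curve (as in configurations $1.7$--$1.12$). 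This transitivity yields a biholomorphism $\psi$ of $\mathbb{F}_{n}$ carrying $\mathbf{p}_{1}$ to $\mathbf{p}_{2}$, which lifts canonically through the blow-up to a biholomorphism $\phi$ of $\mathbb{X}_{4}$ with $\phi_{*} J_{1} = J_{2}$.

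Finally I would check that $\phi \in \text{Diff}_{h}$. By construction $\phi$ respects the labelling of the blow-up centres, so it sends the exceptional divisor in class $E_{i}$ for $J_{1}$ to the one in class $E_{i}$ for $J_{2}$; moreover $\phi$ descends from an automorphism $\psi$ of $\mathbb{F}_{n}$ chosen in the connected group $\text{Aut}^{0}(\mathbb{F}_{n})$, which preserves the base and fibre classes $B$ and $F$. Hence $\phi_{*}$ fixes the whole basis $B, F, E_{1}, E_{2}, E_{3}$ of $H_{2}(\mathbb{X}_{4};\mathbb{Z})$ and acts trivially on homology. The displayed identity $V_{i} = (\text{Diff}_{h}\cdot J_{i}) \cap \mathcal{J}_{\omega}$ then follows at once: the inclusion $\supseteq$ holds because $\text{Diff}_{h}$ preserves $\mathcal{J}^{int}_{\omega}$ together with the configuration type defining the stratum, and $\subseteq$ is precisely the transitivity just established.

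The step I expect to be the main obstacle is this transitivity on the Hirzebruch surface for the more degenerate strata: one must handle the infinitely-near-point configurations with care and confirm that $\text{Aut}^{0}(\mathbb{F}_{n})$ is large enough to realize every incidence pattern appearing in Section \ref{sec m1: configs}, which is where the detailed case analysis imported from \cite{AnjPin} does the real work. A secondary point requiring attention is the normalization $\bar{J}_{1}=\bar{J}_{2}$, since when the contracted configuration forces blow-up points to sit on special fibres the ambient Hirzebruch surface type $n$ must be verified to be the same for both structures before the automorphism argument can be applied.
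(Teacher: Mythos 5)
Your proposal is correct and follows essentially the same route as the paper's proof: both reduce to point configurations (possibly with infinitely near points) on a Hirzebruch surface and invoke transitivity of $\text{Aut}(\mathbb{F}_{0}) \simeq (\text{PSL}(2,\mathbb{C}) \times \text{PSL}(2,\mathbb{C})) \ltimes \mathbb{Z}_{2}$ on configurations of a fixed combinatorial type, exactly as in Proposition 7.3 of \cite{AnjPin}, to which the paper defers the details. The only simplification worth noting is that for $\mu=1$ every integrable compatible structure blows down to $\mathbb{F}_{0}$, so your secondary concern about verifying that both structures yield the same Hirzebruch type $n$ does not arise in this setting.
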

\begin{proof}
The proof is similar to Proposition 7.3 in \cite{AnjPin} with the only difference being that in our case the construction of compatible complex structures on $ \mathbb{X}_{4} $ can be made in more numerous ways, resulting in a total of 56 types that yield the configurations listed at the end of Section \ref{chp m1: Structure}.

Briefly, for $\mathbb{X}_{3}$, the one-point blow-down of our space, we have exactly 6 types of compatible complex structures:

\begin{enumerate}
\item[(1)] Twofold blow-up of $ \mathbb{F}_{0} $ at two generic points (not lying on the same fiber $ F $ nor on the same section $ B $).
\item[(2)] Twofold blow-up of $ \mathbb{F}_{0} $ at two distinct points on the same fiber $ F $.
\item[(3)] Twofold blow-up of $ \mathbb{F}_{0} $ at two distinct points on the same section $ B $.
\item[(4)] Twofold blow-up of $ \mathbb{F}_{0} $ at two ``infinitely near" points on a fiber, that is, the blow-up of $ \mathbb{F}_{0} $ at $ p $ followed by the blow-up of $ \widetilde{\mathbb{F}}_{0} $ at the line $ l_{p} = T_{p}F \subset T_{p}\mathbb{F}_{0} $ on the exceptional divisor.
\item[(5)] Twofold blow-up of $ \mathbb{F}_{0} $ at two ``infinitely near" points on a flat section $ B $, that is, at $ (p,l_{p}=T_{p}B) $.
\item[(6)] Twofold blow-up of $ \mathbb{F}_{0} $ at two ``infinitely near" points $ (p,l_{p}) $ with the direction $ l_{p} $ transverse to $ T_{p}F $ and $ T_{p}B $.
\end{enumerate}

To get $ \mathbb{X}_{4} $, we consider a third blow-up on each of these types. This produces the 56 types mentioned above, and we refer the reader to the Hirzebruch surfaces in Section \ref{sec m1: generators} to observe where the last blow-up may take place.

For instance, consider the toric picture obtained from blowing-up polytope \circled{1} in Figure \ref{Toric pictures} at  vertex $(iv)$.
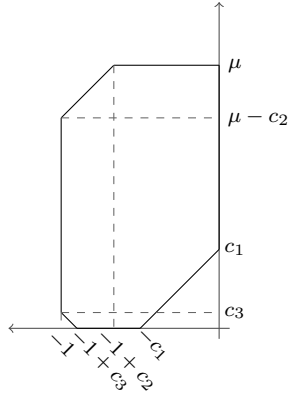
\begin{figure}[thp]
\begin{center}
\begin{tikzpicture}[scale=0.7, roundnode/.style={circle, draw=black!80, thick, minimum size=7mm}, font=\footnotesize]
 
    \draw[<-][black!70] (-4,0) -- (0.2,0); 
    \draw[->][black!70] (0,-0.2) -- (0,6.2); 

	\draw (-3,0.3) -- (-2.7,0);
	\draw (-3,0.3) -- (-3,4);
    \draw (-3,4) -- (-2,5);
    \draw (-2,5) -- (0,5);
    \draw (0,5) -- (0,1.5);
    \draw (-1.5,0) -- (-2.7,0);
    \draw (-1.5,0) -- (0,1.5);
   
	\draw[dashed, black!60] (-3,4) -- (0,4);
	\draw[dashed, black!60] (-2,5) -- (-2,0);
	\draw[dashed, black!60] (-3,0.3) -- (-3,0);
	\draw[dashed, black!60] (-3,0.3) -- (0,0.3);	

    \node at (0.3,0.3) {$c_{3}$};
    \node at (0.3,1.5) {$c_{1}$};
	\node at (0.3,5) {$\mu$};
	\node at (0.75,4) {$\mu-c_{2} $};
	\node[rotate=-45] at (-1.25,-0.3) {$ -c_{1} $};
	\node[rotate=-45] at (-1.75,-0.6) {$ -1 + c_{2} $};
	\node[rotate=-45] at (-2.25, -0.6) {$ -1 + c_{3} $};
	\node[rotate=-45] at (-3, -0.3) {$ -1 $};

\end{tikzpicture}
\end{center}
\caption{Toric action obtained from blowing-up polytope 1 at vertex $(iv)$}
 \label{Toric picture 1(iv)}
 \end{figure}

This figure is obtained by blowing up $ \mathbb{F}_{0} $ at three distinct points $ p_{1}$, $p_{2} $ and $p_{3} $ such that $p_{1} $ and $p_{2} $ lie on the same fiber $ F $, and $p_{2} $ and $p_{3} $ lie on the same section $ B $. Since the complex automorphism group of $ \mathbb{F}_{0} $ is isomorphic to
\begin{center}
$ \text{Aut} (\mathbb{F}_{0}) \simeq (\text{PSL}(2,\mathbb{C}) \times \text{PSL}(2,\mathbb{C})) \ltimes \mathbb{Z}_{2} $,
\end{center}
it follows that the automorphism group of the 3-point blow-up must act transitively on the triple $ (p_{1}, p_{2}, p_{3}) $ that defines the almost complex structure.
\end{proof}

Now, to prove the correspondence between isometry groups and configurations, we start by recalling the following fact due to Abreu-Granja-Kitchloo:

\begin{theorem} (\cite[Corollary 2.6]{AbGrKi}) If $ J \in \mathcal{J}^{int}_{\omega} $ is such that the inclusion
\begin{center}
$ \text{Iso}_{h}(\omega,J) \hookrightarrow \text{Aut}_{h}(J) $ 
\end{center}
is a weak homotopy equivalence, then the inclusion of the $ \text{Symp}_{h}(M,\omega) $-orbit of $ J $ in $ ( \text{Diff}_{h} \cdot J ) \cap \mathcal{J}^{int}_{\omega} $
\begin{center}
$ \text{Symp}_{h}(M,\omega) / \text{Iso}_{h}(\omega,J) \hookrightarrow ( \text{Diff}_{h} \cdot J ) \cap \mathcal{J}^{int}_{\omega} $
\end{center}
is also a weak homotopy equivalence.
\end{theorem}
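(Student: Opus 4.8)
The plan is to realize the inclusion $\Symp_h(M,\omega)/\mathrm{Iso}_h(\omega,J)\hookrightarrow(\Diff_h\cdot J)\cap\mathcal{J}^{int}_\omega$ as the map induced on bases of a morphism between two orbit fibrations, and then to invoke the five lemma. Both groups act by pushforward, $\phi\cdot J'=\phi_*J'$. For the symplectic action of $\Symp_h(M,\omega)$ on $\mathcal{J}_\omega$ the stabilizer of $J$ is exactly $\mathrm{Iso}_h(\omega,J)=\Symp_h(M,\omega)\cap\mathrm{Aut}_h(J)$, giving the fibration $\mathrm{Iso}_h(\omega,J)\to\Symp_h(M,\omega)\xrightarrow{\mathrm{ev}_J}\Symp_h(M,\omega)\cdot J\cong\Symp_h(M,\omega)/\mathrm{Iso}_h(\omega,J)$. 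Now set $Z:=(\Diff_h\cdot J)\cap\mathcal{J}^{int}_\omega$; since pushforward preserves integrability one has $Z=(\Diff_h\cdot J)\cap\mathcal{J}_\omega$. Restricting the $\Diff_h$-orbit fibration $\mathrm{Aut}_h(J)\to\Diff_h\xrightarrow{\mathrm{ev}_J}\Diff_h\cdot J$ over the subspace $Z$ yields a fibration $\mathrm{Aut}_h(J)\to E\xrightarrow{\mathrm{ev}_J}Z$ with $E:=\mathrm{ev}_J^{-1}(Z)=\{\phi\in\Diff_h:\phi_*J\in\mathcal{J}_\omega\}$ and fiber $\mathrm{Aut}_h(J)$ over $J$. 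Because $\phi^*\omega=\omega$ forces $\phi_*J\in\mathcal{J}_\omega$, we have $\Symp_h(M,\omega)\subseteq E$ and $\mathrm{Iso}_h(\omega,J)\subseteq\mathrm{Aut}_h(J)$, so the three inclusions assemble into a morphism of fibrations over the common base point $J$: the fiber map $\mathrm{Iso}_h(\omega,J)\hookrightarrow\mathrm{Aut}_h(J)$, the total-space map $\Symp_h(M,\omega)\hookrightarrow E$, and the base map $\Symp_h(M,\omega)\cdot J\hookrightarrow Z$, the last being precisely the inclusion we must show is a weak homotopy equivalence.

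Comparing the two long exact homotopy sequences, the five lemma reduces everything to two inputs: the fiber map is a weak homotopy equivalence by hypothesis (this is the only place the assumption is used), so it suffices to prove that the total-space inclusion $\Symp_h(M,\omega)\hookrightarrow E$ is a weak homotopy equivalence.

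To handle this, I would introduce the evaluation $q\colon E\to\mathcal{S}$, $q(\phi)=\phi^*\omega$, where $\mathcal{S}$ denotes the space of symplectic forms that are compatible with the fixed $J$ and lie in the class $[\omega]$. This is the natural target: for $\phi\in\Diff_h$ one has $[\phi^*\omega]=[\omega]$ automatically, and $\phi_*J\in\mathcal{J}_\omega$ is equivalent to $J$ being compatible with $\phi^*\omega$, so indeed $q(E)\subseteq\mathcal{S}$. The space $\mathcal{S}$ is convex — a convex combination of forms compatible with a fixed $J$ is again compatible, hence symplectic, and the cohomological constraint is affine — and therefore contractible. Moser's theorem shows $q$ is surjective (the straight-line path from $\omega$ to any $\tau\in\mathcal{S}$ stays in $\mathcal{S}$, is cohomologically constant, and produces an isotopy in $\Diff_0\subseteq\Diff_h$ carrying $\omega$ to $\tau$) and that $q$ is a locally trivial fibration, with fiber over $\omega$ equal to $\{\phi\in\Diff_h:\phi^*\omega=\omega\}=\Symp_h(M,\omega)$. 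Since the base $\mathcal{S}$ is contractible, the fiber inclusion $\Symp_h(M,\omega)\hookrightarrow E$ is a weak homotopy equivalence, which closes the argument.

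The elementary geometric inputs (convexity/contractibility of $\mathcal{S}$, and the surjectivity via a straight-line Moser deformation) are routine. The genuine obstacle is the infinite-dimensional bookkeeping: verifying that the pushforward orbit maps $\mathrm{ev}_J$ and the Moser evaluation $q$ are honest Serre fibrations admitting local slices in the Fréchet category, and that restricting the $\Diff_h$-bundle over the stratum $Z$ inherits a fibration structure. For $q$ this is exactly the local-triviality established by the Moser/inflation technique used in Theorem~\ref{slight changes m1}; for $\mathrm{ev}_J$ one appeals to the standard local-slice theorems for these actions on spaces of compatible almost complex structures. Once these fibration properties are in place, the five-lemma comparison of the two homotopy exact sequences delivers the statement.
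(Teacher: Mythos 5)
Note first that the paper itself contains no proof of this statement: it is quoted verbatim from \cite[Corollary 2.6]{AbGrKi} and used as a black box, so there is no in-paper argument to compare yours against. Measured against the cited source, your proof is correct and is essentially the original argument: compare the two orbit fibrations $\mathrm{Iso}_h(\omega,J)\to\Symp_h(M,\omega)\to\Symp_h(M,\omega)\cdot J$ and $\mathrm{Aut}_h(J)\to E\to(\Diff_h\cdot J)\cap\mathcal{J}^{int}_{\omega}$ via the five lemma, reducing everything to the weak equivalence $\Symp_h(M,\omega)\hookrightarrow E$, which you obtain from the Moser fibration $\phi\mapsto\phi^{*}\omega$ over the convex, hence contractible, space $\mathcal{S}$ of $J$-compatible forms in the class $[\omega]$; your pointwise identifications (that $E=\{\phi\in\Diff_h:\phi^{*}\omega \text{ is compatible with } J\}$, that its fiber over $J$ is all of $\mathrm{Aut}_h(J)$, and that the fiber of $q$ over $\omega$ is exactly $\Symp_h(M,\omega)$) are all accurate. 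The one point you defer rather than prove --- that the two orbit maps admit local sections and hence are Serre fibrations in the Fr\'echet category --- is genuinely where the technical weight of \cite{AbGrKi} lies, but you identify it explicitly and it is established there, so I would not count it as a gap.
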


\begin{lemma} \label{isometrygroups}
For any $ J \in \mathcal{J}^{int}( \widetilde{M}_{c_{1} , c_{2}, c_{3}}  ) $, the inclusion $ \text{Iso}_{h}(\omega_{c_{1}, c_{2}, c_{3}}, J) \hookrightarrow \text{Aut}_{h}(J) $ is a weak homotopy equivalence.
\end{lemma}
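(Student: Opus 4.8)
The plan is to realize $\text{Iso}_h(\omega_{c_1,c_2,c_3},J)$ as a maximal compact subgroup of the finite-dimensional complex Lie group $\text{Aut}_h(J)$ and then to conclude by the Cartan--Iwasawa--Malcev theorem, exactly as in the two-blow-up case treated in \cite{AnjPin}. First I would set up the structure of $\text{Aut}_h(J)$. By the preceding Lemma, $(\mathbb{X}_4,J)$ is a three-fold blow-up of a Hirzebruch surface, hence a compact complex surface; by Bochner--Montgomery its full automorphism group $\text{Aut}(\mathbb{X}_4,J)$ is a complex Lie group, and $\text{Aut}_h(J)$, being the kernel of the representation on $H_2(\mathbb{X}_4;\Z)$, is a closed complex Lie subgroup. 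Since every element of $\text{Aut}_h(J)$ fixes the homology class of each exceptional curve of the configuration and therefore carries each such curve to itself, one can blow down $E_3$ and iterate, as in Proposition \ref{complextransitive}, to identify $\text{Aut}_h(J)$ with the subgroup of $\text{Aut}_h(\mathbb{F}_0)\simeq \mathrm{PSL}(2,\mathbb{C})\times \mathrm{PSL}(2,\mathbb{C})$ stabilizing the ordered (possibly infinitely near) configuration of blow-up points corresponding to $J$. This stabilizer is a linear algebraic group; in particular $\text{Aut}_h(J)$ has finitely many connected components, which is the only structural input I will need.

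Next I would prove that the Kähler isometry group $\text{Iso}_h(\omega_{c_1,c_2,c_3},J)=\text{Aut}_h(J)\cap \Symp(\mathbb{X}_4,\omega_{c_1,c_2,c_3})$ is a maximal compact subgroup. It is compact, being the isometry group of a Kähler metric on a compact manifold. Given an arbitrary compact subgroup $K\subset \text{Aut}_h(J)$, I would average the Kähler form over $K$ to obtain a $K$-invariant Kähler form $\omega_K=\int_K g^{*}\omega_{c_1,c_2,c_3}\,dg$ that is again compatible with $J$; because $\text{Aut}_h(J)$ acts trivially on $H^2$, one has $[\omega_K]=[\omega_{c_1,c_2,c_3}]$. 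Moser's theorem then produces an isotopy carrying $\omega_K$ to $\omega_{c_1,c_2,c_3}$, and, using that $J$-compatible Kähler forms in a fixed cohomology class form a connected (indeed contractible) family on which $\text{Aut}_h(J)$ acts, this conjugates $K$ into $\text{Iso}_h(\omega_{c_1,c_2,c_3},J)$ inside $\text{Aut}_h(J)$. This establishes maximality. In practice this step can also be verified directly from the explicit description of the automorphism groups for each of the $56$ types appearing in Proposition \ref{complextransitive}, which is the reference case of the Hirzebruch-surface computation.

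Finally, I would invoke the Cartan--Iwasawa--Malcev theorem: a Lie group with finitely many connected components deformation retracts onto any of its maximal compact subgroups, and all such subgroups are conjugate. Applied to $\text{Aut}_h(J)$ with maximal compact $\text{Iso}_h(\omega_{c_1,c_2,c_3},J)$, this shows the inclusion $\text{Iso}_h(\omega_{c_1,c_2,c_3},J)\hookrightarrow \text{Aut}_h(J)$ is a deformation retract, hence a homotopy equivalence and a fortiori a weak homotopy equivalence, which feeds directly into the Abreu--Granja--Kitchloo criterion (\cite[Corollary 2.6]{AbGrKi}) quoted above. I expect the genuine obstacle to be the second paragraph, namely the verification that $\text{Iso}_h$ is \emph{maximal} compact and, relatedly, that the averaged Kähler form can be normalized to lie in the correct cohomology class $[\omega_{c_1,c_2,c_3}]$; once this is settled, the algebraic structure of $\text{Aut}_h(J)$ and the classical homotopy-equivalence theorem make the remainder formal.
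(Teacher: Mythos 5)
Your overall skeleton --- exhibit $\text{Aut}_{h}(J)$ as an algebraic stabilizer with finitely many components, show $\text{Iso}_{h}(\omega_{c_{1},c_{2},c_{3}},J)$ is a maximal compact subgroup, and invoke Cartan--Iwasawa--Malcev --- is genuinely different from the paper's proof, which is an explicit case-by-case matching: there $\text{Aut}_{h}(J)$ is identified with the stabilizer of the (possibly infinitely near) blow-up points and found to be homotopy equivalent to a point, $S^{1}$ or $T^{2}$ according to the configuration, while $\text{Iso}_{h}$ is produced by equivariant blow-up from the maximal compact isometry groups of the Hirzebruch surfaces and matches these types. The problem is that your second step, which you yourself flag as the crux, does not go through. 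Averaging does give a $K$-invariant K\"{a}hler form $\omega_{K}$ compatible with $J$ with $[\omega_{K}]=[\omega_{c_{1},c_{2},c_{3}}]$, and Moser does give a diffeomorphism $\psi$ with $\psi^{*}\omega_{K}=\omega_{c_{1},c_{2},c_{3}}$; but $\psi$ is not a biholomorphism, so $\psi^{-1}K\psi$ preserves $\omega_{c_{1},c_{2},c_{3}}$ and $\psi^{*}J$, not $J$: the conjugation lands in $\Symp_h$ but leaves $\text{Aut}_{h}(J)$. What the averaging really shows is $K\subseteq\text{Iso}_{h}(\omega_{K},J)$; to compare this with $\text{Iso}_{h}(\omega_{c_{1},c_{2},c_{3}},J)$ \emph{inside} $\text{Aut}_{h}(J)$ you would need an element of $\text{Aut}_{h}(J)$ itself carrying $\omega_{K}$ to $\omega_{c_{1},c_{2},c_{3}}$, and there is no reason for one to exist: $\text{Aut}_{h}(J)$ is finite dimensional, while the (contractible) space of $J$-compatible K\"{a}hler forms in a fixed class is infinite dimensional, so the action is nowhere near transitive.

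This is not a repairable technicality, because maximality of $\text{Iso}_{h}$ actually fails for general $J\in\mathcal{J}^{int}_{\omega}$, so no soft argument can prove the lemma in the stated generality. Indeed, let $J_{0}$ be a standard toric structure in a stratum with $\text{Aut}_{h}(J_{0})\simeq T^{2}$, and set $\omega'=\omega_{c_{1},c_{2},c_{3}}+\epsilon\, i\partial\bar{\partial} f$ for small $\epsilon$ and generic $f$; this is a K\"{a}hler form compatible with $J_{0}$ and cohomologous to $\omega_{c_{1},c_{2},c_{3}}$, but an isometry $g\in T^{2}$ of $\omega'$ must satisfy $f\circ g=f$ (since $f\circ g-f$ is pluriharmonic, hence a constant, which vanishes after integrating against $\omega_{c_{1},c_{2},c_{3}}^{2}$), and a genericity argument over the finite-dimensional family of circles conjugate into $T^{2}$ rules out all other circles; hence $\text{Iso}_{h}(\omega',J_{0})$ is finite. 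Transporting by the Moser diffeomorphism produces $J\in\mathcal{J}^{int}_{\omega}$ in the same stratum with $\text{Iso}_{h}(\omega_{c_{1},c_{2},c_{3}},J)$ finite and $\text{Aut}_{h}(J)\simeq T^{2}$, so the inclusion is not a weak equivalence for that $J$. The statement one can prove, and the only one needed downstream (Corollary \ref{homotopytype} via \cite[Corollary 2.6]{AbGrKi} uses one representative per stratum), is maximal compactness for the standard toric or circle-invariant representatives; and for those the verification is precisely the explicit computation you relegate to a fallback --- that is, the paper's actual proof: $\text{Iso}_{h}\supseteq T^{2}$ (or $S^{1}$) by construction of the equivariant blow-up, every compact subgroup of $\text{Aut}_{h}(J_{0})$ is conjugate into its maximal torus, and the resulting sandwich forces equality.
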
\label{B06}
\begin{proof}
We start by recalling that if $ (\widetilde{M},\widetilde{J}) $ is the blow-up of $ (M,J) $ at a point $ p $, then the complex automorphism group of $ \widetilde{J} $ is isomorphic to the stabilizer subgroup of $ p $ in the automorphism group of $ J $.

Going through all possible types of compatible almost complext structures, we can see that the space $ \text{Aut}_{h}(\mathbb{X}_{4}, J) $ is homotopy equivalent to either a point, $ S^{1} $ or $ T^{2} $. More specifically, looking at the configurations in  Section \ref{chp m1: Structure}, we get:
\[
  \text{Aut}_{h}(\mathbb{X}_{4}, J) \simeq \left\{\def\arraystretch{1.2}%
  \begin{array}{@{}c@{\quad}l@{}}
	{*} & \text{if $J$ corresponds to  1:13}\\    
    T^{2} & \text{if $J$ corresponds to  1:1-6,  2:1-6,  3:1-6,  4:1-6,  5:1-6}\\
    S^{1} & \text{if $J$ corresponds to one of the remaining configurations}\\
     \end{array}\right.
\]
On the other hand, the isometry groups of the Hirzebruch surfaces are the maximal compact Lie subgroups of their complex automorphism groups:
\[
  \text{Iso}(\mathbb{F}_{i}, \omega) \simeq \left\{\def\arraystretch{1.2}%
  \begin{array}{@{}c@{\quad}l@{}}
    (SO(3) \times SO(3)) \ltimes \mathbb{Z}_{2} & \text{for $i=0$}\\
    U(2) & \text{for $i=1$}\\
  \end{array}\right.
\]
In particular, they are deformation retracts of $ \text{Aut}(\mathbb{F}_{m}) $, and after blow-up they induce isometry groups $ \text{Iso}_{h}(\mathbb{X}_{4},\omega,J) $ that are homotopy equivalent to the cases designated above.
For example, recalling Figure \ref{Toric picture 1(iv)} in the proof of Proposition \ref{complextransitive}, we see that its automorphism group is homotopy equivalent to the isometry group of the almost complex structure in the stratum characterized by configuration 1.4 (see Figure  \ref{Config1 of AP blown up 1-6}).
\end{proof}

Combining the last three results, we get:

\begin{corollary}\label{homotopytype}
Given $ J \in V_{i} \subset \mathcal{J}^{int}_{\omega} $, there is a weak homotopy equivalence 
\begin{center}
$ \text{Symp}_{h}(\widetilde{M}_{c_{1} , c_{2}, c_{3}}, \omega) / \text{Iso}_{h}(\omega,J) \simeq V_{i} $.
\end{center}
\end{corollary}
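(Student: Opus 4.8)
The plan is to assemble the corollary from the three results immediately preceding it, with essentially no new computation required. Fix $J \in V_i \subset \mathcal{J}^{int}_{\omega}$; since $V_i = U_i \cap \mathcal{J}^{int}_{\omega}$, this $J$ is an $\omega$-compatible integrable structure lying in the stratum $U_i$. The idea is simply to verify the hypothesis of the Abreu--Granja--Kitchloo theorem, invoke it, and then identify its target with $V_i$.

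First I would check that the hypothesis of the theorem of Abreu--Granja--Kitchloo (\cite[Corollary 2.6]{AbGrKi}, reproduced above) is met. This is exactly the content of Lemma \ref{isometrygroups}: for every $J \in \mathcal{J}^{int}(\widetilde{M}_{c_1,c_2,c_3})$ the inclusion $\text{Iso}_h(\omega_{c_1,c_2,c_3}, J) \hookrightarrow \text{Aut}_h(J)$ is a weak homotopy equivalence. Feeding this into the theorem, I obtain that the inclusion of the $\text{Symp}_h$-orbit
$$\text{Symp}_h(\widetilde{M}_{c_1,c_2,c_3}, \omega)/\text{Iso}_h(\omega, J) \hookrightarrow (\text{Diff}_h \cdot J) \cap \mathcal{J}^{int}_{\omega}$$
is a weak homotopy equivalence.

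Second, I would identify the target of this map with the stratum $V_i$. By Proposition \ref{complextransitive}, $V_i = (\text{Diff}_h \cdot J) \cap \mathcal{J}_{\omega}$. The only point to verify is that intersecting the orbit with the compatible structures $\mathcal{J}_{\omega}$ is the same as intersecting with the integrable compatible ones $\mathcal{J}^{int}_{\omega}$: but every element $\phi_* J$ of $\text{Diff}_h \cdot J$ is automatically integrable, since pushing an integrable structure forward by a diffeomorphism preserves integrability. Hence $(\text{Diff}_h \cdot J) \cap \mathcal{J}_{\omega} = (\text{Diff}_h \cdot J) \cap \mathcal{J}^{int}_{\omega} = V_i$, and the weak homotopy equivalence above reads
$$\text{Symp}_h(\widetilde{M}_{c_1,c_2,c_3}, \omega)/\text{Iso}_h(\omega, J) \simeq V_i,$$
as claimed.

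The argument is a formal chaining of the three results, so I do not expect a serious obstacle; the one place to be careful is the bookkeeping identification of $(\text{Diff}_h \cdot J) \cap \mathcal{J}^{int}_{\omega}$ with $V_i$, i.e.\ checking that the orbit description of Proposition \ref{complextransitive} and the definition $V_i = U_i \cap \mathcal{J}^{int}_{\omega}$ refer to the same space and that passing between $\mathcal{J}_{\omega}$ and $\mathcal{J}^{int}_{\omega}$ is harmless. I would also note that the statement is independent of the chosen $J \in V_i$, which is guaranteed by the transitivity part of Proposition \ref{complextransitive}: any two structures in $V_i$ differ by a homologically trivial diffeomorphism, so the orbit $\text{Diff}_h \cdot J$, and hence the whole construction, does not depend on the representative.
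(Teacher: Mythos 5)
Your proposal is correct and is exactly the paper's argument: the corollary is stated there with no separate proof beyond ``combining the last three results,'' i.e.\ verifying the hypothesis of the Abreu--Granja--Kitchloo theorem via Lemma \ref{isometrygroups} and identifying the target orbit with $V_i$ via Proposition \ref{complextransitive}. Your extra bookkeeping remarks (that $(\text{Diff}_h \cdot J) \cap \mathcal{J}_{\omega} = (\text{Diff}_h \cdot J) \cap \mathcal{J}^{int}_{\omega}$ since pushforwards preserve integrability, and that transitivity makes the statement independent of the chosen $J \in V_i$) are precisely the implicit details the paper leaves to the reader.
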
\label{B07}

\section{Some toric actions for \texorpdfstring{$ \mu > 1 $}{Lg}}
\label{app: relations m>1}

Following a similar line of thought to that of Section \ref{sec m1: generators}, we can draw the possible Delzant polytopes for $ \widetilde{M}_{\mu , c_{1} , c_{2}, c_{3} }$. In this section, we will briefly summarize this construction and show some of these polytopes, more precisely, the ones which are necessary for the proof of Proposition \ref{specialSamelson}. 

Let $ T^{4} \subset U(4) $ act in the standard way on $\mathbb{C}^{4}$. Given an integer $ n \geq 0 $, the action of the subtorus $ T^{2} _{n} := (ns+t,t,s,s) $ is Hamiltonian with moment map

\begin{center}
$ (z_{1},...,z_{4}) \mapsto (n |z_{1}|^{2} + |z_{3}|^{2} + |z_{4}|^{2} , |z_{1}|^{2} + |z_{2}|^{2}) $.
\end{center}

We identify $( S^{2} \times S^{2}, \mu \sigma \oplus \sigma ) $ with each of the toric Hirzebruch surfaces $ \mathbb{F}_{\mu}^{2k} $, $ 0 \leq k \leq l $, defined as the symplectic quotient $ \mathbb{C}^{4} // T^{2} _{2k} $ at the regular value $ (\mu + k, 1) $ endowed with the residual action of the torus $ T(2k) := (0,u,v,0) \subset T^{4} $. The image $ \Delta (2k) $ of the moment map is the convex hull of
$$ \{ (0,0), (1,0), (1,\mu + k), (0, \mu - k) \}.$$

Similarly, we identify $( S^{2} \tilde{\times} S^{2}, \omega_{\mu} ) $ with the toric Hirzebruch surface $ \mathbb{F}^{\mu}_{2k-1} $, $ 1 \leq k \leq l $, defined as the symplectic quotient $ \mathbb{C}^{4} // T^{2} _{2k-1} $ at the regular value $ (\mu + k, 1) $. The image $ \Delta (2k-1) $ of the moment map of the residual action of the torus $ (0,u,v,0)$ is the convex hull of
$$ \{ (0,0), (1,0), (1,\mu + k), (0, \mu - k + 1) \}.$$
Since the group $ \text{Symp}_{h}(M_{\mu}) $ of symplectomorphisms acting trivially on homology is connected, any two identifications of $ \mathbb{F}^{\mu}_{n} $ with the spaces $ S^{2} \times S^{2} $ and $ S^{2} \tilde{\times} S^{2} $ are isotopic and lead to isotopic identifications of $ \text{Symp}_{h}(\mathbb{F}^{\mu}_{n}) $ with the respective symplectomorphism groups.

We identify the symplectic blow-up $ \widetilde{M}_{\mu,c_{1}} $ at a ball of capacity $c_{1} $ with the equivariant blow-up of the Hirzebruch surfaces $ \mathbb{F}^{\mu}_{n} $. We define the even torus action $ \widetilde{T}(2k) $ as the equivariant blow-up of the toric action of $T(2k)$ on $ \mathbb{F}^{\mu}_{2k} $ at the fixed point $ (0,0) $ with capacity $ c_{1} $. The image $\widetilde\Delta(2k)$ of the moment map then is the convex hull of
$$ \{ (1,\mu + k), (0, \mu - k) , (0, c_{1} ), (c_{1} , 0), (1,0) \}.$$
Similarly, we define the odd torus action $ \widetilde{T}(2k-1) $ as the equivariant blow-up of the toric action of $T(2k-1)$ on $ \mathbb{F}^{\mu}_{2k-1} $ at the fixed point $ (0,0) $ with capacity $1- c_{1} $. The image $\widetilde\Delta(2k-1)$ of the moment map then is the convex hull of
$$ \{ (1,\mu - c_{1} + k), (0, \mu - c_{1} - k +1) , (0, 1- c_{1} ), (1- c_{1} , 0), (1,0) \}.$$
Note that when $ c_{1} < c_{crit} := \mu - l $, $ \widetilde{M}_{\mu,c_{1}} $ admits exactly $ 2l+1 $ inequivalent toric structures $ \widetilde{T}(0),...,\widetilde{T}(2l) $, while when  $ c_{1} \geq c_{crit}$, it admits $ 2l $ of those, namely $ \widetilde{T}(0),...,\widetilde{T}(2l-1) $.

The K\"{a}hler isometry group of  $ \mathbb{F}^{\mu}_{n} $ is $ N(T^{2} _{n}) / T^{2} _{n} $ where $ N(T^{2} _{n})$ is the normalizer of $T^{2} _{n} $ in $U(4)$. There is a natural isomorphism $N(T^{2} _{0}) / T^{2} _{0} \simeq SO(3) \times SO(3)  := K(0)$, while for $ k \geq 1 $, we have $ N(T^{2} _{2k}) / T^{2} _{2k} \simeq S^{1} \times SO(3) := K(2k) $ and $ N(T^{2} _{2k-1}) / T^{2} _{2-1} \simeq U(2) := K(2k-1) $ The
restrictions of these isomorphisms to the maximal tori are given in coordinates by
\begin{align*}
 (u,v) & \mapsto (-u,v) \in T(0) := S^{1} \times S^{1} \subset K(0) \\
(u,v)  & \mapsto (u,ku+v) \in T(2k) := S^{1} \times S^{1} \subset K(2k) \\
(u,v)  & \mapsto (u+v ,ku + (k-1) v) \in T(2k-1) := S^{1} \times S^{1} \subset K(2k-1)
\end{align*}
These identifications imply that the moment polygon associated to the maximal tori $T(n) = S^{1} \times S^{1} \subset K(n)$ and $\widetilde{T}(n)$ are the images of $\Delta(n)$ and $ \widetilde{\Delta}(n) $, respectively, under the transformations $ C_{n} \in GL(2,\mathbb{Z}) $ given by

\begin{center}
$C_{0} =
\left( \begin{array}{cc}
-1 & 0 \\
0 & 1 \\
\end{array} \right)$, $C_{2k} =
\left( \begin{array}{cc}
1 & 0 \\
-k & 1 \\
\end{array} \right)$ \mbox{ and } $C_{2k-1} =
\left( \begin{array}{cc}
1-k & k \\
1 & -1 \\
\end{array} \right)$.
\end{center}
Under the blow-down map, $ \widetilde{T}(n) $ is sent to the maximal torus of $ K(n) $ for all $ n \geq 0 $. By \cite{LalPin} $ \text{Symp}(  \widetilde{M}_{\mu,c_{1}} ) $ is connected, hence the choices involved in these identifications give the same maps up to homotopy.

\begin{figure}[thp]
\begin{center}
\begin{tikzpicture}[scale=0.8,roundnode/.style={circle, draw=black!80, thick, minimum size=7mm}, font=\footnotesize]

	\node[roundnode] at (0,3.5) (maintopic) {6} ;
 
    \draw[->][black!70] (0.8,0) -- (8.4,0); 
    \draw[->][black!70] (3,-4.4) -- (3,3.2); 

    \draw[brown] (1,3) -- (8.2,-4.2);
    \draw[brown] (8.2,-4.2) -- (7.6,-3.9);
    \draw[brown] (7.6,-3.9) -- (5.5,-2.5);
    \draw[brown] (5.5,-2.5) -- (3.5,-0.5);
    \draw[brown] (3.5,-0.5) -- (2,1.5);
    \draw[brown] (2,1.5) -- (1,3);

	\filldraw [black] (1,3) circle (2pt);
	\filldraw [black] (8.2,-4.2) circle (2pt);
	\filldraw [black] (7.6,-3.9) circle (2pt);
	\filldraw [black] (5.5,-2.5) circle (2pt);
	\filldraw [black] (3.5,-0.5) circle (2pt);
	\filldraw [black] (2,1.5) circle (2pt);

    \node at (1.2,3.3) {$(i)$};
    \node at (8.2,-4.6) {$(vi)$};
    \node at (7.4,-4.5) {$(v)$};
    \node at (5.3,-2.9) {$(iv)$};
    \node at (3.3,-0.9) {$(iii)$};
    \node at (1.6,1.5) {$(ii)$};
   
	\draw[dashed, black!60] (1,3) -- (3,3);
	\draw[dashed, black!60] (1,3) -- (1,0);
	\draw[dashed, black!60] (8.2,-4.2) -- (8.2,0);
	\draw[dashed, black!60] (8.2,-4.2) -- (3,-4.2);	
	\draw[dashed, black!60] (7.6,-3.9) -- (7.6,0);
	\draw[dashed, black!60] (7.6,-3.9) -- (3,-3.9);
	\draw[dashed, black!60] (5.5,-2.5) -- (5.5,0);
	\draw[dashed, black!60] (5.5,-2.5) -- (3,-2.5);
	\draw[dashed, black!60] (3.5,-0.5) -- (3,-0.5);
	\draw[dashed, black!60] (3.5,-0.5) -- (3.5,0);
	\draw[dashed, black!60] (2,1.5) -- (3,1.5);
	\draw[dashed, black!60] (2,1.5) -- (2,0);

	\node[rotate=45] at (3.2,3) {$k$};
	\node[rotate=45] at (3.6,1.8) {$k-k c_{1}$};
	\node[rotate=45] at (2.3,-0.8) {$-1+c_{1}$};
	\node[rotate=45] at (1.8,-3.4) {$- \mu -1 +k +c_{1}$};
	\node[rotate=45] at (2,-4.4) {$- \mu +c_{1} + c_{2}$};
	\node[rotate=45] at (1.7,-5.2) {$- \mu +c_{1} + (k-1)c_{2}$};
	\node[rotate=45] at (0.5,-0.7) {$ -k+1 $};
	\node[rotate=45] at (1,-1.3) {$ -k+1+(k-1)c_{1} $};
	\node[rotate=45] at (3.7,0.5) {$ 1-c_{1} $};
	\node[rotate=45] at (6.1,1) {$ \mu + 1 - k -c_{1} $};
	\node[rotate=45] at (8.4,1.2) {$ \mu + 1 - c_{1} - kc_{2} $};
	\node[rotate=45] at (9,1.1) {$ \mu + 1 - c_{1} - c_{2} $};
    
	\node at (4,-6) {($x_{2k-1,1,\cdot}$, $y_{2k-1,1, \cdot }$)};

\end{tikzpicture}
\end{center}
\caption{Delzant polytope corresponding to Configuration (10) of \cite{AnjPin} for $\mathbb{X}_{3}$, with the next blow-ups plotted}
\label{Config10 of AP}
\end{figure}
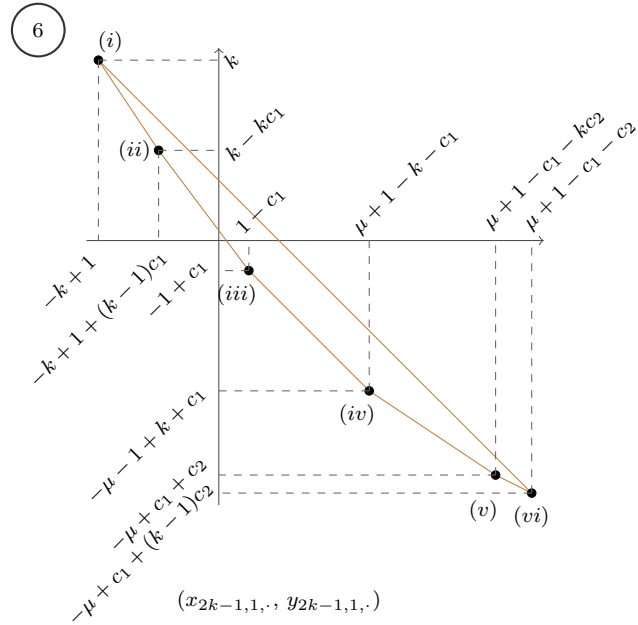

\begin{figure}[thp]
\begin{center}
\begin{tikzpicture}[scale=0.8,roundnode/.style={circle, draw=black!80, thick, minimum size=7mm}, font=\footnotesize]

	\node[roundnode] at (0,3.5) (maintopic) {7} ;
 
    \draw[->][black!70] (0.8,0) -- (8.6,0); 
    \draw[->][black!70] (3,-4.6) -- (3,3.2); 

    \draw[brown] (1,3) -- (8.5,-4.5);
    \draw[brown] (8.5,-4.5) -- (6.4,-3.1);
    \draw[brown] (6.4,-3.1) -- (5.2,-2.2);
    \draw[brown] (5.2,-2.2) -- (3.5,-0.5);
    \draw[brown] (3.5,-0.5) -- (2,1.5);
    \draw[brown] (2,1.5) -- (1,3);

	\filldraw [black] (1,3) circle (2pt);
	\filldraw [black] (8.5,-4.5) circle (2pt);
	\filldraw [black] (6.4,-3.1) circle (2pt);
	\filldraw [black] (5.2,-2.2) circle (2pt);
	\filldraw [black] (3.5,-0.5) circle (2pt);
	\filldraw [black] (2,1.5) circle (2pt);

    \node at (1.2,3.3) {$(i)$};
    \node at (8.4,-4.8) {$(vi)$};
    \node at (6.2,-3.6) {$(v)$};
    \node at (5.3,-2.6) {$(iv)$};
    \node at (3.3,-0.9) {$(iii)$};
    \node at (1.6,1.5) {$(ii)$};
   
	\draw[dashed, black!60] (1,3) -- (3,3);
	\draw[dashed, black!60] (1,3) -- (1,0);
	\draw[dashed, black!60] (8.5,-4.5) -- (8.5,0);
	\draw[dashed, black!60] (8.5,-4.5) -- (3,-4.5);	
	\draw[dashed, black!60] (6.4,-3.1) -- (6.4,0);
	\draw[dashed, black!60] (6.4,-3.1) -- (3,-3.1);
	\draw[dashed, black!60] (5.2,-2.2) -- (5.2,0);
	\draw[dashed, black!60] (5.2,-2.2) -- (3,-2.2);
	\draw[dashed, black!60] (3.5,-0.5) -- (3,-0.5);
	\draw[dashed, black!60] (3.5,-0.5) -- (3.5,0);
	\draw[dashed, black!60] (2,1.5) -- (3,1.5);
	\draw[dashed, black!60] (2,1.5) -- (2,0);

	\node[rotate=45] at (3.2,3) {$k$};
	\node[rotate=45] at (3.6,1.8) {$k-k c_{1}$};
	\node[rotate=45] at (2.3,-0.8) {$-1+c_{1}$};
	\node[rotate=45] at (1.6,-3.4) {$- \mu -1 +k +c_{1}+c_{2}$};
	\node[rotate=45] at (1.2,-4.6) {$- \mu -1+k +c_{1} -(k-1) c_{2}$};
	\node[rotate=45] at (2.2,-4.8) {$- \mu +c_{1} $};
	\node[rotate=45] at (0.5,-0.7) {$ -k+1 $};
	\node[rotate=45] at (0.9,-1.3) {$ -k+1+(k-1)c_{1} $};
	\node[rotate=45] at (3.7,0.5) {$ 1-c_{1} $};
	\node[rotate=45] at (6.1,1.3) {$ \mu + 1 - k -c_{1}-c_{2} $};
	\node[rotate=45] at (7.4,1.4) {$ \mu + 1 -k- c_{1} + kc_{2} $};
	\node[rotate=45] at (9,1) {$ \mu + 1 - c_{1}$};
    
	\node at (4,-6) {($x_{2k-1,2,\cdot}$, $y_{2k-1,2, \cdot }$)};

\end{tikzpicture}
\end{center}

\caption{Delzant polytopes with the next blow-ups plotted, continued}
\label{Config11 of AP}
\end{figure}

\begin{figure}[thp]
\begin{tikzpicture}[roundnode/.style={circle, draw=black!80, thick, minimum size=7mm}, font=\small]

	\node[roundnode] at (-1,6) (maintopic) {8} ;
 
    \draw[->][black!70] (-0.2,0) -- (1.2,0); 
    \draw[->][black!70] (0,-1.1) -- (0,4.9); 

    \draw[brown] (1,4.7) -- (1,-1);
    \draw[brown] (1,-1) -- (0.5,-0.5);
    \draw[brown] (0.5,-0.5) -- (0,0.5);
    \draw[brown] (0,0.5) -- (0,4);
    \draw[brown] (0,4) -- (0.7,4.6);
    \draw[brown] (0.7,4.6) -- (1,4.7);

	\filldraw [black] (1,4.7) circle (2pt);
	\filldraw [black] (1,-1) circle (2pt);
	\filldraw [black] (0.5,-0.5) circle (2pt);
	\filldraw [black] (0,0.5) circle (2pt);
	\filldraw [black] (0,4) circle (2pt);
	\filldraw [black] (0.7,4.6) circle (2pt);

    \node at (1.4,4.7) {$(i)$};
    \node at (1.4,-1) {$(vi)$};
    \node at (1.4,-0.5) {$(v)$};
    \node at (-0.4,0.9) {$(iv)$};
    \node at (0.3,3.8) {$(iii)$};
    \node at (0.7,5) {$(ii)$};

	\draw[dashed, black!60] (1,4.7) -- (0,4.7);
	\draw[dashed, black!60] (1,-1) -- (0,-1);
	\draw[dashed, black!60] (0.5,-0.5) -- (0.5,0);
	\draw[dashed, black!60] (0.5,-0.5) -- (0,-0.5);
	\draw[dashed, black!60] (0.7,4.6) -- (0.7,0);
	\draw[dashed, black!60] (0.7,4.6) -- (0,4.6);

	\node at (-0.9,4.8) {$\mu - c_{2}$};
	\node at (-0.9,4.4) {$\mu -k c_{2}$};
	\node at (-0.9,3.9) {$\mu -k $};
	\node at (-0.5,0.5) {$c_{1}$};
	\node at (-0.5,-0.5) {$-kc_{1}$};
	\node at (-0.5,-1) {$-k$};
	\node[rotate=45] at (1.3,0.5) {$ 1-c_{2} $};
	\node[rotate=45] at (0.6,0.3) {$ c_{1} $};
    
	\node at (0,-2) {($x_{2k,1,\cdot}$, $y_{2k,1, \cdot }$)};

	\node[roundnode] at (7,6) (maintopic) {9} ;
 
    \draw[->][black!70] (7.8,0) -- (9.2,0); 
    \draw[->][black!70] (8,-1.1) -- (8,5.2); 

    \draw[brown] (9,5) -- (9,-1);
    \draw[brown] (9,-1) -- (8.5,-0.5);
    \draw[brown] (8.5,-0.5) -- (8,0.5);
    \draw[brown] (8,0.5) -- (8,3.7);
    \draw[brown] (8,3.7) -- (8.3,4.3);
    \draw[brown] (8.3,4.3) -- (9,5);

	\filldraw [black] (9,5) circle (2pt);
	\filldraw [black] (9,-1) circle (2pt);
	\filldraw [black] (8.5,-0.5) circle (2pt);
	\filldraw [black] (8,0.5) circle (2pt);
	\filldraw [black] (8,3.7) circle (2pt);
	\filldraw [black] (8.3,4.3) circle (2pt);

    \node at (9.4,4.7) {$(i)$};
    \node at (9.4,-1) {$(vi)$};
    \node at (9.4,-0.5) {$(v)$};
    \node at (7.6,0.9) {$(iv)$};
    \node at (7.5,3.2) {$(iii)$};
    \node at (8.7,4.4) {$(ii)$};

	\draw[dashed, black!60] (9,5) -- (8,5);
	\draw[dashed, black!60] (8.5,-0.5) -- (8.5,0);
	\draw[dashed, black!60] (8.5,-0.5) -- (8,-0.5);
	\draw[dashed, black!60] (8.3,4.3) -- (8.3,0);
	\draw[dashed, black!60] (8.3,4.3) -- (8,4.3);
	\draw[dashed, black!60] (9,-1) -- (8,-1);

	\node at (7.7,5) {$\mu$};
	\node at (7.1,4.3) {$\mu -k + c_{2}$};
	\node at (6.9,3.7) {$\mu -k - c_{2} $};
	\node at (7.5,0.5) {$c_{1}$};
	\node at (7.5,-0.5) {$-kc_{1}$};
	\node at (7.5,-1) {$-k$};
	\node[rotate=45] at (8.3,0.3) {$ c_{2} $};
	\node[rotate=45] at (8.8,0.3) {$ c_{1} $};
    
	\node at (8,-2) {($x_{2k,2,\cdot}$, $y_{2k,2, \cdot }$)};

\end{tikzpicture}

\caption{Delzant polytopes with the next blow-ups plotted, continued}\label{Config16,18 of AP}
\bigskip
\end{figure}

\begin{figure}[thp]
\begin{minipage}{.5\textwidth}
\begin{center}
\begin{tikzpicture}[roundnode/.style={circle, draw=black!80, thick, minimum size=7mm}, font=\small]

	\node[roundnode] at (-1,6) (maintopic) {10} ;
 
    \draw[->][black!70] (-0.2,0) -- (1.2,0); 
    \draw[->][black!70] (0,-1.1) -- (0,5.2); 

    \draw[brown] (1,5) -- (1,-0.7);
    \draw[brown] (1,-0.7) -- (0.7,-0.6);
    \draw[brown] (0.7,-0.6) -- (0.5,-0.5);
    \draw[brown] (0.5,-0.5) -- (0,0.5);
    \draw[brown] (0,0.5) -- (0,4);
    \draw[brown] (0,4) -- (1,5);

	\filldraw [black] (1,5) circle (2pt);
	\filldraw [black] (1,-0.7) circle (2pt);
	\filldraw [black] (0.7,-0.6) circle (2pt);
	\filldraw [black] (0.5,-0.5) circle (2pt);
	\filldraw [black] (0,0.5) circle (2pt);
	\filldraw [black] (0,4) circle (2pt);

    \node at (1.4,4.7) {$(i)$};
    \node at (1.4,-0.6) {$(vi)$};
    \node at (0.6,-1.1) {$(iv) (v)$};
    \node at (-0.4,1.2) {$(iii)$};
    \node at (0.5,4) {$(ii)$};

	\draw[dashed, black!60] (1,5) -- (0,5);
	\draw[dashed, black!60] (1,-0.7) -- (0,-0.7);
	\draw[dashed, black!60] (0.7,-0.6) -- (0.7,0);
	\draw[dashed, black!60] (0.7,-0.6) -- (0,-0.6);
	\draw[dashed, black!60] (0.5,-0.5) -- (0.5,0);
	\draw[dashed, black!60] (0.5,-0.5) -- (0,-0.5);

	\node at (-0.3,5) {$\mu$};
	\node at (-0.7,4) {$\mu - k $};
	\node at (-0.5,0.5) {$c_{1}$};
	\node at (-0.6,-0.2) {$-kc_{1}$};
	\node at (-1,-0.6) {$-k-kc_{2}$};
	\node at (-0.8,-1) {$-k+c_{2}$};
	\node[rotate=45] at (1.3,0.5) {$ 1-c_{2} $};
	\node[rotate=45] at (0.6,0.3) {$ c_{1} $};
    
	\node at (0,-2) {($x_{2k,5,\cdot}$, $y_{2k,5, \cdot }$)};

\end{tikzpicture}
\end{center}
\end{minipage}%
\begin{minipage}{.5\textwidth}
\begin{center}
\begin{tikzpicture}[roundnode/.style={circle, draw=black!80, thick, minimum size=7mm}, font=\small]

	\node[roundnode] at (-1,6) (maintopic) {11} ;
 
    \draw[->][black!70] (-0.2,0) -- (1.2,0); 
    \draw[->][black!70] (0,-1.1) -- (0,5.2); 

    \draw[brown] (1,5) -- (1,-1);
    \draw[brown] (1,-1) -- (0.5,-0.5);
    \draw[brown] (0.5,-0.5) -- (0.3,-0.1);
    \draw[brown] (0.3,-0.1) -- (0,0.8);
    \draw[brown] (0,0.8) -- (0,4);
    \draw[brown] (0,4) -- (1,5);

	\filldraw [black] (1,5) circle (2pt);
	\filldraw [black] (1,-1) circle (2pt);
	\filldraw [black] (0.5,-0.5) circle (2pt);
	\filldraw [black] (0.3,-0.1) circle (2pt);
	\filldraw [black] (0,0.8) circle (2pt);
	\filldraw [black] (0,4) circle (2pt);

    \node at (1.4,4.7) {$(i)$};
    \node at (1.4,-1) {$(vi)$};
    \node at (1.4,-0.6) {$(v)$};
    \node at (1.4,-0.2) {$(iv)$};
    \node at (-0.4,1.2) {$(iii)$};
    \node at (0.5,4) {$(ii)$};

	\draw[dashed, black!60] (1,5) -- (0,5);
	\draw[dashed, black!60] (0.5,-0.5) -- (0.5,0);
	\draw[dashed, black!60] (0.5,-0.5) -- (0,-0.5);
	\draw[dashed, black!60] (0.3,-0.1) -- (0.3,0);
	\draw[dashed, black!60] (0.3,-0.1) -- (0,-0.1);
	\draw[dashed, black!60] (1,-1) -- (0,-1);

	\node at (-0.3,5) {$\mu$};
	\node at (-0.7,4) {$\mu - k $};
	\node at (-0.8,0.8) {$c_{1} + c_{2} $};
	\node at (-0.8,-0.1) {$c_{1} - 2 c_{2}$};
	\node at (-0.5,-0.5) {$-kc_{1}$};
	\node[rotate=45] at (0.3,0.3) {$ c_{2} $};
	\node[rotate=45] at (0.7,0.3) {$ c_{1} $};
	\node at (-0.5,-1) {$-k$};
    
	\node at (0,-2) {($x_{2k,3, \cdot }$, $y_{2k,3, \cdot}$)};

\end{tikzpicture}
\end{center}
\end{minipage}
\caption{Delzant polytopes with the next blow-ups plotted, continued}\label{Config13 of AP}
\end{figure}

We identify the symplectic blow-up $ \widetilde{M}_{\mu,c_{1},c_{2}} $ with the equivariant two blow-up of the Hirzebruch surfaces $ \mathbb{F}^{\mu}_{n} $ and obtain inequivalent toric structures. We define the torus actions $ \widetilde{T}_{i}(2k) $, $\widetilde{T_{i}} ( 2k - 1 )$, $i = 1, ... , 5$ as the equivariant blow-ups of the toric action of $\widetilde{T}(n)$ on $ \widetilde{\mathbb{F}}^{\mu}_{2k} $ and $ \widetilde{\mathbb{F}}^{\mu - c_{1}}_{2k-1} $ respectively, with capacity $c_{2}$, at each one of the five fixed points, which correspond to the vertices of the moment polygon $ \widetilde{\Delta} (n) $.
Furthemore, we blow-up each of the resulting toric actions in their six fixed points and obtain $ \widetilde{T}_{i,j}(2k) $, $\widetilde{T}_{i,j} ( 2k - 1 )$, $i = 1, ... , 5$, $j=1,...,6$. These toric pictures arise from the ones described in Section 4.2 of \cite{AnjPin}, by blowing up once more at a ball of capacity $ c_{3} $.   In each case, we will further pick two Hamiltonian $ S^{1} $-actions, ($ x_{2k-1,i,j}, y_{2k-1,i,j} $) or ($ x_{2k,i,j}, y_{2k,i,j} $), as we did in Section \ref{sec m1: generators}.

Since the maps  $ T_{i,j}(n) \rightarrow G_{\mu, c_{1}, c_{2}, c_{3}} $ induce injective maps of fundamental groups (see Remark \ref{injectivehomotopy}), it follows that we can see the actions $x_{n,i,j}$, $y_{n,i,j}$ as elements of $ \pi_{1}(G_{\mu, c_{1}, c_{2}, c_{3}}) $. Then, using Karshon's classification, we can find several relations between these elements. In fact, an easy but long calculation shows that exactly one more generator, for instance $ y_{1,1,1} $, is necessary and sufficient to produce all the other elements.

\end{appendices}

\end{document}